\newtheorem{theorem}{Theorem}
\newtheorem{definition}[theorem]{Definition}
\newtheorem{conjecture}[theorem]{Conjecture}
\newtheorem{proposition}[theorem]{Proposition}
\newtheorem{lemma}[theorem]{Lemma}
\newtheorem{corollary}[theorem]{Corollary}
\newtheorem{theorem-question}[theorem]{Theorem-Question}
\theoremstyle{definition}
\newtheorem{remark}[theorem]{Remark}
\newtheorem{example}[theorem]{Example}
\def\bbz{\mathbb{Z}}
\def\bbr{\mathbb{R}}
\def\CC{\mathcal{C}}
\def\CX{\mathcal{X}}
\def\calx{\mathcal{X}}
\def\calr{\mathcal{R}}
\def\CR{\mathcal{R}}
\def\polytopalX{\mathcal{C}_{\mathcal{X}}}
\def\polytopalXideal{\mathcal{C}_{\mathcal{X}^-}}
\def\umv{\circ}
\def\mv{\bullet}
\def\smv{\rule[.2ex]{1ex}{1ex}}
\def\iso{\buildrel \sim\over\to}
\def\mMod{\operatorname{\!-mod}\nolimits} 
\def\Mod{\operatorname{\!-Mod}\nolimits}  
\def\Nil{\operatorname{\!-nil}\nolimits} 
\def\QMod{\operatorname{\!-Qmod}\nolimits} 
\def\qMod{\operatorname{\!-qmod}\nolimits} 
\def\perf{\operatorname{\!-perf}\nolimits}
\begin{document}

\baselineskip = 15pt

\title{Cubist algebras} 

\author{Joseph Chuang}
\author{Will Turner}

\begin{abstract}
We construct algebras from rhombohedral tilings of Euclidean space obtained as projections of certain
cubical complexes. 
We show that these `Cubist algebras' satisfy strong homological properties, 
such as Koszulity and quasi-heredity, 
reflecting the combinatorics of the tilings. 
We construct derived equivalences between Cubist algebras associated to local mutations in tilings. 
We recover as a special case the Rhombal algebras of Michael Peach and make a precise connection to weight $2$ 
blocks of symmetric groups.
\end{abstract}

\maketitle
\tableofcontents

\section {Introduction}

\subsection{Homological algebras}
How many algebras are there which possess every strong homological property known to mathkind, 
yet are not semisimple ?

We define algebras $U_{\mathcal X}$, which generalise the Rhombal algebras of M. Peach, as well
as the Brauer tree algebra associated to an infinite line. 
We prove that these algebras are 
Koszul, symmetric, super-symmetric algebras, 
whose projective modules are of identical odd Loewy length $2r-1$, for some natural number $r$.
We reveal $r !$ different highest weight structures on $U_{\mathcal X}\mMod$.
Standard modules are Koszul, for any highest weight structure.
Given a highest weight structure, there
is a canonical choice of homogeneous cellular basis for $U_{\mathcal X}$.

The Koszul duals $V_{\mathcal X}$
of the $U_{\mathcal X}$ generalise the preprojective algebra associated to an infinite line.
They have global dimension $2r-2$. 
Thinking of $V_{\mathcal X}\mMod$ as the category of quasi-coherent sheaves on 
a chimeric noncommutative affine algebraic variety of dimension $2r-2$, 
we may define a category of sheaves on the corresponding projective variety, 
which has dimension $2r-3$,
and obeys Serre duality, with trivial canonical bundle.
We define $r !$ different highest weight structures on $V_{\mathcal X}\mMod$,
dual to those on $U_{\mathcal X}\mMod$.
Again, standard modules are all Koszul.
Given a highest weight structure, there
is a canonical choice of homogeneous cellular basis for $V_{\mathcal X}$.

The only finite-dimensional algebras which enjoy these potent combinations of properties 
are semisimple algebras. Our examples are therefore necessarily infinite dimensional.

The combinatorics of our algebras is governed by collections of cubes in $r$-dimensional space,
viewed from $(r-1)$-dimensional space. In homage to P. Picasso and G. Braque, who explored manifold possibilities
of this geometric attitude when $r=3$, we call them \emph{Cubist algebras}.

We expect there to be further examples of algebras ${\mathcal A}_\tau$ which satisfy many of the listed properties of the 
$U_{\mathcal X}$'s, and upon a suitable localisation, 
describe the Morita type of blocks of symmetric groups / Schur algebras.
The Koszul duals of the algebras ${\mathcal A}_\tau$, upon localisation, will describe the Koszul duals of
blocks of Schur algebras.

There are further similarities between the Cubist algebras $U_{\mathcal X}$, and blocks of symmetric groups.
Large collections of them are derived equivalent, the tilting bimodule complexes being obtained by 
composing two-term complexes. 
Symmetric group blocks of weight two are largely similar to certain $U_{\mathcal X}$'s, in case $r=3$.

Differences between the algebras $U_{\mathcal X}$, and ${\mathcal A}_\tau$ are soon visible.  
On $U_{\mathcal X}$, there are $r !$ possible highest weight structures.
However, ${\mathcal A}_\tau$ has 
only two alternative
orderings on its simple objects, corresponding to  the dominance ordering of partitions and its opposite.  
The combinatorics surrounding ${\mathcal A}_\tau$ is complicated, and mysterious,
whilst numerical properties of $U_{\mathcal X}$ are elegant, and transparent.

Relations between matrices of composition multiplicities for $U_{\mathcal X}$ and $V_{\mathcal X}$ 
release beautiful combinatorial formulae, associated to Cubist views of Euclidean space.

Our proofs begin with the combinatorics of Cubist diagrams. 
Pursuing these combinatorics allows us to prove the existence of 
highest weight structure on $V_{\mathcal X}\mMod$. 
Standard modules for $V_{\mathcal X}$ have linear projective resolutions, implying that
$V_{\mathcal X}$ is standard Koszul, in the sense of Agoston, Dlab and Lukacs.
This allows us to deduce Koszulity, and the existence of highest
weight structures on $U_{\mathcal X}\mMod$.
The existence of cellular structures on $U_{\mathcal X}, V_{\mathcal X}$ are then apparent. 
Eventually, symmetry of $U_{\mathcal X}$ is won.

We proceed to prove that the derived categories of 
$U_{\mathcal X}, U_{\mathcal X'}$ are equivalent, whenever ${\mathcal X'}$ is obtained from ${\mathcal X}$ 
by a local flip. This equivalence of categories allows us identify portions of 
symmetric group blocks of weight two with portions of certain Cubist algebras, in case $r=3$, 
following a path down from the Rouquier block.

\subsection{The case $r=3$:  Peach's rhombal algebras.}
\label{subsection:introrhombal}

\begin{figure}[h]
\[
\includegraphics[height=3in]{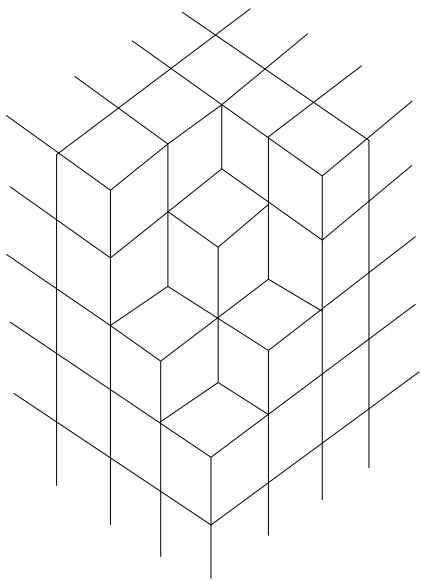}
\]
\caption{Part of a tiling $\Gamma$.}
\label{ExampleTiling}
\end{figure}
\FloatBarrier

Let $\Gamma$ be a tiling of the plane by congruent rhombi affixed to a hexagonal grid (see Figure~\ref{ExampleTiling}). The rhombal algebra $U_\Gamma$ associated to $\Gamma$ is defined to be the path algebra of the quiver obtained from $\Gamma$ by replacing every edge by two arrows in opposite directions, modulo the following quadratic relations:
\begin{itemize}
\item {\bf Two rhombuses relation.} Any path of length two not bordering a single rhombus is zero.
\item {\bf Mirror relation.} The sum of the two paths of length two from one vertex of a rhombus to the opposite vertex is zero.
\item {\bf Star relation.} At each vertex $x$, there are six possible paths of length two from $x$ to itself, which we label as follows:
\begin{figure}[h]
\[
\xymatrix@M=0.1EX@R=2.8ex{
&&  \ar@/^0.3pc/[dddd]^{f}  && \\
&& && \\
 \ar@/^0.3pc/[ddrr]^{a}  && &&   \ar@/^0.3pc/[ddll]^{b}  \\
&& && \\
&& \bullet \ar@{-}@/^0.3pc/[uuuu] \ar@{-}@/^0.3pc/[uurr] \ar@{-}@/^0.3pc/[uull] \ar@{-}@/^0.3pc/[dddd] \ar@{-}@/^0.3pc/[ddrr] \ar@{-}@/^0.3pc/[ddll]   && \\
&& && \\
 \ar@/^0.3pc/[uurr]^{e} &&  &&  \ar@/^0.3pc/[uull]^{d}   \\
&& && \\
&&  \ar@/^0.3pc/[uuuu]^{c} && 
}
\]
\end{figure}
\FloatBarrier 
\noindent We impose the relation $$a+d=b+e=c+f,$$ where we read any of  $a,b,c,d,e,f$ as $0$ if it is not present at the vertex $x$.
\end{itemize} 
The choice of signs in this presentation is due to Turner \cite[Definition 15]{Turner-OSFOA}.  The relationship with Peach's original presentation
\cite{Peach} is described in Remark~\ref{Peachsigns} below.

We now list the good properties of $U_\Gamma$, together with references to results (proved in the generality of Cubist algebras) in the main body of the paper. Some of the properties were originally established by Peach; in those cases we also provide the reference in Peach's thesis.
\begin{itemize}
\item $U_\Gamma$ is a locally finite dimensional graded algebra; each indecomposable projective module has radical length five 
(\cite[Corollary 2.4.2]{Peach}, Corollary~\ref{gldim})
\item $U_\Gamma$ is a symmetric algebra, i.e. it possesses an invariant inner product.
(\cite[Theorem 2.5.1]{Peach}, Theorem~\ref{symmetric})
\item $U_\Gamma$ is Koszul (Theorem~\ref{UKoszul})
\item $U_\Gamma$ is quasihereditary (Theorem~\ref{Uqh})
\end{itemize}
Let $V_\Gamma$ be the quadratic dual of $U_\Gamma$. Then
\begin{itemize}
\item $V_\Gamma$ is Koszul (Theorem~\ref{VKoszul})
\item $V_\Gamma$ is quasihereditary (Theorem~\ref{Vhwc})
\item $V_\Gamma$ has global dimension 4 (Corollary~\ref{gldim})
\end{itemize}
The quasihereditary structure on $U_\Gamma$ comes with a partial order $\preceq$ on simple modules and therefore on the vertices of $\Gamma$; the partial order for $V_\Gamma$ is the opposite order.   The partial order
$\preceq$ may be described as follows. There exists a bijection $\lambda$ from the set of vertices of $\Gamma$ to the set of rhombi in $\Gamma$, given by Figure~\ref{vertexfacetfigure}. Then $y\preceq x$ if $y$ is a vertex in the rhombus $\lambda x$, and this relation generates the partial order.
 The standard modules are pictured in Figure~\ref{StandardModuleFigures}. Note that the standard module for $U_\Gamma$ has four composition factors, while the standard module for $V_\Gamma$ has infinitely many.
\begin{figure}[h]
\[
\xymatrix@M=0EX@R=2.8ex{
&  \ar@{-}[dl] \ar@{-}[dd] &&\\
\ar@{-}[dd] & & \ar@{-}[dd] \ar@{-}[dr] & \\
& \bullet \ar@{-}[dl] && \ar@{-}[dd] \\
&& \ar@{-}[dr]& \\
& \ar@{-}[dl] \ar@{-}[dr] && \bullet \\
\ar@{-}[dr] && \ar@{-}[dl] & \\
& \bullet && \\}
\]
\caption{Bijection between vertices and rhombi.}
\label{vertexfacetfigure}
\end{figure}
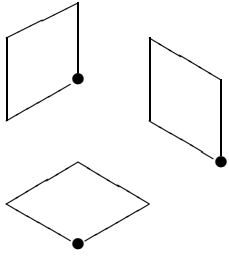
\FloatBarrier
\begin{figure}[h]
\[
\includegraphics[height=3in]{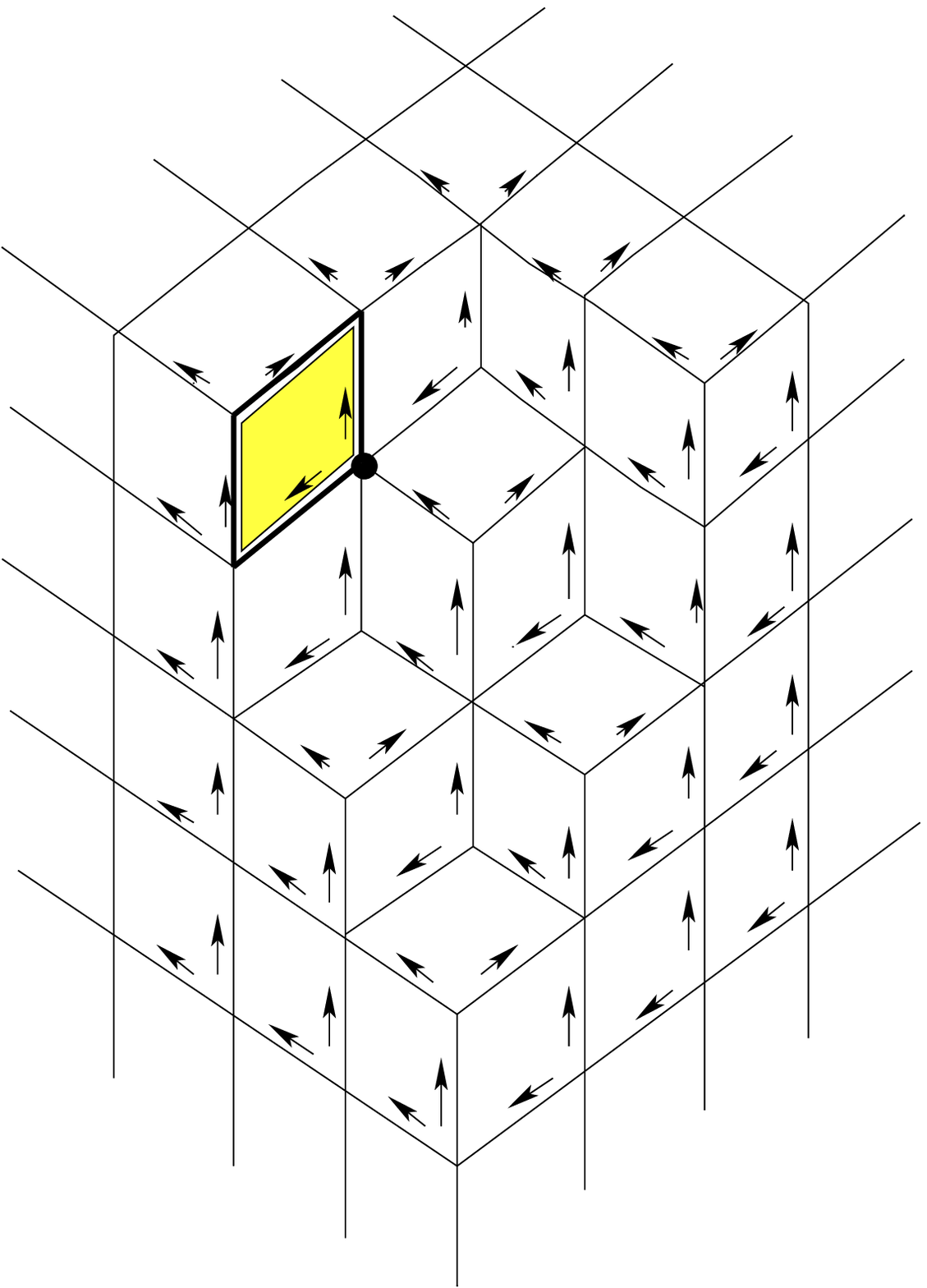}
\hspace{1in}
\includegraphics[height=3in]{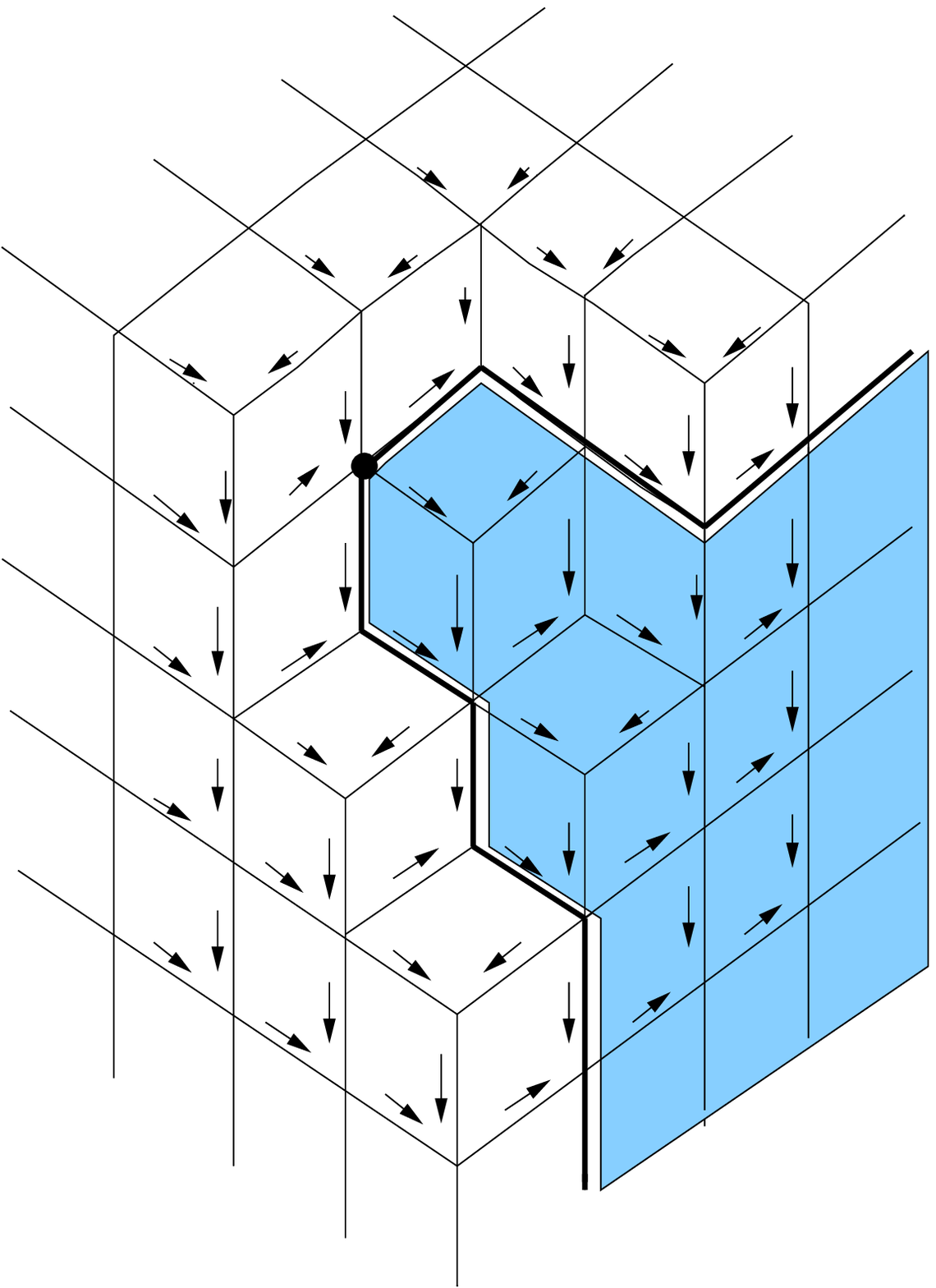}
\]
\caption{Standard modules in $U_\Gamma$ and $V_\Gamma.$}
\label{StandardModuleFigures}
\end{figure}
\FloatBarrier

Rotating Figure~\ref{vertexfacetfigure} by a multiple of $\pi/3$, we obtain $6$ different partial orders on the vertices
of $\Gamma$ and therefore $6$ different quasi-hereditary structures on $U_\Gamma$ and $V_\Gamma$.

The quasi-hereditary and Koszul properties give rise to curious formulae for the inverse of a 
matrix recording distances in $\Gamma$.  
Indeed, 
given $x,y \in \Gamma$, let $d(x,y)$ be the length of the shortest path from $x$ to $y$ along edges in $\Gamma$.
Let $\operatorname{Dist}(q)$ be the infinite matrix with rows and columns indexed by the vertices of $\Gamma$,  
whose $(x,y)$-entry is $q^{d(x,y)}$.
Then
$$\operatorname{Dist}(q)\operatorname{Loc}(-q)=(1-q^2)^2 I,$$ 
where $I$ is the identity matrix, 
and $\operatorname{Loc}(q)$ is a matrix with a number of equivalent descriptions, each of which
describe certain local configurations in $\Gamma$. 
The
$(x,y)$-entry of $\operatorname{Loc}(q)$ can be written $\sum q^{d(x,z)+d(z,y)}$, 
where the sum is over all vertices $z$ in $\Gamma$ such that $x,y\in\lambda z$ (Corollary~\ref{preformula}). 

An alternative formula for the $(x,y)$-entry  of $\operatorname{Loc}(q)$, 
which does not depend on a choice of highest weight structure is
$\sum_{z \in I_\Gamma(x) \cap I_\Gamma(y)} q^2[3-d(z,x)-d(z,y)]_q$, where $I_\Gamma(x)$ is a set 
describing a local configuration about $x$ (Proposition~\ref{alternativeformula}), and $[n]_q=(q^n-q^{-n})/(q-q^{-1})$.
Let $H$ be a tiled hexagon, in which the six internal lines/rhombi are placed in correspondence 
with the six external vertices, as in Figure~\ref{Hex}.

\begin{figure}[h]
\[
\includegraphics[height=1in]{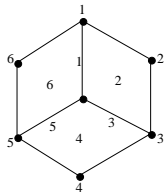}
\]
\caption{The hexagon $H.$}
\label{Hex}
\end{figure}
\FloatBarrier

Let $H(x)$ be the hexagon $H$, centered at $x$,
and dilated so its external vertices coincide with the six vertices of $\Gamma$ closest to $x$.
Note that the rhombic tiles in $H(x)$ cannot necessarily be thought of as tiles in $\Gamma$. 
The set $I_\Gamma(x)$ consists of the vertex $x$, as well as those vertices $1,...,6$ in $H(x)$ 
corresponding to 
lines/rhombi in $H(x)$ which can also be thought of as lines/rhombi in $\Gamma$.

\subsection{Acknowledgements}
We thank Volodymyr Mazorchuk for helpful discussions on
standard Koszul algebras, Claus Ringel for his kind encouragement and 
Michael Peach for volunteering his pictures of rhombus tilings.  Joe Chuang thanks the
EPSRC for its support (grant GR/T00924/01).

\section{Cubist combinatorics}

We define and study certain subsets of integer lattices that correspond to tilings of Euclidean space by rhombohedra.

\subsection{Cubist subsets}

Given $x,y\in\mathbb{R}^r$, we write $x\leq y$ if $y-x\in\mathbb{R}^r_{\geq 0}$.
This defines a partial order on $\mathbb{R}^r$.
We denote by $\epsilon_{1},\ldots,\epsilon_{r}$
the standard basis of $\mathbb{R}^{r}$.
For $x\in\mathbb{R}^r$ and $\zeta\in\mathbb{R}$, let $x[\zeta] = x+\zeta(\epsilon_1+\cdots+\epsilon_r)\in \mathbb{R}^r$.

\begin{definition} 
A subset ${\mathcal X} \subset \mathbb{Z}^r$ is \emph{Cubist}, if  
${\mathcal X} = {\mathcal X}^- \backslash {\mathcal X}^-[-1]$, where ${\mathcal X}^-$ is a nonempty proper ideal of $\mathbb{Z}^r$ (with respect to the partial order $\leq$).
\end{definition}

Note that ${\mathcal X}^-$ is uniquely determined by $\mathcal{X}$; it is the ideal of $\mathbb{Z}^r$ generated by $\mathcal{X}$. An ideal of $\mathbb{Z}^2$ is an infinite version of (the Ferrers diagram of) a partition, and of $\mathbb{Z}^3$ is an infinite version of a plane partition (see, e.g., \cite[p.371]{Stanley}). So Cubist subsets may be regarded as higher-dimensional generalisations of infinite partitions.
 
We have the following easy inductive characterisation of ideals in $\mathbb{Z}^r$.

\begin{lemma} \label{divide}
${\mathcal X}^-$ is an ideal of $\mathbb{Z}^r$ if, and only if, 
\begin{enumerate}
\item ${\mathcal X}^-_i = 
\{ x \in \mathbb{Z}^{r-1} \mid  (x , i) \in {\mathcal X} \}$
is an ideal of $\mathbb{Z}^{r-1}$ for $i \in \mathbb{Z}$, and 
\item ${\mathcal X}^- _{i+1} \subset {\mathcal X}^- _{i}$, 
for all $i \in \mathbb{Z}$. $\Box$
\end{enumerate}
\end{lemma}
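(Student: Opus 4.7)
The lemma is essentially a routine slicing argument: the partial order $\leq$ on $\mathbb{Z}^r$ splits along the last coordinate as the componentwise order on the first $r-1$ coordinates together with the order on the last coordinate. So an ideal in $\mathbb{Z}^r$ should correspond exactly to an $\mathbb{Z}$-indexed family of slices that are themselves ideals, plus a monotonicity condition in the last coordinate. The plan is to unpack both directions directly from the definition of ideal (downward-closed subset).

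For the forward direction, I assume ${\mathcal X}^-$ is an ideal of $\mathbb{Z}^r$. To check that each slice ${\mathcal X}^-_i$ is an ideal in $\mathbb{Z}^{r-1}$, I take $x\in {\mathcal X}^-_i$ and $y\leq x$ in $\mathbb{Z}^{r-1}$; then $(y,i)\leq (x,i)\in {\mathcal X}^-$, so $(y,i)\in {\mathcal X}^-$ by downward-closure, giving $y\in {\mathcal X}^-_i$. For the chain condition, I use that $(x,i)\leq (x,i+1)$, so if $x\in {\mathcal X}^-_{i+1}$ then $(x,i+1)\in {\mathcal X}^-$ forces $(x,i)\in {\mathcal X}^-$, hence $x\in {\mathcal X}^-_i$.

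For the reverse direction, I assume both conditions and verify that ${\mathcal X}^-$ is downward-closed. Suppose $(y',j)\in {\mathcal X}^-$ and $(x',i)\leq (y',j)$, so $i\leq j$ and $x'\leq y'$ coordinatewise in $\mathbb{Z}^{r-1}$. Iterating condition (2) gives $y'\in {\mathcal X}^-_j\subseteq {\mathcal X}^-_{j-1}\subseteq \cdots \subseteq {\mathcal X}^-_i$. Then because ${\mathcal X}^-_i$ is an ideal of $\mathbb{Z}^{r-1}$ by condition (1), and $x'\leq y'$, we get $x'\in {\mathcal X}^-_i$, i.e.\ $(x',i)\in {\mathcal X}^-$, as required.

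There is no real obstacle; the lemma is combinatorial bookkeeping, and the only substantive observation is that both conditions are needed precisely because the order decomposes as two independent pieces (componentwise in the first $r-1$ coordinates, and a linear chain in the last), so one needs slice-wise ideal-ness \emph{and} the inclusion between consecutive slices to rebuild the full downward-closure.
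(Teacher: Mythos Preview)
Your proof is correct. The paper gives no proof at all (the $\Box$ after the statement indicates it is left as an easy exercise), and what you have written is exactly the routine slicing argument that justifies it.
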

This leads to an useful inductive description of Cubist subsets.
\begin{lemma} \label{ind}
If ${\mathcal X} \subset \mathbb{Z}^r$ is Cubist, then ${\mathcal X}_i = 
\{ x \in \mathbb{Z}^{r-1} \mid  (x , i),(x, i-1)  \in {\mathcal X} \}$
is either a Cubist subset of $\mathbb{Z}^{r-1}$ or empty, for $i \in \mathbb{Z}$. $\Box$
\end{lemma}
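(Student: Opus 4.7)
The plan is to reduce the claim to Lemma~\ref{divide} by proving the slice identity
$${\mathcal X}_i = {\mathcal X}^-_i \setminus ({\mathcal X}^-_i)[-1],$$
where on the right the shift $[-1]$ is interpreted in $\mathbb{Z}^{r-1}$, that is, by $-(\epsilon_1+\cdots+\epsilon_{r-1})$. If this holds then the right-hand side is exactly the candidate Cubist structure associated to the ideal ${\mathcal X}^-_i \subset \mathbb{Z}^{r-1}$, and the trichotomy (empty ideal, full ideal, nonempty proper ideal) yields the two alternatives of the conclusion.

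The main step will be to unpack the definition of ${\mathcal X}$. Writing $\mathbf{e}' = \epsilon_1+\cdots+\epsilon_{r-1}$, the condition $(x,i)\in{\mathcal X}$ is equivalent to $x \in {\mathcal X}^-_i$ together with $x+\mathbf{e}' \notin {\mathcal X}^-_{i+1}$, and similarly $(x,i-1) \in {\mathcal X}$ is equivalent to $x \in {\mathcal X}^-_{i-1}$ together with $x+\mathbf{e}' \notin {\mathcal X}^-_i$. A priori ${\mathcal X}_i$ is therefore cut out by four conditions, but the inclusion chain ${\mathcal X}^-_{i+1} \subset {\mathcal X}^-_i \subset {\mathcal X}^-_{i-1}$ supplied by Lemma~\ref{divide}(2) makes two of them redundant: $x \in {\mathcal X}^-_i$ implies $x \in {\mathcal X}^-_{i-1}$, and $x + \mathbf{e}' \notin {\mathcal X}^-_i$ implies $x + \mathbf{e}' \notin {\mathcal X}^-_{i+1}$. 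What survives is precisely $x \in {\mathcal X}^-_i$ and $x+\mathbf{e}' \notin {\mathcal X}^-_i$, which gives the slice identity.

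Given this, I finish by invoking Lemma~\ref{divide}(1): ${\mathcal X}^-_i$ is an ideal of $\mathbb{Z}^{r-1}$. If ${\mathcal X}^-_i=\emptyset$, or if ${\mathcal X}^-_i=\mathbb{Z}^{r-1}$ (in which case $({\mathcal X}^-_i)[-1]=\mathbb{Z}^{r-1}$ as well), then ${\mathcal X}_i=\emptyset$; otherwise ${\mathcal X}^-_i$ is a nonempty proper ideal of $\mathbb{Z}^{r-1}$ and ${\mathcal X}_i$ is Cubist by definition. There is no substantive obstacle here beyond the bookkeeping in the slice identity; the point worth noticing is that the \emph{pair} of requirements $(x,i),(x,i-1) \in {\mathcal X}$ in the definition of ${\mathcal X}_i$ is exactly what is needed to generate the two redundancies that collapse the four constraints into the two-sided Cubist condition for the slice.
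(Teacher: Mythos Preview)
Your proof is correct and follows essentially the same approach as the paper: both establish the slice identity $\mathcal{X}_i = \mathcal{X}^-_i \setminus \mathcal{X}^-_i[-1]$ by using the nesting $\mathcal{X}^-_{i+1}\subset\mathcal{X}^-_i\subset\mathcal{X}^-_{i-1}$ from Lemma~\ref{divide} to collapse four conditions into two. The only cosmetic difference is direction---the paper starts from $\mathcal{X}^-_i \setminus \mathcal{X}^-_i[-1]$ and adds the redundant conditions to reach the definition of $\mathcal{X}_i$, whereas you start from $\mathcal{X}_i$ and remove them---and you are slightly more explicit about the degenerate cases where $\mathcal{X}^-_i$ is empty or all of $\mathbb{Z}^{r-1}$.
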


\begin{proof}
By Lemma~\ref{divide},
$\mathcal{X}_{i}^{-}=\{x\in\mathbb{Z}^{r-1}\mid (x,i)\in\mathcal{X}^{-}\}$ is an ideal in
$\mathbb{Z}^{r-1}$, and
\begin{align*}
\mathcal{X}_{i}^{-}\setminus\mathcal{X}_{i}^{-}[-1] 
 &= \{x\in\mathbb{Z}^{r-1} \mid   (x,i)\in\mathcal{X}^{-},\:(x[1],i)\notin\mathcal{X}^{-}\} \\
& = \{x\in\mathbb{Z}^{r-1} \mid   (x,i),\,\left(x,i-1\right)  \in\mathcal{X}^{-}, 
 \;(x[1],i),\,(x[1],i+1)\notin\mathcal{X}^{-}\}\\
&= \{x\in\mathbb{Z}^{r-1} \mid   (x,i)\in\mathcal{X},\:(x,i-1)\in\mathcal{X}\}.
\end{align*}
\end{proof}

The set of Cubist subsets of $\bbz^r$ is invariant under translations and under the action of the symmetric group $\Sigma_r$ permuting coordinates, as well as under the involution $x\mapsto -x$. The latter is true because of the following easy lemma.

\begin{lemma}
A subset ${\mathcal X}$ of $\mathbb{Z}^r$ is Cubist if, and only if, 
${\mathcal X} = {\mathcal X}^+ \backslash {\mathcal X}^+[1]$, where ${\mathcal X}^+$ is a nonempty proper 
coideal of $\mathbb{Z}^r$.  
$\Box$
\end{lemma}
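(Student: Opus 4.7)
The plan is to prove the lemma by exhibiting an explicit involution between the ideal descriptions and the coideal descriptions via set-theoretic complementation in $\mathbb{Z}^r$.

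First I would record two easy observations: (a) the complement in $\mathbb{Z}^r$ of an ideal is a coideal, and conversely, and (b) for any $k\in\mathbb{Z}$ the shift operation $S\mapsto S[k]$ is an order-preserving bijection of $\mathbb{Z}^r$ with itself, so it maps ideals to ideals and coideals to coideals, and it commutes with complementation.

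Next, given a Cubist ${\mathcal X}={\mathcal X}^-\setminus{\mathcal X}^-[-1]$ with ${\mathcal X}^-$ a nonempty proper ideal of $\mathbb{Z}^r$, I would set ${\mathcal X}^+=\mathbb{Z}^r\setminus{\mathcal X}^-[-1]$. By (a) and (b) this is a coideal, and since ${\mathcal X}^-$ is nonempty and proper, so is ${\mathcal X}^-[-1]$, hence so is ${\mathcal X}^+$. A direct calculation then gives ${\mathcal X}^+[1]=\mathbb{Z}^r\setminus{\mathcal X}^-$, and therefore
\[
{\mathcal X}^+\setminus{\mathcal X}^+[1]=(\mathbb{Z}^r\setminus{\mathcal X}^-[-1])\setminus(\mathbb{Z}^r\setminus{\mathcal X}^-)={\mathcal X}^-\setminus{\mathcal X}^-[-1]={\mathcal X},
\]
proving one direction.

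The converse is entirely symmetric: starting from ${\mathcal X}={\mathcal X}^+\setminus{\mathcal X}^+[1]$ with ${\mathcal X}^+$ a nonempty proper coideal, set ${\mathcal X}^-=\mathbb{Z}^r\setminus{\mathcal X}^+[1]$, which by the same observations is a nonempty proper ideal, and verify ${\mathcal X}^-\setminus{\mathcal X}^-[-1]={\mathcal X}$ by the analogous three-line computation. There is no real obstacle: the content of the lemma is just the compatibility of shifting with complementation, and the entire argument amounts to the bookkeeping above.
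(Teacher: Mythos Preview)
Your proof is correct and is precisely the kind of argument the paper has in mind; the paper omits the proof entirely (the $\Box$ indicates it is left to the reader as an easy exercise), and your complementation argument is the natural way to fill in the details.
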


\subsection{Rhombohedral tilings}
\label{subsection:tilings}
In case $r\leq 3$, it is easy to understand Cubist subsets: one only needs
to draw a picture.  In higher dimensions, Cubist combinatorics are not so
easy, but a topological perspective can be helpful.  Here we associate
to any Cubist subset $\CX$ of $\bbz^r$ a polytopal complex $\CC_{\CX}$
of dimension $r-1$ inside $\bbr^r$, whose faces are all cubes. This
cubical complex projects homeomorphically onto a hyperplane, inducing a tiling of Euclidean $(r-1)$-space by rhombohedra.

Let us spell this out it detail.
We define a {\em cube} in $\mathbb{Z}^{r}$ to be a subset of the form
$$
C = x+\left\{  \sum_{j\in S}a_{j}\epsilon_{j},\quad a_{j}=0,1\right\},
$$
where $x\in\mathbb{Z}^{r}$ and $S$ is a subset of $\{1,\ldots,r\}$. We  say
$C$ is a {\em $d$-cube} if $S$ has size $d$. These cubes define a polytopal decomposition of $\mathbb{R}^r$ in which the $d$-dimension faces are
the convex hulls of the $d$-cubes in $\mathbb{Z}^r$. For $x=(x_1,\ldots,x_r)\in\bbr^r$, the unique face of smallest dimension containing $x$
is $\left\{y\in\bbr^r\mid \lfloor x \rfloor \leq y \leq
\lceil x \rceil \right\}$, where
$\lfloor x \rfloor = \left(\lfloor x_1 \rfloor,\ldots,\lfloor x_r \rfloor\right)$ and
$\lceil x \rceil = \left(\lceil x_1 \rceil,\ldots,\lceil x_r \rceil\right)$.

Fix a Cubist subset $\calx$ of $\bbz^r$.
\begin{definition} We define
$\polytopalX$ to be the smallest subcomplex of $\mathbb{R}^r$ containing $\mathcal{X}$. Equivalently $\polytopalX$ is the subcomplex consisting of all
faces which are convex hulls of cubes contained in $\calx$.
\end{definition}

 For all $x\in\mathbb{R}^r$, we have $x\in\polytopalX$ if and only if 
$\lfloor x \rfloor\in\calx$ and $\lceil x \rceil\in\calx$, i.e., if and only if
$\lceil x \rceil\in\calx^-$ and $\lfloor x \rfloor[1]\notin\calx^-$.
We define $\polytopalXideal$ much as we did $\polytopalX$:
as the smallest
subcomplex of $\mathbb{Z}^r$ containing $\mathcal{X}^-$.
We note that if $x$ is in the ideal of $\bbr^r$ generated by $\calx^-$, then $\lceil x \rceil$ is as well. It follows that this ideal is equal to
$\polytopalXideal$.


Let $L$ be an affine line in $\bbr^r$ parallel to the vector
 $\epsilon_1+\ldots+\epsilon_r$. Because $\calx^-$ is a nonempty proper subset
of $\bbz^r$, the intersection of $L$ with $\polytopalXideal$ is a half-line: there exists $x_L\in L$ such that for all $y\in L$, $y\in
\polytopalXideal$ if and only if $y\leq x_L$. Clearly, $x_L$ is on the boundary of $\polytopalXideal$. Conversely any point on the boundary is
of the form $x_L$ for some $L$. Indeed, if
$x\in\polytopalXideal$ and $x\neq x_L$, where $L$ is the affine line
parallel to $\epsilon_1+\ldots+\epsilon_r$ containing $x$, then
$\left\{y\in\bbr^r\mid y<x_L\right\}$ is a neighborhood of $x$ contained in 
$\polytopalXideal$. 

We claim that the boundary of $\polytopalXideal$ is
$\polytopalX$. Suppose that $x\in\polytopalX$ is not
on the boundary of $\polytopalXideal$. By the
discussion above, $x[\varepsilon]\in\polytopalXideal$ for some
$\varepsilon>0$. Then $\lfloor x \rfloor \in \calx$ and 
$\lfloor x \rfloor[1] \leq \lceil x[\varepsilon] \rceil \in \calx^-$,
contradicting the assumption that $\calx$ is Cubist. On the other hand
if $x$ is on the boundary of $\polytopalXideal$, then
$\lceil x \rceil \in \calx^-$ and
 $\lfloor x \rfloor [1]\notin\calx^-$, the latter because
 $x[\varepsilon]<\lfloor x \rfloor[1]$ for some $\varepsilon>0$.

\begin{proposition}\label{homeomorphism}
Let $\mathbb{R}^r_0$ be the hyperplane $\{ (x_1,\ldots,x_r) | \sum x_i = 0 \}$ of $\mathbb{R}^r$.
Let $\pi$ be the orthogonal projection of $\bbr^r$ onto $\mathbb{R}^r_0$. Then the restriction of $\pi$
to $\polytopalX$ is a homeomorphism onto $\mathbb{R}^r_0$.
\end{proposition}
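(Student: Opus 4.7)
The plan is to combine the three geometric facts developed in the paragraphs immediately preceding the proposition: the fibres of $\pi$ are exactly the affine lines $L$ parallel to $\epsilon_1+\cdots+\epsilon_r$; each such line $L$ meets $\polytopalXideal$ in a closed half-line with distinguished endpoint $x_L$; and the topological boundary of $\polytopalXideal$ in $\mathbb{R}^r$ is precisely $\polytopalX$.

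First I would verify that $\pi|_{\polytopalX}$ is a continuous bijection onto $\mathbb{R}^r_0$. For $y\in\mathbb{R}^r_0$ the fibre $\pi^{-1}(y)$ is a line $L_y$, and the half-line structure of $L_y\cap\polytopalXideal$ together with the identification of the boundary of $\polytopalXideal$ with $\polytopalX$ forces $L_y\cap\polytopalX=\{x_{L_y}\}$. Continuity of $\pi|_{\polytopalX}$ is inherited from $\pi$, so the real content is continuity of the inverse.

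Set $v=\tfrac{1}{\sqrt r}(\epsilon_1+\cdots+\epsilon_r)$ so that each fibre is $L_y=y+\mathbb{R}v$, and write the unique point of $L_y\cap\polytopalX$ as $y+h(y)v$. The inverse of $\pi|_{\polytopalX}$ is then $y\mapsto y+h(y)v$, and its continuity is equivalent to continuity of the height function $h\colon \mathbb{R}^r_0\to\mathbb{R}$. I would verify this by checking that both sublevel sets $\{y:h(y)\geq a\}$ and $\{y:h(y)\leq a\}$ are closed for every $a\in\mathbb{R}$. The first is immediate: $h(y)\geq a$ iff $y+av\in\polytopalXideal$, and $\polytopalXideal$ is closed in $\mathbb{R}^r$ as a locally finite union of closed polytopes.

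The essential step, which I expect to be the only nontrivial point, is showing $\{y:h(y)>a\}$ is open. The subtlety is that $\polytopalX$ has empty interior in $\mathbb{R}^r$, so one cannot perturb $x_L$ while remaining on $\polytopalX$; instead one perturbs slightly below it into the interior of $\polytopalXideal$. Concretely, if $h(y_0)>a$, pick $s$ with $a<s<h(y_0)$: the point $y_0+sv$ lies strictly below $x_{L_{y_0}}$, and by the neighbourhood observation already recorded in the text, $\{z\in\mathbb{R}^r:z<x_{L_{y_0}}\}$ is an open neighbourhood of $y_0+sv$ contained in $\polytopalXideal$. Thus a Euclidean ball about $y_0+sv$ lies in $\polytopalXideal$, and for every $y\in\mathbb{R}^r_0$ sufficiently close to $y_0$ we get $y+sv\in\polytopalXideal$, forcing $h(y)\geq s>a$. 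Combined with the continuous bijection established above, this yields the desired homeomorphism.
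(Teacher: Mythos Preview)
Your proof is correct. The continuous-bijection part matches the paper exactly, but for continuity of the inverse you take a genuinely different route.

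The paper argues cell-by-cell: since each closed cell of $\polytopalX$ is a compact cube on which $\pi$ is an affine injection, $\pi$ restricts to a homeomorphism of that cell onto a closed (compact) subset of $\mathbb{R}^r_0$; the local finiteness of the cubical complex then lets these local inverses glue to a continuous global inverse. This is quick but leans on a standard principle about locally finite closed covers that the paper leaves implicit.

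Your approach instead introduces the height function $h$ and proves it continuous by showing both $\{h\geq a\}$ and $\{h\leq a\}$ are closed, using respectively the closedness of $\polytopalXideal$ and the open-neighbourhood fact $\{z<x_L\}\subset\polytopalXideal$ already recorded in the text. This is more self-contained and avoids any appeal to cellular gluing; it also makes explicit why the ideal structure of $\calx^-$ (via the downward-closed neighbourhoods) is exactly what supplies lower semicontinuity. The paper's version is shorter; yours is more transparent about where each hypothesis is used.
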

\begin{proof}
In the discussion above we proved that $\polytopalX$ is the boundary of $\polytopalXideal$ and thus meets each fiber of $\pi$ in exactly one point.
So the restriction of $\pi$ to $\polytopalX$ is a continuous bijection. In fact it is a homeomorphism, since the restriction to any cell of
$\polytopalX$ is a homeomorphism onto a closed subset of $\mathbb{R}^r_0$.
\end{proof}

The homeomorphism between $\polytopalX$ and $\mathbb{R}^r_0$ induces a rhombotopal decomposition of $\mathbb{R}^r_0$, which
Linde, Moore and Nordahl call `a configuration of the  $(r-1)$-dimensional tiling' \cite[\S2]{LMN}. When $r=3$ we obtain the rhombus tilings in $\mathbb{R}^2$ considered in
\S~\ref{subsection:introrhombal}.

From Proposition~\ref{homeomorphism} we deduce some useful properties of Cubist subsets.

\begin{definition}
A subset ${\mathcal S}$ of $\mathbb{Z}^r$ is \emph{connected}, 
if for any $x,y \in {\mathcal S}$, there exists a sequence
$x= x^0, x^1, \ldots, x^l =y$, such that $x^m \in {\mathcal S}$ and for each $m \geq 1$, we have 
$x^{m+1} = x^m \pm \epsilon_n$, for some $n=n(m)$.

Let ${\mathcal S}$ be a connected subset of $\mathbb{Z}^r$. 
For $x,y \in {\mathcal S}$, 
let $d_{\mathcal S}(x,y)$ be the smallest number $l$ for which there exists a sequence
$x= x^0, x^1, \ldots, x^l =y$, such that $x^m \in {\mathcal S}$ and for each $m \geq 1$, we have 
$x^{m+1} = x^m \pm \epsilon_n$, for some $n=n(m)$.

We write $d(x,y) = d_{\mathbb{Z}^r}(x,y).$
\end{definition}

\begin{corollary} \label{def}
Let ${\mathcal X}$ be a Cubist subset of $\mathbb{Z}^r$.
Then  
\begin{enumerate}
\item
${\mathcal X}$ is a connected subset of $\mathbb{Z}^r$,
 such that 
$\mathbb{Z}^r = \amalg_{\mathbb{Z}} {\mathcal X}[m]$.
\item No $r$-cube is contained in $\calx$.
\item For any $x\in\calx$ the intersection of the $(r-1)$-cubes in $\calx$ containing $x$
is $\{x\}$.
\end{enumerate}
\end{corollary}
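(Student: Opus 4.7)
My plan is to handle (1) and (2) directly from the definitions and Proposition~\ref{homeomorphism}, and then to prove (3) by transporting the problem to the tiling of $\mathbb{R}^r_0$ and running a convexity argument on tangent cones.

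For the disjoint union in (1), I fix $z\in\mathbb{Z}^r$ and consider the chain $\{z+n(\epsilon_1+\cdots+\epsilon_r):n\in\mathbb{Z}\}$. Because $\mathcal{X}^-$ is an ideal that is nonempty (so some far-negative $n$ forces $z+n(\epsilon_1+\cdots+\epsilon_r)$ below an element of $\mathcal{X}^-$) and proper (so some far-positive $n$ dominates an element outside $\mathcal{X}^-$), the set $A_z=\{n:z+n(\epsilon_1+\cdots+\epsilon_r)\in\mathcal{X}^-\}$ has the form $\{n\leq n^\ast_z\}$ for a unique $n^\ast_z$. The identity $\mathcal{X}=\mathcal{X}^-\setminus\mathcal{X}^-[-1]$ then selects the single chain point $z+n^\ast_z(\epsilon_1+\cdots+\epsilon_r)\in\mathcal{X}$, giving $z\in\mathcal{X}[-n^\ast_z]$ uniquely. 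For (2), if $C$ were an $r$-cube contained in $\mathcal{X}$ with minimum vertex $x$, the opposite vertex $x[1]$ would lie in $\mathcal{X}\subseteq\mathcal{X}^-$, giving $x\in\mathcal{X}^-[-1]$, in contradiction with $x\in\mathcal{X}$. For connectedness of $\mathcal{X}$ in (1), Proposition~\ref{homeomorphism} realises $\mathcal{C}_\mathcal{X}$ as a connected topological manifold of dimension $r-1$, whose top cells are $(r-1)$-cubes by (2). I would run the standard two-step argument: any two top cells are linked by a chain of top cells with consecutive ones sharing a vertex (using path-connectedness of $\mathbb{R}^r_0$), and within each top cube the $1$-skeleton is the connected hypercube graph. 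This yields connectedness of the $1$-skeleton of $\mathcal{C}_\mathcal{X}$, which is precisely the graph on $\mathcal{X}$ whose edges join $x$ and $x\pm\epsilon_n$.

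The main obstacle is (3), which I would prove by contradiction. Suppose $y\in\mathbb{Z}^r$, $y\neq x$, lies in every $(r-1)$-cube in $\mathcal{X}$ containing $x$. Then $y$ is a vertex of such a cube, so $y-x\in\{-1,0,1\}^r$ has at most $r-1$ nonzero coordinates; in particular $y-x$ is not a nonzero multiple of $\epsilon_1+\cdots+\epsilon_r$, so $u:=\pi(y)-\pi(x)$ is a nonzero element of $\mathbb{R}^r_0$. By Proposition~\ref{homeomorphism}, the images $\pi(F)$ of the $(r-1)$-cubes $F$ of $\mathcal{C}_\mathcal{X}$ through $x$ tile a neighborhood of $\pi(x)$ in $\mathbb{R}^r_0$, so their tangent cones at $\pi(x)$ together cover every direction; in particular there is such an $F$ with $-u$ in the tangent cone of $\pi(F)$ at $\pi(x)$. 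Now $\pi(F)$ is a convex polytope with $\pi(x)$ as a vertex, so this tangent cone is pointed (contains no line). If $F$ contained $y$, convexity would place the segment $[\pi(x),\pi(y)]$ inside $\pi(F)$, forcing $u$ into the tangent cone too, and the pointed cone would then contain the whole line through $\pm u$---a contradiction. Hence $y\notin F$, contradicting the standing hypothesis, and (3) follows.
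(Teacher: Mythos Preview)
Your argument is correct. The paper states this corollary without proof, treating it as an immediate consequence of Proposition~\ref{homeomorphism}, so you are supplying the details the authors leave implicit. Your treatments of the disjoint-union part of (1) and of (2) are in fact independent of the homeomorphism and follow straight from the ideal description of $\mathcal{X}^-$, which is perfectly fine and arguably cleaner than routing everything through the projection. Your tangent-cone argument for (3) is a crisp way to extract the needed combinatorics from the homeomorphism: the key points---that $\pi$ is an affine isomorphism on each facet so $\pi(x)$ is a genuine vertex of $\pi(F)$, and that the top cells through $\pi(x)$ cover a full neighbourhood---are exactly what Proposition~\ref{homeomorphism} provides. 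The paper's own later use of this idea (in Lemma~\ref{suitfacet}) phrases it as ``any $d$-cube is the intersection of the facets containing it'', invoking the dual complex, but your direct convexity argument achieves the same end without that language.
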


Given a polytopal complex $\mathcal{C}$ which is everywhere locally homeomorphic to $\mathbb{R}^w$, for some $w$,
one can form its dual complex ${\mathcal C'}$. The dual ${\mathcal C'}$ 
is a polytopal complex homeomorphic to ${\mathcal C}$,
whose $d$-dimensional faces are in bijection with the $(w-d)$-dimensional faces of ${\mathcal C}$.
The poset of faces of ${\mathcal C'}$ is opposite to the poset of faces of ${\mathcal C}$.

\begin{definition}
Let $x$ be an element of a cubist subset ${\mathcal X}$ of $\mathbb{Z}^r$. 
We define ${\mathcal P}_x$ to be the face of $\mathcal{C'_X}$
which corresponds to the vertex $x \in \mathcal{C_X}$. 
\end{definition}

Thus, ${\mathcal P}_x$ is an $r-1$-dimensional polytope, which describes the configuration of ${\mathcal X}$ about $x$.

In case $r=3$, the polytope ${\mathcal P}_x$ can be a triangle, a square, a pentagon, or a hexagon.
For general $r$,
the number of $d_2$-dimensional faces containing any given $d_1$-dimensional face of ${\mathcal P}_x$
is $\binom{r-1-d_1}{d_2-d_1}$, whenever $d_1 \leq d_2$.

\subsection{Vertex-facet bijection}

While we have defined $d$-cubes in $\mathbb{Z}^r$ for all $d$, Proposition~\ref{homeomorphism} shows that
$(r-1)$-cubes are particularly relevant.  We call them {\em facets}. Let
$\mathcal{F}$ be the set of facets in $\mathbb{Z}^{r}$ and $\mathcal{F}
_{\mathcal{X}}$ the set of facets contained in a Cubist subset $\mathcal{X}$.
Any $F\in\mathcal{F}$ can be written as
\[
F=x+F_{i},
\]
where
\[
F_{i}=\left\{  \sum_{j<i}a_{j}\epsilon_{j}-\sum_{j>i}a_{j}\epsilon_{j},\quad
a_{j}=0,1\right\},
\]
for a unique choice of $x\in\mathbb{Z}^{r}$ and $i\in\{1,\ldots,r\}$.

\begin{proposition}
\label{vertex-facet bijection}For each $x\in\mathcal{X}$, there is a unique
$i$ such that $x+F_{i}\subseteq\mathcal{X}$. We have
$$i=\max\{j\mid  x+\epsilon_{1}+\ldots+\epsilon_{j-1}\in\mathcal{X\}},$$
and putting $\lambda
x=x+F_{i}$, we obtain a bijection
\[
\lambda=\lambda_{\mathcal{X}}:\mathcal{X}\longrightarrow\mathcal{F}%
_{\mathcal{X}}.
\]

\end{proposition}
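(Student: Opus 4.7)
The plan is to convert the condition $x+F_i \subseteq \mathcal{X}$ into a purely diagonal statement about membership in $\mathcal{X}^-$. First I would identify the minimum vertex $x - \sum_{j>i}\epsilon_j$ and the maximum vertex $x + \sum_{j<i}\epsilon_j$ of the cube $x+F_i$ under the coordinatewise partial order. Since $\mathcal{X}^-$ is an ideal (downward closed), requiring every vertex $y$ of $x+F_i$ to lie in $\mathcal{X}^-$ collapses to requiring just the maximum vertex to lie in $\mathcal{X}^-$. Dually, requiring $y+(1,\ldots,1) \notin \mathcal{X}^-$ for every vertex collapses to requiring it for the minimum vertex, i.e.\ to $x + \epsilon_1 + \ldots + \epsilon_i \notin \mathcal{X}^-$. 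Combining, $x+F_i \subseteq \mathcal{X}$ if and only if $x + \epsilon_1 + \ldots + \epsilon_{i-1} \in \mathcal{X}^-$ and $x + \epsilon_1 + \ldots + \epsilon_i \notin \mathcal{X}^-$.

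Existence and uniqueness of $i$ then follow from monotonicity along the diagonal sequence $z_j = x + \epsilon_1 + \ldots + \epsilon_j$ for $0 \leq j \leq r$. The indicator of $z_j \in \mathcal{X}^-$ begins at $1$ (since $z_0 = x \in \mathcal{X}^-$), ends at $0$ (since $z_r = x+(1,\ldots,1) \notin \mathcal{X}^-$, because $x \in \mathcal{X} = \mathcal{X}^- \setminus \mathcal{X}^-[-1]$), and is non-increasing in $j$ because $\mathcal{X}^-$ is downward closed. So it drops exactly once, yielding a unique index $i \in \{1,\ldots,r\}$, equal to $\max\{j \mid z_{j-1} \in \mathcal{X}^-\}$. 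Since $z_{j-1} + (1,\ldots,1) \geq x + (1,\ldots,1) \notin \mathcal{X}^-$, the membership $z_{j-1} \in \mathcal{X}^-$ is actually equivalent to $z_{j-1} \in \mathcal{X}$, so the maximum may be taken over $j$ with $z_{j-1} \in \mathcal{X}$, as stated.

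For the bijection, I would use that every facet $F \in \mathcal{F}$ admits a unique presentation $F = y + F_i$: the index $i$ is determined as the single coordinate that is constant on $F$, and $y$ is then the unique vertex of $F$ minimizing the $j$-th coordinate for $j<i$ and maximizing it for $j>i$. This gives injectivity of $\lambda$, because the source vertex $x$ can be recovered from $\lambda(x)$ as this distinguished corner. Surjectivity falls out by reversing the first paragraph: given $F = y + F_i \in \mathcal{F}_\mathcal{X}$, the maximum vertex of $F$ lies in $\mathcal{X} \subseteq \mathcal{X}^-$ while applying $+(1,\ldots,1)$ to the minimum vertex of $F$ lands outside $\mathcal{X}^-$; the monotonicity argument then forces $i$ to be precisely the index associated to $y \in \mathcal{X}$, so $\lambda(y) = F$.

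The only substantive step is the first reduction: observing that the ideal structure of $\mathcal{X}^-$ lets the conditions ``every vertex of $x+F_i$ lies in $\mathcal{X}^-$'' and ``every $y+(1,\ldots,1)$ for $y$ a vertex of $x+F_i$ lies outside $\mathcal{X}^-$'' each collapse to a single extremal vertex. Once that translation is in hand, the rest is a routine monotonicity argument along the main diagonal, combined with the standard corner-vertex decoding of an axis-aligned cube.
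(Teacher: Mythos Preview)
Your proof is correct and follows essentially the same approach as the paper's. Both arguments reduce the condition $x+F_i\subseteq\mathcal{X}$ to the two extremal vertices of the cube and then exploit monotonicity of the diagonal sequence $x+\epsilon_1+\cdots+\epsilon_j$ inside the ideal $\mathcal{X}^-$; the paper packages uniqueness by exhibiting specific bad vertices for $j\neq i$, while you phrase it as a single $1\to 0$ drop of the indicator, but the content is the same. Your treatment of the bijection is in fact more explicit than the paper's, which simply asserts that once $\lambda$ is well-defined it is ``clearly a bijection''.
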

\begin{proof}

Let $i=\max\{j\mid  x+\epsilon_{1}+\ldots+\epsilon_{j-1}\in\mathcal{X\}}$.
Then $x+F_{i}\subseteq\mathcal{X}$. Indeed if $y\in x+F_{i}$, then $y\leq
x+\epsilon_{1}+\ldots+\epsilon_{i-1}\in\mathcal{\mathcal{X}}^{-}$ and
$y[1]\geq x+\epsilon_{1}+\ldots+\epsilon_{i}\notin\mathcal{X}^{-}$. On the
other hand, if $j>i$ (resp. $j<i$) then $x+\epsilon_{1}+\ldots+\epsilon_{i}$
(resp. $x-\epsilon_{i}-\ldots-\epsilon_{r}$) is in $x+F_{j}$ but not in
$\mathcal{X}$. Once the map $\lambda$ is defined it is clearly a bijection.
\end{proof}

\begin{definition}
For $x\in\mathcal{X}$, such that 
$\lambda x=x+F_{i}$, let $C_{i}=\mathbb{Z}_{\leq
0}^{i-1}\times\mathbb{Z\times\mathbb{Z}}_{\geq0}^{r-i}$.
Let $\mu x=x+C_{i}$.
\end{definition}

The set $\lambda x$ can be thought of as a cube emanating from $x$.
From this perspective, $\mu x$ can be seen as a convex polyhedral cone in $\mathbb{Z}^r$ emanating from $x$, 
in all directions opposite to $\lambda x$.

\subsection{Basic examples}

Let $1\leq j \leq r$. Then the lattice
 $\mathcal{H}_j=\left\{(x_1,\ldots,x_n)\mid x_j=0\right\}$
is a Cubist subset of $\bbz^r$. The corresponding tiling of 
$\bbr^r_0$ is a regular tiling composed of translates of a fixed $(r-1)$-dimensional rhombohedron.
A Cubist subset $\calx\subset \bbz^r$ may look locally like one of these `flat' Cubist subsets $H_j$.

\begin{definition} Let ${\mathcal X} \subset \mathbb{Z}^r$ be Cubist. The subset 
${\mathcal X}_{flat}$ of \emph{flat elements}
is defined to be the subset of elements $x \in {\mathcal X}$ 
for which there exists $i = i(x)$ such that $x + \epsilon_i, x - \epsilon_i \notin {\mathcal X}$.
The subset ${\mathcal X}_{crooked}$ of \emph{crooked elements} is the complement 
${\mathcal X} \backslash {\mathcal X}_{flat}$.
\end{definition}

\begin{lemma}
Let ${\mathcal X} \subset \mathbb{Z}^r$ be a Cubist subset.

1. If $x \in {\mathcal X}$ is crooked, 
then for all $j$, 
either $x+\epsilon_j \in {\mathcal X}$, or $x-\epsilon_j \in {\mathcal X}$. 

2. If $x \in {\mathcal X}$ is flat, then for all $j \neq i(x)$, we have 
$x + \epsilon_j, x - \epsilon_j \in {\mathcal X}$.
$\Box$ 
\end{lemma}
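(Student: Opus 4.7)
The plan is to handle the two parts separately, since part 1 is essentially immediate from the definition, while part 2 requires unpacking the ideal structure of $\mathcal{X}^-$.

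For part 1, I would simply take the contrapositive of the definition of ``flat''. An element $x \in \mathcal{X}$ is flat iff there exists $i$ with both $x+\epsilon_i,\,x-\epsilon_i \notin \mathcal{X}$; so $x$ is crooked iff for every $j$ at least one of $x \pm \epsilon_j$ lies in $\mathcal{X}$, which is exactly the claim.

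For part 2, the key step is to record two reductions that hold for any $x \in \mathcal{X}$, based on the criterion $y \in \mathcal{X} \iff y \in \mathcal{X}^-\text{ and }y[1]\notin\mathcal{X}^-$. (a) Because $x[1] \notin \mathcal{X}^-$ and the ideal $\mathcal{X}^-$ is downward closed, the point $(x+\epsilon_j)[1] = x[1]+\epsilon_j$ automatically lies outside $\mathcal{X}^-$, so $x+\epsilon_j \in \mathcal{X}$ is equivalent to $x+\epsilon_j \in \mathcal{X}^-$. (b) Since $x-\epsilon_j \leq x \in \mathcal{X}^-$, the condition $x-\epsilon_j \in \mathcal{X}^-$ is automatic, so $x-\epsilon_j \in \mathcal{X}$ is equivalent to $(x-\epsilon_j)[1] = x+\sum_{k \neq j}\epsilon_k \notin \mathcal{X}^-$.

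Applying these reductions to the flat hypothesis with witness $i=i(x)$ translates $x \pm \epsilon_i \notin \mathcal{X}$ into the two facts $x+\epsilon_i \notin \mathcal{X}^-$ and $x+\sum_{k\neq i}\epsilon_k \in \mathcal{X}^-$. Given any $j \neq i$, the inequality $x+\epsilon_j \leq x+\sum_{k\neq i}\epsilon_k$ together with downward closure forces $x+\epsilon_j \in \mathcal{X}^-$, hence $x+\epsilon_j \in \mathcal{X}$ by (a); symmetrically, if $x+\sum_{k\neq j}\epsilon_k$ were in $\mathcal{X}^-$, downward closure applied to the inequality $x+\sum_{k\neq j}\epsilon_k \geq x+\epsilon_i$ would give $x+\epsilon_i \in \mathcal{X}^-$, a contradiction, so $x-\epsilon_j \in \mathcal{X}$ by (b). There is no real obstacle here: the only content is the bookkeeping that turns membership in $\mathcal{X}$ into one nontrivial membership condition for $\mathcal{X}^-$ in each direction, and the complementary use of the two inequalities that the hypothesis $j \neq i$ makes available.
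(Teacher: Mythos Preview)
Your proof is correct. Part~1 is indeed immediate from the definition of ``crooked'' by negation, and your argument for part~2 is a clean direct verification using only the characterisation $y\in\mathcal{X}\iff y\in\mathcal{X}^-$ and $y[1]\notin\mathcal{X}^-$ together with downward closure of $\mathcal{X}^-$.

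This is a genuinely different route from the paper's proof. The paper argues by induction on $r$: it slices $\mathcal{X}$ along the last coordinate using the subsets $\mathcal{X}_i\subset\mathbb{Z}^{r-1}$ of Lemma~\ref{ind}, reduces to the inductive hypothesis when $x$ has a neighbour in the $\epsilon_r$-direction, and handles the remaining case (where $x\pm\epsilon_r\notin\mathcal{X}$, so $i(x)=r$) separately. Your argument avoids induction entirely by working directly with the ideal $\mathcal{X}^-$: the two reductions (a) and (b) isolate the single nontrivial membership condition in each direction, and the flat hypothesis then supplies exactly the two elements of $\mathcal{X}^-$ and its complement needed to squeeze $x\pm\epsilon_j$ via the order. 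This is shorter and more transparent for this particular lemma; the paper's inductive framework pays off elsewhere (e.g.\ in Proposition~\ref{combinatorial}) but is heavier machinery than the statement here requires.
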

\begin{proof}

We proceed by induction on $r$. The lemma is obvious for small $r$. 
Suppose the lemma holds in dimensions $<r$, and ${\mathcal X}$ is a cubist subset of $\mathbb{Z}^r$.
We may assume that $x=0$. If $x \in {\mathcal X}_0$, then $x-\epsilon_r \in {\mathcal X}$, 
and the statement of the lemma holds
by the inductive hypothesis. Similarly, if $x \in {\mathcal X}_1 -\epsilon_r$, then $x+\epsilon_r \in {\mathcal X}$,
and the statement of the lemma holds by the inductive hypothesis.
Otherwise, $x \pm \epsilon_r \notin {\mathcal X}$, in which case $x[-1], x[1] \notin {\mathcal X}$. 
Therefore, $x + \sum_{j<r} \epsilon_j, x - \sum_{j<r} \epsilon_j \in {\mathcal X}$,
and $x \pm \epsilon_j \in {\mathcal X}$,
for $1 \leq j \leq r-1$.
\end{proof}

Another example is the `corner configuration'.

\begin{definition}
The \emph{Corner configuration} is the Cubist subset 
\begin{align*}
{\mathcal X}_{CC} 
& = \mathbb{Z}^r_{\leq 0} \backslash \mathbb{Z}^r_{\leq 0}[-1] \\
& =\left\{(x_1,\ldots,x_n)\in\bbz_{\leq 0}^r\mid x_i=0 \textrm{ for some } i
\right\}
\end{align*}
of $\mathbb{Z}^r$.
\end{definition}

\begin{lemma} \label{corner approximation}
Any Cubist subset ${\mathcal X}$ can be approximated in an arbitrarily large finite region, 
by removing a finite number of $r$-cubes from the Corner configuration.

Precisely, given $x \in {\mathcal X}$, and $N \geq 0$,
there exists $z \in \mathbb{Z}^r$, and a Cubist set ${\mathcal X}(x, N) \subset \mathbb{Z}^r$,
such that ${\mathcal X}(x,N)^-$ is obtained from $(z + {\mathcal X}_{CC})^-$ by removing a finite number of elements,
and 
$$y \in {\mathcal X} \Leftrightarrow y \in {\mathcal X}(x,N),$$
for all $y \in \mathbb{Z}^r$ such that $d(y,x) \leq N$.
\end{lemma}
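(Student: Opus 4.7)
The plan is to choose $z$ far above $x$ along $\epsilon_1+\cdots+\epsilon_r$ so that the downward cone $(z+\mathcal{X}_{CC})^{-}=z+\mathbb{Z}^r_{\leq 0}$ contains a sufficiently large ball around $x$, and then to excise a finite up-closed subset $F$ from this cone in order to reproduce the local structure of $\mathcal{X}^{-}$. Concretely, set $z=x+(N+r)(\epsilon_1+\cdots+\epsilon_r)$ and $B=\{y\in\mathbb{Z}^r:d(y,x)\leq N+r\}$. One checks from the triangle inequality that $B\subseteq z+\mathbb{Z}^r_{\leq 0}$, since $y_i-x_i\leq d(y,x)\leq N+r=z_i-x_i$. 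Then define
\[
F \;=\; \{\,y\in z+\mathbb{Z}^r_{\leq 0}\;:\;\text{there exists }w\in B\setminus\mathcal{X}^{-}\text{ with }w\leq y\,\},
\]
and set $\mathcal{X}(x,N)^{-}=(z+\mathbb{Z}^r_{\leq 0})\setminus F$.

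The body of the proof then consists of three brief verifications. Finiteness of $F$ reduces to the fact that $B\setminus\mathcal{X}^{-}$ is finite, and that for each witness $w$ the set $\{y:w\leq y\leq z\}$ is a finite box (of side at most $2(N+r)+1$). The set $F$ is visibly up-closed inside $z+\mathbb{Z}^r_{\leq 0}$, so $\mathcal{X}(x,N)^{-}$ is a down-ideal of $\mathbb{Z}^r$; it is nonempty because $F$ is finite and the cone infinite, and proper because it lies in the cone, so $\mathcal{X}(x,N):=\mathcal{X}(x,N)^{-}\setminus\mathcal{X}(x,N)^{-}[-1]$ is Cubist. For agreement on $B$: if $y\in B\cap\mathcal{X}^{-}$, then any $w\leq y$ already lies in $\mathcal{X}^{-}$ by the ideal property of $\mathcal{X}^{-}$, so no witness exists and $y\notin F$; conversely if $y\in B\setminus\mathcal{X}^{-}$, then $y$ itself is a witness, so $y\in F$.

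The promotion from agreement of the underlying ideals on $B$ to agreement of $\mathcal{X}$ and $\mathcal{X}(x,N)$ on the smaller ball $\{y:d(y,x)\leq N\}$ is a one-line check: membership of $y$ in $\mathcal{X}$ is decided by whether $y$ and $y+(\epsilon_1+\cdots+\epsilon_r)$ lie in $\mathcal{X}^{-}$, and the triangle inequality gives $d(y+(\epsilon_1+\cdots+\epsilon_r),x)\leq d(y,x)+r\leq N+r$, so both queries stay inside $B$, where the two ideals coincide.

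The only mild subtlety, which dictates the precise definition of $F$ above, is that we need a finite, up-closed $F\subseteq z+\mathbb{Z}^r_{\leq 0}$ whose intersection with $B$ equals $B\setminus\mathcal{X}^{-}$. The upward closure inside the cone of the finite set $B\setminus\mathcal{X}^{-}$ is the natural candidate, and it is down-closedness of $\mathcal{X}^{-}$ that prevents this upward generation from swallowing any point of $\mathcal{X}^{-}\cap B$, which is the only way the agreement on $B$ could fail.
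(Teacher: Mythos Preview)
Your proof is correct. The approach differs from the paper's, which constructs $\mathcal{X}(x,N)^-$ from below rather than from above: the paper takes the ideal of $(\mathbb{Z}^r,\leq)$ generated by $(\mathcal{X}\cap(x+[-N,N]^r))\cup\{x[N]-2N\epsilon_i\mid i=1,\ldots,r\}$ and sets $z=x[N]$, leaving the verification to the reader. Your top-down construction---start with the full cone $z+\mathbb{Z}^r_{\leq 0}$ and excise the upward closure of $B\setminus\mathcal{X}^-$---has the virtue of making the finite removed set $F$ explicit, which is precisely what the lemma demands, and your agreement check on $B$ via the ideal property of $\mathcal{X}^-$ is clean. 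The paper's version is terser but requires the reader to see that the spike generators $x[N]-2N\epsilon_i$ already fill the cone up to a finite box, so that the complement is finite. Your choice of radius $N+r$ rather than $N$ for $B$ (and correspondingly $z=x[N+r]$) is a harmless enlargement that makes the promotion step from $\mathcal{X}^-$-agreement to $\mathcal{X}$-agreement transparent, since $y[1]$ stays in $B$.
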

\begin{proof}

Let ${\mathcal X}(x,N)^-$ be the ideal of $(\mathbb{Z}^r, \leq)$ generated by 
$$({\mathcal X} \cap (x + [-N,N]^r)) \cup \{ x[N] - 2N \epsilon_i \mid  i=1,\ldots,r \}.$$ 
Let $z = x[N]$. The statement of the lemma is satisfied for this pair $(z, {\mathcal X}(x,N)^-)$.
\end{proof}

\section{Some algebraic preliminaries}

Let $k$ be a field. We shall be working with associative $k$-algebras $A$
graded over the integers. So $A=\oplus_{i\in\mathbb{Z}}A_{i}$ and $A_{i}
A_{j}\subset A_{i+j}$. While not assuming the existence of a unit, we require
$A$ to be equipped with a set of mutually orthogonal idempotents
$\{e_{s}\mid  s\in\mathcal{S}\}\subset A_{0}$ such that $A=\oplus
_{s,s'\in\mathcal{S}}e_{s}Ae_{s'}$. It will be useful to allow some of the idempotents to be zero. Unless stated otherwise, all $A$-modules $M$
are assumed to be \emph{graded} left modules, so that $M=\oplus_{i\in
\mathbb{Z}}M_{i}$ and $A_{i}M_{j}\subset M_{i+j}$, and to be
quasi-unital, i.e.,
$M=\oplus_{s\in S}e_{s}M$.  Given $n\in\mathbb{Z}$, we let $M\langle n\rangle$
be the $A$-module obtained by shifting the grading by $n$, so that
$M\langle n\rangle_{i}=M\langle n-i\rangle$.

Suppose that $A_{i}=0$ for all but finitely many negative integers $i$, and that $e_{s}A_{i}e_{s'}$ is finite
dimensional for all $s,s'\in\mathcal{S}$ and $i\in\mathbb{Z}$. 
We define
the graded Cartan matrix $C_A(q)$ to be the matrix with rows and columns labelled by 
$\mathcal{S}$ and entries
\[
C_{A}(q)_{s,s'}=\sum_{i\in\mathbb{Z}}\left(\dim e_{s}A_{i}e_{s'}\right)q^i
\]
in the ring of Laurent power series in an indeterminate $q$.
If $e_s=0$ then the entries in the row and column labelled by $s$ are zero. So we will often regard $C_A(q)$ 
as an $\calr\times\calr$ matrix,
where $\calr=\left\{s\in S\mid e_s\neq 0\right\}$.

Now suppose that $A$ is positively graded, i.e. $A_{i}=0$ for $i<0$, and that
$\{e_{s}\mid  s\in\mathcal{R}\}$ is a basis for $A_0$.  Let us also impose the
finiteness condition $\dim e_s A_i < \infty$ for all $s\in\CR$ and $i\in\bbz$.
Let $A\Mod$ be the
category of all graded $A$-modules, where the space of morphisms
between graded modules $M$ and $N$, which we denote $\operatorname*{Hom}%
\nolimits_{A}(M,N)$, consists of $A$-module homomophisms preserving degree.
We denote by $A\mMod$ the full subcategory consisting of modules $M$ such that
$\dim e_s M_i < \infty$ for all $s\in\CR$ and $i\in\bbz$, and that $M_i=0$ for 
$i<<0$.
 
Then $Ae_{s}$ is a projective $A$-module (=projective indecomposable object in $A\mMod$) 
for each $s\in\calr$, 
and every projective indecomposable $A$-module is isomorphic to 
$Ae_{s}\langle n\rangle$, for a unique $s\in\mathcal{R}$ and $n\in\mathbb{Z}$. 
Similarly, every simple $A$-module is isomorphic to $L_A(s)\langle n\rangle$ for a unique $s\in\mathcal{R}$ and $n\in\mathbb{Z}$, where $L_A(s)=Ae_{s}/A_{> 0}e_{s}$.
The category $A\mMod$ contains enough projective objects.

Fix a partial order $\preceq$ on $\calr$. For each $s\in\calr$, the standard module $\Delta_A(s)=\frac{Ae_s}{\sum_{t\succ s}Ae_tAe_s}$ is the largest 
quotient of $Ae_s$ which does not contain $L(t)\langle n \rangle$ as a composition factor for $t\succ s$.
We define the graded decomposition matrix $D_A(q)$ of $A$ to be the $\calr\times\calr$ matrix with entries
$$D_A(q)_{st}=\sum_{i\in\bbz}\left(\dim e_t\Delta_A(s)_i\right)q^i.$$
If $A$ has an antiautomorphism fixing each $e_s$, then 
$D_A(q)$=$D_{A^{\operatorname{op}}}(q)$, where $A^{\operatorname{op}}$ is the 
opposite algebra.

 We say that $A$ is a graded quasi-hereditary algebra, or that $A\mMod$ is a
 graded highest weight category, if for all $s\in\calr$,
 \begin{itemize}
 \item
 $D_A(q)_{st}=0$ for all $t\npreceq s$,
 \item
 $ker(Ae_s \twoheadrightarrow \Delta_A(s))$  has a filtration in which each section is isomorphic
 to $\Delta_A(t)\langle n \rangle$ for some $t\succ s$ and $n\in\bbz$.
\end{itemize}

This differs from the original notion of quasiheredity introduced by Cline, Parshall and Scott
in that $A$ is allowed to be infinite dimensional; in particular $A$ may have infinite global 
dimension.  We are also  using a slightly different notion of (graded) highest weight category than that introduced by
Cline, Parshall and Scott. We filter projective objects by standard modules, rather than
injectives by costandards. Furthermore, we do not assume the finite interval property holds with respect
to our partial order $\preceq$.

If $A\mMod$ is a highest weight category, then according to
\cite[Theorem 3.1.11]{MR961165}
we have Brauer-Humphreys reciprocity:
$$C_A(q) = D_A(q)^T D_{A^{op}}(q).$$ 
We will need a version of Rickard's Morita theorem for derived categories \cite{Rickard-MTDC}
adapted to our graded algebras (see, e.g., \cite[\S 2]{Drozd-Mazorchuk-KDEASM}). Let us assume that $Ae_{s}$ is
finite dimensional for each $s\in\mathcal{S}$. Let $\{\Gamma_{t}\mid  t\in\mathcal{T}\}$ be a collection of
bounded complexes of projective $A$-modules.  Denote by $D^{b}(A\mMod)$
the derived category of bounded complexes in $A\mMod$.
Our formulation is rather clumsy; for the general theory it
is better to think in terms of functor categories.
 
\begin{theorem}[Rickard]
\label{Rickard} Suppose that
\begin{itemize}
\item for $t,t'\in\mathcal{T}$ and $m,n\in\mathbb{Z}$ with $m\neq0$,%
\[
\operatorname*{Hom}\nolimits_{D^{b}(A\mMod)}(\Gamma_{t}\langle
n\rangle,\Gamma_{t'}[m])=0.
\]

\item The triangulated subcategory of $D^{b}(A\mMod)$ generated
by all summands of $\Gamma_{t}\langle n\rangle$, $t\in\mathcal{T}$,
$n\in\mathbb{Z}$, contains $Ae_{s}$ for all $s\in\mathcal{S}$.
\end{itemize}

Then, the graded endomorphism ring $E=\oplus_{n\in\mathbb{Z}}E_{n}$ with
components
\[
E_{n}=\oplus_{t,t'\in\mathcal{X}}\operatorname*{Hom}\nolimits_{D^{b}%
(A\mMod)}(\Gamma_{t}\langle n\rangle,\Gamma_{t'}),
\]
comes equipped with idempotents $e_{t}=\operatorname*{id}\nolimits_{\Gamma
_{t}}\in E_{0}$, and there exists an equivalence%
\[
F:D^{b}(E^{op}\mMod)\iso D^{b}(A\mMod)
\]
such that $F(E^{op}e_{t})\cong\Gamma_{t}$ for all $t\in\mathcal{T}$, and
$F(X\langle n \rangle) \cong F(X)\langle n \rangle$ for $X\in D^b(A\mMod)$ and $n\in\mathbb{Z}$.
\end{theorem}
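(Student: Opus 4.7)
The plan is to adapt Rickard's original derived Morita theorem to the graded setting by carefully tracking the $\bbz$-grading throughout. I would begin by constructing the functor $F$ explicitly. Let $T$ denote the complex of graded $A$-modules $\bigoplus_{t\in\mathcal{T}}\Gamma_t$, endowed with a right $E$-action lifted, via a careful choice of graded projective resolutions, from the natural action in the homotopy category. This makes $T$ into a complex of graded $A$-$E$-bimodules, and I set $F = T\otimes^{L}_{E} -$, a functor $D^{b}(E^{op}\mMod)\to D^{b}(A\mMod)$. By construction $F(E^{op}e_t) = Te_t = \Gamma_t$, and $F$ commutes with grading shifts because $T$ and the resolution of $E^{op}e_t$ used to compute the derived tensor product are both graded.

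Next I would verify that $F$ is fully faithful on the projective generators $E^{op}e_t\langle n\rangle$. Since $E^{op}e_t$ is projective, one has
\[
\operatorname{Hom}_{D^{b}(A\mMod)}\bigl(F(E^{op}e_t\langle n\rangle),\; F(E^{op}e_{t'})[m]\bigr) \;=\; \operatorname{Hom}_{D^{b}(A\mMod)}\bigl(\Gamma_t\langle n\rangle,\;\Gamma_{t'}[m]\bigr).
\]
The first hypothesis makes this vanish for $m\neq 0$, while for $m=0$ it equals $e_{t'}E_{n}e_{t} = \operatorname{Hom}_{D^{b}(E^{op}\mMod)}(E^{op}e_t\langle n\rangle, E^{op}e_{t'})$ by the very definition of $E$. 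Full faithfulness of $F$ on all of $D^{b}(E^{op}\mMod)$ then follows by the standard triangulated five-lemma extension from these projective generators.

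Finally, essential surjectivity follows from the second hypothesis: the essential image of $F$ contains every $\Gamma_t\langle n\rangle$, and being the image of a fully faithful exact functor it is a triangulated subcategory of $D^{b}(A\mMod)$ closed under summands; so by assumption it contains every $Ae_s$ and hence all of $D^{b}(A\mMod)$. The main obstacle is the initial construction of $T$ as a genuine complex of graded bimodules: \emph{a priori} the $E$-action exists only in the homotopy category (equivalently, $E$ is the cohomology of a DG-endomorphism algebra), and producing an honest bimodule lift requires either a careful manipulation of projective resolutions, as in Rickard's original proof, or the machinery of DG-enhancements. In the graded setting every such construction must respect the $\bbz$-grading, which is manageable provided one works systematically with graded projective resolutions throughout; this is presumably what the authors allude to when they recommend thinking in terms of functor categories.
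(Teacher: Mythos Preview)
The paper does not actually prove this theorem: it is stated as a known result, attributed to Rickard and adapted to the graded setting, with references to \cite{Rickard-MTDC} and \cite[\S 2]{Drozd-Mazorchuk-KDEASM}. So there is no proof in the paper to compare against.

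Your sketch follows the standard line of Rickard's original argument and is broadly correct. The one point worth flagging is the step where you ``lift'' the $E$-action on $T$ from the homotopy category to an honest complex of bimodules. You correctly identify this as the crux, but your description (``a careful choice of graded projective resolutions'') undersells the difficulty: in Rickard's original proof this is where most of the work lies, and it is not simply a matter of choosing resolutions but of constructing the action inductively, or alternatively passing through Keller's DG framework where one first realises $E$ as quasi-isomorphic to the DG endomorphism algebra of $T$ and then invokes a derived Morita theorem for DG algebras. In the graded setting of the paper, the cleanest route is probably via the functor-category formulation the authors mention (and which Drozd--Mazorchuk use): one replaces $A$ by the $k$-linear category with objects $\mathcal{S}\times\bbz$ and morphisms given by the graded pieces $e_sA_ie_{s'}$, so that graded $A$-modules become ordinary functors, and then applies the ungraded Rickard theorem directly. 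Your sketch would benefit from either committing to one of these routes or, more honestly, simply citing the references as the paper does.
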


\section{Definitions}

Let $r$ be a natural number.
In this section, we define algebras $U_r$, and $V_r$ by quiver and relations.
The Cubist algebras are defined to be quotients, or subalgebras of these.

\bigskip

{\bf Motivation.}
Before stating the generators and relations which define $U_r$ and $V_r$, 
we explore their conception.

Indeed, let $J$ be a $2r$-dimensional orthogonal vector space over an algebraically closed field $\bar{k}$,
with non-degenerate bilinear form $\langle,\rangle$. 

Let $H$ be the Heisenberg Lie
superalgebra of dimension $2r+1$ associated to $J$, with $H_0 = \bar{k}, H_1 = J$,
and bracket $$[(\lambda,x), (\mu, y)] = (\langle x,y \rangle, 0).$$
Let $T$ be a maximal torus in $SO(J)$.
Then $T$ acts on $H$ as automorphisms, via $(\lambda,x)^t = (\lambda, x^t)$.

In the sequel, we define an algebra $U_r$ over an arbitrary field $k$.
Over $\bar{k}$, this algebra has the same finite-dimensional graded complex
representations as the crossed product $U(H) \rtimes T$
of the universal enveloping algebra of $H$ with $T$.

The Koszul dual of this crossed product is
$S(J^*)/\delta \rtimes T$, 
where $S(J^*)$ is the symmetric algebra on $J^*$, and
$\delta$ is the quadratic form on $J$, identified as an element of 
$S^2(J^*)$. 
This Koszul dual algebra has the same
finite-dimensional graded representations as the algebra $V_r$. 

\bigskip

\begin{definition}
We define a graded associative algebra $U_r$ by quiver and relations, over any field $k$.
The quiver $Q$ has vertices
$$\{ e_x \mid  x \in \mathbb{Z}^r \},$$
and arrows
$$\{ a_{x,i}, b_{x,i} \mid  x \in \mathbb{Z}^r, 1 \leq i \leq r \}.$$
The arrow $a_{x,i}$ is directed from $e_x$ to $e_{x+\epsilon_i}$, and
$b_{x,i}$ is directed from $e_x$ to $e_{x-\epsilon_i}$.  
$U_r$ is defined to be the path algebra $kQ$ of $Q$,
modulo \emph{square relations},
\begin{equation}\label{U1}\tag*{(U0)}
\begin{split}
a_{x,i} a_{x + \epsilon_i,i} = 0,\\
b_{x,i} b_{x - \epsilon_i,i} = 0,
\end{split}
\end{equation}
for $x \in \mathbb{Z}^r, \ 1 \leq i \leq r$, as well as \emph{supercommutation relations},
\begin{equation}\label{U2}\tag*{(U1)}
\begin{split}
a_{x,i} a_{x + \epsilon_i,j} + a_{x,j} a_{x + \epsilon_j,i} = 0,\\
b_{x,i} b_{x - \epsilon_i,j} + b_{x,j} b_{x - \epsilon_j,i} = 0,\\
a_{x,i} b_{x + \epsilon_i,j} + b_{x,j} a_{x - \epsilon_j,i} = 0,
\end{split}
\end{equation}
for $x \in \mathbb{Z}^r, \ 1 \leq i,j\leq r, \ i\neq j$, and \emph{Heisenberg relations},
\begin{equation}\label{U3}\tag*{(U2)}
b_{x,i} a_{x- \epsilon_i,i} + a_{x,i} b_{x + \epsilon_i,i} =
b_{x,i+1} a_{x- \epsilon_{i+1},i+1} + a_{x,i+1} b_{x + \epsilon_{i+1},i+1},
\end{equation}
for $x \in \mathbb{Z}^r, \ 1 \leq i < r$.
\end{definition}

\begin{remark}\label{Peachsigns}
Applying the automorphism $\tau$ of $kQ$ defined by
$\tau(e_x)=e_x$,\,
$\tau(a_{x,i}) =(-1)^{\sum_{\zeta=1}^{i}x_{\zeta}}a_{x,i}$
and $\tau(b_{x,i})=(-1)^{\sum_{\zeta=1}^{i-1}x_{\zeta}}b_{x,i}$,
we obtain an alternative presentation for $U_r$, in which the relations
\ref{U2} and \ref{U3} are replaced by
\begin{equation}\label{U2'}\tag*{(U1')}
\begin{split}
a_{x,i} a_{x + \epsilon_i,j} - a_{x,j} a_{x + \epsilon_j,i} = 0,\\
b_{x,i} b_{x - \epsilon_i,j} - b_{x,j} b_{x - \epsilon_j,i} = 0,\\
a_{x,i} b_{x + \epsilon_i,j} - b_{x,j} a_{x - \epsilon_j,i} = 0,
\end{split}
\end{equation}
for $x \in \mathbb{Z}^r, \ 1 \leq i,j\leq r, \ i\neq j$, and
\begin{equation}\label{U3'}\tag*{(U2')}
(-1)^{x_i}\left(b_{x,i} a_{x- \epsilon_i,i}
 - a_{x,i} b_{x + \epsilon_i,i}\right) = (-1)^{x_{i+1}}
\left(b_{x,i+1} a_{x- \epsilon_{i+1},i+1} - a_{x,i+1} b_{x + \epsilon_{i+1},i+1}\right),
\end{equation}
for $x \in \mathbb{Z}^r, \ 1 \leq i < r$.
This presentation coincides with that used by Peach for his
rhombal algebras (the $r=3$ case).

\end{remark}

\begin{remark}
Let $R_r$ be the algebra generated by indeterminates $a_i,b_i, i = 1,\ldots,r$, 
modulo relations  
$$a_i^2= b_i^2= 0,$$
$$a_i a_j + a_j a_i = b_i b_j + b_j b_i = 0,$$
$$a_i b_j + b_j a_i = 0, \ i \neq j,$$
$$a_ib_i + b_ia_i = a_{j}b_{j} + b_{j}a_{j}, \ i \neq j,$$
for $i,j = 1,\ldots,r$.
The algebra $R_r$ acts on the right of $U_r$, via
$$u \circ a_i = \sum_{x \in \mathbb{Z}^r}
ua_{x,i},$$
$$u \circ b_i = \sum_{x \in \mathbb{Z}^r}
ub_{x,i},$$
for $u \in U_r$. 
Similarly, $R_r$ acts on the left of $U_r$.
Let $c$ be the element $a_1b_1 + b_1a_1$ of $R_r$.
By the supercommutation relations, the left and right actions of $c$ commute: 
$c \circ u = u \circ c$, for $u \in U_r$.
Therefore, by a combination of the left and right actions, $U_r$ attains the structure
of an $R_r \otimes_{k[c]} R_r^{op}$-module.
\end{remark}

\begin{definition}
We define $V_r$ to be the quadratic dual of $U_r$. It is the path algebra of the quiver $Q'$ with vertices
$$\{ f_x \mid  x \in \mathbb{Z}^r \},$$
and arrows,
$$\{ \alpha_{x,i}, \beta_{x,i} \mid  x \in \mathbb{Z}^r, 1 \leq i \leq r \},$$
modulo \emph{commutation relations},
$$\alpha_{x,i} \alpha_{x + \epsilon_i,j} - \alpha_{x,j} \alpha_{x + \epsilon_j,i} = 0,$$
$$\beta_{x,i} \beta_{x - \epsilon_i,j} - \beta_{x,j} \beta_{x - \epsilon_j,i} = 0,$$
$$x \in \mathbb{Z}^r, 1 \leq j < i \leq r,$$
$$\alpha_{x,i} \beta_{x + \epsilon_i,j} - \beta_{x,j} \alpha_{x - \epsilon_j,i} = 0,$$ 
for $x \in \mathbb{Z}^r, 1 \leq j \leq i \leq r$,
and the \emph{Milnor relation},
$$\sum_{i=1} ^r  \beta_{x,i} \alpha_{x- \epsilon_i,i} =0,$$
for $x \in \mathbb{Z}^r$.
\end{definition}

\begin{remark}
Let $\alpha_i, \beta_i$ ($i=1,\ldots,r$) be indeterminates. Let $\gamma_i = \alpha_i \beta_i$.
Let $$\Lambda_r = k[\alpha_1,\ldots,\alpha_r,\beta_1,\ldots,\beta_r]/(\sum_{i=1}^r \gamma_i).$$
The algebra $\Lambda_r$ acts on the right of $V_r$, via
$$v \circ \alpha_i = \sum_{x \in \mathbb{Z}^r} v \alpha_{x,i},$$
$$v \circ \beta_i = \sum_{x \in \mathbb{Z}^r} v \beta_{x,i},$$
for $v \in V_r$.
Similarly, $\Lambda_r$ acts on the left of $V_r$.
Let $\Gamma_r$ be the subalgebra $k[\gamma_1,\ldots,\gamma_r]/(\sum \gamma_i)$ of $\Lambda_r$.
By the commutation relations, the right and left actions of $\Gamma_r$ commute: 
$\gamma_i \circ v = v \circ \gamma_i$ for $v \in V_r$.
Therefore, by a combination of the left and right actions, $V_r$ attains the structure of a 
$\Lambda_r \otimes_{\Gamma_r} \Lambda_r^{op}$-module.
\end{remark}

\begin{remark}
Let $x,y \in \mathbb{Z}^r$.
Any two paths in $Q'$ from $x$ to $y$, of length $d(x,y)$, 
represent the same element of $V_r$, by the commutation relations. 
We define $p_{xy}$ to be the element of $V_r$ 
representing a path in $Q'$ of length $d(x,y)$.
\end{remark}

\begin{lemma} \label{monomial}
Let $1 \leq i \leq r$.
The set 
$$B_i = \{ p_{xy} \circ m \ \mid  \ x, y \in \mathbb{Z}^r,
\ m \textrm{ is a monomial in } \gamma_1,\ldots,\gamma_{i-1},\gamma_{i+1},\ldots,\gamma_r \},$$
is a basis for $V_r$.
\end{lemma}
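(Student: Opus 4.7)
The plan is to establish both spanning and linear independence of $B_i$.

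For spanning, I would first argue that the commutation relations of $V_r$ force all pairs of arrows $\alpha_j, \alpha_k, \beta_j, \beta_k$ to commute (modulo the appropriate vertex-label shifts), so that any path from $x$ to $y$ in $Q'$ equals, modulo commutation, $p_{xy}\cdot\gamma_1^{k_1}\cdots\gamma_r^{k_r}$ for a unique $(k_1,\ldots,k_r)\in\mathbb{Z}_{\geq 0}^r$; here the $\gamma$-monomial records the extra $\alpha_j\beta_j$-detours beyond the minimum-length path $p_{xy}$. The Milnor relation $\sum_j\gamma_j=0$ then allows the substitution $\gamma_i = -\sum_{j\neq i}\gamma_j$, eliminating $\gamma_i$ from the monomial and writing the path as a $k$-linear combination of elements of $B_i$.

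For linear independence, I would pass through the intermediate algebra $A := kQ'/(\text{commutation relations})$. By the commutativity of arrows in $A$, the space $e_y A e_x$ has $k$-basis $\{p_{xy}\cdot\gamma_1^{k_1}\cdots\gamma_r^{k_r} : k_j\in\mathbb{Z}_{\geq 0}\}$, in bijection with multisets of arrows on paths from $x$ to $y$. The core of the argument is that the full quotient $V_r = A/(\text{Milnor})$ satisfies $e_y V_r e_x \cong p_{xy}\cdot\Gamma_r$, where $\Gamma_r = k[\gamma_1,\ldots,\gamma_r]/(\sum_j\gamma_j)$. One checks this by verifying that inserting the Milnor term $\sum_i\beta_{z,i}\alpha_{z-\epsilon_i,i}$ at any intermediate vertex $z$ of a path from $x$ to $y$ yields, after applying commutation, an element of the form $p_{xy}\cdot m\cdot(\sum_j\gamma_j)$, where $m$ is a monomial determined by the detour; the point is that the inserted $\beta_i\alpha_i$ contributes exactly one extra $\gamma_i$, while the rest commutes around it, and the resulting monomial $m$ is independent of the summation index $i$. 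Consequently the ideal of Milnor-induced relations in $e_y A e_x$ is precisely $p_{xy}\cdot k[\gamma_1,\ldots,\gamma_r]\cdot(\sum_j\gamma_j)$.

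Since $\Gamma_r\cong k[\gamma_j : j\neq i]$ via the substitution $\gamma_i = -\sum_{j\neq i}\gamma_j$, the monomials in $\gamma_j$ for $j\neq i$ form a $k$-basis of $\Gamma_r$. Combined with $e_y V_r e_x\cong p_{xy}\cdot\Gamma_r$, this gives a basis of each $e_y V_r e_x$, and taking the union over $(x,y)\in\mathbb{Z}^r\times\mathbb{Z}^r$ produces the basis $B_i$ of $V_r$.

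The main obstacle is justifying rigorously the identification of the ideal of Milnor-induced relations in $e_y A e_x$. This requires careful combinatorial bookkeeping of how an inserted Milnor term at a vertex $z$ rearranges to normal form under commutation. A cleaner organization is via the right action of $\Lambda_r$ on $V_r$ from the preceding remark: the subalgebra $\Gamma_r\subset\Lambda_r$ stabilizes each $e_y V_r e_x$, making it a cyclic $\Gamma_r$-module generated by $p_{xy}$; the goal then reduces to showing that the annihilator of $p_{xy}$ in $\Gamma_r$ is trivial, i.e., that $\Gamma_r$ acts faithfully on this cyclic module.
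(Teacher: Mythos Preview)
Your spanning argument coincides with the paper's. For linear independence, however, you take a genuinely different route. The paper constructs an explicit $V_r$-module $W_r$ with basis indexed by triples $(x,y,m)$, $m$ a monomial in the $\gamma_j$ for $j\neq i$; it writes down the action of each generator, checks that the defining relations of $V_r$ hold, and observes that $v\mapsto \sum_x v\circ(x,x,1)$ carries $B_i$ bijectively onto this basis. This faithful-representation trick bypasses any analysis of the ideal structure.

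Your approach instead passes through the intermediate quotient $A=kQ'/(\text{commutation})$, identifies $f_yAf_x$ with $p_{xy}\cdot k[\gamma_1,\dots,\gamma_r]$, and argues that the Milnor ideal meets this piece in exactly $p_{xy}\cdot(\sum_j\gamma_j)\,k[\gamma]$. This is correct: once the monomial basis of $A$ is in hand (via Bergman's diamond lemma, or by recognising $A$ as a $\mathbb{Z}^r$-graded form of a polynomial ring in $2r$ variables), centrality of the $\gamma_j$ in $A$ forces every ideal generator $p\cdot M_z\cdot q$ to normalise to $p_{xy}\cdot(\text{monomial})\cdot\sum_j\gamma_j$, and the reverse inclusion follows by taking $z$ at an endpoint. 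The payoff is a transparent structural identification $f_yV_rf_x\cong\Gamma_r$, which the paper reaches only afterwards as a corollary; the cost is that you must first justify the basis of $A$, a step the paper's module construction avoids entirely. The obstacle you flag is real but routine.
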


\begin{proof}
We first demonstrate that $B_i$ is a spanning set of $V_r$.

The commutation relations for $V_r$ reduce any path from $x$ to $y$ in $Q'$ to the form
$p_{xy} \circ q_y$, 
where $q_y$ is a monomial in $\gamma_1,\ldots,\gamma_r$.
The Milnor relation reduces $q_y$ to a polynomial in 
$\gamma_1,\ldots,\gamma_{i-1},\gamma_{i+1},\ldots,\gamma_r$.

\bigskip
We now show that $B_i$ is linearly independent.

Let $W_r$ be the vector space with basis
$$\{ (x,y,m) \ \mid  \ x,y \in \mathbb{Z}^r,
\ m \textrm{ is a monomial in } \zeta_1,\ldots,\zeta_{i-1},\zeta_{i+1},\ldots,\zeta_r \}.$$
We define an action of $V_r$ on $W_r$, via

$$f_{x} \circ (y,z,m) = \delta_{xy} (y,z,m),$$

$$ 
\alpha_{x,j} \circ (y,z,m) =
\begin{cases}
(x,z,m), & \textrm{ if }  x+\epsilon_j = y, d(x,z) = d(y,z)+1, \\
(x,z, \gamma_j m), & \textrm{ if } x+\epsilon_j = y, d(x,z) = d(y,z)-1, j \neq i, \\
-\sum_{l \neq i} (x,z, \gamma_l m), & \textrm{ if }  x+\epsilon_j = y, d(x,z) = d(y,z)-1, j = i, \\
0, & \textrm{ otherwise.}
\end{cases}
$$
$$\beta_{x,j} \circ (y,z,m) =
\begin{cases}
(x,z,m), & \textrm{ if }  x-\epsilon_j = y, d(x,z) = d(y,z)+1, \\
(x,z, \gamma_j m), & \textrm{ if } x-\epsilon_j = y, d(x,z) = d(y,z)-1, j \neq i, \\
-\sum_{l \neq i} (x,z, \gamma_l m), & \textrm{ if }  x+\epsilon_j = y, d(x,z) = d(y,z)-1, j = i, \\
0, & \textrm{ otherwise.}
\end{cases}
$$

This does in fact define an action.
Indeed, we defined this action precisely 
in such a way that the defining relations for $V_r$ are forced to hold.


Now observe that the $V_r$-module $W_r$ is generated by $\{ (x,x,1), x \in \mathbb{Z}^r \}$. 
In fact, the image of $B_i$ under the map
$$V_r\rightarrow W_r:\quad v \mapsto \sum_{x\in\mathbb{Z}^r}v\circ(x,x,1)$$
is the defining basis for $W_r$.
Therefore, $B_i$ is linearly independent.
\end{proof}

\begin{corollary} \label{Cartan1}
$C_{V_r}(q)_{xy} = (1- q^2)^{1-r} q^{d(x,y)}$, for $x,y \in \mathbb{Z}^r$. 
$\Box$
\end{corollary}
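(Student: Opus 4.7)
The plan is to extract $C_{V_r}(q)_{xy}$ directly from the basis of $V_r$ furnished by Lemma~\ref{monomial}, by first determining which basis elements lie in the bimodule component $f_x V_r f_y$ and then summing up Poincar\'e series. Fix any $i\in\{1,\ldots,r\}$ and let $B_i$ be the basis in Lemma~\ref{monomial}.

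I would first verify that right multiplication by each $\gamma_j=\alpha_j\beta_j$ preserves every bimodule component $f_a V_r f_b$: on the right, $\alpha_j$ and $\beta_j$ shift the source idempotent by $\epsilon_j$ in opposite directions, so their product restores it. Since $p_{xy}$ is a single path lying in the bimodule component of $V_r$ attached to the vertices $x$ and $y$, the portion of $B_i$ contained in $f_x V_r f_y$ is exactly
\[
\{\,p_{xy}\circ m \mid m \text{ a monomial in }\gamma_1,\ldots,\widehat{\gamma_i},\ldots,\gamma_r\,\}.
\]

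Finally I would count by degree: $p_{xy}$ has degree $d(x,y)$, each $\gamma_j$ has degree $2$, and the number of monomials of total degree $k$ in $r-1$ variables is $\binom{r-2+k}{r-2}$. Hence
\[
C_{V_r}(q)_{xy}=\sum_{k\geq 0}\binom{r-2+k}{r-2}q^{d(x,y)+2k}=q^{d(x,y)}(1-q^2)^{1-r}.
\]
The only step requiring thought is the preservation of bimodule components by $\circ\gamma_j$, which is a short check from the quiver conventions; the rest is pure bookkeeping from Lemma~\ref{monomial}.
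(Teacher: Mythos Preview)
Your argument is correct and is precisely the computation the paper has in mind: the corollary is marked $\Box$ with no proof, signalling that it is an immediate count using the basis $B_i$ of Lemma~\ref{monomial}, exactly as you carry out. One cosmetic slip: right multiplication by $\gamma_j$ fixes the \emph{target} (right) idempotent of $p_{xy}$, not the source, but the conclusion is unaffected.
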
 

\begin{lemma} \label{free}
The actions of
$\Lambda_r$ on $V_r$ are free.
\end{lemma}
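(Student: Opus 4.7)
The plan is to prove, for each $x\in\mathbb{Z}^r$, that the map $\Phi_x:\Lambda_r\to f_x V_r$, $\omega\mapsto f_x\circ\omega$, is an isomorphism of right $\Lambda_r$-modules. The right action of $V_r$ (and hence of $\Lambda_r$) on itself preserves the decomposition $V_r=\bigoplus_x f_x V_r$ indexed by target vertices, so an affirmative answer for each $x$ will give $V_r\cong \bigoplus_x \Lambda_r$ as right $\Lambda_r$-modules, establishing freeness of the right action. The analogous statement for the left action follows by a symmetric argument, using $V_r=\bigoplus_y V_r f_y$ and the left-action map $\omega\mapsto\omega\cdot f_y$.

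The key to showing $\Phi_x$ is an isomorphism is to match $\Gamma_r$-bases on the two sides. Since the left and right actions of $\Gamma_r$ on $V_r$ coincide (by the remark preceding Lemma~\ref{monomial}) and $\Lambda_r$ is commutative, $\Phi_x$ is $\Gamma_r$-linear. Lemma~\ref{monomial}, applied for any fixed choice of $i$, already shows that $f_x V_r$ is a free right $\Gamma_r$-module with basis $\{p_{zx}\}_{z\in\mathbb{Z}^r}$. For $\Lambda_r$, the componentwise identity $\alpha^a\beta^b = \gamma^{\min(a,b)}\alpha^{a-\min(a,b)}\beta^{b-\min(a,b)}$ rewrites every monomial as a $\Gamma_r$-multiple of a monomial $\alpha^{a'}\beta^{b'}$ with $\operatorname{supp}(a')\cap\operatorname{supp}(b')=\emptyset$. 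That these disjoint-support monomials are $\Gamma_r$-independent is inherited from the corresponding freeness of the polynomial ring $k[\alpha_1,\ldots,\alpha_r,\beta_1,\ldots,\beta_r]$ over $k[\gamma_1,\ldots,\gamma_r]$, which is a direct verification by tracking $\alpha$- and $\beta$-exponents.

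Finally, the direct calculations $f_x\circ\alpha_i=\alpha_{x-\epsilon_i,i}$ and $f_x\circ\beta_i=\beta_{x+\epsilon_i,i}$ iterate to show that $\Phi_x(\alpha^{a'}\beta^{b'})$ is a path from $x-a'+b'$ to $x$ of length $|a'|+|b'|$. When $a',b'$ have disjoint support, this length coincides with $d(x,x-a'+b')$, so this path must equal $p_{x-a'+b',x}$. The assignment $(a',b')\mapsto x-a'+b'$ is a bijection between disjoint-support pairs in $\mathbb{Z}_{\geq 0}^r\times\mathbb{Z}_{\geq 0}^r$ and $\mathbb{Z}^r$, so $\Phi_x$ sends one $\Gamma_r$-basis bijectively to the other and is therefore an isomorphism. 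The main obstacle is the commutative-algebra step confirming freeness of $\Lambda_r$ over $\Gamma_r$; once that is in hand, the remaining work is a careful accounting of paths and idempotents.
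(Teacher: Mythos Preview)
Your argument is correct in substance, though you have a consistent left/right indexing slip: in the paper's conventions $f_x V_r$ consists of paths \emph{starting} at $x$, so $f_x\circ\alpha_i=\alpha_{x,i}$ (not $\alpha_{x-\epsilon_i,i}$), the $\Gamma_r$-basis of $f_x V_r$ furnished by Lemma~\ref{monomial} is $\{p_{xz}\}_{z\in\bbz^r}$, and $\Phi_x(\alpha^{a'}\beta^{b'})=p_{x,\,x+a'-b'}$. With these corrections your basis-matching argument goes through unchanged.

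The paper's proof uses the same map $\Phi_x:\Lambda_r\to f_xV_r$ but argues more economically: surjectivity is immediate from Lemma~\ref{monomial}, and injectivity follows from a Hilbert-series comparison via Corollary~\ref{Cartan1}, which shows both sides have Hilbert series $(1-q^2)(1-q)^{-2r}$. Your route is more constructive, exhibiting an explicit $\Gamma_r$-basis bijection rather than a numerical identity; this buys a concrete free basis of $V_r$ over $\Lambda_r$ at the cost of the extra commutative-algebra step verifying that $\Lambda_r$ is free over $\Gamma_r$ (which, as you indicate, follows by base-changing the freeness of $k[\alpha,\beta]$ over $k[\gamma]$ along $k[\gamma]\twoheadrightarrow\Gamma_r$). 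Both approaches ultimately rest on the same input, Lemma~\ref{monomial}.
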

\begin{proof}

We look at the right action.
For $x \in \mathbb{Z}^r$, the map 
$$\Lambda_r \rightarrow f_x V_r,$$
$$v \mapsto f_x \circ v,$$
is clearly surjective, and degree preserving. To see it is an isomorphism, we observe that the Hilbert
polynomials of the two sides agree. Indeed, 
by corollary~\ref{Cartan1}, summing over all $y$, we see the Hilbert series
of the right hand side is
$$(1-q^2)^{1-r}(1 + 2q + 2q^2 +\ldots)^r = (1-q^2)^{1-r}\left(\frac{1+q}{1-q}\right)^r = (1-q^2)(1-q)^{-2r},$$
which is the Hilbert series of the left hand side. \end{proof}   

\bigskip

Let ${\mathcal X} \subset \mathbb{Z}^r$ be a Cubist subset.
We now define our main objects of study.


\begin{definition}
The Cubist algebras associated to ${\mathcal X}$ are  
\begin{align*}
U_{\mathcal X} & =\: U_r/\sum_{x\in\mathbb{Z}^r\backslash\mathcal{X}}U_re_x U_r \\
\intertext{and}
V_{\mathcal X} & = \sum_{x,y\in\mathcal{X}}f_x V_r f_y. 
\end{align*}
\end{definition}

\begin{remark} \label{CartanV}
$C_{V_{\mathcal X}}(q)_{xy} = (1- q^2)^{1-r} q^{d(x,y)}$, for $x,y \in {\mathcal X}$. 
\end{remark}

\begin{remark}
The algebras $U_r, V_r$ each have an anti-involution $\omega$, which swaps 
$a_{x,i}$ and $b_{x+\epsilon_i,i}$ (respectively $\alpha_{x,i}$ and $\beta_{x+\epsilon_i,i}$).
These anti-involutions descend to $U_{\mathcal X}, V_{\mathcal X}$.
\end{remark}  

\begin{remark} \label{smallrank}
When $r=1$, the algebras $U_{\mathcal X}$ and $V_{\mathcal X}$ are all isomorphic to the field $k$.

When $r=2$, the algebras $U_{\mathcal X}$ are all isomorphic to the 
Brauer tree algebra of an infinite line, and
the algebras $V_{\mathcal X}$ are all isomorphic to the 
preprojective algebra on an infinite line. 

\begin{figure}[h]
\[
\includegraphics[height=3in]{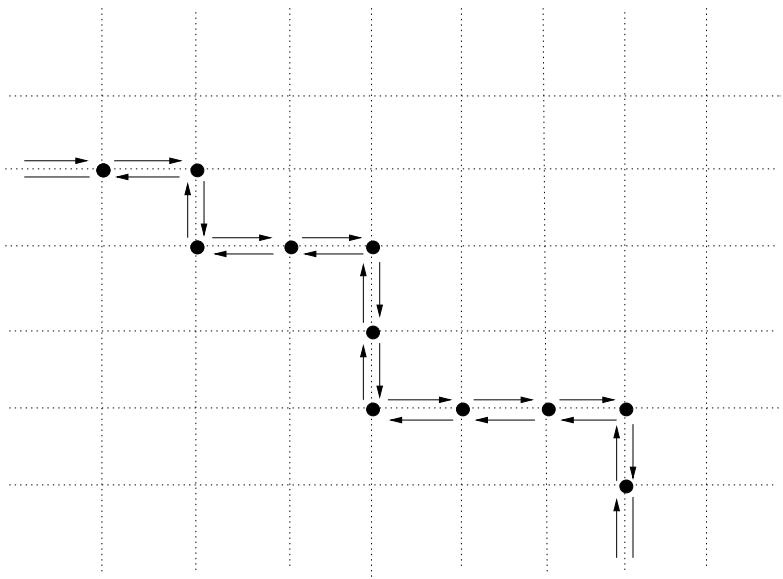}
\]
\caption{Cubist algebras when $r=2$.}
\label{rankone}
\end{figure}
\FloatBarrier

The commutation relations for the preprojective algebras come from the commutation relations for $V_2$ at flat elements $x\in\calx$ and the Milnor relations at crooked elements. The commutation relations for the Brauer tree algebras come from the supercommutation relations for $U_2$ at flat elements $x\in\calx$ and the Heisenberg relations at crooked elements; the square relations for the Brauer tree algebras come from the square relations for $U_2$ at flat elements and the supercommutation relations at crooked elements.  

When $r=3$, the algebras $U_{\mathcal X}$ are isomorphic to the rhombal
algebras of Peach introduced in \S~\ref{subsection:introrhombal}. The rhombus tilings are obtained from the Cubist subsets $\mathcal{X}$ by projecting the cubical
complex $\mathcal{C}_\mathcal{X}$ onto a hyperplane, as described in 
\S~\ref{subsection:tilings}.  The star relations come from the Heisenberg relations for $U_3$, and the mirror relations from the supercommutation relations. The two rhombuses relations come in two varieties:  straight paths of length two in the rhombus tiling are zero (a consequence of the square relations), and nonstraight paths of lenth two not bordering a single rhombus are zero (a consequence
of the supercommutation relations).

To obtain the original presentation of the rhombal algebras given by Peach, 
one has to use a different presentation of $U_3$. This alternative choice of signs is
described in remark~\ref{Peachsigns}.
\end{remark}

\section {Highest weight categories} \label{hw}

In this section and the following one, we demonstrate that $U_{\mathcal X}\mMod, V_{\mathcal X}\mMod$ 
are graded highest weight categories, in the sense of Cline, Parshall and Scott. 
As throughout the rest of the paper, we frequently forget the word graded, 
and use the term "highest weight category'' as an abbreviation for ``graded highest weight category''.

Let $\mathcal{X} \subset \mathbb{Z}^r$ be a Cubist subset.  
The bijection $\lambda$ between $\mathcal{X}$ and its set of facets $\mathcal{F}_\mathcal{X}$ established in
Proposition~\ref{vertex-facet bijection} gives rise to a partial order on $\mathcal{X}$ which usually does not coincide with the restriction of the partial order on $\mathbf{Z}^r$ that we have been employing.

\begin{proposition} \label{poset}
${\mathcal X}$ possesses a partial order $\succeq$, generated by the relations
$x \succeq y$, for $y \in \lambda x.$
\end{proposition}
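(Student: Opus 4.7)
The goal is to prove antisymmetry of the reflexive-transitive closure of the generating relation, i.e., to rule out nontrivial directed cycles $x^0 \to x^1 \to \cdots \to x^n = x^0$ with each $x^{k+1}\in\lambda x^k$ and some $x^{k+1}\neq x^k$. Write $v^k := x^{k+1}-x^k \in F_{i_k}$, where $i_k$ is the unique index characterising $\lambda x^k = x^k + F_{i_k}$ from Proposition~\ref{vertex-facet bijection}.

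The crucial observation is a sign pattern built directly into $F_i = \{\sum_{j<i}a_j\epsilon_j - \sum_{j>i}a_j\epsilon_j:\,a_j\in\{0,1\}\}$: for any $v\in F_i$ one has $v_j\geq 0$ whenever $j<i$ and $v_j\leq 0$ whenever $j>i$. In particular $v^k_1\geq 0$ and $v^k_r\leq 0$ for every admissible step. Combined with the cycle-closure identity $\sum_k v^k = 0$, this forces $v^k_1 = v^k_r = 0$ for every $k$, so every closed walk preserves the first and last coordinates.

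The plan is then to iterate this argument on the middle $r-2$ coordinates, by induction on $r$. Once $v_1 = v_r = 0$ is imposed, for $i_k\in\{2,\ldots,r-1\}$ the restricted step has again the $F$-shape of an $(r-2)$-dimensional Cubist step, so the same monotonicity argument controls $v^k_2$ and $v^k_{r-1}$. Using Lemma~\ref{ind} one identifies the resulting subproblem with a genuine lower-dimensional Cubist combinatorial setting, to which the inductive hypothesis applies. The base case $r=2$ is immediate: the sign constraints $v_1\geq 0$ and $v_2\leq 0$ combined with $\sum_k v^k = 0$ already force every step to vanish.

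The main obstacle is the boundary types $i_k\in\{1,r\}$. There the restricted step is entirely non-positive (if $i_k = 1$) or entirely non-negative (if $i_k = r$) on the middle coordinates, and does not naturally fit the inductive scheme. I would rule these out by exploiting the defining (non-)membership conditions for $i_k$: $i_k = 1$ forces $x^k+\epsilon_1\notin\mathcal{X}$, while $i_{k'} = r$ forces $x^{k'}+\epsilon_1+\cdots+\epsilon_{r-1}\in\mathcal{X}$. A case analysis of the cycle-closure condition, tracking how these membership constraints propagate through the walk, produces a point of $\mathbb{Z}^r$ that is required to be simultaneously in and out of $\mathcal{X}$, which is the desired contradiction. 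The delicate bookkeeping here is the main technical step of the argument.
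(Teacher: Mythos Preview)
Your monotonicity observation is correct and is the engine of the argument: every step $v\in F_i$ has $v_r\leq 0$, so closure of the walk forces $v_r^k=0$ throughout and the cycle lies in a hyperplane $x_r=c$. The paper uses exactly this idea, but reduces $r$ by one per inductive step rather than two, and this is what makes the boundary case tractable.

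The genuine gap in your sketch is the part you flag yourself. The slice $\mathcal{Z}_c=\mathcal{X}\cap\{x_r=c\}$ is \emph{not} a Cubist subset of $\mathbb{Z}^{r-1}$; Lemma~\ref{ind} only hands you the smaller set $\mathcal{X}_c=\{x\in\mathcal{Z}_c: x-\epsilon_r\in\mathcal{X}\}$, and you have not argued that the cycle stays inside it. Your ``case analysis \ldots\ tracking membership constraints'' is precisely the missing content, and the contradiction you promise does not follow from what you have written.

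The actual argument is short and requires no delicate bookkeeping. Using Proposition~\ref{vertex-facet bijection} together with the identity $(x-\epsilon_r)[1]=x+\epsilon_1+\cdots+\epsilon_{r-1}$, one checks that $\mathcal{X}_c=\{x\in\mathcal{Z}_c: i_x<r\}$. Now if $x^k\in\mathcal{X}_c$ (so $i_k<r$) and $v_r^k=0$, then $v^k-\epsilon_r\in F_{i_k}$ as well, whence $x^{k+1}-\epsilon_r\in\lambda x^k\subset\mathcal{X}$ and $x^{k+1}\in\mathcal{X}_c$. Thus the walk cannot exit $\mathcal{X}_c$; being a cycle it cannot enter it either, so it lies entirely in $\mathcal{X}_c$ or entirely in $\mathcal{Z}_c\setminus\mathcal{X}_c$. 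In the latter region every $i_k=r$ and the generating relation is just the restriction of $\geq$, which has no nontrivial cycles; on $\mathcal{X}_c$ one applies the inductive hypothesis. Your two-coordinate reduction would need this step carried out twice, not bypassed.
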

\begin{proof}

We proceed by induction on $r$.

When $r=1$, the set ${\mathcal X}$ has only one element, and the lemma is trivial.

Now assume $r>1$. Let
$$x=x^0 \succ x^1 \succ \ldots \succ x^l = x$$
be a loop in ${\mathcal X}$. 
To prove the lemma, we show any such loop has length zero.
Without loss of generality, we may assume that $x = 0$.

\bigskip

Let us first observe that the loop must lie in the hyperplane 
$\mathbb{Z}^{r-1} \times 0 \subset \mathbb{Z}^r$.
This is because by definition $x^{i+1}\in \lambda x^i + F_j$ for some $j$, which implies that the last
coordinate of $x^{i+1}$ is less than or equal to the last coordinate of $x^i$
Let us write ${\mathcal Z}_0 = {\mathcal X} \cap (\mathbb{Z}^{r-1} \times 0)$.
We have proved the loop lies in ${\mathcal Z}_0$.

\bigskip

Secondly, we prove that
the loop must lie either in the subset ${\mathcal X}_0$ of ${\mathcal Z}_0$, or else in 
${\mathcal Z}_0 \backslash {\mathcal X}_0$
where
\begin{align*} 
{\mathcal X}_0 &= \{ x \in {\mathcal Z}_0 \mid  x-\epsilon_r \in {\mathcal X} \} \\
 &= \{ x \in {\mathcal Z}_0 \mid  \lambda x = x + F_j, j=1,..,r-1 \}.
 \end{align*}
Indeed, the loop cannot pass from ${\mathcal X}_0$ onto ${\mathcal Z}_0 \backslash {\mathcal X}_0$, 
since if $\lambda(x^i)= x^i + F_j$, and $j=1,..,r-1$ then
$x^{i+1} = x^i + h$, where $h \in F_j \cap (\mathbb{Z}^{r-1} \times 0)$, and so
$h - \epsilon_r \in F_j, x^i +h - \epsilon_r \in {\mathcal X}$, which implies 
that $x^{i+1} \in {\mathcal X}_0$.
  
The relation $\succeq$ on ${\mathcal Z}_0 \backslash {\mathcal X}_0$ is precisely the restriction of the relation $\geq$ on 
$\mathbb{Z}^{r-1} \times 0$, so there are no loops in ${\mathcal Z}_0 \backslash {\mathcal X}_0$.

Therefore, the loop must lie in ${\mathcal X}_0$.
By lemma~\ref{ind}, ${\mathcal X}_0$ is a Cubist subset of $\mathbb{Z}^{r-1}$.
Therefore, by the inductive hypothesis, our loop has length zero.
\end{proof}

\begin{remark} 
We call the partial order $\succeq$ the \emph{standard} partial order.
Permuting indices $1 \leq i \leq r$ with an element of the symmetric group $\Sigma_r$, we obtain
an alternative partial order on ${\mathcal X}$. Indeed, conjugating $\succeq$ by elements
of the symmetric group, we obtain $r!$ different partial orders on ${\mathcal X}$.
The theorems of this paper hold for any such partial order. 
\end{remark}

\begin{example}\label{Cubistexamples}
\begin{enumerate}
\item Consider the flat Cubist subset
$\mathcal{H}_j=\left\{x=(x_1,\ldots,x_n)\mid x_j=0\right\}$ in $\bbz^r$.
For each $x\in\mathcal{H}_j$ we have $\lambda x=x+F_j$. So for all
$x,y\in\mathcal{H}_j$.
$x\succeq y$ if and only if $x_i\leq y_i$ for $i<j$ and
$x_i\geq y_i$ for $i>j$.
\item The partial order on the corner configuration $\calx_{CC}$ is more subtle. Given $x\in\calx_{CC}$, we have $\lambda x = x+F_{m(x)}$ where
$m(x)=\min\left\{i\mid x_i=0\right\}$.  We claim that $x\succeq y$ in
$\calx_{CC}$
if and only if the following hold:
\begin{itemize}
\item $m(x)\geq m(y)$,
\item $x_i \leq y_i$, if $1\leq i \leq m(y)$,
\item $x_i \geq y_i$, if $m(x) \leq i \leq r$.
\end{itemize}
These conditions define a transitive relation on $\calx_{CC}$ which clearly
holds when $y\in \lambda x$ and therefore when $x\succeq y$. Conversely suppose
that the conditions are satisfied for some $x,y\in\calx_{CC}$. Then
$x\succeq x^{(1)}\succeq\ldots\succeq x^{(m(x)-m(y))}\succeq y$, where
$x^{(j)}\in\calx_{CC}$ is defined by
\[
x^{(j)}_i=
\begin{cases}
0 
& \text{if } m(x)-j \leq i \leq m(x), \\
x_i   & \text{otherwise}.
\end{cases}
\]
\end{enumerate}
\end{example}

Our proofs of the strong homological properties of the Cubist algebras,
rely on the following combinatorial observation. 

\begin{proposition} \label{combinatorial}
For $x,y \in {\mathcal X}$, the intersection $\lambda x \cap \mu y$ is a $d$-cube, for
some dimension $d$.

If $x \neq y$, and $\lambda x \cap \mu y \neq \emptyset$, then
there exist some $k \in \{ 1,\ldots,r \}, \sigma \in \{ \pm 1 \}$ such that 
$x + \sigma \epsilon_k \in \lambda x \cap \mu y$,
and $d(x + \sigma \epsilon_k, y) = d(x, y) -1$.
\end{proposition}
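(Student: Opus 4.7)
The plan is, first, to read off the cube structure of $\lambda x\cap\mu y$ from an explicit coordinate-wise description, and, second, to locate a single-step neighbor of $x$ inside the intersection by an extremum argument combined with the Cubist structure of $\mathcal X$.  Writing $\lambda x=x+F_i$ with $i=i(x)$ and $\mu y=y+C_j$ with $j=j(y)$, a point $z\in\lambda x$ has the form $z=x+\sum_{l<i}a_l\epsilon_l-\sum_{l>i}a_l\epsilon_l$ with $a_l\in\{0,1\}$, while $z\in\mu y$ translates into the uncoupled system $z_l\leq y_l$ for $l<j$, no constraint at $l=j$, and $z_l\geq y_l$ for $l>j$.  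These inequalities decouple across the coordinates $l\neq i$ and each restricts $a_l$ to a subset of $\{0,1\}$, so $\lambda x\cap\mu y$ is a product of factors of sizes $0$, $1$, or $2$; if any factor is $0$ the intersection is empty, otherwise it is a subcube of $\lambda x$ whose dimension $d$ equals the number of coordinates at which both values of $a_l$ are allowed.  This gives the first assertion.

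For the second assertion, I would examine the unique vertex $z^\star$ of $\lambda x$ minimising the $\ell^1$-distance to $y$, obtained coordinate by coordinate by taking the element of the allowed set for coordinate $l$ closer to $y_l$.  Using the nonemptiness of the intersection, a short coordinate-wise check shows that the $\mu y$-inequalities hold at $z^\star$, so $z^\star\in\lambda x\cap\mu y$, and each coordinate $l$ at which $z^\star_l\neq x_l$ is precisely a direction along which a unit step from $x$ inside $\lambda x$ strictly decreases the distance to $y$.  If exactly one such coordinate $k$ exists, then $z^\star=x+\sigma\epsilon_k$ is the desired neighbor.  If none exists then $z^\star=x\in\mu y$, and I would choose $k$ with $y_k\neq x_k$ such that the step toward $y$ lies in $F_i$, noting that the step stays in $\mu y$ because $\mu y$ is convex and contains both $x$ and $y$.

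The hard part will be the case in which $z^\star$ differs from $x$ in two or more coordinates, each forced by the $\mu y$-inequality to a single non-$x$ value:  then no single step from $x$ lands in the subcube containing $z^\star$.  My plan for closing this gap is to invoke the Cubist structure.  Each forced coordinate $l$ lies strictly between $\min(i,j)$ and $\max(i,j)$ and yields an equality of the form $y_l-x_l=\pm 1$, and by combining these equalities with the inequalities produced by the nonemptiness of the intersection one obtains a coordinate-wise comparison between the elements $y+\sum_{l<j}\epsilon_l$ and $x+\sum_{l\leq i}\epsilon_l$.  By Proposition~\ref{vertex-facet bijection} applied to $y$ and to $x$, the former lies in $\mathcal X\subseteq\mathcal X^-$ while the latter lies outside $\mathcal X^-$; since $\mathcal X^-$ is an ideal, this comparison would contradict the ideal property, ruling out the multi-coordinate case and completing the proof.
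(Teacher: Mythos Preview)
Your first paragraph is correct and essentially matches the paper's one-line argument for the cube structure.

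For the second assertion your route is quite different from the paper's: the paper proceeds by induction on $r$, splitting into four cases according to whether $i(x)=r$ or $i(x)<r$ and whether $i(y)=r$ or $i(y)<r$, and in the last case passing to the Cubist subset $\mathcal X_0\subset\bbz^{r-1}$ via Lemma~\ref{ind}. You instead attempt a direct argument based on the coordinate-wise structure of $\lambda x\cap\mu y$.

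There are genuine gaps. The correct case distinction is by the number of \emph{forced-away} coordinates (those $l$ strictly between $i$ and $j$ at which the $\mu y$-constraint excludes the value $x_l$), not by the number of coordinates where $z^\star_l\neq x_l$; the latter set can be strictly larger, and your case analysis conflates the two. More importantly, in the case $z^\star=x$ your instruction ``choose $k$ with $y_k\ne x_k$ such that the step toward $y$ lies in $F_i$'' need not succeed: one can have $x\in\mu y$, $x\ne y$, yet every step available in $F_i$ moves \emph{away} from $y$. Ruling this out already needs a Cubist argument you did not supply (compare $x+\epsilon_1+\cdots+\epsilon_{i-1}\in\mathcal X^-$ with $y+\epsilon_1+\cdots+\epsilon_{j}\notin\mathcal X^-$).

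Your sketch for the ``hard'' multi-forced case does not produce a contradiction. With $j<i$, you propose to compare $y+\sum_{l<j}\epsilon_l\in\mathcal X^-$ against $x+\sum_{l\le i}\epsilon_l\notin\mathcal X^-$; for the ideal property to bite you need the second $\le$ the first coordinatewise. But at $l=i$ this asks for $x_i+1\le y_i$, whereas nonemptiness forces $x_i\ge y_i$; at $l>i$ it asks for $x_l\le y_l$, whereas nonemptiness gives $x_l\ge y_l$; and at non-forced coordinates $j<l<i$ it asks for $y_l=x_l+1$, which fails by definition. So the inequality simply does not hold. A concrete test: take $r=4$, $\mathcal X^-=\{z\le(2,0,0,0)\}$, $y=(2,0,0,0)$ (so $i(y)=1$), $x=(0,-1,-1,0)$ (so $i(x)=4$); then $\lambda x\cap\mu y=\{(0,0,0,0),(1,0,0,0)\}$, both coordinates $2$ and $3$ are forced away, and your comparison breaks at coordinate $4$. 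The paper's inductive approach, peeling off the last coordinate, is designed to avoid exactly this kind of global comparison.
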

\begin{proof}

The intersection of $x + F_i$ with $y + C_j$ is certainly a $d$-cube for some $d$. 
Indeed, if it is nonempty, this subset of the $(r-1)$-cube $x + F_i$ is carved out by a number of 
inequalities on coordinates.

\bigskip

The second statement of the proposition is trivial when $r=1$. Let us assume that
the proposition is true for all Cubist subsets of $\mathbb{Z}^{r-1}$.
Let ${\mathcal X}$ be a Cubist subset of $\mathbb{Z}^r$. We prove that the proposition is true for ${\mathcal X}$, 
and thereby for any Cubist subset, by induction. 
We assume that $\lambda x \cap \mu y \neq \emptyset$ for some $x \neq y$, 
and demonstrate the existence of some coordinate $k$
so that the proposition is satisfied.
Without loss of generality, $y = 0$.

\bigskip

Case 1. 
$\lambda y = F_r, \lambda x = x + F_r$. 
Therefore, $\mu y = \mathbb{Z}_{\leq 0}^{r-1} \times \mathbb{Z} \cap {\mathcal X}$.
Then $x \in \mu y$, and $x+ \epsilon_i \in \mu y$, for some $i < r$,
and the proposition holds for $k=i$.

\bigskip

Case 2. 
$\lambda y = F_r, \lambda x = x + F_i$, for some $i<r$.
Again, $\mu y = (\mathbb{Z}_{\leq 0}^{r-1} \times \mathbb{Z}) \cap {\mathcal X}$.
We claim that the intersection of $\mu y$ with ${\mathcal X}_0$ is empty. 
Indeed, suppose for a contradiction that this set is non-empty, and there exists 
$z \in {\mathcal X}_0 \cap \mu y \subset (\mathbb{Z}_{\leq 0}^{r-1} \times 0) \cap {\mathcal X}$.
Then $z - \epsilon_r \in {\mathcal X}$. Since $z \leq 0$, we have 
$z - \epsilon_r \leq - \epsilon_r$.
Therefore $- \epsilon_r \in {\mathcal X}^+$, 
and $\epsilon_1 +\ldots+ \epsilon_{r-1} \in {\mathcal X}^+[1]$.
Thus $\epsilon_1 +\ldots+ \epsilon_{r-1} \notin {\mathcal X}$. However, $\lambda y = F_r$ by 
assumption, so $\epsilon_1 +\ldots+ \epsilon_{r-1} \in {\mathcal X}$, realising our contradiction.

We may now observe that the existence of a nontrivial intersection between 
$\mu y$ and $\lambda x$ implies the $r^{th}$ coordinate
of $x$ must be strictly greater than zero, and the proposition holds for $k=r$. 

\bigskip

Case 3. $\lambda y = F_i$ for some $i<r$, $\lambda x = x+F_r$.
Suppose there is a non trivial intersection $\lambda x \cap \mu y$.
Then $\mu y = \mathbb{Z}^{i-1} _{\leq 0} \times \mathbb{Z} \times
\mathbb{Z}^{r-i} _{\geq 0}$. If the $j^{th}$ coordinate of $x$ is strictly less than zero 
for some $j<i$, then the proposition holds for $k=j$.

If the $1^{st}, \ldots,i-1^{th}$ coordinates of $x$ are equal to zero, let us observe 
the $i^{th}$ coordinate 
of $x$ is strictly less than zero. 
Indeed, suppose for a contradiction that 
the $1^{st}, \ldots,i-1^{th}$ coordinates of $x$ are equal to zero, and 
the $i^{th}$ coordinate of $x$ is greater than, or equal to zero.
Let $z \in \lambda x \cap \mu y$.
We may assume that $z \in x + (0^i \times \{0,1\}^{r-i})$.
Thus, $z \in \mathbb{Z}_{\geq 0}^r \cap {\mathcal X}$, and 
$z + \epsilon_1 +\ldots + \epsilon_i \in {\mathcal X}$. Thus, 
$\epsilon_1+\ldots+\epsilon_i \in {\mathcal X}^-$, 
so $-\epsilon_{i+1}-\ldots-\epsilon_r \in {\mathcal X}^-[1]$,
and $-\epsilon_{i+1}-\ldots-\epsilon_r \notin {\mathcal X}$.
However, $\lambda y = F_i$, and so $-\epsilon_{i+1}-\ldots-\epsilon_r \in {\mathcal X}$,
giving a contradiction.
 
When the $1^{st}, \ldots,i-1^{th}$ coordinates of $x$ are equal to zero,
we may now observe the truth of the proposition for $k=i$.

\bigskip

Case 4. $\lambda y = F_i, \lambda x = x+ F_j$ for some $i,j<r$.
Then either the $r^{th}$ coordinate of $x$ is strictly greater than zero, in which case the proposition
holds for $k=r$, or else $x \in {\mathcal X}_0$, in which case
the induction hypothesis gives the result.
\end{proof}

\begin{corollary}
If $x \neq y \in {\mathcal X}$, and $\lambda x \cap \mu y$ is non-empty,
then there exists a $(d-1)$-cube ${\mathcal C}$, an integer $k \in \{ 1,\ldots,r\}$, and 
$\sigma \in \{ \pm 1 \}$, 
such that
$$\lambda x \cap \mu y = \mathcal{C}+\{0,\sigma\epsilon_k\},$$
and such that for all $c \in {\mathcal C}$,
$$d(x, c+ \sigma \epsilon_k) = d(x,c) + 1,$$
$$d(c+ \sigma \epsilon_k,y) = d(c,y) - 1.$$
\end{corollary}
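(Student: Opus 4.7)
The plan is to apply Proposition~\ref{combinatorial} to obtain the $d$-cube structure of $\lambda x \cap \mu y$ together with indices $k, \sigma$ and a point $v_0 = x + \sigma\epsilon_k \in \lambda x \cap \mu y$ satisfying $d(v_0, y) = d(x, y) - 1$, and then to select a suitable axis direction of the cube along which to decompose. Writing $\lambda x = x + F_i$ and $\lambda y = y + F_l$, I take $y = 0$ without loss of generality. The required decomposition $\mathcal{C} + \{0, \sigma\epsilon_k\}$ needs a direction of the cube which is simultaneously an edge of the cube and points ``from $x$ toward $y$'' in its varying coordinate.

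The easy case is $x \in \mu y$. Then $x$ is itself a vertex of the cube and $v_0$ is an adjacent vertex, so $\sigma\epsilon_k$ is one of the axis-directions of the cube. I take $\mathcal{C}$ to be the $(d-1)$-face characterised by $c_k = x_k$. The identity $d(x, c + \sigma\epsilon_k) = d(x, c) + 1$ is immediate since only the $k$-coordinate changes. For the $y$-distance, the difference $|c_k + \sigma - y_k| - |c_k - y_k|$ is independent of the remaining coordinates of $c$, so it equals its value at $c = x$, which is $-1$ by Proposition~\ref{combinatorial}.

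The harder case is $x \notin \mu y$; here $\sigma\epsilon_k$ need not be an edge of the cube, so an alternative direction must be located. First I rule out $l = i$: in that case the single-coordinate moves from $x$ that stay in $\lambda x = x + F_l$ are $+\epsilon_j$ for $j < l$ and $-\epsilon_j$ for $j > l$, and neither can repair a $\mu y$-violation at its coordinate, so $l = i$ forces $x \in \mu y$. I therefore assume $l < i$, the case $l > i$ being symmetric. Nonemptiness of $\lambda x \cap \mu y$ yields coordinate inequalities on $x$, including $x_k \leq 0$ for $k < l$, $x_k \geq -1$ for $l < k < i$, $x_i \geq 0$, and $x_k \geq 0$ for $k > i$. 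Next I exploit the ideal $\mathcal{X}^-$: the vertex-facet bijection gives $x + \epsilon_1 + \cdots + \epsilon_{i-1} \in \mathcal{X}^-$, while $\epsilon_1 + \cdots + \epsilon_l \notin \mathcal{X}^-$ (otherwise, since $y[1]$ dominates it, downward-closure would place $y[1] \in \mathcal{X}^-$, contradicting $y \in \mathcal{X}$). Downward-closure of $\mathcal{X}^-$ prevents the former from dominating the latter coordinate-wise, and combined with the earlier inequalities this forces some $k' \leq l$ with $x_{k'} \leq -1$. Since $k' \leq l < i$ the direction $+\epsilon_{k'}$ is an axis of $\lambda x$, and the $k'$-range in the cube is $\{x_{k'}, x_{k'} + 1\}$, so $+\epsilon_{k'}$ is an edge of the cube. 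Taking $\mathcal{C}$ to be the face with $c_{k'} = x_{k'}$, the two distance identities follow from the same coordinate-wise argument as in the easy case, using $x_{k'} \leq -1 < 0 = y_{k'}$.

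The main obstacle is precisely the second case: the direction supplied by the proposition may not be an edge of the cube, and one must marshal the vertex-facet bijection together with the ideal property of $\mathcal{X}^-$ to certify that a good direction lives inside the cube. The case bookkeeping (ruling out $l = i$, and treating $l < i$ versus $l > i$ separately) together with the coordinate inequalities is the fiddly step.
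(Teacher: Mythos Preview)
Your overall strategy is sound and genuinely different from the paper's. The paper simply takes the $k,\sigma$ supplied by Proposition~\ref{combinatorial}, splits the whole facet $\lambda x=\mathcal{C}'+\{0,\sigma\epsilon_k\}$, asserts that $c\in\mu y\Leftrightarrow c+\sigma\epsilon_k\in\mu y$ for $c\in\mathcal{C}'$, and then sets $\mathcal{C}=\mathcal{C}'\cap\mu y$. Your approach is more explicit: you separate the cases $x\in\mu y$ and $x\notin\mu y$, and in the latter you manufacture a new direction $\epsilon_{k'}$ which is guaranteed to be an axis of the intersection cube $\lambda x\cap\mu y$ itself. This is arguably cleaner, since the paper's equivalence claim is not entirely transparent from the statement of Proposition~\ref{combinatorial} alone; your argument makes the reason self-contained.

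There is, however, one genuine slip. Your parenthetical justification that $\epsilon_1+\cdots+\epsilon_l\notin\mathcal{X}^-$ is backwards. You write that if it were in $\mathcal{X}^-$ then, since $y[1]$ dominates it, downward-closure would force $y[1]\in\mathcal{X}^-$. But downward-closure says that elements \emph{below} a member of $\mathcal{X}^-$ lie in $\mathcal{X}^-$, not elements above. The correct argument runs as follows (with $y=0$): the vertex--facet bijection gives $\epsilon_1+\cdots+\epsilon_l\notin\mathcal{X}$; since $y\in\mathcal{X}$ we have $y[1]\notin\mathcal{X}^-$, and $y[1]\le(\epsilon_1+\cdots+\epsilon_l)[1]$, so the ideal property forces $(\epsilon_1+\cdots+\epsilon_l)[1]\notin\mathcal{X}^-$, i.e.\ $\epsilon_1+\cdots+\epsilon_l\notin\mathcal{X}^-[-1]$; combining these two exclusions with $\mathcal{X}=\mathcal{X}^-\setminus\mathcal{X}^-[-1]$ yields $\epsilon_1+\cdots+\epsilon_l\notin\mathcal{X}^-$. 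With this repair the rest of your argument (the comparison of $x+\epsilon_1+\cdots+\epsilon_{i-1}\in\mathcal{X}^-$ against $\epsilon_1+\cdots+\epsilon_l\notin\mathcal{X}^-$, the extraction of $k'\le l$ with $x_{k'}\le -1$, and the verification of the two distance identities) goes through exactly as you wrote it.
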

\begin{proof}

By proposition~\ref{combinatorial}, there exists a $(r-2)$-cube ${\mathcal C}'$ such that 
$\lambda x = {\mathcal C}' + \{ 0, \sigma \epsilon_k \}$, and for all $c \in {\mathcal C}'$,
we have $d(x + \sigma \epsilon_k, y) = d(x, y) -1$, 
$d(y, c + \sigma \epsilon_k)= d(y,c) -1$, and $c \in \mu y$ exactly when 
$c + \sigma \epsilon_k \in \mu y$. The corollary follows upon putting 
${\mathcal C} = {\mathcal C}' \cap \mu y$. 
\end{proof}

\bigskip

Let $\tilde{D}_{V_{\mathcal X}}(q)$ be the ${\mathcal X} \times {\mathcal X}$ matrix, whose $xy$ entry is
$q^{d(x,y)}$, if $y \in \mu x$, and zero otherwise. 

Let $\tilde{D}_{U_{\mathcal X}}(q)$ be the ${\mathcal X} \times {\mathcal X}$ matrix, whose $xy$ entry is
$q^{d(x,y)}$, if $y \in \lambda x$, and zero otherwise.

(We shall eventually show that these are equal to the decomposition matrices 
${D}_{V_{\mathcal X}}(q)$
and ${D}_{U_{\mathcal X}}(q)$.)

\begin{lemma} \label{formula}
$$\tilde{D}_{U_{\mathcal X}}(q) \tilde{D}_{V_{\mathcal X}}(-q)^T = 1.$$
\end{lemma}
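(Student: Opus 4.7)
The plan is to compute the $(x,y)$ entry of the product directly and show it vanishes off the diagonal by a sign-cancellation argument driven by the corollary immediately preceding this lemma. Unwinding the definitions,
\[
\bigl(\tilde{D}_{U_{\mathcal X}}(q)\tilde{D}_{V_{\mathcal X}}(-q)^T\bigr)_{xy}
=\sum_{z\in\lambda x\cap\mu y}q^{d(x,z)}(-q)^{d(y,z)},
\]
since the $(x,z)$ entry of $\tilde{D}_{U_{\mathcal X}}(q)$ vanishes unless $z\in\lambda x$, and the $(y,z)$ entry of $\tilde{D}_{V_{\mathcal X}}(-q)$ vanishes unless $z\in\mu y$.

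First I will handle the diagonal case $x=y$. Writing $\lambda x=x+F_i$ and $\mu x=x+C_i$, the defining inequalities on coordinates of $F_i$ and $C_i$ have opposite signs in every coordinate $j\neq i$, and both force the $i$th coordinate to be $0$. Hence $F_i\cap C_i=\{0\}$, so $\lambda x\cap\mu x=\{x\}$ and the diagonal entry equals $q^0(-q)^0=1$.

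For the off-diagonal case $x\neq y$, if $\lambda x\cap\mu y=\emptyset$ the entry is $0$, so assume the intersection is nonempty. I will then invoke the corollary to Proposition~\ref{combinatorial}: there is a $(d-1)$-cube $\mathcal{C}$, an index $k$, and a sign $\sigma\in\{\pm 1\}$ such that $\lambda x\cap\mu y=\mathcal{C}+\{0,\sigma\epsilon_k\}$, and moreover $d(x,c+\sigma\epsilon_k)=d(x,c)+1$ and $d(y,c+\sigma\epsilon_k)=d(y,c)-1$ for every $c\in\mathcal{C}$. Grouping the sum over $z\in\lambda x\cap\mu y$ into the pairs $\{c,\,c+\sigma\epsilon_k\}$ yields
\[
\sum_{c\in\mathcal{C}}\Bigl(q^{d(x,c)}(-q)^{d(y,c)}+q^{d(x,c)+1}(-q)^{d(y,c)-1}\Bigr)
=\sum_{c\in\mathcal{C}}q^{d(x,c)}(-q)^{d(y,c)}\bigl(1+q\cdot(-q)^{-1}\bigr)=0.
\]

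The only real content of the argument is the combinatorial corollary, which is already in hand; once that is available the rest is just bookkeeping, so I do not expect any serious obstacle. The one thing worth double-checking is that the pairing $c\leftrightarrow c+\sigma\epsilon_k$ is indeed a fixed-point-free involution on $\lambda x\cap\mu y$ (equivalently, that the product decomposition $\mathcal{C}+\{0,\sigma\epsilon_k\}$ is literal and not just a set-theoretic equality with collisions), which is guaranteed by the fact that $\sigma\epsilon_k\neq 0$.
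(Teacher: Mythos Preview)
Your proof is correct and follows essentially the same approach as the paper: compute the $(x,y)$ entry as a sum over $\lambda x\cap\mu y$, handle the diagonal by noting $\lambda x\cap\mu x=\{x\}$, and in the off-diagonal case invoke the corollary to Proposition~\ref{combinatorial} to pair terms and force cancellation. One tiny imprecision: in the diagonal case it is $F_i$ alone (not $C_i$) that forces the $i$th coordinate to be $0$, but this does not affect the conclusion.
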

\begin{proof}

For $x,y \in {\mathcal X}$, the $xy$ entry is equal to
$$\sum_{z \in \lambda x \cap \mu y} q^{d(x,z)} (-q)^{d(z,y)}.$$

If $\lambda x \cap \mu y = \emptyset$, then this sum is equal to zero.

If $x \neq y$, and $\lambda x \cap \mu y$ is non-empty, the previous corollary shows that this sum is equal to
$$\sum_{c \in {\mathcal C}} (-1)^{d(c + \sigma \epsilon_k,y)} 
\left( q^{d(x,c + \sigma \epsilon_k) + d(c+ \sigma \epsilon_k,y)} - q^{d(x,c) + d(c,y)}
\right) = 0.$$
If $x=y$, then $\lambda x \cap \mu y = \{ x \}$, and the sum is equal to $1$.
\end{proof}

\begin{corollary}
Let $x,y \in {\mathcal X}$. If $x \in \mu y$, then $x \succeq y$.
\end{corollary}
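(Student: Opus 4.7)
The plan is to prove the corollary by induction on $d(x,y)$, leveraging Proposition~\ref{combinatorial} directly rather than going through the matrix identity of Lemma~\ref{formula}.

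First, the base case $d(x,y)=0$ forces $x=y$, and $x\succeq y$ holds trivially. For the inductive step, I would observe that $x\in\lambda x$ always holds: writing $\lambda x = x+F_i$ with $F_i=\left\{\sum_{j<i}a_j\epsilon_j-\sum_{j>i}a_j\epsilon_j\mid a_j\in\{0,1\}\right\}$, the choice $a_j=0$ for all $j$ gives $0\in F_i$, hence $x\in\lambda x$. Combined with the hypothesis $x\in\mu y$, this shows that $\lambda x\cap\mu y$ is non-empty.

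Assuming $x\neq y$, Proposition~\ref{combinatorial} then produces $k\in\{1,\ldots,r\}$ and $\sigma\in\{\pm 1\}$ such that $x+\sigma\epsilon_k\in\lambda x\cap\mu y$ and $d(x+\sigma\epsilon_k,y)=d(x,y)-1$. The first membership gives $x\succeq x+\sigma\epsilon_k$ from the defining generators of the partial order $\succeq$ (since $x+\sigma\epsilon_k$ lies in the facet $\lambda x$). The second membership, together with the strict decrease in distance, lets us apply the inductive hypothesis to conclude $x+\sigma\epsilon_k\succeq y$. Transitivity of $\succeq$ then yields $x\succeq y$.

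I expect no serious obstacle here: the entire content is packaged into Proposition~\ref{combinatorial}, which produces exactly the intermediate vertex needed to descend one step in both the facet-generated relation and the graph distance to $y$. The only small point to verify carefully is that $x\in\lambda x$, so that $\lambda x\cap\mu y$ is guaranteed non-empty and the proposition is actually applicable — but this is immediate from the definition of $F_i$.
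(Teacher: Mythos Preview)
Your proof is correct, and it takes a genuinely different route from the paper's.

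The paper argues via Lemma~\ref{formula}: since $\tilde{D}_{U_{\mathcal X}}(q)$ is lower unitriangular with respect to $\succeq$ (by the very definition of $\succeq$), its inverse $\tilde{D}_{V_{\mathcal X}}(-q)^T$ must also be lower unitriangular with respect to $\succeq$, and the nonzero entries of the latter are exactly at pairs $(x,y)$ with $x\in\mu y$. You instead bypass the matrix identity entirely and induct on $d(x,y)$, using Proposition~\ref{combinatorial} to produce at each step a neighbour $x+\sigma\epsilon_k\in\lambda x\cap\mu y$ that is both $\preceq x$ (as it lies in $\lambda x$) and strictly closer to $y$. This is a clean descent argument whose content is exactly the second assertion of Proposition~\ref{combinatorial}. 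Your approach has the advantage of being completely elementary: it avoids any appeal to the principle that the inverse of a lower unitriangular matrix is lower unitriangular, which for matrices indexed by an infinite poset without the finite interval property (as $\mathcal X$ may be) is not entirely automatic. The paper's approach, on the other hand, packages the result as a formal consequence of the already-established inversion formula, which fits the paper's emphasis on the matrix identities.
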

\begin{proof}

The matrix $\tilde{D}_{U_{\mathcal X}}(q)$ is lower unitriangular with respect to $\succeq$. Therefore its inverse is also
lower unitriangular, with respect to $\succeq^{op}$.
\end{proof}

\bigskip

Let $1 \leq i \leq r$.
Consider the subalgebra $P_i$ of $V_r$ generated by elements 
$$\{ f_x, \beta_{x,1},\ldots,\beta_{x,i-1}, \alpha_{x,i+1},\ldots, \alpha_{x,r} \mid  
x \in {\mathbb{Z}^r} \}.$$
Let $L(x)$ be the simple $P_i$-module corresponding to $x \in \mathbb{Z}^r$. Let
$$\Delta_{V,i}(x) = V_r \otimes_{P_i} L(x),$$
a $V_r$-module. 
Let $\Omega_i = k[\beta_{1},\ldots,\beta_{i-1}, \alpha_{i+1},\ldots, \alpha_{r}]$ 
a polynomial
subalgebra of $\Lambda_r$ in $r-1$ variables.

\begin{lemma} The algebra $P_i$ is free over $\Omega_i$ 
with basis $\{ f_x \mid  x \in {\mathbb{Z}^r} \}$.
The algebra $V_r$ is free over $P_i$ with a basis
$\{b\circ 1\mid  b\in\mathcal{B}_i\}$, where
\begin{align*}
{\mathcal B}_i = & 
\left\{ \textrm{monomials in } \alpha_1,\ldots,\alpha_i, \beta_{i+1},\ldots,\beta_r \right\} \\
& \quad\cup\left\{ \textrm{monomials in } \alpha_1,\ldots,\alpha_{i-1}, \beta_i,\ldots,\beta_r 
\right\}.
\end{align*}
\end{lemma}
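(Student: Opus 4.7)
The plan is to reduce both assertions to statements about the commutative algebra $\Lambda_r$ via the isomorphism $f_x V_r \cong \Lambda_r$ from Lemma~\ref{free} (concretely $f_x\circ\lambda\mapsto\lambda$), and then verify a purely combinatorial decomposition coming from the single defining relation $\sum_j\gamma_j=0$.

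\textbf{First claim.} I would first show that $P_i = \bigoplus_x f_x \circ \Omega_i$ as subspaces of $V_r$. For the inclusion $\supseteq$, each generator of $\Omega_i$ (namely $\beta_j$ for $j<i$ or $\alpha_j$ for $j>i$), acting on the right of an idempotent $f_x$, produces a generator of $P_i$; by associativity of the right $\Lambda_r$-action, every element $f_x \circ \omega$ with $\omega \in \Omega_i$ lies in $P_i$. For $\subseteq$, any product of generators of $P_i$ starts at some $f_y$ and is of the form $f_y \circ \omega$ for a suitable monomial $\omega$ in the generators of $\Omega_i$. Freeness then reduces to showing that the subalgebra inclusion $\Omega_i \hookrightarrow \Lambda_r$ is injective: this is clear because the defining relation $\sum_j \gamma_j$ of $\Lambda_r$ involves the products $\alpha_j\beta_j$, none of which can be formed from the generators of $\Omega_i$ ($\beta_j$ for $j<i$, $\alpha_j$ for $j>i$). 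Combining this injection with the isomorphism $\Lambda_r \cong f_x V_r$ of Lemma~\ref{free}, we obtain $f_x \circ \Omega_i \cong \Omega_i$, establishing the first assertion.

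\textbf{Second claim.} The same isomorphism reduces the second assertion to showing that $\Lambda_r$ is free as an $\Omega_i$-module with basis $\mathcal{B}_i$, i.e., $\Lambda_r = \bigoplus_{b \in \mathcal{B}_i} b \cdot \Omega_i$. Using any monomial order making $\gamma_i=\alpha_i\beta_i$ the leading term of $\sum_j \gamma_j$, a $k$-basis of $\Lambda_r$ is the set of monomials $\alpha^A \beta^B$ with $\min(A_i, B_i) = 0$. I would then verify by direct inspection that each such basis monomial has a unique factorization $\alpha^A \beta^B = b \cdot \omega$ with $b \in \mathcal{B}_i$, $\omega \in \Omega_i$: if $B_i=0$ then $b=\alpha_1^{A_1}\cdots\alpha_i^{A_i}\beta_{i+1}^{B_{i+1}}\cdots\beta_r^{B_r}$ (Type A); if $A_i=0$ and $B_i>0$ then $b=\alpha_1^{A_1}\cdots\alpha_{i-1}^{A_{i-1}}\beta_i^{B_i}\cdots\beta_r^{B_r}$ (Type B); when $A_i=B_i=0$ the two recipes give the same $b$, so no double-counting occurs. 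In every case the allocation of each variable between $b$ and $\omega$ is forced, giving a bijection between the standard monomials of $\Lambda_r$ and the set $\mathcal{B}_i\times(\text{monomials in }\Omega_i)$.

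\textbf{Sanity check and main obstacle.} A Hilbert series check confirms the counting: $|\mathcal{B}_i|(q) \cdot |\Omega_i|(q) = \frac{1+q}{(1-q)^r} \cdot \frac{1}{(1-q)^{r-1}} = \frac{1-q^2}{(1-q)^{2r}} = |\Lambda_r|(q)$. The argument is essentially combinatorial, and the main subtlety is handling the overlap of Type A and Type B monomials in $\mathcal{B}_i$, which is resolved by observing that the two candidate factorizations coincide on the overlap. Beyond Lemma~\ref{free} (equivalently, the freeness of the right $\Lambda_r$-action on $V_r$), no new input is required.
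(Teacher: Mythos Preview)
Your proof is correct and follows essentially the same strategy as the paper: reduce everything to the freeness of $\Lambda_r$ over $\Omega_i$ with basis $\mathcal{B}_i$, via Lemma~\ref{free}. The paper simply asserts this decomposition of $\Lambda_r$ and then writes the chain $V_r=\bigoplus_x \Lambda_r\circ f_x=\bigoplus_x\bigoplus_{b\in\mathcal{B}_i} b\,\Omega_i\circ f_x=\bigoplus_{b\in\mathcal{B}_i} b\circ P_i$, whereas you supply the explicit monomial bijection and Hilbert-series check that justify that assertion.
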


\begin{proof}
By Lemma~\ref{free}, the action of $\Lambda_r$ on $V_r$ is free,
with basis $\{ f_x \mid  x \in {\mathbb{Z}^r} \}$. The commutative algebra 
$\Omega_i$ acts freely on $\Lambda_r$, with basis 
${\mathcal B}_i$.
Hence

$$V_{r} \; = \;\:\bigoplus_{x\in\mathbb{Z}^{r}}\Lambda_{r}\circ f_{x}
      \;\: =\;\: \bigoplus_{x\in\mathbb{Z}^{r}}\bigoplus_{b\in\mathcal{B}_{i}}
             b\Omega_{i}\circ f_{x}
      \;\:=\;\: \bigoplus_{b\in\mathcal{B}_{i}}b\circ P_{i},$$
the action of $\Omega_i$ on $P_i$ is free with basis $\{ f_x \}$
and the action of $P_i$ on $V_r$ is free with basis
$\{b\circ 1\mid  b\in\mathcal{B}_i\}$.
\end{proof}

Let $\mathcal{X}\subset\mathbb{Z}^r$ be a Cubist subset. Given $x\in\mathcal{X}$, we have
$\lambda x = x+F_{i_x}$ for some $i_x$.

\begin{corollary} \label{Vdec}
Let $x \in {\mathcal X}, 1 \leq i \leq r$.
$$[\Delta_{V,i}(x) : L(y)]_q =
\sum_{z \in x + C_i} 
q^{d(x,z)},$$
$$[f_{\CX}\Delta_{V,i_x}(x) : L(y)]_q = \tilde{D}_{V_{\mathcal X}}(q)_{xy}. \Box$$ 
\end{corollary}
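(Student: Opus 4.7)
The strategy is to apply the preceding freeness lemma, according to which $V_r$ is free over $P_i$ with basis $\mathcal{B}_i$. Since $L(x)\cong k$, this immediately yields a $k$-vector space decomposition
\[
\Delta_{V,i}(x) \;=\; V_r\otimes_{P_i}L(x) \;=\; \bigoplus_{b\in\mathcal{B}_i} k\cdot(b\otimes 1_x),
\]
and it remains only to determine, for each $b\in\mathcal{B}_i$, the vertex at which $b\otimes 1_x$ is supported and its graded degree.

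Right-multiplication by $\alpha_j$ (respectively $\beta_j$) shifts the endpoint of a path in $V_r$ by $+\epsilon_j$ (respectively $-\epsilon_j$). For a monomial $b\in\mathcal{B}_i$ with exponent vector $(a_1,\ldots,a_r)$, let $\operatorname{disp}(b)\in\mathbb{Z}^r$ denote its signed exponent vector, namely $+a_j$ in each $\alpha_j$-slot and $-a_j$ in each $\beta_j$-slot. Using the identity $(vf_z)\otimes 1_x = \delta_{xz}\,v\otimes 1_x$ to reduce to paths ending at $x$, I find that $b\otimes 1_x$ is supported at vertex $y = x-\operatorname{disp}(b)$, and has graded degree equal to the length $\sum_j a_j$, which is precisely $d(x,y)$.

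I then check that $b\mapsto x-\operatorname{disp}(b)$ gives a bijection between $\mathcal{B}_i$ and $\mu x = x+C_i$. The displacements of type-I monomials (those in $\alpha_1,\ldots,\alpha_i,\beta_{i+1},\ldots,\beta_r$) range over $\mathbb{Z}_{\geq 0}^{i}\times\mathbb{Z}_{\leq 0}^{r-i}$, so the corresponding vertices cover $x+\mathbb{Z}_{\leq 0}^{i}\times\mathbb{Z}_{\geq 0}^{r-i}$; type-II monomials (those in $\alpha_1,\ldots,\alpha_{i-1},\beta_i,\ldots,\beta_r$) analogously cover $x+\mathbb{Z}_{\leq 0}^{i-1}\times\mathbb{Z}_{\geq 0}^{r-i+1}$. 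The union of these two sets is exactly $x+C_i = \mu x$. The overlap, consisting of vertices with $y_i=x_i$, is precisely where both types reduce to the same monomial (involving neither $\alpha_i$ nor $\beta_i$), so each element of $\mu x$ appears once in $\mathcal{B}_i$. Combined with the degree computation, this gives the first equality.

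The second equality is an immediate restriction: for $x\in\mathcal{X}$ with $\lambda x = x+F_{i_x}$ we have $\mu x = x + C_{i_x}$, matching the definition used in $\tilde{D}_{V_{\mathcal X}}(q)_{xy}$, and applying $f_{\mathcal X}$ simply retains those composition factors indexed by $y\in\mathcal{X}$. The only real subtlety is the handling of the double-cover on the hyperplane $y_i=x_i$, which the structure of $\mathcal{B}_i$ as a set (not a multiset) resolves cleanly.
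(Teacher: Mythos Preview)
Your argument is correct and is exactly the computation the paper has in mind: the corollary is marked with $\Box$ precisely because it follows by reading off the vertex and degree of each basis vector $b\circ f_x\otimes 1_x$ from the freeness lemma, and you have carried this out carefully, including the bijection between $\mathcal{B}_i$ and $-C_i$ via displacements and the observation that the overlap on the hyperplane $\{y_i=x_i\}$ is accounted for by $\mathcal{B}_i$ being a set rather than a multiset.
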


\begin{lemma} \label{lin}
Let $x \in {\mathcal X}$.
Then 
$\Delta_{V_{\mathcal X}}(x)$, the standard $V_{\CX}$-module corresponding to $X$ with
respect to $\succeq^{op}$,
 possesses a linear projective resolution:
$$\ldots\rightarrow \bigoplus_{\substack {y \in \lambda x \\ d(x,y) =2}} 
\negthickspace
V_{\mathcal X} f_y \langle 2 \rangle
\rightarrow \bigoplus_{\substack{y \in \lambda x \\ d(x,y) =1}} 
\negthickspace 
V_{\mathcal X} f_y \langle 1 \rangle
\; \rightarrow\;  V_{\mathcal X} f_x
\; \rightarrow \; \Delta_{V_{\mathcal X}}(x).$$
Furthermore, $\Delta_{V_{\mathcal X}}(x) \cong f_{\CX}\Delta_{V,i_x}(x)$,
and consequently ${D}_{V_{\mathcal X}}(q)=\tilde{D}_{V_{\mathcal X}}(q)$.
\end{lemma}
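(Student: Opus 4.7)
The plan is to construct a Koszul resolution of the simple module $L(x)$ over the subalgebra $P_{i_x}$, induce it up to a projective resolution of $\Delta_{V,i_x}(x)$ over $V_r$ using the free right $P_{i_x}$-module structure provided by the previous lemma, restrict to $V_\mathcal{X}$ by applying the exact idempotent functor $f_\mathcal{X}\cdot$, and finally identify the outcome with the standard module $\Delta_{V_\mathcal{X}}(x)$ for $\succeq^{\mathrm{op}}$.

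First I would build, for each $i$, a linear resolution of $L(x)$ over $P_i$ of the form
\[
\cdots \to \bigoplus_{\substack{y\in\lambda x\\ d(x,y)=k}} P_i f_y \langle k \rangle \to \cdots \to P_i f_x \to L(x) \to 0,
\]
whose $k$th differential is a signed sum of right-multiplications by $P_i$-arrows---in particular, the first differential uses $\beta_{x+\epsilon_j,j}$ for $j<i$ and $\alpha_{x-\epsilon_j,j}$ for $j>i$, whose tails are precisely the $d$-distance-one neighbours of $x$ in $\lambda x$. The justification is that $P_i$ inherits from $V_r$ only the quadratic commutation relations among its generators (no term of the Milnor relation ever lies in $P_i$, since each such term pairs a $\beta$ in direction $k$ with an $\alpha$ in direction $k$), and by the previous lemma $P_i$ is free over the polynomial algebra $\Omega_i$ with basis $\{f_y\}$. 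Thus $P_i$ is a multi-vertex incarnation of $\Omega_i$, and the proposed complex is the classical Koszul complex of $\Omega_i$ spread out over the vertex set. Applying $V_r\otimes_{P_i}(-)$ with $i=i_x$---an exact, projective-preserving functor by freeness of $V_r$ as a right $P_i$-module---produces a linear projective resolution of $V_r\otimes_{P_i}L(x)=\Delta_{V,i_x}(x)$ by modules $V_r f_y\langle k\rangle$. Since $\lambda x\subset\mathcal{X}$, the exact idempotent functor $M\mapsto f_\mathcal{X}M$ converts this into a linear projective resolution of $f_\mathcal{X}\Delta_{V,i_x}(x)$ over $V_\mathcal{X}$ of exactly the claimed shape.

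Finally I would identify $M := f_\mathcal{X}\Delta_{V,i_x}(x)$ with $\Delta_{V_\mathcal{X}}(x)$. The resolution presents $M$ as a cokernel whose defining image lies in $\sum_{y\in\lambda x\setminus\{x\}} V_\mathcal{X} f_y V_\mathcal{X} f_x$, and each such $y$ satisfies $y\prec x$, i.e.\ $y\succ^{\mathrm{op}}x$; this yields a surjection $M\twoheadrightarrow\Delta_{V_\mathcal{X}}(x)$. Conversely, Corollary~\ref{Vdec} identifies the composition factors of $M$ as the $L(y)$ with $y\in\mu x\cap\mathcal{X}$, and the corollary to Lemma~\ref{formula} forces $y\succeq x$ for each such $y$; hence $f_tM=0$ for every $t\prec x$, so the submodule $\sum_{t\prec x} V_\mathcal{X} f_t V_\mathcal{X} f_x$ lies in the kernel of $V_\mathcal{X} f_x \twoheadrightarrow M$. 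Combining the two inclusions yields $M\cong\Delta_{V_\mathcal{X}}(x)$, and the equality $D_{V_\mathcal{X}}(q)=\tilde D_{V_\mathcal{X}}(q)$ then reads off from Corollary~\ref{Vdec}. The principal obstacle is the first step: rigorously verifying exactness of the proposed Koszul-type complex over the many-vertex algebra $P_i$. I would handle this by exploiting the free $\Omega_i$-module structure of $P_i$ to reduce to the classical Koszul resolution of the residue field over the polynomial ring $\Omega_i$.
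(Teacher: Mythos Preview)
Your proposal is correct and follows essentially the same approach as the paper. The only cosmetic difference is that you build the Koszul resolution over the intermediate algebra $P_i$ and then induce via $V_r\otimes_{P_i}(-)$, whereas the paper tensors the Koszul complex of $\Omega_i$ directly with $V_r$ over $\Omega_i$ and then picks off the $x$-summand; since $V_r\otimes_{P_i}P_i\otimes_{\Omega_i}(-)=V_r\otimes_{\Omega_i}(-)$ and you explicitly reduce your $P_i$-resolution to the polynomial Koszul complex, the two arguments coincide, and your identification of $f_{\mathcal X}\Delta_{V,i_x}(x)$ with the standard module matches the paper's verbatim.
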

\begin{proof}

We first prove the existence of a linear projective resolution for $\Delta_{V_{\mathcal X}}(x)$, 
before deducing the standard property.

In fact, we first reveal a linear projective resolution of
$$\Delta_{V,i} = V_r \otimes_{\Omega_i} k,$$
for $1 \leq i \leq r$, where $k$ is the unique graded simple $\Omega_i$-module.
Note that 
$$\Delta_{V,i} \;\cong\;\: V_r \otimes_{P_i} \left( \bigoplus_{x\in\mathbb{Z}^r} f_x \Omega_i\right)
\otimes_{\Omega_i} k
\;\:\cong\;\: V_r \otimes_{P_i} \bigoplus_{x \in \mathbb{Z}^r} L(x)
\;\:\cong\;\: \bigoplus_{x \in \mathbb{Z}^r} \Delta_{V,i}(x)$$
as $V_r$-modules,
and therefore $\Delta_{V,i}(x)$ possesses a linear projective resolution.
In case $i= i_x$, applying the exact functor $\oplus_{x\in\mathcal{X}} Hom(V_r f_x,-)$, 
we obtain a linear projective resolution of $\Delta_{V_{\mathcal X}}(x)$.

How to obtain the linear projective resolution of
$\Delta_{V,i}$ ? 
Recall that $\Omega_i$ is a Koszul algebra, whose Koszul complex
$$k[\beta_1,\ldots,\beta_{i-1}, \alpha_{i+1},\ldots,\alpha_r] 
\otimes_k \bigvee(\beta_1,\ldots,\beta_{i-1}, \alpha_{i+1},\ldots,\alpha_r)
\twoheadrightarrow k$$ 
defines a linear projective resolution of $_{\Omega_i}k$.
Here, we write $\bigvee(W)$ for the vector space dual of $\bigwedge(W^*)$. Note that
$\bigwedge(W^*)$ is Koszul dual to $S(W) \cong k[W^*]$.

Recall that $P_i$ 
acts freely on $V_r$.
Furthermore, $\Omega_i$ acts freely on $P_i$. Therefore,
tensoring the Koszul complex for $\Omega_i$ with $V_r$ over 
$\Omega_i$, we obtain a linear projective resolution of $V_r$-modules,
$$V_r \bigotimes_{\Omega_i} k[\beta_1,\ldots,\beta_{i-1}, \alpha_{i+1},\ldots,\alpha_r] 
\otimes_k 
\bigvee(\beta_1,\ldots,\beta_{i-1}, \alpha_{i+1},\ldots,\alpha_r)$$ 
$$\twoheadrightarrow V_r \bigotimes_{\Omega_i} k = \Delta_{V,i}.$$
Let $x \in {\mathcal X}$. Taking a direct summand of this complex, in case $i = i_x$, 
we obtain a linear projective resolution of 
$\Delta_{V,i_x}(x)$, whose term in differential degree $d$ is 
$$\bigoplus_{h \in F_i, \mid h\mid =d} V_r f_{x + h} \otimes 
k \xi_{h},$$
where $\xi_h = \xi_h^1 \vee \ldots \vee \xi_h^r$, and 
$$\xi_h^j = 
\begin{cases} 
\beta_j &\textrm{ if the coefficient of }\epsilon_j \textrm{ in }h \textrm{ is } 1 \\
1 & \textrm{ if the coefficient of }\epsilon_j \textrm{ in }l \textrm{ is } 0 \\
\alpha_j &\textrm{ if the coefficient of }\epsilon_j \textrm{ in }h \textrm{ is } -1 \\
\end{cases}
$$
for $h \in F_{i_x}$. 
Note that all the projective indecomposable terms in 
this complex are indexed by elements of ${\mathcal X}$. 
Therefore, applying the exact functor
$\oplus_{x\in\mathcal{X}} Hom(V_r f_x,-)$,
we obtain a projective linear resolution
of the $V_{\mathcal X}$-module $f_{\CX}\Delta_{V,i_x}(x)$, as described in the
statement of the lemma.

Looking at the first two terms in our resolution, and observing that 
$x + \epsilon_j \prec x$ for $j=1,\ldots,i_x-1$, and
$x - \epsilon_j \prec x$ for $j=i_x+1,\ldots,r$,  
we perceive that $f_{\CX}\Delta_{V,i_x}(x)$ surjects onto the standard module at $x$. 
However, we also know that $y \succeq x$, for $y \in \mu x$, and so every composition factor
$L(y)$ of $f_{\CX}\Delta_{V,i_x}(x)$ satisfies $y \succeq x$. 
Therefore, $f_{\CX}\Delta_{V,i_x}(x)$ is a standard module $\Delta_{V_{CX}}(x)$ for $V_{\mathcal X}$.
\end{proof}

\begin{theorem} \label{Vhwc}
$V_{\mathcal X}\mMod$ is a highest weight category, with respect to $\succeq^{op}$.
\end{theorem}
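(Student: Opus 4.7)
The plan is to verify the two axioms of graded highest weight category for $V_{\mathcal X}\mMod$ with respect to $\succeq^{op}$. The composition-factor axiom follows immediately from prior work: Lemma~\ref{lin} identifies $\Delta_{V_{\mathcal X}}(x)$ with $f_{\mathcal X}\Delta_{V,i_x}(x)$, Corollary~\ref{Vdec} computes its composition factors as $L(y)$ with $y\in\mu x$, and the corollary following Lemma~\ref{formula} forces any such $y$ to satisfy $y\succeq x$, equivalently $y\preceq^{op}x$.

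The main task is to construct, for each $x\in\mathcal X$, a filtration of the kernel $K(x)=\ker(V_{\mathcal X}f_x\twoheadrightarrow\Delta_{V_{\mathcal X}}(x))$ whose sections are shifted standard modules $\Delta_{V_{\mathcal X}}(t)\langle n\rangle$ with $t\succ^{op}x$, i.e.\ $t\prec x$ in the standard order. From Lemma~\ref{lin}, $K(x)$ is generated by the $r-1$ arrows $\rho_y\colon y\to x$ from the neighbors $y\in\lambda x$ at distance one, and Proposition~\ref{poset} confirms that $y\prec x$. I plan to filter $V_{\mathcal X}f_x$ by iterating the linear projective resolution: after the top section $\Delta_{V_{\mathcal X}}(x)$, each subsequent layer contributes shifted standard modules $\Delta_{V_{\mathcal X}}(t)\langle d(t,x)\rangle$ indexed by those $t\in\mathcal X$ with $x\in\mu t$, arranged in a linear refinement of $\prec$. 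By the corollary to Lemma~\ref{formula}, every such $t$ other than $x$ itself satisfies $t\prec x$, so restricting to degrees $\geq 1$ yields the required $\Delta$-filtration of $K(x)$.

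The combinatorial consistency of the iteration hinges on Proposition~\ref{combinatorial}, which controls the intersections $\lambda y\cap\mu t$ that govern how successive syzygies glue together, and on the commutation and Milnor relations of $V_{\mathcal X}$ used to rewrite products of arrows as multiples of earlier filtration generators. The graded multiplicities are pinned down by matching the filtration against the Cartan entry $C_{V_{\mathcal X}}(q)_{xy}=(1-q^2)^{1-r}q^{d(x,y)}$ of Remark~\ref{CartanV}, using the identity $\tilde D_{U_{\mathcal X}}(q)\tilde D_{V_{\mathcal X}}(-q)^T=1$ of Lemma~\ref{formula} together with the anti-involution $\omega$ to invoke Brauer--Humphreys reciprocity. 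The main obstacle will be the combinatorial bookkeeping in the inductive construction: verifying that at each step the filtration is extended by precisely one shifted standard $\Delta_{V_{\mathcal X}}(t)\langle d(t,x)\rangle$ with the correct multiplicity, rather than merely producing a module with the correct graded composition multiplicities. I anticipate this to reduce to a careful case analysis guided by Proposition~\ref{combinatorial}.
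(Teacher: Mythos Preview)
Your verification of the composition-factor axiom is correct and matches the paper's preliminary work. However, your approach to the second axiom---constructing an explicit $\Delta$-filtration of $K(x)$---differs from the paper's, and the obstacle you flag at the end is real and not resolved by your outline.

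The paper avoids explicit construction entirely by invoking a criterion of Dlab \cite{Dlab-QHAR}: for $A=V_{\mathcal X}$ with its anti-involution $\omega$ fixing each $f_x$, the highest weight structure holds if and only if the surjective multiplication map
\[
\frac{Af_{x}}{\sum_{y\succ x}Af_{y}Af_{x}}\otimes_k \frac{f_{x}A}{\sum_{y \succ x}f_{x}Af_{y}A}\;\longrightarrow\;\frac{\sum_{y\succeq x}Af_{y}A}{\sum_{y \succ x}Af_{y}A}
\]
is an isomorphism for every $x$. Since surjectivity is automatic and the right-hand sides filter $A$, this reduces to the purely numerical identity $C_{V_{\mathcal X}}(q)=D_{V_{\mathcal X}}(q)^T D_{V_{\mathcal X}}(q)$. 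The paper obtains this identity by combining the linear resolution of Lemma~\ref{lin}, which yields $\tilde D_{U_{\mathcal X}}(-q)\,C_{V_{\mathcal X}}(q)=D_{V_{\mathcal X}}(q)$, with the inversion formula $\tilde D_{U_{\mathcal X}}(q)\tilde D_{V_{\mathcal X}}(-q)^T=1$ of Lemma~\ref{formula}. No filtration is ever built by hand.

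Your direct-construction strategy is legitimate in principle, but the difficulty you identify---producing actual submodules with standard quotients rather than merely matching composition multiplicities---is precisely the work that Dlab's criterion absorbs. ``Iterating the linear projective resolution'' of $\Delta_{V_{\mathcal X}}(x)$ tells you about $\Delta_{V_{\mathcal X}}(x)$, not about the structure of the remainder of $V_{\mathcal X}f_x$; you have not given an inductive mechanism that carves out a submodule of $K(x)$ with a specified standard quotient. You invoke Brauer--Humphreys reciprocity to pin down multiplicities, but you are using it in the forward direction (highest weight implies $C=D^TD$), whereas what is needed here is the converse, and that converse is exactly Dlab's argument. The paper's route is shorter because it recognises that, in the presence of $\omega$, the numerical identity you already have in hand is not merely a consistency check but the whole proof.
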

\begin{proof}

Thanks to the linear resolution of standard modules,
we have the formula
$$\tilde{D}_{U_{\mathcal X}}(-q) C_{V_{\mathcal X}}(q) = {D}_{V_{\mathcal X}}(q).$$
Together with proposition~\ref{formula} and 
the identification ${D}_{V_{\mathcal X}}(q)=\tilde{D}_{V_{\mathcal X}}(q)$,
this implies that
$$C_{V_{\mathcal X}}(q) = D_{V_{\mathcal X}}(q)^T D_{V_{\mathcal X}}(q).$$

Now that this numerical manifestation of the highest weight property is evident, we may appeal
to a standard argument due to Dlab \cite{Dlab-QHAR}.
Let $A = V_{\mathcal X}$.
The existence of the (graded) highest weight structure is equivalent to the surjective multiplication map
$$\frac{Af_{x}}{\sum_{y\succ x}Af_{y}
Af_{x}}\otimes_k \frac{f_{x}A}{\sum_{y \succ x}f_{x}Af_{y}A}\;\longrightarrow\;\frac
{\sum_{y\succeq x}Af_{y}A}{\sum_{y \succ x}Af_{y}A}$$
being an isomorphism, for all $x \in {\mathcal X}$.
Keeping in mind that we have an anti-automorphism $\omega$ of $A$ fixing each $f_x$, we see that 
this is equivalent to the sum over $x\in\mathcal{X}$ of the Hilbert series of 
$f_z \Delta(x)\otimes_{k} f_{z'}\Delta(x)$ 
being equal to the sum over $x$ of the Hilbert series of
$f_z \frac{\sum_{y\succeq x}Af_{y}A}{\sum_{y\succ x}Af_{y}A}f_{z'}$, for all $z,z' \in {\mathcal X}$.
This is precisely the formula,
$$C_{V_{\mathcal X}}(q)_{zz'} = \left( D_{V_{\mathcal X}}(q)^T D_{V_{\mathcal X}}(q) \right)_{zz'}.$$
\end{proof}

\begin{remark} We are using a slightly different notion of highest weight category than that introduced by
Cline, Parshall and Scott. We filter projective objects by standard modules, rather than
injectives by costandards. Furthermore, we do not assume the finite interval property holds with respect
to our partial order $\preceq$. 
In other words, we do not assume that
$\{ z \mid  x \preceq z \preceq y \}$
to be finite, for all $x, y \in {\mathcal X}$.
\end{remark}

The following theorem can be proved by the dual of an argument given by Cline, Parshall and Scott  
(\cite{MR961165}, Theorem 3.9(a)).

\begin{theorem} \label{recollements}
Suppose that ${\mathcal X}$ possesses the finite interval property.
Let ${\mathcal T}_1 \subset {\mathcal X}$ be an ideal relative to $\preceq$.
Let $V_{{\mathcal T}_1}$ be the quotient of $V_{\mathcal X}$ by the ideal generated by 
$f_x, x \in {\mathcal X} \backslash {\mathcal T}_1$.

There is a full embedding of derived categories,
$$D^b(V_{{\mathcal T}_1}\mMod)  \hspace{0.2cm}  \hookrightarrow  \hspace{0.2cm} D^b(V_{\mathcal X}\mMod).$$
\end{theorem}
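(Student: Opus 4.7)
The plan is to realize $V_{\mathcal{T}_1}$ as $V_{\mathcal{X}}/J$ for a stratifying ideal $J$, then invoke the standard Tor-vanishing criterion for derived-category embeddings. Set $e := \sum_{x \in \mathcal{X} \setminus \mathcal{T}_1} f_x$ and $J := V_{\mathcal{X}} e V_{\mathcal{X}}$, so that $V_{\mathcal{T}_1} \cong V_{\mathcal{X}}/J$. The hypothesis that $\mathcal{T}_1$ is a $\preceq$-ideal amounts to saying $\mathcal{T}_2 := \mathcal{X} \setminus \mathcal{T}_1$ is an upper set (coideal) in the quasi-hereditary poset from Theorem~\ref{Vhwc}; this is the setting in which Cline--Parshall--Scott produce a derived full embedding.

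The core technical step is to exhibit an exhaustive $\Delta$-filtration of the left $V_{\mathcal{X}}$-module $J$,
\[
0 = J_0 \subset J_1 \subset J_2 \subset \cdots \subset J,
\]
whose successive quotients are direct sums of grading shifts of standards $\Delta_V(x_k)$ for $x_k \in \mathcal{T}_2$.  The building blocks are the quasi-hereditary $\Delta$-filtrations of the projectives $V_{\mathcal{X}} f_x$ for $x \in \mathcal{T}_2$, whose sections are of the form $\Delta_V(y)$ with $y$ above $x$ in $\preceq$; since $\mathcal{T}_2$ is a coideal, all such $y$ lie in $\mathcal{T}_2$.  The finite interval hypothesis enters here: it supplies an enumeration of $\mathcal{T}_2$ refining $\preceq$ such that at each stage only finitely many earlier weights need to have been processed, which is what ensures the filtration of $J$ exhausts.

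Once the $\Delta$-filtration is in hand, Lemma~\ref{lin} (linear projective resolutions of standards) shows that the filtration assembles into a projective resolution of $J$ whose terms are direct sums of $V_{\mathcal{X}} f_y$ with $y \in \mathcal{T}_2$.  Applying $- \otimes_{V_{\mathcal{X}}} V_{\mathcal{T}_1}$ kills every such projective since $f_y = 0$ in $V_{\mathcal{T}_1}$, and the short exact sequence $0 \to J \to V_{\mathcal{X}} \to V_{\mathcal{T}_1} \to 0$ then yields $\operatorname{Tor}^{V_{\mathcal{X}}}_i(V_{\mathcal{T}_1}, V_{\mathcal{T}_1}) = 0$ for $i > 0$, which is exactly the defining property of a stratifying ideal.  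By standard homological algebra, dual to \cite[Theorem~3.9(a)]{MR961165} (the ``dual'' referring to our convention of filtering projectives by standards rather than injectives by costandards), this Tor vanishing is equivalent to the derived restriction-of-scalars being the fully faithful embedding $D^b(V_{\mathcal{T}_1}\mMod) \hookrightarrow D^b(V_{\mathcal{X}}\mMod)$ of the statement.

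The main obstacle is controlling the $\Delta$-filtration of $J$ in the infinite-dimensional regime.  Without the finite interval hypothesis, the coideal $\mathcal{T}_2$ may fail to admit a well-ordering refining $\preceq$ with finite initial segments, and the stepwise filtration of $J$ risks failing to exhaust. The finite interval property is precisely the combinatorial input that licenses the stepwise construction; once it is available, the rest is routine, more or less a transcription of CPS.
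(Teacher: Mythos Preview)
Your proposal is correct and takes essentially the same approach as the paper, which gives no proof beyond citing the dual of \cite[Theorem~3.9(a)]{MR961165}. Your elaboration via the stratifying-ideal/Tor-vanishing criterion---filtering $J$ by standards indexed by $\mathcal{T}_2$, invoking Lemma~\ref{lin} to resolve those standards by projectives again indexed by $\mathcal{T}_2$, and deducing $\operatorname{Tor}^{V_{\mathcal X}}_{>0}(V_{\mathcal T_1},V_{\mathcal T_1})=0$---is precisely the content of that cited argument, with the finite interval hypothesis playing exactly the role you describe.
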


We conclude this section by observing the finite interval property does hold for those
Cubist sets which are obtained from the corner configuration ${\mathcal X}_{CC}$ by removing finitely many boxes.

\begin{lemma} \label{finite interval property}
Suppose that ${\mathcal X}^-$ is obtained from the corner configuration ${\mathcal X}_{CC}^-$ 
by removing finitely many elements.
Then set $\{ z \in {\mathcal X} \mid  x \preceq z \preceq y \}$ is finite for all $x,y \in {\mathcal X}$.
\end{lemma}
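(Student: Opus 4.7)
The plan is to first establish the finite interval property for $\mathcal{X}_{CC}$ itself, where the partial order is described explicitly in Example~\ref{Cubistexamples}(2), and then to extend to the perturbed case using that $\mathcal{X}$ differs from $\mathcal{X}_{CC}$ in only finitely many elements.

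For $\mathcal{X}_{CC}$, I would start from $x \preceq z \preceq y$ and use Example~\ref{Cubistexamples}(2) to obtain $m(x) \leq m(z) \leq m(y)$ together with the coordinate inequalities $z_i \leq x_i$ for $i \leq m(x)$, $z_i \geq x_i$ for $i \geq m(z)$, $y_i \leq z_i$ for $i \leq m(z)$, and $y_i \geq z_i$ for $i \geq m(y)$. Combining these with the coordinate-wise bound $z \leq 0$ coming from $\mathcal{X}_{CC} \subset \mathbb{Z}^r_{\leq 0}$, a short case analysis on where $i$ lies relative to $m(x), m(z), m(y)$ confines each $z_i$ to a finite range; in particular, for $m(x) < i < m(y)$ one uses $y_i \leq z_i \leq 0$ when $i \leq m(z)$ and $x_i \leq z_i \leq 0$ when $i > m(z)$. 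Since $m(z)$ is also bounded, $[x,y]_\preceq$ is finite.

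For the general case $\mathcal{X}^- = \mathcal{X}_{CC}^- \setminus F$ with $F$ finite, I would record three easy observations: (i) $\mathcal{X} \subset \mathbb{Z}^r_{\leq 0}$; (ii) the symmetric difference $\mathcal{X} \triangle \mathcal{X}_{CC}$ is finite; and (iii) for any Cubist set, a cover $y \in \lambda z$ forces $y_1 - z_1 \in \{0,1\}$ and $y_r - z_r \in \{0,-1\}$ by the definition of $F_i$, yielding $y_1 \leq z_1 \leq x_1$ and $x_r \leq z_r \leq y_r$ for $z \in [x,y]_\preceq$. The contribution of $\mathcal{X} \setminus \mathcal{X}_{CC}$ to the interval is finite by (ii). For $z \in \mathcal{X} \cap \mathcal{X}_{CC}$ in the interval, I would bound the middle coordinates by exploiting that $\lambda_{\mathcal{X}}(z) = \lambda_{\mathcal{X}_{CC}}(z)$ whenever $z$ is sufficiently far from the perturbation region, so as to transfer the Step~1 coordinate bounds to such points.

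The main obstacle is that chains realizing $x \preceq_{\mathcal{X}} z$ may traverse the perturbation region, so the explicit description in Example~\ref{Cubistexamples}(2) does not transfer verbatim to $\mathcal{X}$, even on $\mathcal{X} \cap \mathcal{X}_{CC}$. To handle this I would split $[x,y]_\preceq$ into a bounded neighbourhood of $F$ (which contains only finitely many points of $\mathcal{X}$) and its complement, and for $z$ in the complement argue that any descending chain through the perturbation can be rerouted within $\mathcal{X}_{CC}$ at the cost of a bounded number of extra steps. This should reduce the bound on the middle coordinates of $z$ to the Step~1 analysis for $\mathcal{X}_{CC}$, absorbing finitely many additional exceptions into the bounded region and completing the finite interval bound.
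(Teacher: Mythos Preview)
Your treatment of $\mathcal{X}_{CC}$ is correct and is exactly what the paper does: its entire proof is the single sentence ``The finite interval property holds for $\mathcal{X}_{CC}$, by Example~\ref{Cubistexamples}(2).'' So for the corner configuration you and the paper agree, with your version simply unpacking what that citation means.

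For the perturbed case you are actually doing more than the paper, which does not spell out the passage from $\mathcal{X}_{CC}$ to a general $\mathcal{X}$ with $\mathcal{X}^- = \mathcal{X}_{CC}^- \setminus F$. Your observations (i)--(iii) are correct and useful (in particular (iii) bounds $z_1$ and $z_r$ for any Cubist set), but the ``rerouting'' argument for the middle coordinates is where the proposal becomes a sketch rather than a proof. The difficulty you flag is real: $\preceq_{\mathcal{X}}$ is not the restriction of $\preceq_{\mathcal{X}_{CC}}$, and a $\preceq_{\mathcal{X}}$-chain can pass through the finitely many points where $\lambda_{\mathcal{X}} \neq \lambda_{\mathcal{X}_{CC}}$ arbitrarily often in principle, so ``rerouting at bounded cost'' needs a concrete mechanism. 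If you want to complete this line, one clean way is to induct on $|F|$: removing a single maximal element of the ideal is exactly a flip in the sense of Lemma~\ref{flipable}, so it suffices to check that the finite interval property survives a flip, which is a local comparison of $\lambda_{\mathcal{X}}$ and $\lambda_{\mathcal{X}'}$ near the flipped vertex.
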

\begin{proof} 
The finite interval property holds for ${\mathcal X}_{CC}$, by example~\ref{Cubistexamples}(2).
\end{proof}

\section {Standard Koszulity}

\begin{theorem} $U_r$ and $V_r$ are Koszul dual. 
\end{theorem}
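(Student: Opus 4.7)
Since $V_r$ is defined to be the quadratic dual of $U_r$, the content of the theorem is the assertion that this quadratic duality is \emph{Koszul} duality; equivalently, that $V_r$ (or $U_r$) is a Koszul algebra. Koszulity of one of a pair of quadratic duals forces Koszulity of the other and identifies the Ext-algebra of the trivial module (through its semisimple quotient) with the quadratic dual. So my plan is to prove Koszulity of $V_r$ and then invoke this standard fact; Koszulity of $U_r$ will follow for free.

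The strategy for Koszulity of $V_r$ is to reuse, and sharpen, the construction already carried out in the proof of Lemma~\ref{lin}. There we built a linear projective resolution of $\Delta_{V,i}=V_r\otimes_{\Omega_i}k$ by tensoring the Koszul resolution of $k$ over the polynomial algebra $\Omega_i$ with $V_r$, exploiting the freeness of $V_r$ over $P_i$ and of $P_i$ over $\Omega_i$ established by Lemma~\ref{monomial} and Lemma~\ref{free}. To promote this into a linear resolution of the simple module $L(x)$ itself, one further resolves $L(x)$ as a $P_i$-module (with $i=i_x$) by splicing in the Koszul complex for the remaining polynomial generator(s) of $P_i$ over its degree-zero part, and then tensors with $V_r$ over $P_i$. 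The total complex is manifestly linear — each differential is built out of the degree-$1$ generators $\alpha_{x,j},\beta_{x,j}$ — and exactness is preserved by the freeness statements above. By Koszulity's standard criterion (simples have linear projective resolutions), this gives Koszulity of $V_r$.

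An alternative, more combinatorial route is through Priddy's PBW theorem. Lemma~\ref{monomial} exhibits a basis $B_i$ which is precisely the collection of monomials surviving after imposing the commutation relations as reduction rules and using the Milnor relation to rewrite $\beta_i\alpha_i=-\sum_{j\neq i}\beta_j\alpha_j$. Once one verifies that the resulting rewriting system is confluent with respect to a suitable monomial order on paths in $Q'$, the hypothesis of Priddy's theorem holds and Koszulity of $V_r$ is immediate.

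The main obstacle in either approach is the Milnor/Heisenberg relation, which couples different coordinate directions and prevents $V_r$ from decomposing as a tensor product of rank-one Koszul pieces. The device of singling out one coordinate $i$ (passing from $\Lambda_r$ to $\Omega_i$, i.e. from the complete intersection $k[\alpha,\beta]/(\sum\gamma_i)$ to the genuine polynomial subalgebra $k[\beta_1,\dots,\beta_{i-1},\alpha_{i+1},\dots,\alpha_r]$) is exactly the trick that neutralises this coupling: it reduces the Koszul question to that of a genuine polynomial ring, and the choice of $i$ introduces the same asymmetry that appears in the PBW basis $B_i$. Once this reduction is in hand, the remaining checks are routine, and the Koszul duality $U_r\simeq V_r^!$ is automatic from the definitions.
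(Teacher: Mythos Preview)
Your first approach has a gap. The functor $V_r\otimes_{P_i}(-)$ sends the simple $P_i$-module $L(x)$ to $\Delta_{V,i}(x)$ \emph{by definition}, not to the simple $V_r$-module $L(x)$. So tensoring a $P_i$-projective resolution of $L(x)$ with $V_r$ over $P_i$ merely reproduces the linear resolution of $\Delta_{V,i}(x)$ already built in Lemma~\ref{lin}; it does not promote it to a resolution of the $V_r$-simple $L(x)$. Moreover the positive-degree generators of $P_i$ are precisely those of $\Omega_i$, so there are no ``remaining'' generators to splice in. (Note also that $i_x$ is only defined relative to a Cubist subset $\mathcal{X}$; for $V_r$ itself there is no distinguished $i$ attached to $x$.)

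The paper closes this gap by replacing $\Omega_i$ with the full complete intersection $\Lambda_r=k[\alpha_1,\ldots,\alpha_r,\beta_1,\ldots,\beta_r]/(\sum\gamma_i)$. Since any commutative quadratic complete intersection is Koszul, $\Lambda_r$ has a Koszul resolution of $k$; and since $V_r$ is free over $\Lambda_r$ (Lemma~\ref{free}), tensoring this resolution up to $V_r$ yields a linear projective resolution of $V_r\otimes_{\Lambda_r}k=V_r^0$, the direct sum of \emph{all} simples, in one stroke. The Milnor relation you identified as the obstacle is thus absorbed as the single defining relation of the complete intersection $\Lambda_r$, and no choice of preferred coordinate is needed.

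Your second route via Priddy's PBW criterion is viable in principle: $B_i$ is a PBW basis for a suitable monomial order in which the Milnor relation becomes the rewriting rule $\beta_i\alpha_i\to-\sum_{j\neq i}\beta_j\alpha_j$. But the confluence verification, while routine, has several overlap cases and is more laborious than the one-line $\Lambda_r$ argument.
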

\begin{proof}
Note that $\Lambda_r$ is a Koszul algebra, whose Koszul complex 
$$\Lambda_r \otimes_k (\Lambda_r^!)^* \twoheadrightarrow k,$$
defines a linear projective resolution of $_{\Lambda_r}k$. (More generally, any commutative complete
intersection with quadratic regular sequence is Koszul, see \cite[\S3.1]{Froberg}.)
Tensoring over with $V_r$ over $\Lambda_r$, we obtain a linear projective resolution,
$$V_r \bigotimes_{\Lambda_r} \left( \Lambda_r \otimes_k (\Lambda_r^!)^* \right) \rightarrow 
V_r \bigotimes_{\Lambda_r} k = V_r^0,$$
of the degree zero part of $V_r$. 
The Koszul dual of $V_r$ is equal to its quadratic dual, namely $U_r$.
\end{proof}

\bigskip

Let ${\mathcal T}$ be a finite truncation of the poset $({\mathcal X}, \succeq^{op})$. Thus ${\mathcal T}$ is the intersection
of an ideal ${\mathcal T}_1$, and a coideal ${\mathcal T}_2$ in ${\mathcal X}$, and ${\mathcal T}$ has finitely many elements.
Since $V_{\mathcal X}\mMod$ has a highest weight module category, 
it has a finite dimensional subquotient $V_{\mathcal T}$,
which is quasi-hereditary, and whose simple modules are indexed by ${\mathcal T}$ (\cite{MR961165}, Theorem 3.9).

\begin{definition}
(\'Agoston, Dlab, and Luk\'acs, \cite{Agoston-Dlab-Lukacs}) 
A \emph{Standard Koszul algebra} is a
quasi-hereditary algebra, whose standard modules all possess linear resolutions.
\end{definition}

\begin{proposition}
Standard modules for $V_{\mathcal T}$ have linear projective resolutions.
$V_{\mathcal T}$ is Koszul.

The Koszul dual $V_{\mathcal T}^!$ of $V_{\mathcal T}$ is quasi-hereditary, with respect to $\succeq$.
Standard modules for $V_{\mathcal T}^!$ have linear projective resolutions.
\end{proposition}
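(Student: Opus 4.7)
The plan is to reduce all four claims to Lemma~\ref{lin} and the theorem of \'Agoston, Dlab and Luk\'acs \cite{Agoston-Dlab-Lukacs}, which asserts that a finite-dimensional standard Koszul algebra $A$ is Koszul, its Koszul dual $A^!$ is quasi-hereditary with respect to the opposite order, and $A^!$ is itself standard Koszul. Thus everything hinges on proving the first assertion: standard modules of $V_{\mathcal T}$ have linear projective resolutions.

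The strategy is to pass through a derived equivalence. The recollement Theorem~\ref{recollements}, together with its dual for the coideal ${\mathcal T}_2$, provides a full embedding
\[
D^b(V_{\mathcal T}\mMod) \hookrightarrow D^b(V_{\mathcal X}\mMod)
\]
obtained by composing the truncations associated to ${\mathcal T}_2$ and ${\mathcal T}_1$. Under this embedding, standards and simples at $x \in {\mathcal T}$ correspond: $\Delta_{V_{\mathcal T}}(x)$ is identified with $\Delta_{V_{\mathcal X}}(x)$, and $L_{V_{\mathcal T}}(x)$ with $L_{V_{\mathcal X}}(x)$. Fullness then gives
\[
\operatorname{Ext}^i_{V_{\mathcal T}}(\Delta_{V_{\mathcal T}}(x), L_{V_{\mathcal T}}(y)\langle n\rangle) \cong \operatorname{Ext}^i_{V_{\mathcal X}}(\Delta_{V_{\mathcal X}}(x), L_{V_{\mathcal X}}(y)\langle n\rangle)
\]
for all $x,y \in {\mathcal T}$ and $n \in \mathbb{Z}$. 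Lemma~\ref{lin} shows the right-hand side vanishes unless $i = n$, so the linear resolution property transfers to $\Delta_{V_{\mathcal T}}(x)$.

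Given this first assertion, $V_{\mathcal T}$ is a finite-dimensional standard Koszul quasi-hereditary algebra, and the theorem of \'Agoston--Dlab--Luk\'acs delivers the remaining three claims at once: Koszulity of $V_{\mathcal T}$, quasi-heredity of $V_{\mathcal T}^!$ with respect to the opposite order $\succeq$, and linear projective resolutions for the standards of $V_{\mathcal T}^!$.

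The main obstacle is the construction of the derived-category embedding. Theorem~\ref{recollements} covers the ideal direction and relies on the dual of \cite[Theorem 3.9]{MR961165}; one needs the corresponding coideal construction, and then has to verify that the two truncations compose into a full embedding respecting standards and simples. An alternative route, should this Ext-transfer prove awkward, is to argue directly that the linear $V_{\mathcal X}$-resolution of $\Delta_{V_{\mathcal X}}(x)$ --- whose terms are indexed by the facet $\lambda x \subset {\mathcal T}_2$ --- descends through the two-stage truncation via exactness of the coideal idempotent functor and Tor-vanishing for the hereditary ideal that carves out ${\mathcal T}_1$ within ${\mathcal T}_2$.
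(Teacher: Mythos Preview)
Your overall strategy---reduce everything to showing that the standards of $V_{\mathcal T}$ have linear projective resolutions, then invoke \'Agoston--Dlab--Luk\'acs---is exactly what the paper does, and the three remaining conclusions are drawn from that theorem just as you describe.

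There are, however, two genuine gaps in the execution.

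First, you omit a reduction that the paper makes at the very start. Theorem~\ref{recollements} carries the finite interval property as a hypothesis, and an arbitrary Cubist subset $\mathcal{X}$ need not satisfy it. The paper deals with this by invoking Lemma~\ref{corner approximation}: for $N \gg 0$ the same finite truncation $V_{\mathcal T}$ arises from the Cubist subset $\mathcal{X}(t,N)$, which is obtained from a shifted corner configuration by removing finitely many boxes and therefore has finite intervals by Lemma~\ref{finite interval property}. Only after replacing $\mathcal{X}$ by $\mathcal{X}(t,N)$ is the recollement available. Without this step your appeal to Theorem~\ref{recollements} is unjustified.

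Second, your primary route via a composite full embedding $D^b(V_{\mathcal T}) \hookrightarrow D^b(V_{\mathcal X})$ sending simples to simples and standards to standards does not work as stated. The ideal step (Theorem~\ref{recollements}) does give such an embedding $D^b(V_{\mathcal{T}_1}) \hookrightarrow D^b(V_{\mathcal{X}})$. But the coideal step is an idempotent truncation $V_{\mathcal T} = f_{\mathcal T} V_{\mathcal{T}_1} f_{\mathcal T}$, and neither adjoint to the exact functor $f_{\mathcal T}(-)$ sends $L_{V_{\mathcal T}}(y)$ to $L_{V_{\mathcal{T}_1}}(y)$; the adjoint that is fully faithful on the derived level sends projectives to projectives, not simples to simples. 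So your displayed Ext isomorphism is not available through this composite. The paper sidesteps the issue entirely: it uses the recollement only for the ideal step, to transfer the linear resolution from $V_{\mathcal X}$ to $V_{\mathcal{T}_1}$, and then for the coideal step it simply applies the exact functor $f_{\mathcal T}(-) = \operatorname{Hom}_{V_{\mathcal{T}_1}}(V_{\mathcal{T}_1} f_{\mathcal T}, -)$ directly to that resolution. The point is that the terms of the resolution are indexed by $t \in \lambda s$, and one checks from the order relations that $\lambda s \subset \mathcal T$, so each $f_{\mathcal T} V_{\mathcal{T}_1} f_t$ is already an indecomposable projective $V_{\mathcal T}$-module. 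This is precisely your ``alternative route'', which the paper treats as the actual argument rather than a fallback.
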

\begin{proof}
Let $t \in {\mathcal T}$.
Let ${\mathcal X}(t,N)$ be a Cubist subset of $\mathbb{Z}^r$ defined as in Lemma~\ref{corner approximation}. Thus
${\mathcal X}(t,N)$ is identical to $\mathcal{X}$ in the region of radius $N$ about $t$, and is
obtained by removing finitely many boxes from a shift of the Corner configuration. We know by
Lemma~\ref{finite interval property} that ${\mathcal X}(t,N)$ satisfies the finite interval property. 
Note that for $N>>0$, the finite truncation $V_{\mathcal T}$ is also a finite truncation of $V_{\mathcal X}(t,N)$.
Replacing ${\mathcal X}$ by ${\mathcal X}(t,N)$ for some $N>>0$, if necessary,
we may now assume that ${\mathcal X}$ possesses the finite interval property.

Let $\Delta(s)$ be a standard $V_{\mathcal X}$-module.
By Lemma~\ref{lin}, we have a linear projective resolution,
$$\bigoplus_{t \in \lambda s} V_{\mathcal X}f_t \twoheadrightarrow \Delta(s),$$
of $\Delta(s)$. The term $V_{{\mathcal X}}f_t$ rests in homological degree $d(s,t)$.

We first prove that standard modules for $V_{{\mathcal T}_1}$ have linear projective resolutions.
Indeed, let $L$ be a simple $V_{{\mathcal T}_1}$-module, and assume $s \in {\mathcal T}_1$.
We have, by Lemma~\ref{recollements},
$$Ext_{V_{{\mathcal T}_1}\mMod}^*(L \langle j \rangle, \Delta(s)) 
\cong Ext_{V_{\mathcal X}\mMod}^*(L \langle j \rangle, \Delta(s)).$$
Therefore, our linear projective resolution of $\Delta(s)$ in $V_{\mathcal X}\mMod$ 
descends to a linear projective resolution of $\Delta(s)$ in $V_{{\mathcal T}_1}\mMod$. 

Let $s \in {\mathcal T}$. Let $f_{\mathcal T} = \sum_{t \in {\mathcal T}} f_t$.
A standard module for $V_{\mathcal T}$ is obtained by 
applying the functor $Hom_{V_{{\mathcal T}_1}}(V_{{\mathcal T}_1}f_{\mathcal T},-)$
to the standard $V_{{\mathcal T}_1}$-module $\Delta(s)$. 
Applying 
this functor
to our resolution of $\Delta(s)$, 
we obtain a linear resolution of $f_{\mathcal T} \Delta(s)$, as required. 
The terms in this resolution are projective, because they are sums of modules $f_{\mathcal T}V_{{\mathcal T}_1}f_t$
such that $t \in \lambda s \cap {\mathcal T}_1$; and therefore 
$t \succeq^{op} s$, and $t \in {\mathcal T}_1 \cap {\mathcal T}_2 = {\mathcal T}$.

We have thus proved that the algebra $V_{\mathcal T}$ is a standard Koszul algebra, in the sense of 
\'Agoston, Dlab, and Luk\'acs \cite{Agoston-Dlab-Lukacs}.
In other words, $V_{\mathcal T}$ is a quasi-hereditary algebra, 
all of whose standard modules have linear projective resolutions. 
These authors have proved that such algebras are always Koszul, and that their Koszul duals are standard Koszul,
with respect to the opposite partially ordered set.
Therefore $V_{\mathcal T}$ is Koszul, and $V_{\mathcal T}^!$ is standard Koszul with respect to $\succeq$. \end{proof}

\begin{theorem} \label{VKoszul}
$V_{\mathcal X}$ is Koszul. 
\end{theorem}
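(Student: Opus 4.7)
The plan is to leverage the just-proved fact that every finite truncation $V_{\mathcal T}$ is Koszul, by reducing the Ext computations in $V_{\mathcal X}\mMod$ to such a truncation. Fix $x,y\in\mathcal{X}$ and $n,j\geq 0$. We aim to show
$$\operatorname{Ext}^n_{V_{\mathcal X}}(L(x),L(y)\langle j\rangle)=0\quad\text{whenever } j\neq n.$$
Since the generators of the $n$-th term of a minimal projective resolution of $L(x)$ lie in a ball of bounded radius about $x$ (and have internal degree at most $n$, with $d(x,z)\leq n$ for each generator $f_z$ appearing), the calculation of this particular Ext group should be insensitive to the shape of $\mathcal{X}$ away from $x$, and should localize to a finite piece of the algebra.

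First, I would invoke Lemma~\ref{corner approximation} to replace $\mathcal{X}$ by a Cubist subset $\mathcal{X}' = \mathcal{X}(x,N)$ for $N$ much larger than $n+j+d(x,y)$. Since $\mathcal{X}'$ agrees with $\mathcal{X}$ on the ball of radius $N$ about $x$, the quivers and relations defining $V_{\mathcal{X}}$ and $V_{\mathcal{X}'}$ coincide near $x$; by the local description of the minimal projective resolution sketched above, the two resolutions agree up to the relevant homological degree, so
$$\operatorname{Ext}^n_{V_{\mathcal{X}}}(L(x),L(y)\langle j\rangle)=\operatorname{Ext}^n_{V_{\mathcal{X}'}}(L(x),L(y)\langle j\rangle).$$
By Lemma~\ref{finite interval property}, $\mathcal{X}'$ satisfies the finite interval property, so Theorem~\ref{recollements} applies.

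Next, I would pick a finite ideal $\mathcal{T}_1\subseteq \mathcal{X}'$ (with respect to $\preceq$) containing the ball of radius $N$ about $x$, together with $y$. The full embedding $D^b(V_{\mathcal{T}_1}\mMod)\hookrightarrow D^b(V_{\mathcal{X}'}\mMod)$ from Theorem~\ref{recollements} gives
$$\operatorname{Ext}^n_{V_{\mathcal{X}'}}(L(x),L(y)\langle j\rangle)=\operatorname{Ext}^n_{V_{\mathcal{T}_1}}(L(x),L(y)\langle j\rangle).$$
Now choose a finite coideal $\mathcal{T}_2$ containing all elements of $\mathcal{X}'$ at distance $\leq n$ from $x$, and set $\mathcal{T}=\mathcal{T}_1\cap \mathcal{T}_2$. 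The subquotient $V_{\mathcal T}$ is quasi-hereditary and, by the preceding Proposition, Koszul, so
$$\operatorname{Ext}^n_{V_{\mathcal{T}}}(L(x),L(y)\langle j\rangle)=0\quad\text{for } j\neq n.$$
The final step is to match this with $\operatorname{Ext}^n_{V_{\mathcal{T}_1}}(L(x),L(y)\langle j\rangle)$: the multiplication functor $f_{\mathcal T}\cdot:V_{\mathcal{T}_1}\mMod\to V_{\mathcal T}\mMod$ preserves the first $n$ terms of minimal projective resolutions of $L(x)$ provided all projective summands $V_{\mathcal{T}_1}f_z$ appearing in those degrees satisfy $z\in\mathcal{T}$, which is ensured by our choice of $\mathcal{T}_2$ together with the bound $d(x,z)\leq n$ on generators of the resolution.

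The main obstacle is the locality claim: showing rigorously that in the minimal projective resolution of $L(x)$ over $V_{\mathcal X}$, the generators in homological degree $n$ are concentrated in internal degree $\leq n$ and at distance $\leq n$ from $x$, and that this data is determined by the algebra in a ball of radius $n$ around $x$. One can establish this inductively, using the fact (from Lemma~\ref{lin}) that each standard module admits a linear projective resolution with generators in the facet $\lambda z$, together with the standard filtration on $V_{\mathcal X}f_x$ provided by the highest weight structure (Theorem~\ref{Vhwc}). Once locality is in hand, all four comparisons chain together to give $\operatorname{Ext}^n_{V_{\mathcal X}}(L(x),L(y)\langle j\rangle)=0$ for $j\neq n$, which is Koszulity.
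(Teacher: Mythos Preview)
Your overall strategy---reduce the question at each pair $(x,y)$ and each bidegree $(n,j)$ to a finite truncation where Koszulity is already known---is exactly the paper's. The difference is in how locality is established, and this is where your argument has a gap.

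You propose to compare minimal projective resolutions of $L(x)$ over $V_{\mathcal X}$ and over the truncated algebras, asserting that ``the generators in homological degree $n$ are concentrated in internal degree $\leq n$''. But for a positively graded algebra the generators in homological degree $n$ of a minimal resolution automatically sit in internal degree $\geq n$, so your claim says the resolution is \emph{linear}---which is precisely Koszulity. The suggested inductive fix (splice linear resolutions of standards along a standard filtration of $Vf_x$) resolves $Vf_x$, not $L(x)$; turning this into a linear resolution of $L(x)$ is essentially the content of the \'Agoston--Dlab--Luk\'acs theorem, which is only available for the finite truncations.

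The paper sidesteps this circularity by working with the Koszul complex instead of the minimal resolution. First it observes that $V_{\mathcal X}$ is quadratic, since each $V_{\mathcal T(x,N)}$ is quadratic and quadraticity is a statement about degrees $0,1,2$ only. Then it forms the Koszul complex $K=\bigoplus_N K_N$ with $K_N=\bigoplus_{i+j=N}(V_{\mathcal X})_i\otimes(V_{\mathcal X}^!)_j^*$. The point is that $f_x K_N$ is built entirely from pieces of the algebra (and its quadratic dual) in degrees $\leq N$, hence is determined by the ball of radius $N$ about $x$; this makes the identification with the corresponding summand of the Koszul complex of $V_{\mathcal T(x,N)}$ immediate and non-circular. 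Exactness there (from Koszulity of the truncation) gives exactness of $f_x K_N$, and hence Koszulity of $V_{\mathcal X}$.

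So: keep your reduction via Lemma~\ref{corner approximation} and the preceding Proposition, but replace ``minimal projective resolution'' by ``Koszul complex'' (after first deducing quadraticity). Your intermediate steps through $\mathcal T_1$ and Theorem~\ref{recollements} are then unnecessary; the comparison is a direct identification of finite-dimensional complexes.
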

\begin{proof}

Let $x \in {\mathcal X}$.
Let ${\mathcal X}(x,N)$ be a Cubist subset of $\mathbb{Z}^r$ defined in Lemma~\ref{corner approximation}. Thus
${\mathcal X}(x,N)$ is identical to $\mathcal{X}$ in the region of radius $N$ about $x$, and is
obtained by removing finitely many boxes from a shift of the Corner configuration. We know by
Lemma~\ref{finite interval property} that ${\mathcal X}(x,N)$ satisfies the finite interval property. Consequently
there exists a finite subset ${\mathcal T}(x,N)$ of ${\mathcal X}(x,N)$, which is the intersection of
an ideal and a coideal, and contains the region of radius $N$ about $x$.
The algebra $V_{{\mathcal T}(x,N)}$ is therefore Koszul by the previous theorem.
In particular, $V_{{\mathcal T}(x,N)}$ is a quadratic algebra, and as this is true for all $x,N$, the algebra 
$V_{\mathcal X}$ is quadratic. Let $K$ be the Koszul complex associated to the quadratic algebra $V_{\mathcal X}$.
Thus, $K = \bigoplus_{N \geq 0} K_N$ is the sum of complexes 
$$K_N = \bigoplus_{i+j = N} (V_{\mathcal X})_i \otimes (V_{\mathcal X}^!)_j^*.$$
To prove that $V_{\mathcal X}$ is Koszul, it suffices to show that $f_x K_N$ is exact for all $x$, and all $N \geq 1$.
This is true, however, 
because we can identify $f_x K_N$ with the corresponding summand of the Koszul complex of 
$V_{{\mathcal T}(x,N)}$.
\end{proof}

\begin{proposition} 
$V_{\mathcal X}$ is isomorphic to the path algebra of the quiver with vertices 
$$\{ f_x \mid  x \in {\mathcal X} \},$$
and arrows
$$\{ \alpha_{x,i} \mid  x, x+\epsilon_i \in {\mathcal X}  \} \cup
\{ \beta_{x,i} \mid  x, x-\epsilon_i \in {\mathcal X}  \},$$
modulo the ideal generated by quadratic relations,
$$\alpha_{x,i} \alpha_{x + \epsilon_i,j} - \alpha_{x,j} \alpha_{x + \epsilon_j,i} = 0 \
(x,x+\epsilon_i,x+\epsilon_j,x+\epsilon_i+\epsilon_j \in {\mathcal X}),$$
$$\beta_{x,i} \beta_{x - \epsilon_i,j} - \beta_{x,j} \beta_{x - \epsilon_j,i} = 0 \
(x,x-\epsilon_i,x-\epsilon_j,x-\epsilon_i-\epsilon_j \in {\mathcal X}),$$
$$\alpha_{x,i} \beta_{x + \epsilon_i,j} - \beta_{x,j} \alpha_{x - \epsilon_j,i} = 0 \ 
(x,x+\epsilon_i,x-\epsilon_j,x+\epsilon_i-\epsilon_j \in {\mathcal X}),$$ 
$$1 \leq i,j \leq r, \quad i\neq j,$$
$$\sum _{i} \xi_{x,i} \eta_{x,i} = 0 \ (x \in {\mathcal X}_{crooked}),$$
where 
 $$(\xi_{x,i}, \eta_{x,i}) =
 \begin{cases} 
(\beta_{x,i}, \alpha_{x-\epsilon_i}), & \textrm{ if } 
x-\epsilon_i \in {\mathcal X} \\
(\alpha_{x,i}, \beta_{x+\epsilon_i,i}), &
\textrm{ if } x-\epsilon_i \notin {\mathcal X}.
\end{cases}$$
\end{proposition}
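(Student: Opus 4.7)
The plan is to construct a graded algebra surjection $\phi : W \twoheadrightarrow V_{\mathcal{X}}$, where $W$ is the quiver algebra with the stated presentation, and then use the Koszul property of $V_{\mathcal{X}}$ established in Theorem~\ref{VKoszul} to reduce injectivity to a dimension comparison in degree $2$.

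First I would define $\phi$ by sending each vertex $f_x$ and each arrow of $W$ to its namesake inside $V_r \supset V_{\mathcal{X}}$. The commutation relations of $W$ are the commutation relations of $V_r$ restricted to quadruples of vertices lying entirely in $\mathcal{X}$, and hence hold in $V_{\mathcal{X}}$ automatically. For the Milnor-type relation at crooked $x$, begin from $\sum_i \beta_{x,i}\alpha_{x-\epsilon_i,i} = 0$ in $V_r$: summands with $x-\epsilon_i \in \mathcal{X}$ are already $\xi_{x,i}\eta_{x,i}$, while summands with $x-\epsilon_i \notin \mathcal{X}$ (necessarily $x+\epsilon_i \in \mathcal{X}$ by crookedness) are rewritten as $\alpha_{x,i}\beta_{x+\epsilon_i,i} = \xi_{x,i}\eta_{x,i}$ via the identity $\beta_{x,i}\alpha_{x-\epsilon_i,i} = \alpha_{x,i}\beta_{x+\epsilon_i,i}$ in $V_r$ (the $j=i$ case of the mixed commutation in $V_r$, encoding $\gamma_i \circ f_x = f_x \circ \gamma_i$). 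Summing yields the Milnor-type relation in $W$. Surjectivity of $\phi$ is then immediate since $V_{\mathcal{X}}$ is generated in degrees $0$ and $1$ by the images of the generators of $W$.

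For injectivity, Koszulity of $V_{\mathcal{X}}$ means it suffices to check $\phi$ is an isomorphism in degree $2$. From Remark~\ref{CartanV}, $\dim(V_{\mathcal{X}})_2|_{(x,y)}$ equals $r-1$ when $x=y$, $1$ when $d(x,y)=2$, and $0$ when $d(x,y)=1$. A case analysis verifies the matching dimensions for $W$: for $d(x,y)=2$ the commutation relations identify the two candidate length-$2$ paths (one exists in $W$ by the connectedness of $\mathcal{X}$ established in Corollary~\ref{def}); for $d(x,y)=1$ the count is zero by parity of length-$2$ displacements; and for loops at $x$, letting $s$, $t$, $u$ count the two-sided, $+\epsilon_i$-only, and $-\epsilon_i$-only directions (so $s+t+u=r$), the $2s+t+u$ candidate loops in $W$ are reduced by one via the single Milnor-type relation and further by $s$ centrality identifications, leaving exactly $r-1$ independent classes.

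The main obstacle is securing the centrality identification inside $W$ itself: at a vertex $x$ with a two-sided direction $i$, one needs the equality $\alpha_{x,i}\beta_{x+\epsilon_i,i} = \beta_{x,i}\alpha_{x-\epsilon_i,i}$. This is the $j=i$ case of the mixed commutation that is excluded from the listed relations of $W$, and it must be derived by combining the Milnor-type relation at $x$ with its counterparts at adjacent crooked vertices together with iterated applications of the mixed commutation for indices $k \neq i$, which transport terms between neighbouring Milnor relations. Once this is in place, the degree-$2$ dimension match together with Koszulity of $V_{\mathcal{X}}$ forces $\phi$ to be an isomorphism in all degrees.
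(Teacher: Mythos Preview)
Your overall strategy—build the surjection $\phi:W\twoheadrightarrow V_{\mathcal X}$, invoke Koszulity (Theorem~\ref{VKoszul}) to reduce to a degree-$2$ dimension comparison, and then count—is precisely the paper's route, and your handling of surjectivity and of the off-diagonal pieces $f_xW_2f_y$ with $d(x,y)=2$ is fine.

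The obstacle you isolate is real, and your proposed fix does not work. The centrality identity $\alpha_{x,i}\beta_{x+\epsilon_i,i}=\beta_{x,i}\alpha_{x-\epsilon_i,i}$ cannot in general be derived by chaining Milnor-type relations at neighbouring crooked vertices, because there may be no crooked vertices at all. Take the flat Cubist subset $\mathcal{H}_j$ of Example~\ref{Cubistexamples}: every vertex is flat, so $W$ carries \emph{no} Milnor-type relation, and none of the listed commutation relations (all with $i\neq j$) are loop relations. Hence the $2(r-1)$ loops at each $x$ are linearly independent in $W$, giving $\dim f_xW_2f_x=2(r-1)>r-1=\dim f_x(V_{\mathcal X})_2f_x$ for every $r\geq2$, so $\phi$ already has a kernel in degree~$2$. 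For $r=2$ this is the statement that the path algebra of the doubled $A_\infty$ quiver with no relations surjects onto, but is not isomorphic to, the preprojective algebra; compare Remark~\ref{smallrank}, where the preprojective relation at a flat vertex is obtained from the $V_2$ commutation relation with $i=j$.

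The paper's own proof shares this gap: its last sentence asserts that for flat $x$ the $r-1$ elements $\beta_{x,j}\alpha_{x-\epsilon_j,j}$, $j\neq i(x)$, form a basis of $f_x(V_{\mathcal X})_2f_x$, which is true in $V_{\mathcal X}$ but gives no mechanism inside $W$ to express $\alpha_{x,j}\beta_{x+\epsilon_j,j}$ in terms of them. The statement and both arguments become correct once the constraint $i\neq j$ is dropped from the mixed commutation family, matching the presentation of $V_r$ (where that family is indexed by $1\leq j\leq i\leq r$); with that amendment your degree-$2$ count goes through with no further obstacle.
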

\begin{proof} 

Since $V_{\mathcal X}$ is Koszul, it is generated in degrees zero and one, 
modulo the ideal generated by quadratic relations.
In degrees zero and one, by definition $V_{\mathcal X}$ has a basis
as described in the proposition. It remains to check the quadratic relations between these generators.

The commutation relations between generators of $V_{\mathcal X}$ are
visible as the first three families of relations given in the proposition. 
The Milnor relation at $x$ is inherited from $V_r$ if $x$ is crooked.
However, the degree two part of $f_x V_{\mathcal X} f_x$ 
has dimension $r-1$, and when $x$ is flat, the Milnor relation 
need not be invoked to demonstrate that 
the elements $\{ \beta_{x,j}\alpha_{x-\epsilon_j}, j \neq i(x) \}$ 
form a basis for this space.
\end{proof}

\begin{corollary}
If $x,y \in {\mathcal X}$, then $d_{\mathcal X}(x,y) = d_{\mathbb{Z}^r}(x,y)$. $\Box$
\end{corollary}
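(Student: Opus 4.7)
The plan is to compare two inequalities. The inequality $d_{\mathcal{X}}(x,y)\geq d_{\mathbb{Z}^r}(x,y)$ is immediate, since every sequence in $\mathcal{X}$ witnessing $d_{\mathcal{X}}(x,y)$ is in particular a sequence in $\mathbb{Z}^r$. So the work is in the reverse inequality.

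For the reverse inequality, I would appeal to Remark~\ref{CartanV}, which states that
\[
C_{V_{\mathcal{X}}}(q)_{xy}=(1-q^2)^{1-r}q^{d_{\mathbb{Z}^r}(x,y)}.
\]
Since $(1-q^2)^{1-r}$ has constant term $1$, this says the graded component $f_x(V_{\mathcal{X}})_{d_{\mathbb{Z}^r}(x,y)}f_y$ is nonzero, and moreover $(f_x V_{\mathcal{X}} f_y)_n=0$ for $n<d_{\mathbb{Z}^r}(x,y)$.

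Now I would use the quiver-and-relations presentation of $V_{\mathcal{X}}$ just proved in the previous Proposition. Under that presentation, the degree-$n$ component $f_x(V_{\mathcal{X}})_nf_y$ is spanned by the images of paths of length $n$ in the quiver of $V_{\mathcal{X}}$ from $y$ to $x$ (noting the convention that the arrows in the quiver from $x$ to its neighbours correspond to left multiplication). Every edge of this quiver connects two elements of $\mathcal{X}$ that differ by $\pm\epsilon_i$ for some $i$. Hence the existence of a nonzero element in degree $d_{\mathbb{Z}^r}(x,y)$ forces the existence of an actual path in the quiver of $V_{\mathcal{X}}$ of that length, which is a sequence inside $\mathcal{X}$ joining $x$ and $y$. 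This gives $d_{\mathcal{X}}(x,y)\leq d_{\mathbb{Z}^r}(x,y)$, completing the proof.

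There is no real obstacle here; the statement is essentially a translation between two computations of the minimum degree in which $f_x V_{\mathcal{X}} f_y$ is supported. The only thing one has to be slightly careful about is that $\mathcal{X}$ is connected in the sense required so that $d_{\mathcal{X}}(x,y)$ is defined, but this was already recorded in Corollary~\ref{def}(1).
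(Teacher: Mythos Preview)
Your argument is correct and is precisely the reasoning the paper has in mind: the corollary is stated with a bare $\Box$ immediately after the quiver-and-relations presentation of $V_{\mathcal{X}}$, and the intended proof is exactly the comparison you give between the lowest nonvanishing degree of $f_x V_{\mathcal{X}} f_y$ (which is $d_{\mathbb{Z}^r}(x,y)$ by Remark~\ref{CartanV}) and the shortest path length in the quiver of $V_{\mathcal{X}}$ (which is $d_{\mathcal{X}}(x,y)$). The minor remark about path direction is irrelevant since the quiver has an arrow in each direction along every edge of $\mathcal{X}$.
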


The Koszul dual of $V_{\mathcal X}$ is equal to the quadratic dual $V_{\mathcal X}^!$.
The quadratic presentation of $V^!_{\mathcal X}$ is by the quiver with vertices,
$$\{ e_x \mid  x \in {\mathcal X} \}$$
arrows,
$$\{ a_{x,i} \mid  x, x+\epsilon_i \in {\mathcal X}  \} \cup
\{ b_{x,i} \mid  x, x-\epsilon_i \in {\mathcal X}  \},$$
and relations,
$$a_{x,i} a_{x + \epsilon_i,i} = 0 \ 
(x \in {\mathcal X}, x + \epsilon_i \in {\mathcal X}, x + 2\epsilon_i \in {\mathcal X}),$$ 
$$b_{x,i} b_{x - \epsilon_i,i} = 0 \ 
(x \in {\mathcal X}, x - \epsilon_i \in {\mathcal X}, x - 2\epsilon_i \in {\mathcal X}),$$
$$ 1 \leq i \leq r.$$
$$a_{x,i} a_{x + \epsilon_i,j} + a_{x,j} a_{x + \epsilon_j,i} = 0 \
(x,x+\epsilon_i+\epsilon_j \in {\mathcal X}),$$
$$b_{x,i} b_{x - \epsilon_i,j} + b_{x,j} b_{x - \epsilon_j,i} = 0 \
(x,x-\epsilon_i-\epsilon_j \in {\mathcal X}),$$
$$a_{x,i} b_{x + \epsilon_i,j} + b_{x,j} a_{x - \epsilon_j,i} = 0 \ 
(x,x+\epsilon_i-\epsilon_j \in {\mathcal X}),$$
$$1 \leq i,j \leq r,\quad i\neq j,$$
$$b_{x,i} a_{x- \epsilon_i,i} + a_{x,i} b_{x + \epsilon_i,i} = 
b_{x,i+1} a_{x- \epsilon_{i+1,i+1}} + a_{x,i+1} b_{x + \epsilon_{i+1,i+1}},$$ 
$$(x \in {\mathcal X}), 1 \leq i < r.$$
Here, the term $a_{x,i} a_{x+\epsilon_i,j}$ is defined to be zero
if $x+\epsilon_i$ is not an element of $\mathcal{X}$.  The same convention
applies to any term in the last four relations. 

\begin{lemma}
$V^!_{\mathcal X}$ is a locally finite dimensional algebra. 
\end{lemma}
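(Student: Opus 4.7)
The plan is to compute the graded Cartan matrix of $V^!_{\CX}$ explicitly via Koszul duality and observe by inspection that every entry is a polynomial. The quiver presenting $V^!_{\CX}$ has finite valency at each vertex, so each space $e_x(V^!_{\CX})_n e_y$ is finite dimensional, and $C_{V^!_{\CX}}(q)$ is a well-defined matrix of formal power series indexed by $\CX$. The Koszul property of $V_{\CX}$ (Theorem~\ref{VKoszul}) then yields the standard numerical Koszul duality identity
$$C_{V^!_{\CX}}(q)\; C_{V_{\CX}}(-q)^T\;=\;I.$$

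I would then unpack the right-hand side using the highest weight structure on $V_{\CX}\mMod$. Brauer--Humphreys reciprocity (Theorem~\ref{Vhwc}) gives $C_{V_{\CX}}(q) = D_{V_{\CX}}(q)^T D_{V_{\CX}}(q)$, while Lemma~\ref{lin} identifies $D_{V_{\CX}}(q)$ with $\tilde D_{V_{\CX}}(q)$. Applying Lemma~\ref{formula} in the form $\tilde D_{V_{\CX}}(-q)=\tilde D_{U_{\CX}}(q)^{-T}$, a short manipulation transforms the Koszul duality identity into
$$C_{V^!_{\CX}}(q) \;=\; \tilde D_{U_{\CX}}(q)^{T}\; \tilde D_{U_{\CX}}(q),$$
whose $(x,y)$-entry equals
$$\sum_{\substack{z\in\CX\\ x,\,y\,\in\,\lambda z}} q^{d(z,x)+d(z,y)}.$$

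Finally I would observe that this sum is finite with bounded degree: each facet $\lambda z$ is a unit $(r-1)$-cube by Proposition~\ref{vertex-facet bijection}, so the set of $z\in\CX$ with $x\in\lambda z$ has cardinality at most $r\cdot 2^{r-1}$, and each contributing monomial has degree $d(z,x)+d(z,y)\leq 2(r-1)$. Hence every entry of $C_{V^!_{\CX}}(q)$ is a polynomial of degree at most $2(r-1)$, so $e_x V^!_{\CX} e_y$ is finite dimensional for all $x,y\in\CX$. The principal obstacle will be rigorously justifying the numerical Koszul duality identity in our infinite-dimensional, locally finite setting; but since $V_{\CX}$ is Koszul and both $V_{\CX}$ and $V^!_{\CX}$ have locally finite idempotent-truncated graded pieces, the identity holds term by term by unravelling the Koszul complex of $V_{\CX}$ against each projective.
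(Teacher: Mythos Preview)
Your argument is correct and takes a genuinely different route from the paper's proof. The paper argues directly from the quiver relations: using the supercommutation and Heisenberg relations, any path of length $2r$ in $V^!_{\CX}$ can be rewritten so that an initial segment of length $r$ traverses all $r$ coordinate directions, i.e.\ crosses an $r$-cube; since no $r$-cube lies in $\CX$ (Corollary~\ref{def}(2)), some intermediate vertex falls outside $\CX$ and the path vanishes. This is an elementary, self-contained computation that uses nothing beyond the presentation of $V^!_{\CX}$ and the basic combinatorics of Cubist subsets.

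Your approach instead leverages the full strength of what has already been proved: Koszulity of $V_{\CX}$, the highest weight structure, and the explicit inverse relation between $\tilde D_{U_{\CX}}$ and $\tilde D_{V_{\CX}}$. The payoff is that you do not merely bound the degree of $e_x V^!_{\CX} e_y$ but actually compute $C_{V^!_{\CX}}(q) = \tilde D_{U_{\CX}}(q)^T \tilde D_{U_{\CX}}(q)$ exactly, which anticipates Corollary~\ref{preformula} (established only after the identification $V^!_{\CX}\cong U_{\CX}$). The cost is that you must verify the Koszul Cartan identity and the various infinite matrix products are well-defined in this locally finite setting; you flag this correctly, and the row/column finiteness needed for the products does hold (each $\lambda z$ is an $(r-1)$-cube, and the unitriangularity of the decomposition matrices with respect to $\succeq$ makes the inverses two-sided). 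So both proofs work: the paper's is cheaper and more direct, while yours extracts sharper numerical information at the price of invoking heavier machinery.
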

\begin{proof}

The relations allow an element of degree $2r$ to be written as a sum of elements,
$$c^1_{\sigma 1}\ldots c^{r}_{\sigma r} d^1_{\sigma 1}\ldots d^{r}_{\sigma r},  \sigma \in \Sigma_r, 
\{c^i,d^i \} = \{a,b \}.$$ 
The term $c^1_{\sigma 1}\ldots c^{r}_{\sigma r}$ represents a path of length $r$ across an $r$-cube.
All such paths are equal up to sign, by the supercommutation relations, 
and there exists such a path through each vertex
of the $r$-cube. However, no $r$-cube is a subset of ${\mathcal X}$, so this term is equal to zero
in $V^! _{\mathcal X}$.
\end{proof}

\bigskip

Because $V^!_{\mathcal X} = Ext^*_{V_{\mathcal X}}(V^0_{\mathcal X}, V^0_{\mathcal X})$ 
is locally finite dimensional, we have the following fact:

\begin{corollary}
$V_{\mathcal X}$ has finite global dimension. $\Box$
\end{corollary}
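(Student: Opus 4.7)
The plan is to exploit Koszul duality to transfer information about $V^!_{\mathcal X}$ into a bound on projective dimensions. Since $V_{\mathcal X}$ is Koszul by Theorem~\ref{VKoszul}, we have the standard identification
\[
e_y (V^!_{\mathcal X})_i e_x \;\cong\; \operatorname{Ext}^i_{V_{\mathcal X}}\bigl(L(x),L(y)\bigr),
\]
so any upper bound on the top degree of $V^!_{\mathcal X}$ immediately yields vanishing of high-degree Ext groups between simples.

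First I would revisit the proof of the previous lemma. That argument does not merely show that $e_y V^!_{\mathcal X} e_x$ is finite-dimensional: it shows that every element of $V^!_{\mathcal X}$ of degree $2r$ can be written as a sum of paths crossing an $r$-cube, and such a path is forced to be zero because no $r$-cube is contained in ${\mathcal X}$ by Corollary~\ref{def}(2). Thus $(V^!_{\mathcal X})_i = 0$ for all $i \geq 2r$, and the Koszul isomorphism above yields $\operatorname{Ext}^i_{V_{\mathcal X}}(L(x),L(y)) = 0$ for all $x,y \in {\mathcal X}$ and all $i \geq 2r$.

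Next I would promote this vanishing on simples to a bound on global dimension. For any $x\in\mathcal{X}$, the linear projective resolution of $L(x)$ provided by Koszul duality has length equal to the top degree of $e_x V^!_{\mathcal X}$, which is at most $2r-1$; therefore each simple has projective dimension at most $2r-1$. In our graded setting, every module in $V_{\mathcal X}\mMod$ is bounded below in degree and has $\dim e_s M_i < \infty$ for all $s,i$, so standard dimension-shifting along the radical filtration (applied degree by degree) shows that vanishing of $\operatorname{Ext}^i$ above $2r-1$ at all simples propagates to arbitrary $M, N \in V_{\mathcal X}\mMod$. This gives $\operatorname{gl.dim}(V_{\mathcal X}) \leq 2r-1$, which in particular is finite.

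The main obstacle is the final step: turning a bound on $\operatorname{Ext}^i$ between simples into a bound on $\operatorname{Ext}^i$ between arbitrary graded modules in the infinite-dimensional setting. Because modules in $V_{\mathcal X}\mMod$ need not have finite composition length, one cannot just induct on composition series; instead one exploits the positive grading and the locally finite assumption to reduce, degree by degree, to the case of simple modules and then take the relevant inverse/direct limit. Once this reduction is in place the corollary is immediate from the Koszul identification and the degree bound extracted from the previous lemma.
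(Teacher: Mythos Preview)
Your proposal is correct and follows essentially the same approach as the paper: the paper's entire argument is the one-line observation that $V^!_{\mathcal X} = \operatorname{Ext}^*_{V_{\mathcal X}}(V^0_{\mathcal X}, V^0_{\mathcal X})$ is locally finite dimensional (in fact, by the proof of the preceding lemma, vanishes in degrees $\geq 2r$), from which the corollary is declared immediate. You have simply fleshed out the details the paper omits---extracting the uniform degree bound from the lemma's proof, translating it via Koszulity into a bound on projective dimensions of simples, and noting the mild care needed to pass from simples to arbitrary bounded-below locally finite modules---all of which is appropriate and correct.
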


Let $D^b(V_r\mMod)_{\mathcal X}$ denote the subcategory of $D^b(V_r \mMod)$ of complexes of modules, 
whose homology is given by simple modules outside ${\mathcal X}$.

\begin{corollary}
There is a recollement of derived categories,
$$D^b(V_r\mMod)_{\mathcal X} \hspace{0.2cm} \overleftarrow{\rightleftarrows}  \hspace{0.2cm} D^b(V_r\mMod)  \hspace{0.2cm}
\overleftarrow{\rightleftarrows}  \hspace{0.2cm} D^b(V_{\mathcal X}\mMod).$$
\end{corollary}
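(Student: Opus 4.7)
The plan is to realise this recollement as the standard one attached to the idempotent $f_{\mathcal X}=\sum_{x\in\mathcal{X}}f_x$ inside $V_r$, in the spirit of Cline--Parshall--Scott. First I would note that $V_{\mathcal X}=f_{\mathcal X}V_r f_{\mathcal X}$ by definition, and consider the two-sided ideal $V_r f_{\mathcal X}V_r$ of $V_r$ with quotient algebra $W=V_r/V_r f_{\mathcal X}V_r$. The simple $W$-modules are precisely the $L(x)$ for $x\in\mathbb{Z}^r\setminus\mathcal{X}$, so a complex in $D^b(V_r\mMod)$ has cohomology built from simples outside $\mathcal{X}$ if and only if $f_{\mathcal X}$ annihilates each cohomology module, i.e.\ if and only if it lies in the essential image of the natural functor $D^b(W\mMod)\to D^b(V_r\mMod)$. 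This identifies the left-hand category in the statement with $D^b(W\mMod)$.

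Next I would introduce the six candidate functors: on the $V_{\mathcal X}$-side the restriction $j^{*}=f_{\mathcal X}(-)$ together with its left adjoint $j_{!}=V_r f_{\mathcal X}\otimes_{V_{\mathcal X}}^{L}(-)$ and right adjoint $j_{*}=R\operatorname{Hom}_{V_{\mathcal X}}(f_{\mathcal X}V_r,-)$; on the $W$-side the inclusion $i_{*}$ together with $i^{*}=W\otimes_{V_r}^{L}(-)$ and $i^{!}=R\operatorname{Hom}_{V_r}(W,-)$. At the level of unbounded derived categories the recollement axioms---fully faithfulness of $i_*$, $j_!$, $j_*$, the vanishing $j^*i_*=0$, and the two defining exact triangles---follow formally from the properties of the idempotent ideal $V_r f_{\mathcal X}V_r$, exactly as in the classical finite-dimensional case, and I would verify them briefly with no use of the particular structure of $V_r$.

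The hard part, and essentially the only non-formal step, is to ensure that all six functors preserve boundedness of cohomology in our infinite-dimensional graded setting. For $j_!$ and $j_*$ this reduces to showing that $V_r f_{\mathcal X}$ and $f_{\mathcal X}V_r$ have finite projective dimension over $V_{\mathcal X}$ and its opposite respectively, which is immediate from the previous corollary asserting that $V_{\mathcal X}$ has finite global dimension. For $i^*$ and $i^!$ I would first establish that $V_r$ itself has finite global dimension: the Koszul dual of $V_r$ is $U_r$, and its indecomposable projectives have finite Loewy length $2r-1$, so $V_r$ has global dimension $2r-2$. This provides $W$ with a bounded projective resolution both as a left and as a right $V_r$-module, so $i^*$ and $i^!$ preserve the bounded derived category, and the recollement descends from the unbounded to the bounded level as claimed.
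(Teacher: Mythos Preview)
Your overall strategy---set up the idempotent recollement for $f_{\mathcal X}$ and then check boundedness---is exactly the Cline--Parshall--Scott mechanism the paper invokes. The paper's proof is a one-liner: $V_{\mathcal X}$ has finite global dimension, so CPS \cite[Theorem~2.3]{MR961165} gives the recollement, with the left-hand category \emph{defined} as the kernel of $j^*$ inside $D^b(V_r\mMod)$, i.e.\ the thick subcategory of complexes whose homology is supported off $\mathcal X$.

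There is a genuine gap in your argument, though. You assert that $V_r$ has global dimension $2r-2$ because ``the Koszul dual of $V_r$ is $U_r$, and its indecomposable projectives have finite Loewy length $2r-1$''. That Loewy-length statement is proved in the paper for $U_{\mathcal X}$, not for $U_r$; in fact $U_r e_x$ is infinite dimensional (the Heisenberg central element $c=a_ib_i+b_ia_i$ acts non-nilpotently), and correspondingly $V_r$ has \emph{infinite} global dimension---as one would expect from its description as a quadric hypersurface $\Lambda_r$. So your justification that $W=V_r/V_r f_{\mathcal X}V_r$ admits bounded projective resolutions over $V_r$, and hence that $i^*,i^!$ land in $D^b$, breaks down.

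This is not fatal to the recollement itself, only to your extra identification of the left piece with $D^b(W\mMod)$. The paper never makes that identification: it takes $D^b(V_r\mMod)_{\mathcal X}$ to be the kernel subcategory. With that choice, once you know $j_!$ and $j_*$ preserve $D^b$ (which follows, as you say, from $V_{\mathcal X}$ having finite global dimension), the adjoints $i^*,i^!$ of the inclusion automatically preserve $D^b$ via the recollement triangles $j_!j^*X\to X\to i_*i^*X$ and $i_*i^!X\to X\to j_*j^*X$. No hypothesis on $V_r$ is needed.
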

\begin{proof}

Since $V_{\mathcal X}$ has finite global dimension,
a theorem of Cline, Parshall, and Scott implies that  
the map $D^b(V_r\mMod) \rightarrow D^b(V_{\mathcal X}\mMod)$ extends to a recollement of derived categories
(\cite{MR961165}, Theorem 2.3).
\end{proof}

\begin{theorem} \label{UKoszul}
We have an isomorphism, $V_{\mathcal X}^! \cong U_{\mathcal X}$.
In other words, $U_{\mathcal X}$ is Koszul dual to $V_{\mathcal X}$.
\end{theorem}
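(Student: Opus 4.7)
The plan is to show that $U_{\mathcal X}$ and $V_{\mathcal X}^!$ admit the same quadratic presentation. Since $V_{\mathcal X}$ is Koszul by Theorem~\ref{VKoszul}, its quadratic dual $V_{\mathcal X}^!$ coincides with its Koszul dual, and can be computed explicitly from the quadratic presentation of $V_{\mathcal X}$ given in the preceding proposition. I would carry out this computation at each pair $(x,y) \in \mathcal{X}^2$ with $d(x,y) \leq 2$: the dual relations are the orthogonal complement, inside the space of length-$2$ paths from $x$ to $y$, of the span of the commutation and Milnor relations. The outcome is a presentation with generators $\{e_x : x \in \mathcal{X}\} \cup \{a_{x,i}, b_{x,i}\}$ and square, supercommutation, and Heisenberg relations, with the convention that any generator involving a vertex outside $\mathcal{X}$ is read as zero.

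Next, I would observe that $U_{\mathcal X} = U_r / \sum_{x \notin \mathcal{X}} U_r e_x U_r$ admits precisely this same presentation, inherited from the defining presentation of $U_r$. The idempotents $e_x$ for $x \notin \mathcal{X}$ vanish; consequently every arrow and path touching such a vertex vanishes, and the defining square, supercommutation, and Heisenberg relations of $U_r$ descend to the relations just computed for $V_{\mathcal X}^!$. This yields a surjective graded algebra homomorphism $\varphi : U_{\mathcal X} \twoheadrightarrow V_{\mathcal X}^!$ sending each generator to its counterpart.

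To upgrade $\varphi$ to an isomorphism, I would compare graded Hilbert series. The Koszul property of $V_{\mathcal X}$ yields an explicit formula for $C_{V_{\mathcal X}^!}(q)$ via the Koszul duality identity $C_{V_{\mathcal X}^!}(q)\, C_{V_{\mathcal X}}(-q)^T = I$ together with Remark~\ref{CartanV}. In parallel, the Hilbert series of $U_{\mathcal X}$ can be extracted from Lemma~\ref{formula} (which already inverts $\tilde D_{V_{\mathcal X}}(-q)^T$) and the highest weight structure on $V_{\mathcal X}\mMod$. I expect the principal difficulty to be the bookkeeping required to match these two series entrywise, i.e. checking that the displayed relations really do capture every quadratic relation holding in the quotient of $U_r$, so that no extra independent relations are hidden in $U_{\mathcal X}$. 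An elegant alternative would be to invoke the general Koszul-duality principle that $(eAe)^! \cong A^!/A^!(1-e)A^!$ whenever $A$ and $eAe$ are both Koszul, applied to $A = V_r$ and $e = e_{\mathcal X}$; but carrying this out rigorously in our positively graded, infinite-dimensional setting would itself require care.
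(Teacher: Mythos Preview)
Your approach is on the right track — matching the quadratic presentation of $V_{\mathcal X}^!$ against that of $U_{\mathcal X}$ is exactly how the paper begins — but your plan for upgrading the surjection $\varphi: U_{\mathcal X}\twoheadrightarrow V_{\mathcal X}^!$ to an isomorphism has a circularity. You propose to compute $C_{U_{\mathcal X}}(q)$ using Lemma~\ref{formula} together with highest-weight data; but at this point in the paper nothing is known about the graded Cartan matrix of $U_{\mathcal X}$, since $U_{\mathcal X}$ is defined only as a quotient of $U_r$ by an ideal generated in degree zero. The identification $C_{U_{\mathcal X}}(q)=D_{U_{\mathcal X}}(q)^T D_{U_{\mathcal X}}(q)$ with $D_{U_{\mathcal X}}(q)=\tilde D_{U_{\mathcal X}}(q)$ requires the quasi-hereditary structure on $U_{\mathcal X}$ (Corollary~\ref{Uqh}), which is proved \emph{after} and \emph{using} the present theorem. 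So the Hilbert-series comparison you sketch cannot be carried out independently.

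The paper avoids this by producing surjections in \emph{both} directions and then invoking local finite-dimensionality. From the explicit quadratic presentation of $V_{\mathcal X}^!$ one reads off a surjection $V_{\mathcal X}^!\twoheadrightarrow U_{\mathcal X}$ (the displayed relations are $U_r$-relations with vanishing terms, hence hold in $U_{\mathcal X}$). For the reverse surjection $U_{\mathcal X}\twoheadrightarrow V_{\mathcal X}^!$ the paper does \emph{not} argue by presentations; instead it uses the recollement obtained from the finite global dimension of $V_{\mathcal X}$: this yields a surjection of Ext-algebras
\[
U_r=\operatorname{Ext}^*_{V_r}(V_r^0,V_r^0)\twoheadrightarrow \operatorname{Ext}^*_{V_{\mathcal X}}(V_{\mathcal X}^0,V_{\mathcal X}^0)=V_{\mathcal X}^!,
\]
whose kernel visibly contains $U_r e_x U_r$ for $x\notin\mathcal X$, and hence factors through $U_{\mathcal X}$. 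This is precisely a rigorous implementation of the principle $(eAe)^!\cong A^!/A^!(1{-}e)A^!$ that you mention as an alternative; the recollement is what makes it go through cleanly in the positively graded, locally finite setting here.
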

\begin{proof}

The relations for $V^!_{\mathcal X}$ are precisely the relations for $U_r$, modulo the relation $e_{\mathcal X}=0$.
Therefore, there exists a surjection 
$V^!_{\mathcal X} \rightarrow U_r / U_r e_{\mathcal X} U_r = U_{\mathcal X}$
of graded algebras.
 
Thanks to the aforementioned recollement, there exists a surjection,
$$U_r = Ext^*_{V_r}(V_r^0, V_r^0) \rightarrow 
Ext_{V_{\mathcal X}}^*(V^0_{\mathcal X}, V^0_{\mathcal X}) = V^!_{\mathcal X},$$
in the kernel of which lies $U_r e_{\mathcal X} U_r$.  
Thus, we have a surjection $U_{\mathcal X} \rightarrow V^!_{\mathcal X}$ of graded algebras.

We have proved the existence of graded surjections from $V^!_{\mathcal X}$ to $U_{\mathcal X}$, and back. 
Such maps preserve homogeneous spaces of projective indecomposable modules, which are finite dimensional. 
Each of these surjections is therefore an isomorphism. \end{proof}

\begin{corollary} \label{UVeq}
There is an equivalence of derived categories,
$$D^b(U_{\mathcal X} \mMod) \cong D^b(V_{\mathcal X} \mMod).$$
\end{corollary}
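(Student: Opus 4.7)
The plan is to invoke Koszul duality. We have established that $V_{\mathcal X}$ is a locally finite dimensional Koszul algebra of finite global dimension (Theorem \ref{VKoszul} together with the preceding corollary), whose Koszul dual is $U_{\mathcal X}$ (Theorem \ref{UKoszul}). Under these assumptions, derived Koszul duality produces an equivalence $D^b(V_{\mathcal X}\mMod)\cong D^b(V_{\mathcal X}^!\mMod)=D^b(U_{\mathcal X}\mMod)$, which is exactly what is wanted.

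To realise this equivalence concretely via the version of Rickard's Morita theorem stated as Theorem \ref{Rickard}, I would take $A=V_{\mathcal X}$, $\mathcal{T}=\mathcal{X}$, and, for each $x\in\mathcal{X}$, let $\Gamma_x$ be a bounded complex of finitely generated projective $V_{\mathcal X}$-modules representing the simple $L_{V_{\mathcal X}}(x)$ in the derived category. Such $\Gamma_x$ exist because $V_{\mathcal X}$ has finite global dimension, and they are built from the linear projective resolutions afforded by Koszulity. To match the grading convention of Theorem \ref{Rickard}, each summand $P(y)\langle -i\rangle$ sitting in cohomological degree $i$ of the linear resolution has its internal shift absorbed into the bigraded bookkeeping, so that the combined graded endomorphism algebra $E=\bigoplus_n \operatorname{Hom}_{D^b}(\Gamma_x\langle n\rangle,\Gamma_y)$ reads off the full Ext algebra $\bigoplus_{m,n}\operatorname{Ext}^m_{V_{\mathcal X}}(L(x),L(y)\langle n\rangle)$, which by Koszulity and Theorem \ref{UKoszul} is identified with $U_{\mathcal X}$.

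To complete the application of Theorem \ref{Rickard}, two hypotheses need checking: vanishing of $\operatorname{Hom}_{D^b}(\Gamma_x\langle n\rangle,\Gamma_y[m])$ for $m\neq 0$, and generation of $D^b(V_{\mathcal X}\mMod)$ by the shifts $\Gamma_x\langle n\rangle$. The vanishing follows from Koszulity, which forces the Ext algebra to concentrate on a single diagonal in $(m,n)$; the renormalisation is designed precisely so that this diagonal becomes $m=0$. Generation holds because the simples $L(x)$ generate $D^b(V_{\mathcal X}\mMod)$, since $V_{\mathcal X}$ has finite global dimension. Theorem \ref{Rickard} then delivers an equivalence $F:D^b(E^{op}\mMod)\iso D^b(V_{\mathcal X}\mMod)$; since $U_{\mathcal X}$ carries the anti-involution $\omega$ noted earlier, $E^{op}\cong U_{\mathcal X}^{op}\cong U_{\mathcal X}$, yielding the desired equivalence. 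The main obstacle is the regrading bookkeeping: orchestrating the internal shifts in the $\Gamma_x$ so that both hypotheses of Theorem \ref{Rickard} are satisfied simultaneously and $E$ is identified with $U_{\mathcal X}$ rather than a graded twist. This is a standard but finicky reconciliation of conventions between Koszul duality and Rickard's formulation.
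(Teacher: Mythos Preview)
Your opening paragraph is exactly the paper's proof: cite the Beilinson--Ginzburg--Soergel theorem that for a Koszul pair in which one algebra is locally finite dimensional, the bounded derived categories of graded modules are equivalent. Nothing more is needed.

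The attempt to realise this through Theorem~\ref{Rickard}, however, does not go through as written, and the obstruction is more than bookkeeping. First, Theorem~\ref{Rickard} in this paper is stated under the standing assumption that each $Ae_s$ is finite dimensional; for $A=V_{\mathcal X}$ the projectives $V_{\mathcal X}f_x$ are infinite dimensional (their Hilbert series $(1-q^2)^{1-r}q^{d(x,y)}$ are genuine power series when $r\geq 2$), so the hypothesis fails outright. Second, and relatedly, your generation argument is inverted: finite global dimension says that \emph{projectives} generate the bounded derived category (every object has a finite projective resolution), not that simples do. Since $V_{\mathcal X}f_x$ has infinitely many composition factors, it cannot lie in the thick subcategory generated by the $L(y)\langle n\rangle$, so the second hypothesis of Theorem~\ref{Rickard} also fails. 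Swapping roles and taking $A=U_{\mathcal X}$ does not rescue the argument either, because $U_{\mathcal X}$ is symmetric and non-semisimple, hence of infinite global dimension, so its simples admit no bounded projective resolutions to play the role of $\Gamma_x$. The BGS equivalence is built by a different mechanism (a functor on complexes, not a tilting object), and the ``regrading'' you mention is precisely the content of that construction rather than a convention to be absorbed.
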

\begin{proof}

By a theorem of Beilinson, Ginzburg, and Soergel, 
such an equivalence holds for a general pair of Koszul dual algebras,
one of which is locally finite dimensional (\cite{MR1322847}, Theorem 2.12.6).
\end{proof}

Let $D^b(U_r\mMod)^{\mathcal X}$ denote the quotient category of $D^b(U_r\mMod)$ 
by the smallest thick subcategory containing $U_{\mathcal X}\mMod$.

\begin{corollary}
There is a recollement of derived categories,
$$D^b(U_{\mathcal X}\mMod)  \hspace{0.2cm} \overleftarrow{\rightleftarrows}  \hspace{0.2cm} D^b(U_r\mMod)  \hspace{0.2cm}
\overleftarrow{\rightleftarrows}  \hspace{0.2cm} D^b(U_r\mMod)^{\mathcal X}.$$
\end{corollary}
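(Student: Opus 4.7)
The plan is to deduce this corollary by transporting the recollement for $V_r$ from the previous corollary through Beilinson-Ginzburg-Soergel Koszul duality. A direct application of Cline-Parshall-Scott is not available on the $U$ side because $U_{\mathcal X}$ is symmetric, hence of infinite global dimension, so the hoped-for stratifying behaviour of the ideal $U_r e_{\mathcal X'} U_r$ cannot be read off from a global-dimension hypothesis. Instead, I would invoke the derived equivalence $F_r\colon D^b(U_r\mMod) \xrightarrow{\sim} D^b(V_r\mMod)$ (\cite{MR1322847}, Theorem 2.12.6), applicable since $U_r$ and $V_r$ are Koszul dual locally finite dimensional algebras, along with the equivalence $F_{\mathcal X}$ of Corollary~\ref{UVeq}.

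The central identification is that under $F_r$ the two sides of the $V$-recollement correspond to subcategories of $D^b(U_r\mMod)$ matching the desired $U$-recollement. Koszul duality interchanges simples and indecomposable projectives, so $F_r$ carries the thick subcategory of $D^b(U_r\mMod)$ generated by $\{L_U(x) : x\in\mathcal{X}\}$ onto the thick subcategory of $D^b(V_r\mMod)$ generated by the projectives $\{V_r f_x : x\in\mathcal{X}\}$. Because $V_{\mathcal X}$ has finite global dimension (Corollary~\ref{gldim}), this latter subcategory coincides with the image of $D^b(V_{\mathcal X}\mMod)$ under the left adjoint $V_r f_{\mathcal X}\otimes^L_{V_{\mathcal X}}(-)$ of the quotient functor $f_{\mathcal X}(-)$ appearing in the $V$-recollement, and also equals the thick closure of $V_{\mathcal X}\mMod$ in $D^b(V_r\mMod)$. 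Pulling back through $F_r$ and composing with $F_{\mathcal X}^{-1}$ then gives a fully-faithful embedding $D^b(U_{\mathcal X}\mMod)\hookrightarrow D^b(U_r\mMod)$ whose image is the thick subcategory containing $U_{\mathcal X}\mMod$; the Verdier quotient by this image is $D^b(U_r\mMod)^{\mathcal X}$ by definition.

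Once this identification is in place, the $V$-recollement transports through $F_r$ to the desired recollement with $D^b(U_{\mathcal X}\mMod)$ on the left and $D^b(U_r\mMod)^{\mathcal X}$ on the right. The main obstacle I anticipate is verifying that the embedding $D^b(U_{\mathcal X}\mMod)\hookrightarrow D^b(U_r\mMod)$ obtained via the Koszul-duality composition agrees, up to natural isomorphism, with the inflation functor induced by the surjection $U_r\twoheadrightarrow U_{\mathcal X}$, and consequently that the thick closure of $\{L_U(x) : x \in \mathcal{X}\}$ coincides with the thick closure of $U_{\mathcal X}\mMod$. This compatibility is expected to follow from the standard Koszul-duality principle that algebra quotients on one side correspond to corner algebras on the other (here, $U_r\twoheadrightarrow U_{\mathcal X}$ being dual to $V_{\mathcal X}=f_{\mathcal X}V_r f_{\mathcal X}\hookrightarrow V_r$), but the verification requires careful tracking of the six-functor formalism for the two recollements through $F_r$ and $F_{\mathcal X}$.
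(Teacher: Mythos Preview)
The paper's route is far more direct than yours, and your belief that Cline--Parshall--Scott ``is not available on the $U$ side'' rests on a confusion between two different CPS theorems. The $V$-recollement used \cite[Theorem~2.3]{MR961165}, which does require finite global dimension of the corner algebra. But for the $U$-side the paper instead invokes \cite[Theorem~3.1]{MR957457}, whose hypothesis is simply that inflation $D^b(U_{\mathcal X}\mMod)\to D^b(U_r\mMod)$ be fully faithful. That hypothesis is verified in one line: since $U_r$ and $U_{\mathcal X}$ are Koszul with duals $V_r$ and $V_{\mathcal X}=f_{\mathcal X}V_r f_{\mathcal X}$, one has
$\operatorname{Ext}^*_{U_r}(L(x),L(y))\cong f_yV_rf_x=f_yV_{\mathcal X}f_x\cong\operatorname{Ext}^*_{U_{\mathcal X}}(L(x),L(y))$
for $x,y\in\mathcal X$, and this extends to all finite-dimensional $U_{\mathcal X}$-modules by d\'evissage.

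Your transport strategy has a structural gap beyond the ``main obstacle'' you flag. Carrying the $V$-recollement through $F_r$ produces a recollement with $D^b(U_{\mathcal X}\mMod)$ on the \emph{quotient} side (that is where $D^b(V_{\mathcal X}\mMod)$ sits, via $j^*$), whereas the corollary places it on the \emph{subcategory} side. You try to flip this by taking the essential image of $j_!$ as a thick subcategory; granted that this image is thick (the source being idempotent-complete), promoting its inclusion to the $i_*$ of a recollement requires $j_!$ to admit a further \emph{left} adjoint, which the original recollement does not provide. If instead you identify $F_r^{-1}\circ j_!\circ F_{\mathcal X}$ with inflation and then appeal to CPS for the recollement structure, you have reproduced the paper's argument by a longer path.
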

\begin{proof}

Since $V_{\mathcal X}$ is equal to $f_{\mathcal X} V f_{\mathcal X}$,
we know that $Ext_{U_r}^i(S,T) = Ext_{U_{\mathcal X}}^i(S,T)$, for all simple $U_{\mathcal X}$-modules $S,T$.
By functorality, $Ext_{U_r}^i(M,T) = Ext_{U_{\mathcal X}}^i(M,T)$, for all finite dimensional 
$U_{\mathcal X}$-modules $M$, and all simple $U_{\mathcal X}$-modules $T$.
Again by functorality, we find that
$Ext_{U_r}^i(M,N) = Ext_{U_{\mathcal X}}^i(M,N)$, for all finite dimensional 
$U_{\mathcal X}$-modules $M,N$.

A theorem of Cline, Parshall and Scott allows us to deduce
that the map $D^b(U_{\mathcal X}\mMod) \rightarrow D^b(U_r\mMod)$ extends to a recollement
(\cite{MR957457}, Theorem 3.1).  \end{proof}

\begin{corollary}\label{Uqh}
$U_{\mathcal X}\mMod$ is a highest weight category, with respect to $\succeq$.
Standard modules possess linear projective resolutions.
\end{corollary}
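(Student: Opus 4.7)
The proof combines the standard Koszulity of finite truncations (established in the preceding proposition) with the Koszul duality $V_{\mathcal X}^! \cong U_{\mathcal X}$ (Theorem~\ref{UKoszul}). The preceding proposition tells us that for every finite truncation $\mathcal{T} \subset \mathcal{X}$, the Koszul dual $V_{\mathcal{T}}^!$ is standard Koszul with respect to $\succeq$: a finite-dimensional quasi-hereditary algebra whose standard modules have linear projective resolutions. The plan is to transfer these finite-dimensional highest weight structures to $U_{\mathcal X}\mMod$ in a manner analogous to the proof of Theorem~\ref{Vhwc}.

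For each $x \in \mathcal{X}$, I would use the corner configuration approximation $\mathcal X(x,N)$ of Lemma~\ref{corner approximation} to obtain a Cubist set with the finite interval property that agrees with $\mathcal X$ in a large ball around $x$. Taking a sufficiently large finite truncation $\mathcal{T} \subset \mathcal{X}(x,N)$ containing $x$, the standard module $\Delta_{V_{\mathcal T}^!}(x)$ of the standard Koszul algebra $V_{\mathcal T}^!$ should be identified, via the Koszul duality $U_{\mathcal X} \cong V_{\mathcal X}^!$ restricted to the truncation, with a $U_{\mathcal X}$-module $\Delta_{U_{\mathcal X}}(x)$ having composition factors indexed by $\lambda x$, with graded multiplicities $[\Delta_{U_{\mathcal X}}(x) : L(y)]_q = q^{d(x,y)}$ for $y \in \lambda x$, and equipped with a linear projective resolution by indecomposables $U_{\mathcal X} e_y$ indexed by $y \in \mu x$ at homological degree $d(x,y)$.

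Once the standard modules and their linear projective resolutions are in hand, the highest weight property of $U_{\mathcal X}\mMod$ follows by the same Dlab-style argument used in Theorem~\ref{Vhwc}. Specifically, the identification $D_{U_{\mathcal X}}(q) = \tilde{D}_{U_{\mathcal X}}(q)$, combined with Lemma~\ref{formula} ($\tilde{D}_{U_{\mathcal X}}(q)\tilde{D}_{V_{\mathcal X}}(-q)^T = 1$) and the already-established identification $D_{V_{\mathcal X}}(q) = \tilde{D}_{V_{\mathcal X}}(q)$, yields the Brauer--Humphreys reciprocity $C_{U_{\mathcal X}}(q) = D_{U_{\mathcal X}}(q)^T D_{U_{\mathcal X}}(q)$. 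The anti-involution $\omega$ on $U_{\mathcal X}$ fixing each $e_x$, together with this reciprocity, gives the required Hilbert series identity, and hence the highest weight structure with respect to $\succeq$.

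The main obstacle is the identification in the middle paragraph: one must ensure that $V_{\mathcal{T}}^!$ maps compatibly into $U_{\mathcal X}$ so that standard modules and linear resolutions correspond, and that $\Delta_{U_{\mathcal X}}(x)$ is canonical (independent of the truncation $\mathcal T$ once $\mathcal T$ is sufficiently large). The crucial observation enabling this is that the projective terms appearing in the linear resolution of $\Delta_{V_{\mathcal T}^!}(x)$ are indexed only by vertices $y$ in a fixed neighborhood of $x$ determined by $\mu x$ and the differential degree, so further enlargement of $\mathcal T$ leaves the resolution unchanged, and the result lifts natively to $U_{\mathcal X}\mMod$.
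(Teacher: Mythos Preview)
Your strategy matches the paper's core: corner-configuration approximation, finite truncations $\mathcal{T}$, and the standard Koszulity of $V_{\mathcal T}^!$ with respect to $\succeq$. The difference is in the finish. You lift standard modules and their resolutions to $U_{\mathcal X}$, then invoke a Dlab-style Cartan-factorization argument as in Theorem~\ref{Vhwc}. The paper is more direct: since $U_{\mathcal X}$ is locally finite-dimensional (established just before Theorem~\ref{UKoszul}), each projective $U_{\mathcal X} e_x$ is finite-dimensional and therefore coincides with $U_{\mathcal T} e_x$ once $\mathcal T$ is large enough, so the $\Delta$-filtration of the latter, and the linear resolutions of standards, transfer verbatim from the truncation. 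No Dlab step is required.

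Your route is not incorrect, but the Dlab detour is superfluous. To identify $\Delta_{U_{\mathcal X}}(x)$ with $\Delta_{V_{\mathcal T}^!}(x)$ you already need the local identification of projectives --- your ``stabilization'' observation, about which projective \emph{indices} appear in the resolution of the standard module, does not by itself guarantee that each $U_{\mathcal X} e_y$ agrees with its truncated counterpart; for that you need the local finite-dimensionality of $U_{\mathcal X}$. Once you have $U_{\mathcal X} e_x \cong U_{\mathcal T} e_x$ as modules, the $\Delta$-filtration is immediate and the highest weight property follows without any Hilbert-series bookkeeping.
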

\begin{proof}

For $x \in {\mathcal X},N \geq 0$, let ${\mathcal X}(x,N)$ be a Cubist subset
which can be identified with ${\mathcal X}$ in a box of diameter $N$ around $x$, 
such that ${\mathcal X}(x,N)$ is obtained by removing
boxes from a translate of the corner configuration.
Such an ${\mathcal X}(x,N)$ has the finite interval property, and therefore
for all finite truncations ${\mathcal T}(x,N)$,
the algebra $U_{{\mathcal T}(x,N)}$ Koszul dual to $V_{{\mathcal T}(x,N)}$ is standard Koszul.
Because $U_{\mathcal X}$ is locally finite dimensional, $U_{\mathcal X}$ can be identified with $U_{{\mathcal T}(x,N)}$ in
a large region around $x$, so long as $N$ is large enough.
For this reason, the regular $U_{\mathcal X}$-module possesses a $\Delta$-filtration. 
As the Koszulity of $V_{{\mathcal T}(x,N)}$ implied the Koszulity of $V_{{\mathcal X}}$ 
in the proof of theorem~\ref{VKoszul}, now the
existence of linear projective resolutions for standard $U_{{\mathcal T}(x,N)}$-modules 
imply the existence of linear projective resolutions for standard $U_{\mathcal X}$-modules.
\end{proof}

\begin{lemma}\label{Ustandard}
The standard module $\Delta_{U_{\mathcal X}}(x)$ of $U_{\mathcal X}$
has a basis $\{ q_{y} \mid  y \in \lambda x \}$, with $q_{y}$ in degree $d(y,x)$.
If $y,y'\in\lambda x$ and $d(y',x)=d(y',y)+d(y,x)$, then $\gamma_{y'y}q_y=\pm q_{y'}$.  In particular,  
$\Delta_{U_{\mathcal X}}(x)$ has simple socle
$L(x^{op})\langle w \rangle$.
\end{lemma}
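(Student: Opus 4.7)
The plan is to use Koszul duality and the standard-Koszul machinery already in place to pin down the composition multiplicities of $\Delta_{U_{\mathcal X}}(x)$, then exhibit $q_y$ as an explicit path image and read off the socle via rigidity.

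First I would determine the multiplicities $[\Delta_{U_{\mathcal X}}(x):L(y)]_q$. From Corollary~\ref{Uqh}, $U_{\mathcal X}$ is standard Koszul; from Lemma~\ref{lin}, so is $V_{\mathcal X}$, and $D_{V_{\mathcal X}}=\tilde D_{V_{\mathcal X}}$. For Koszul dual standard Koszul pairs the decomposition matrices satisfy $D_A(q)D_{A^!}(-q)^T=I$ (Agoston--Dlab--Lukacs). Combined with Lemma~\ref{formula}, which gives $\tilde D_{U_{\mathcal X}}(q)\tilde D_{V_{\mathcal X}}(-q)^T=I$, this forces $D_{U_{\mathcal X}}=\tilde D_{U_{\mathcal X}}$. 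Hence $[\Delta_{U_{\mathcal X}}(x):L(y)]_q=q^{d(x,y)}$ for $y\in\lambda x$ and $0$ otherwise, so $\Delta_{U_{\mathcal X}}(x)$ has graded Hilbert series $(1+q)^{r-1}$, with each weight space $e_y\Delta_{U_{\mathcal X}}(x)_{d(x,y)}$ one-dimensional.

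Next I would build the $q_y$. Write $\lambda x=x+F_i$ and $y=x+h$ with $h\in F_i$. Define $q_y$ as the image in $\Delta_{U_{\mathcal X}}(x)$ of the path in $U_{\mathcal X}e_x$ obtained by composing, in any fixed order, one arrow $a_{\cdot,j}$ for each $j<i$ with $h_j=1$ and one arrow $b_{\cdot,j}$ for each $j>i$ with $h_j=-1$. Since every coordinate appears at most once, the square relations~\ref{U1} never activate, and by the supercommutation relations~\ref{U2} reorderings only change the sign; so $q_y$ is well-defined up to sign and sits in $e_y\Delta_{U_{\mathcal X}}(x)_{d(x,y)}$. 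Defining $\gamma_{y',y}$ by the same procedure whenever $d(y',x)=d(y',y)+d(y,x)$, the concatenation of $\gamma_{y',y}$ with the chosen representative of $q_y$ is a legitimate representative of $q_{y'}$, so~\ref{U2} immediately delivers $\gamma_{y',y}\,q_y=\pm q_{y'}$.

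For non-vanishing, the top-degree component $\Delta_{U_{\mathcal X}}(x)_{r-1}$ is nonzero (by the Hilbert series), is concentrated at the single vertex $x^{op}$ of $\lambda x$ farthest from $x$, and is killed by every positive-degree generator of $U_{\mathcal X}$, so it constitutes a copy of $L(x^{op})\langle r-1\rangle$ inside $\operatorname{soc}\Delta_{U_{\mathcal X}}(x)$. Being one-dimensional, it is spanned by $\pm q_{x^{op}}$, so $q_{x^{op}}\neq 0$. Taking $y'=x^{op}$ in the multiplication formula forces $q_y\neq 0$ for every $y\in\lambda x$; combined with the dimension count of the first paragraph, $\{q_y:y\in\lambda x\}$ is a basis. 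Rigidity of standard modules in standard Koszul algebras (from Corollary~\ref{Uqh} and the Agoston--Dlab--Lukacs framework) identifies the radical and degree filtrations on $\Delta_{U_{\mathcal X}}(x)$, so $\operatorname{soc}\Delta_{U_{\mathcal X}}(x)$ is exactly the top-degree piece $L(x^{op})\langle r-1\rangle$, giving $w=r-1$. The main obstacle is the first step: extracting $D_{U_{\mathcal X}}=\tilde D_{U_{\mathcal X}}$ from Koszul duality requires the correct decomposition-matrix identity between Koszul dual standard Koszul algebras; once that is in hand, the remainder is sign-tracking with~\ref{U2}, routine but requiring care so that the signs in the multiplication formula are compatible across all pairs $(y,y')$.
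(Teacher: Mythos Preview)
Your argument is correct and follows essentially the same route as the paper: both first establish $D_{U_{\mathcal X}}=\tilde D_{U_{\mathcal X}}$ via Koszul duality, then read off the basis $\{q_y\}$ and the socle. The packaging of the first step differs slightly. Rather than citing an abstract decomposition-matrix identity for standard Koszul pairs, the paper applies the inverse Koszul duality functor $K^{-1}$ directly to $\Delta_{V_{\mathcal X}}(x)$: the linear projective resolution of Lemma~\ref{lin} shows $K^{-1}(\Delta_{V_{\mathcal X}}(x))$ is a module with composition factors $L(y)$, $y\in\lambda x$, in the expected degrees, while the composition series from Corollary~\ref{Vdec} produces an injective coresolution identifying this module as the costandard $\nabla_{U_{\mathcal X}}(x)$; dualising gives $\Delta_{U_{\mathcal X}}(x)$ with the asserted multiplicities. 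Your matrix identity is precisely what this functor computation encodes, and the infinite-matrix cancellation you need is justified because $\tilde D_{V_{\mathcal X}}(-q)^T$ is lower-unitriangular with respect to $\preceq$ while $D_{U_{\mathcal X}}-\tilde D_{U_{\mathcal X}}$ has finite rows. One small simplification: you need not invoke rigidity for the socle, since your multiplication formula already shows $(U_{\mathcal X})_{>0}\,q_y\neq 0$ for every $y\neq x^{op}$, so the socle is exactly the top-degree line $kq_{x^{op}}\cong L(x^{op})\langle r-1\rangle$.
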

\begin{proof}
Let $K^{-1}: D^{b}(V_\mathcal{X}\mMod)\rightarrow D^{b}(U_\mathcal{X}\mMod)$ be the inverse Koszul duality functor (see \cite[Theorem 1.2.6]{MR1322847}). Then $K^{-1}$ is a triangulated functor such that $K^{-1}(M\langle n \rangle)= K^{-1}(M)[-n]\langle -n \rangle$, $K^{-1}(V_\mathcal{X}e_x)=L(x)$, and $K^{-1}(L(x))=U^*_\mathcal{X}e_x$. By Lemma~\ref{lin}, we deduce that
$K^{-1}(\Delta_{V_{\mathcal{X}}}(x))$ is quasiisomorphic to a module $M$ whose composition factors are described by the matrix $D_{U_{\mathcal{X}}}(q)$. Moreover, by Lemma ~\ref{Vdec}, $M$ has an injective resolution 
$U^*_\mathcal{X}e_x\rightarrow\oplus_{y\in\mu x, d(y,x)=1} U^*_\mathcal{X}e_y\langle -1 \rangle \rightarrow\ldots$. Hence $M$ is the costandard module of $U_{\mathcal{X}}$ associated to the simple module $L(x)$, and
$\Delta_{U_{\mathcal{X}}}=M^*$, the corresponding standard module, also has composition factors given by $D_{U_{\mathcal{X}}}(q)$.  By comparing $D_{U_{\mathcal{X}}}(q)$ with $C_{U_{\mathcal{X}}}(q)$, we deduce that the images  $q_{y}$, $y\in\lambda x$ of $\gamma_{y,x}$, $y\in\lambda x$ under a surjective homomorphism
$U_{\mathcal{X}}e_x\rightarrow \Delta_{\mathcal{X}}(x)$ form a basis for $\Delta_{\mathcal{X}}(x)$, and furthermore we have 
$\gamma_{y'y} q_{y} = \pm q_{y}$ whenever $d(y',x)=d(y',y)+d(y,x)$.
\end{proof}

\begin{corollary} \label{preformula}
$$C_{U_{\mathcal X}}(q) = D_{U_{\mathcal X}}(q)^TD_{U_{\mathcal X}}(q).$$
\end{corollary}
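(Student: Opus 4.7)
The plan is to apply Brauer–Humphreys reciprocity in the form already used in the proof of Theorem~\ref{Vhwc}. By Corollary~\ref{Uqh}, $U_{\mathcal X}\mMod$ is a highest weight category with respect to $\succeq$, and standard modules have linear projective resolutions. In particular the hypotheses of \cite[Theorem 3.1.11]{MR961165} (as reformulated earlier in the preliminaries) are satisfied, giving
\[
C_{U_{\mathcal X}}(q) = D_{U_{\mathcal X}}(q)^T D_{U_{\mathcal X}^{op}}(q).
\]

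To conclude, I would invoke the anti-involution $\omega$ of $U_r$ which swaps $a_{x,i}$ and $b_{x+\epsilon_i,i}$ and descends to an anti-involution of $U_{\mathcal X}$ fixing every $e_x$. This anti-involution identifies $U_{\mathcal X}$ with $U_{\mathcal X}^{op}$ as graded algebras, sending each $e_x$ to itself. Consequently the graded decomposition matrices agree, $D_{U_{\mathcal X}}(q) = D_{U_{\mathcal X}^{op}}(q)$, and substituting into the reciprocity formula yields
\[
C_{U_{\mathcal X}}(q) = D_{U_{\mathcal X}}(q)^T D_{U_{\mathcal X}}(q),
\]
as required.

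There is no real obstacle here; the work has already been done in Corollary~\ref{Uqh} and in the discussion of Brauer–Humphreys reciprocity in the preliminaries. The only care needed is to verify that the anti-involution $\omega$ preserves the partial order $\succeq$ in the sense required to identify the standard modules of $U_{\mathcal X}$ and $U_{\mathcal X}^{op}$ corresponding to the same $x\in\mathcal{X}$; this is immediate because $\omega$ fixes every idempotent $e_x$, so it exchanges left and right modules indexed by the same set of simples with the same partial order.
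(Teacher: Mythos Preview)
Your proof is correct and follows exactly the approach implicit in the paper: the corollary is stated without proof because it is an immediate consequence of Brauer--Humphreys reciprocity (recorded in the preliminaries) together with the observation, also noted there, that an antiautomorphism fixing each $e_x$ forces $D_{U_{\mathcal X}}(q)=D_{U_{\mathcal X}^{\mathrm{op}}}(q)$. Your final paragraph is slightly over-cautious---no compatibility with $\succeq$ needs checking beyond $\omega(e_x)=e_x$, since the standard modules on both sides are defined by the same formula in terms of the same idempotents and the same order.
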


\begin{definition}
Let $x \in {\mathcal X}$.
We call the standard $U_{\mathcal X}$-module $\Delta_{U_{\mathcal X}}(x)$ the \emph{facetious module} 
corresponding to $\lambda x$. It is a graded $U_{\mathcal X}$-module whose
head is $L(x)$, and whose Hilbert series is $\sum_{y \in \lambda x} q^{d(x,y)} L(y)$.
\end{definition}

\begin{theorem}
$U_{\mathcal X}$, and $V_{\mathcal X}$ have homogeneous cellular bases. For either algebra,
there is a canonical choice of such basis, with respect to our fixed generators.
\end{theorem}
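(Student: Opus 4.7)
The plan is to invoke the standard construction of a cellular basis from the data of a graded quasi-hereditary algebra equipped with an anti-involution fixing the idempotents. Both $U_{\mathcal X}$ and $V_{\mathcal X}$ carry such structure: each is graded quasi-hereditary (by Corollary~\ref{Uqh} and Theorem~\ref{Vhwc}, with respect to $\succeq$ and $\succeq^{op}$ respectively), and each carries the anti-involution $\omega$ fixing every primitive idempotent (recorded in the Remark preceding Remark~\ref{smallrank}). Moreover, standard modules have explicit homogeneous bases in each case.

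For a graded quasi-hereditary $A$ with anti-involution $\omega$ fixing the $e_s$, and with homogeneous bases $\{c^s_t \mid t\in M(s)\}$ of each $\Delta_A(s)$, the recipe is: pick a homogeneous lift $\tilde c^s_t \in e_{r(t)} A e_s$ of $c^s_t$, and set
\[
c^s_{t,u} \;=\; \tilde c^s_t \cdot \omega(\tilde c^s_u) \;\in\; e_{r(t)}\,A\,e_{r(u)}.
\]
Cellularity then reduces to three points. The involution axiom is automatic, since $\omega(c^s_{t,u})=c^s_{u,t}$. The multiplication axiom $a \cdot c^s_{t,u} \equiv \sum_{t'} r_a(t',t)\, c^s_{t',u} \pmod{A^{\succ s}}$ holds because, modulo the ideal generated by strictly higher idempotents, the product $a\cdot \tilde c^s_t$ is controlled by the action of $a$ on $\Delta_A(s)$ in the basis $\{c^s_t\}$, independently of $u$. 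Linear independence and spanning reduce to a dimension count in each $e_y A e_z$: the number of triples $(s,t,u)$ with $r(t)=y$, $r(u)=z$ and $\tilde c^s_t \omega(\tilde c^s_u)$ in degree $n$ is $\sum_s D_A(q)_{s,y}\,D_A(q)_{s,z}$, and the equality with $C_A(q)_{y,z}$ is precisely Brauer--Humphreys reciprocity in the form $C_A(q) = D_A(q)^T D_A(q)$, established in Corollary~\ref{preformula} for $U_{\mathcal X}$ and in the proof of Theorem~\ref{Vhwc} for $V_{\mathcal X}$.

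It remains to specify the canonical lifts determined by the fixed quiver generators. For $U_{\mathcal X}$: by Lemma~\ref{Ustandard}, $\Delta_{U_{\mathcal X}}(x)$ has basis $\{q_y\mid y\in\lambda x\}$, where $q_y$ is the image of any path of length $d(x,y)$ from $x$ to $y$ inside the cube $\lambda x$; such a path is well-defined up to sign by the supercommutation relations \ref{U2}, so lifting $q_y$ to the chosen path element $\gamma_{y,x}$ produces basis elements $\gamma_{y,x}\,\omega(\gamma_{z,x})$ indexed by $(x,y,z)$ with $y,z\in\lambda x$. For $V_{\mathcal X}$: the commutation relations make the element $p_{y,x}$ of the Remark preceding Lemma~\ref{monomial} canonical, and by (the proof of) Lemma~\ref{lin} and Corollary~\ref{Vdec} the images $\{p_{y,x}\mid y\in\mu x\cap\mathcal{X}\}$ form a basis of $\Delta_{V_{\mathcal X}}(x)$, yielding the cellular basis $\{p_{y,x}\,\omega(p_{z,x})\mid x\in\mathcal{X},\ y,z\in\mu x\cap\mathcal{X}\}$, which is homogeneous and locally finite.

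The main obstacle is checking linear independence of the proposed basis for $V_{\mathcal X}$, where standards are infinite dimensional and the cellular framework has to accommodate an infinite index set. This reduces to the graded equality $C_{V_{\mathcal X}}(q) = D_{V_{\mathcal X}}(q)^T D_{V_{\mathcal X}}(q)$ termwise, which is finite for each $(y,z)$ because $V_{\mathcal X}$ is locally finite dimensional, so that the spanning and independence arguments can be carried out degree by degree inside each $f_y V_{\mathcal X} f_z$. Once this is in place, cellularity follows in exactly the same way as in the finite-dimensional quasi-hereditary-with-duality setting.
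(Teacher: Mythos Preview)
Your proposal is correct and follows essentially the same approach as the paper: cellularity via the K\"onig--Xi criterion (quasi-heredity plus an anti-involution fixing the primitive idempotents), with the canonical basis coming from the fact that all graded decomposition numbers are monomials. The paper simply cites K\"onig--Xi and writes down the resulting bases $q_{yx}\cdot q_{xz}$ and $p_{yx}\cdot p_{xz}$; you spell out the construction and the verification of the axioms more explicitly, and you are more careful than the paper about the infinite-dimensional case of $V_{\mathcal X}$, where the dimension count $C=D^TD$ must be interpreted degree by degree in each $f_yV_{\mathcal X}f_z$.
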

\begin{proof}
Let ${\mathcal X} \subset \mathbb{Z}^r$ be a Cubist subset.
The cellularity is immediate from the definition of S. K{\"o}nig and C. Xi \cite[Corollary 4.2]{Konig-Xi-OTSOCA}:
a quasi-hereditary algebra which has a decomposition by primitive idempotents each fixed by
an anti-involution is cellular. Our anti-involution is $\omega$, which swaps 
$a_{x,i}$ and $b_{x+\epsilon_i,i}$ (respectively $\alpha_{x,i}$ and $\beta_{x+\epsilon_i,i}$). 
The grading on our algebras is compatible with the highest weight structure, and therefore with the cellular
structure. 
Cellular bases can be canonically defined with respect to the generators of $U_{\mathcal X}, V_{\mathcal X}$, 
because the $q$-decomposition numbers are all monomials. 
Indeed, the basis for $U_{\mathcal X}$ consists of products 
$q_{yx}.q_{xz}$, where $y \in \lambda x, x \in \lambda z$.
The analogous basis for $V_{\mathcal X}$ consists of products 
$p_{yx}.p_{xz}$, where $y \in \mu x, x \in \mu z$.
\end{proof}

\section{Symmetry}

Before proving an algebraic property of the Cubist algebras, we must always do some combinatorics.
Let us prove some lemmas, before we deduce the symmetry of $U_{\mathcal X}$\ldots

\begin{lemma} \label{opp}
Fix a partial order $\succeq$ on ${\mathcal X}$.
The map $x \mapsto x^{op}$ which takes $x$ to its opposite in $\lambda x$
is bijective.
\end{lemma}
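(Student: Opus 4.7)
My plan is to exhibit $x\mapsto x^{op}$ as a composition of two bijections $\mathcal{X}\to\mathcal{F}_\mathcal{X}$. Proposition~\ref{vertex-facet bijection} already supplies one such bijection, namely $\lambda$, which sends each vertex $x$ to the facet of which it is the distinguished ``bottom'' corner. What I need alongside it is a complementary bijection $\lambda^{op}:\mathcal{X}\to\mathcal{F}_\mathcal{X}$ sending each $y\in\mathcal{X}$ to the unique facet in $\mathcal{F}_\mathcal{X}$ whose \emph{opposite} corner is $y$, i.e.\ the unique $F\in\mathcal{F}_\mathcal{X}$ with $\lambda^{-1}(F)^{op}=y$.

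To construct $\lambda^{op}$ I would appeal to the fact that the set of Cubist subsets is invariant under the involution $z\mapsto -z$ (noted just before Subsection~\ref{subsection:tilings}). Applying Proposition~\ref{vertex-facet bijection} to the Cubist subset $-\mathcal{X}$ yields a bijection $\lambda_{-\mathcal{X}}:-\mathcal{X}\to\mathcal{F}_{-\mathcal{X}}$, which transports along $z\mapsto -z$ to a bijection $\mathcal{X}\to\mathcal{F}_\mathcal{X}$. The bookkeeping step is to set $t_i=\sum_{j<i}\epsilon_j-\sum_{j>i}\epsilon_j$ (so that $x^{op}=x+t_i$ whenever $\lambda x=x+F_i$) and verify the simple substitution identity $-F_i=F_i-t_i$. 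With this in hand, a facet $y+F_i\subseteq -\mathcal{X}$ is sent to $-y-F_i=(-y-t_i)+F_i\subseteq\mathcal{X}$, whose distinguished vertex is $-y-t_i$ and whose opposite vertex is $(-y-t_i)+t_i=-y$. Thus $\lambda^{op}:\mathcal{X}\to\mathcal{F}_\mathcal{X}$, defined by $\lambda^{op}(-y)=(-y-t_i)+F_i$, is the desired bijection.

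To finish, observe that by definition $x^{op}$ is the opposite vertex of $\lambda x$, so $\lambda x=\lambda^{op}(x^{op})$, i.e.\ $x^{op}=(\lambda^{op})^{-1}(\lambda x)$. Hence $x\mapsto x^{op}$ is the composition $(\lambda^{op})^{-1}\circ\lambda$ of two bijections, and is itself bijective.

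The main obstacle is essentially bookkeeping: establishing $-F_i=F_i-t_i$ and tracking which corner of a facet becomes the ``top'' corner after pulling back along $z\mapsto -z$. A more hands-on alternative would be to imitate the proof of Proposition~\ref{vertex-facet bijection} directly, establishing that for each $y\in\mathcal{X}$ there is a unique $i$ (namely $i=\max\{j\mid y-\epsilon_1-\ldots-\epsilon_{j-1}\in\mathcal{X}\}$) with $y-F_i\subseteq\mathcal{X}$; but the symmetry argument is cleaner and needs essentially no new combinatorial input beyond what is already in the paper.
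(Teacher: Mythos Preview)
Your proof is correct and follows essentially the same approach as the paper: exhibit $x\mapsto x^{op}$ as the composition $(\lambda')^{-1}\circ\lambda$ of two vertex-facet bijections $\mathcal{X}\to\mathcal{F}_\mathcal{X}$, where $\lambda'$ is produced from $\lambda$ by a symmetry of the Cubist setup. The only difference is the choice of symmetry: the paper takes $\lambda'$ to be the vertex-facet bijection associated to the partial order $\succeq^{w_0}$ (conjugation by the longest element of $\Sigma_r$), whereas you transport $\lambda_{-\mathcal{X}}$ along the involution $z\mapsto -z$. These two constructions in fact yield the \emph{same} second bijection $\lambda'=\lambda^{op}$ (each sends $y$ to the unique facet of $\mathcal{X}$ having $y$ as its ``opposite'' corner), so the two arguments are really the same proof dressed in different symmetries; your version has the mild advantage of being more explicit about the bookkeeping that the paper leaves implicit.
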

\begin{proof}

We prove this for the standard partial order, explicitly written down in section \ref{hw}.
Let $\lambda: {\mathcal X} \rightarrow {\mathcal F_X}$ be the map defined by this partial order $\succeq$.
Let $w_0$ be the longest element of $\Sigma_r$.
Let $\lambda': {\mathcal X} \rightarrow {\mathcal F_X}$ be the map defined by the partial order $\succeq^{w_0}$.
Since $x^{op} = \lambda'^{-1} \lambda(x)$, and $\lambda, \lambda'$ are bijective, the lemma holds. \end{proof}

\begin{lemma} \label{suitfacet}
Let $x,y$ be distinct elements of ${\mathcal X}$.
Then there exists a facet $F$ of $\mathcal{X}$ such that $x\in F$ and $y \notin F$.
\end{lemma}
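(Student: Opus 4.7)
The plan is to invoke Corollary~\ref{def}(3) directly. That corollary asserts that for any $x\in\mathcal{X}$, the intersection
\[
\bigcap_{\substack{F\in\mathcal{F}_{\mathcal{X}}\\ x\in F}} F
\]
of all facets of $\mathcal{X}$ passing through $x$ equals the singleton $\{x\}$. Since $y\neq x$, the element $y$ cannot lie in this intersection, so there must exist at least one facet $F\in\mathcal{F}_{\mathcal{X}}$ with $x\in F$ and $y\notin F$.

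The only thing to check is that Corollary~\ref{def}(3) is really available here, which it is: a facet of $\mathcal{X}$ is by definition an $(r-1)$-cube contained in $\mathcal{X}$, and the corollary is stated precisely in terms of $(r-1)$-cubes in $\mathcal{X}$ containing $x$. There is no serious obstacle; the statement is a restatement of that part of the corollary in contrapositive form. Should one instead wish to give a self-contained combinatorial argument not routing through Corollary~\ref{def}(3), the natural path would be to choose a coordinate $k$ in which $y_k\neq x_k$ and then to locate a facet $x+F_i$ (with $i\neq k$ if possible, or otherwise using neighbouring facets of the form $x - \epsilon_j + F_i$ which still pass through $x$) that avoids $y$; but this would amount to re-proving the relevant part of Corollary~\ref{def}, so the direct appeal is preferable.
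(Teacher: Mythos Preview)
Your proof is correct. Both you and the paper rely on the same underlying fact---that the intersection of all facets of $\mathcal{X}$ through $x$ is $\{x\}$---but you cite Corollary~\ref{def}(3) directly, whereas the paper re-derives this via the dual polytope $\mathcal{P}_x$; your route is the more economical one.
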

\begin{proof}
Because $\mathcal{C_X}$ is homeomorphic to $\mathbb{R}^{r-1}$, any $d$-cube $C$ in ${\mathcal X}$ can be characterised
as the intersection of facets containing $C$.
For this reason, ${\mathcal P}_x$ can be characterised as the largest subcomplex of $\mathcal{C_X}$
whose vertices are all contained in ${\mathcal P}_x$. 
However, since $x,y$ are distinct, 
we have ${\mathcal P}_x \neq {\mathcal P}_y$. Therefore, some vertex of ${\mathcal P}_x$
is not a vertex of ${\mathcal P}_y$. This completes the proof of the lemma.
\end{proof}

\begin{definition}
Let $x \in {\mathcal X}$, and let $\xi$ be a facet of ${\mathcal X}$ containing $x$.
Let
$$\xi_i =
\begin{cases}
a_i & \textrm{ if } x+\epsilon_i \in \xi, \\
b_i & \textrm{ if } x-\epsilon_i \in \xi, \\
1 &  \textrm{otherwise,}
\end{cases}$$
for $i=1,\ldots,r$. 
Let $s(\xi)$ to be the order of the set $\{ \xi_i \mid  \xi_i = b_i \}$.
\end{definition} 

\begin{definition}
Let ${\mathcal F}_x = \{ \xi \in \mathcal{F_X} \mid  x \in \xi \}$, for $x \in {\mathcal X}$
\end{definition}

\begin{lemma} \label{transpos}
If $\xi, \xi' \in {\mathcal F}_x$, then there is a sequence 
$\xi = \xi^0,\xi^1\ldots,\xi^l = \xi'$ in ${\mathcal F}_x$, such that $\xi^i \cap \xi^{i+1}$ is an $r-2$-cube.
\end{lemma}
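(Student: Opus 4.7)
My approach is to reformulate the lemma in terms of the polytope $\mathcal{P}_x$. By the construction of the dual complex $\mathcal{C}'_{\mathcal{X}}$, the vertices of $\mathcal{P}_x$ are in canonical bijection with the facets of $\mathcal{X}$ containing $x$, i.e., with $\mathcal{F}_x$, while the edges of $\mathcal{P}_x$ are in bijection with the $(r-2)$-cubes of $\mathcal{X}$ containing $x$; an edge joins the two vertices dual to the pair of facets containing its dual $(r-2)$-cube. Consequently, the condition ``$\xi^i\cap\xi^{i+1}$ is an $(r-2)$-cube'' translates exactly to ``the vertices of $\mathcal{P}_x$ dual to $\xi^i$ and $\xi^{i+1}$ are joined by an edge of $\mathcal{P}_x$,'' and the lemma becomes the assertion that the $1$-skeleton of $\mathcal{P}_x$ is connected.

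For $r=1$ the statement is vacuous, and for $r=2$ the polytope $\mathcal{P}_x$ is a segment with two vertices connected by its single edge, so both cases are immediate. For $r \geq 3$, connectedness of the $1$-skeleton is a standard property of convex polytopes of positive dimension, provable for instance by a simplex-algorithm argument: starting from any vertex $v$, one may always move along an edge in the direction of increasing value of a generic linear functional $\phi$ until reaching the unique maximum, so every vertex is joined to this maximum by an edge path.

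The main obstacle I expect is to justify that $\mathcal{P}_x$ is genuinely convex, as the definition of $\mathcal{P}_x$ via duality is only combinatorial. The cleanest workaround is to bypass convexity entirely by invoking Proposition \ref{homeomorphism}: under $\pi$, a small sphere in $\mathbb{R}^r_0$ centered at $\pi(x)$ inherits a regular CW decomposition whose top $(r-2)$-cells correspond bijectively to $\mathcal{F}_x$ and whose codimension-one cells correspond to $(r-2)$-cubes of $\mathcal{X}$ containing $x$. Since $S^{r-2}$ is path-connected (for $r\geq 3$), any two top cells can be joined by a generic path crossing only codimension-one strata, producing a sequence of top cells in which consecutive ones share a codimension-one face; this sequence is precisely the required $\xi^0,\xi^1,\ldots,\xi^l$.
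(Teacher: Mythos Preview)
Your proposal is correct and follows essentially the same approach as the paper: reformulate the statement as connectedness of the $1$-skeleton of $\mathcal{P}_x$ via the duality between faces of $\mathcal{P}_x$ and cubes of $\mathcal{C}_{\mathcal{X}}$ containing $x$, then invoke the fact that the $1$-skeleton of a polytope is connected. The paper simply asserts this last fact without further comment, whereas you take additional care by flagging the potential convexity issue and offering a second, topological argument via a small sphere in $\mathbb{R}^r_0$; this extra justification is sound but not present in the paper's proof.
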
 
\begin{proof} 
The polytope ${\mathcal P}_x$ has a $1$-skeleton whose vertices correspond to elements of ${\mathcal F}_x$,
and whose edges correspond to $r-2$-cubes containing $x$. The poset of faces of ${\mathcal P}_x$ ordered by inclusion
is the opposite of the poset of cubes containing $x$, ordered by inculsion.
The $1$-skeleton of any polytope is connected. Therefore, the lemma holds.
\end{proof}
 
\begin{lemma} \label{cases}
Suppose that $\xi, \eta \in {\mathcal F_X}$, and that $\xi \cap \eta$ is an $r-2$-cube.
Then one of the following holds:

(a) There exists $i \in [1,r]$, such that $\xi_i = a_i, \eta_i = b_i$, and $\xi_k = \eta_k$ for 
$k \in [1,r], k \neq i$.

(b) There exists $i \in [1,r]$, such that $\xi_i = b_i, \eta_i = a_i$, and $\xi_k = \eta_k$ for 
$k \in [1,r], k \neq i$.

(c) There exist distinct $i,j \in [1,r]$, such that $(\xi_i,\xi_j) = (a_i,1)$, $(\eta_i, \eta_j) = (1, a_j)$,
and $\xi_k = \eta_k$ for $k \in [1,r], k \neq i,j$.

(d) There exist distinct $i,j \in [1,r]$, such that $(\xi_i,\xi_j) = (b_i,1)$, $(\eta_i, \eta_j) = (1, b_j)$,
and $\xi_k = \eta_k$ for $k \in [1,r], k \neq i,j$.
\end{lemma}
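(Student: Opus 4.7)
The plan is to parameterise each facet $\zeta\in\mathcal{F}_x$ by its label tuple $(\zeta_1,\ldots,\zeta_r)$, in which exactly one entry equals $1$ and the rest lie in $\bigcup_k\{a_k,b_k\}$. More generally, any $d$-dimensional subcube of $\mathcal{X}$ through $x$ is determined by such a tuple with exactly $r-d$ entries equal to $1$, so an $(r-2)$-cube through $x$ has exactly two $1$s. The proof then reduces to a combinatorial case analysis on how the tuples for $\xi$ and $\eta$ can differ.

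First I would argue that since $\xi\cap\eta$ is an $(r-2)$-cube containing $x$, its label tuple has $1$s at some pair of positions $i,j\in\{1,\ldots,r\}$ and entries $\tau_k\in\{a_k,b_k\}$ for $k\neq i,j$. Consequently $\xi_k=\eta_k=\tau_k$ for all $k\neq i,j$, and each of $\xi,\eta$ is obtained from $\xi\cap\eta$ by replacing one of the two $1$s with an element of $\{a_i,b_i,a_j,b_j\}$ in the appropriate slot. Since $\xi\neq\eta$, there are six unordered possibilities for the pair $\{(\xi_i,\xi_j),(\eta_i,\eta_j)\}$; four of these — namely $\{(a_i,1),(b_i,1)\}$, $\{(1,a_j),(1,b_j)\}$, $\{(a_i,1),(1,a_j)\}$ and $\{(b_i,1),(1,b_j)\}$ — correspond exactly to cases (a), (b), (c), (d) of the lemma (the first two after relabelling the index called $i$ in the statement).

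The heart of the argument is ruling out the two remaining ``mixed'' pairs $\{(a_i,1),(1,b_j)\}$ and $\{(b_i,1),(1,a_j)\}$ using the Cubist condition $\mathcal{X}=\mathcal{X}^-\setminus\mathcal{X}^-[-1]$. For the first, I take $(\xi_i,\xi_j)=(a_i,1)$ and $(\eta_i,\eta_j)=(1,b_j)$, and introduce the ``top'' vertex of $\xi$ together with the ``bottom'' vertex of $\eta$:
\[
T_\xi \;=\; x+\epsilon_i+\sum_{k\neq i,j,\,\tau_k=a_k}\epsilon_k, \qquad B_\eta \;=\; x-\epsilon_j-\sum_{k\neq i,j,\,\tau_k=b_k}\epsilon_k.
\]
Both are genuine vertices of $\xi$ and $\eta$ respectively. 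A coordinate-by-coordinate comparison shows $B_\eta[1]=T_\xi$: adding $\epsilon_1+\cdots+\epsilon_r$ to $B_\eta$ yields coordinate $1$ at position $i$, coordinate $0$ at position $j$, coordinate $1$ at positions $k$ with $\tau_k=a_k$, and coordinate $0$ at positions $k$ with $\tau_k=b_k$. But then $T_\xi\in\xi\subset\mathcal{X}^-$, while $B_\eta\in\eta\subset\mathcal{X}$ forces $B_\eta[1]\notin\mathcal{X}^-$, a contradiction. The second mixed pair is eliminated by the analogous argument with the roles of $a$ and $b$ exchanged, or equivalently by the involution $x\mapsto-x$.

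The main technical hurdle will be the bookkeeping in the final step: selecting the right two vertices among the $2^{r-1}$ corners of each facet, and tracking coordinates through the four position-types ($i$, $j$, $k$ with $\tau_k=a_k$, and $k$ with $\tau_k=b_k$) to verify the crucial identity $B_\eta[1]=T_\xi$. Once that identity is established, the contradiction with the Cubist condition is immediate, and the remaining case enumeration is mechanical.
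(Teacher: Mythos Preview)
Your proposal is correct and follows essentially the same route as the paper. The paper's proof is terser—it simply says ``we only need eliminate a couple of possibilities'' and then, for the mixed case $(\xi_i,\xi_j)=(a_i,1)$, $(\eta_i,\eta_j)=(1,b_j)$, writes down the top vertex $x_\xi=x+\sum_{k:\,x+\epsilon_k\in\xi}\epsilon_k$ and the bottom vertex $x_\eta=x-\sum_{k:\,x-\epsilon_k\in\eta}\epsilon_k$, observes $x_\eta[1]=x_\xi$, and derives the same contradiction with the Cubist condition; your $T_\xi$ and $B_\eta$ are precisely these vertices, and your more explicit enumeration of the six unordered pairs just makes visible the case analysis the paper leaves implicit.
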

\begin{proof} 

We only need eliminate a couple of possibilities.
The first is the existence of distinct
$i,j \in [1,r]$, such that $(\xi_i,\xi_j) = (a_i,1)$, $(\eta_i, \eta_j) = (1, b_j)$,
and $\xi_k = \eta_k$ for $k \in [1,r], k \neq i,j$.
However, such an arrangement implies that 
$x_\xi = x + \sum_{k, x+\epsilon_k \in \xi} \epsilon_k \in \xi \subset {\mathcal X}$,
as well as
$x_\eta = x - \sum_{k, x-\epsilon_k \in \eta} \epsilon_k \in \eta \subset {\mathcal X}$.
Thus $x_\eta, x_\eta[1] = x_\xi \in {\mathcal X}$, which cannot happen.

The remaining possibility is   
the existence of distinct
$i,j \in [1,r]$, such that $(\xi_i,\xi_j) = (a_i,1)$, $(\eta_i, \eta_j) = (1, b_j)$,
and $\xi_k = \eta_k$ for $k \in [1,r], k \neq i,j$. 
This we can eliminate for identical reasons. \end{proof}

\begin{lemma} \label{Frob}
Let $x \in {\mathcal X}$, and let $\xi, \eta$ be facets of ${\mathcal X}$ containing $x$.
Then 
$$(-1)^{s(\xi)} e_x \circ \xi_1\ldots\xi_{r-1} \xi_{r-1}^\omega\ldots\xi_{1}^\omega
= (-1)^{s(\eta)} e_x \circ \eta_1\ldots\eta_{r-1} \eta_{r-1}^\omega\ldots\eta_{1}^\omega,$$
and these are non-zero elements of $U_{\mathcal X}$.
\end{lemma}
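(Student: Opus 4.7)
My plan is to reduce to adjacent facets, do a case analysis using the supercommutation and Heisenberg relations, and finally deduce non-vanishing via the cellular basis. By Lemma~\ref{transpos} the $1$-skeleton of $\mathcal{P}_x$ is connected, so it suffices to verify the identity when $\xi\cap\eta$ is an $(r-2)$-cube. By Lemma~\ref{cases} four configurations are then possible.

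In each configuration I shall normalise the loops $L_\xi := e_x\circ\xi_1\cdots\xi_{r-1}\xi_{r-1}^\omega\cdots\xi_1^\omega$ and $L_\eta$ by rearranging their factors using the supercommutation relations \ref{U2}, picking up a sign of $-1$ at each transposition. The rearrangement is chosen so that the arrows common to $\xi$ and $\eta$ form a common outer framework of the shape $P\cdots P^\omega$, where $P$ is a path of length $r-2$ from $x$ to some intermediate vertex $y$, leaving the entire discrepancy between $L_\xi$ and $L_\eta$ concentrated at $y$. In cases (c) and (d) only supercommutation is needed: after normalisation $L_\xi$ and $L_\eta$ literally coincide, and $s(\xi)=s(\eta)$ agrees automatically.

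In cases (a) and (b) the central discrepancy is $a_{y,i}b_{y+\epsilon_i,i}$ in $L_\xi$ versus $b_{y,i}a_{y-\epsilon_i,i}$ in $L_\eta$, while $s(\eta)=s(\xi)\pm 1$. The Heisenberg relation \ref{U3} says
$$a_{y,i}b_{y+\epsilon_i,i} + b_{y,i}a_{y-\epsilon_i,i} = H_y,$$
with $H_y$ independent of $i$, so the sign change $(-1)^{s(\eta)}=-(-1)^{s(\xi)}$ reduces the identity to the vanishing of $P\cdot H_y\cdot P^\omega$ in $e_x U_\mathcal{X} e_x$. I plan to establish this vanishing by iterated Heisenberg: if the last arrow of $P$ is $c_{z,j}$ (so $y=z\pm\epsilon_j$), then expanding $H_y$ in direction $j$ and applying a square relation \ref{U1} to one of the two resulting terms gives $c_{z,j}\cdot H_y = H_z\cdot c_{z,j}$ after one further invocation of Heisenberg at $z$; telescoping along $P$ pushes $H$ all the way from $y$ back to $x$, and the residual doubled-up arrows at the endpoint are annihilated by further square relations.

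For non-vanishing I specialise to $\xi=\lambda x$ and appeal to the cellular basis of $U_\mathcal{X}$ constructed in the preceding theorem. There $L_{\lambda x}$ is (up to sign) the product $q_{x^{op},x}\cdot q_{x,x^{op}}$ of two canonical path elements across $\lambda x$ in top degree $2(r-1)$ of $e_xU_\mathcal{X}e_x$, where $x^{op}$ is the vertex of $\lambda x$ opposite $x$ (Lemma~\ref{opp}). This is a distinguished basis vector, hence non-zero; the identity just proved then shows that every $L_\xi$ with its sign $(-1)^{s(\xi)}$ represents the same non-zero element. The main obstacle will be the vanishing of $P\cdot H_y\cdot P^\omega$ in cases (a) and (b): although $H_y$ is typically a non-zero degree-$2$ element of $e_yU_\mathcal{X}e_y$, it must annihilate this specific sandwich of out- and back-paths via a careful telescoping use of Heisenberg and square relations, and the sign bookkeeping in tracking $s(\xi)$ through each supercommutation step must be kept straight throughout.
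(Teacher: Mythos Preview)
Your reduction to adjacent facets and the use of cellularity for non-vanishing match the paper. But the case analysis has two genuine gaps.

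\textbf{Cases (c) and (d) are not handled by supercommutation alone.} In case~(c), $\xi$ omits direction $j$ (so $\xi_j=1$) while $\eta$ omits direction $i$ (so $\eta_i=1$). Thus $L_\xi$ uses the arrow $a_i$ (and $b_i$ on the return), whereas $L_\eta$ uses $a_j$ (and $b_j$); these loops visit different opposite vertices $x^{op}_\xi\neq x^{op}_\eta$ and cannot be made to ``literally coincide'' by supercommutation. What the paper does is bring both loops to the form $P\cdot a_ib_i\cdot P^\omega$ and $P\cdot a_jb_j\cdot P^\omega$ with the \emph{same} outer path $P$ along the $(r-2)$-cube $\xi\cap\eta$, then applies the Heisenberg relation $a_ib_i+b_ia_i=a_jb_j+b_ja_j$ at the midpoint and kills the unwanted terms $P\cdot b_ia_i$ and $P\cdot b_ja_j$ by showing, via the Cubist property $\mathcal X\cap\mathcal X[1]=\emptyset$, that the relevant endpoint lies outside~$\mathcal X$.

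\textbf{Cases (a) and (b): the telescoping does not finish.} Your identity $c_{z,j}\cdot H_y=H_z\cdot c_{z,j}$ is correct, and telescoping gives $P\cdot H_y\cdot P^\omega=H_x\cdot P\cdot P^\omega$. But there are no ``doubled-up arrows at the endpoint'' to which a square relation applies: $H_x$ is a degree-$2$ loop at $x$, and $H_x\cdot P\cdot P^\omega$ lands in the one-dimensional top degree of $e_xU_{\mathcal X}e_x$ with no evident reason to vanish. The paper instead expands $H$ at the midpoint in the direction $j$ absent from $\xi$ (and $\eta$), so that $H=a_jb_j+b_ja_j$, and then shows $P\cdot a_j=0$ and $P\cdot b_j=0$: rearranging so all $b$-type arrows come first takes $x$ to a vertex of $\xi$, whose shift by $[1]$---which is the endpoint after applying the remaining $a$-arrows together with $a_j$---is not in $\mathcal X$. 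This use of the Cubist property is the missing ingredient in your argument; without it the vanishing of $P\cdot H_y\cdot P^\omega$ is simply not established.
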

\begin{proof}

The written elements of $U_{\mathcal X}$ are non-zero, by the cellularity of $U_{\mathcal X}$.
By lemma~\ref{transpos}, we may assume that $\xi \cap \eta$ is an $r-2$-cube.
By lemma~\ref{cases}, we should check cases (a)-(d). 

Case (a):
By the supercommutation relations, the left hand side is equal to 
$$(-1)^{s(\xi)} e_x \circ \xi_1\ldots\xi_{i-1}\xi_{i+1}\ldots\xi_r a_i
b_i \xi_r^\omega\ldots\xi_{i+1}^\omega \xi_i^\omega \ldots \xi_1^\omega,$$
whilst the right hand side is equal to
$$(-1)^{s(\xi)+1} e_x \circ \xi_1\ldots\xi_{i-1}\xi_{i+1}\ldots\xi_r b_i
a_i \xi_r^\omega\ldots\xi_{i+1}^\omega \xi_{i-1}^\omega \ldots \xi_1^\omega.$$
Let $j \neq i$ be that number such that $\xi_j = \eta_j = 1$.
The difference of the left and right hand side is
$$(-1)^{s(\xi)} e_x \circ \xi_1\ldots\xi_{i-1}\xi_{i+1}\ldots\xi_r (a_ib_i + b_ia_i)
\xi_r^\omega\ldots\xi_{i+1}^\omega \xi_i^\omega \ldots \xi_1^\omega,$$
which is equal to 
$$(-1)^{s(\xi)} e_x \circ \xi_1\ldots\xi_{i-1}\xi_{i+1}\ldots\xi_r (a_jb_j + b_ja_j)
\xi_r^\omega\ldots\xi_{i+1}^\omega \xi_i^\omega \ldots \xi_1^\omega,$$
by the Heisenberg relation.
Note that $x - \sum_{k,x - \epsilon_k \in \xi} \epsilon_k \in {\mathcal X}$, and so its shift, 
$x + \epsilon_j + \sum_{k,x + \epsilon_i \in \xi} \epsilon_k \notin {\mathcal X}$, and thus  
$$e_x \circ \xi_1\ldots\xi_{i-1}\xi_{i+1}\ldots\xi_r a_j = \pm 
e_x \circ a_j \prod_{k,x + \epsilon_k \in \xi} \xi_{k}
\prod_{k,x - \epsilon_k \in \xi} \xi_{k} = 0.$$  
Similarly, $e_x \circ \xi_1\ldots\xi_{i-1}\xi_{i+1}\ldots\xi_r b_j = 0$,
and we have proved the difference of left and right hand side is zero.

Case (b) is proved identically to case (a).

\bigskip

Case (c): The left hand side is equal to
$$(-1)^{s(\xi)} e_x \circ \xi_1\ldots\xi_{i-1}\xi_{i+1}\ldots\xi_r a_i
b_i \xi_r^\omega\ldots\xi_{i+1}^\omega \xi_i^\omega \ldots \xi_1^\omega,$$
whilst the right hand side is equal to
$$(-1)^{s(\xi)} e_x \circ \xi_1\ldots\xi_{j-1}\xi_{j+1}\ldots\xi_r a_j
b_j \xi_r^\omega\ldots\xi_{j+1}^\omega \xi_j^\omega \ldots \xi_1^\omega.$$
By the Heisenberg relations, we have
$$(-1)^{s(\xi)} e_x \circ \xi_1\ldots\xi_{i-1}\xi_{i+1}\ldots\xi_r (a_ib_i + b_ia_i)
\xi_r^\omega\ldots\xi_{i+1}^\omega \xi_i^\omega \ldots \xi_1^\omega =$$
$$(-1)^{s(\xi)} e_x \circ \xi_1\ldots\xi_{j-1}\xi_{j+1}\ldots\xi_r (a_jb_j + b_ja_j)
\xi_r^\omega\ldots\xi_{j+1}^\omega \xi_j^\omega \ldots \xi_1^\omega.$$
However, 
$x -\epsilon_j = \sum_{k,x - \epsilon_k \in \xi} \epsilon_k \in {\mathcal X}$, and so its shift, 
$x + \sum_{k,x + \epsilon_i \in \xi} \epsilon_k \notin {\mathcal X}$,
and thus
$$e_x \circ \xi_1\ldots\xi_{i-1}\xi_{i+1}\ldots\xi_r b_i = \pm 
e_x \circ b_i \prod_{k,x - \epsilon_k \in \xi} \xi_{k}
\prod_{k,x + \epsilon_k \in \xi} \xi_{k} = 0.$$  
Similarly, $e_x \circ \xi_1\ldots\xi_{j-1}\xi_{j+1}\ldots\xi_r b_j = 0$,
and we have proved the equality of left and right hand side.

Case (d) is proved identically to case (c). \end{proof}

\begin{lemma} \label{conjelt}
Let $F$ be a facet of a cubist set ${\mathcal X}$, containing an element $x$.
Then there exists $\sigma \in \Sigma_r$, such that $F = x + F_i^\sigma$.
\end{lemma}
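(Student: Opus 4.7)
The plan is to reduce the statement to a purely combinatorial fact about $(r-1)$-cubes in $\mathbb{Z}^r$ containing a fixed vertex: any such cube is determined by a labelled partition of $\{1,\ldots,r\}$ into a positive direction set, a negative direction set, and a single ``missing'' coordinate. The symmetric group $\Sigma_r$ acts transitively on such data of a given size, so everything is accounted for by the standard model $F_i$ and a suitable permutation.

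Concretely, first I would note that any $(r-1)$-cube $F\subset\mathbb{Z}^r$ containing $x$ is of the form
\[
F \;=\; x + \Bigl\{\sum_{j\in S_+}a_j\epsilon_j - \sum_{j\in S_-}a_j\epsilon_j \;\Big|\; a_j=0,1\Bigr\},
\]
where $S_+,S_-$ are the disjoint subsets of $\{1,\ldots,r\}$ defined by
\[
S_+ \;=\; \{\,j : x+\epsilon_j\in F\,\},\qquad S_- \;=\; \{\,j : x-\epsilon_j\in F\,\},
\]
and $|S_+|+|S_-|=r-1$. Let $k\in\{1,\ldots,r\}$ be the unique coordinate not lying in $S_+\cup S_-$. (This follows purely from the definition of an $(r-1)$-cube; the hypothesis that $F\subseteq\mathcal{X}$ is not even needed here.)

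Next I would set $i = |S_+| + 1$, so that $|\{1,\ldots,i-1\}| = |S_+|$ and $|\{i+1,\ldots,r\}| = |S_-|$. Choose any permutation $\sigma\in\Sigma_r$ satisfying
\[
\sigma(\{1,\ldots,i-1\}) = S_+,\qquad \sigma(i) = k,\qquad \sigma(\{i+1,\ldots,r\}) = S_-.
\]
Such a $\sigma$ exists because the three target sets form a partition of $\{1,\ldots,r\}$ with matching cardinalities. Then
\[
x + F_i^\sigma \;=\; x + \Bigl\{\sum_{j<i}a_j\epsilon_{\sigma(j)} - \sum_{j>i}a_j\epsilon_{\sigma(j)}\Bigr\}
\]
coincides with the description of $F$ above after re-indexing via $\sigma$, yielding $F = x + F_i^\sigma$.

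There is essentially no obstacle: the statement is a bookkeeping exercise reflecting the fact that the standard facets $F_i$, together with the $\Sigma_r$-action permuting coordinate directions, exhaust all $(r-1)$-cubes at a given vertex. The only thing worth stressing in the write-up is the existence and uniqueness of the trichotomy $(S_+,S_-,\{k\})$ associated to $F$ at $x$.
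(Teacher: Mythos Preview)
Your proof is correct and takes essentially the same approach as the paper: identify the positive-direction set, the negative-direction set, and the single missing coordinate at $x$, then pick $\sigma\in\Sigma_r$ sending $\{1,\ldots,i-1\}$, $\{i\}$, $\{i+1,\ldots,r\}$ onto these. The paper's version is terser (and arguably has a sign typo in its displayed formula for $F$), but the argument is the same bookkeeping exercise you carry out in full.
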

\begin{proof}

We have
\[
F=\left\{    x  +\sum_{j\in S}a_{j}\epsilon_{j}+\sum_{j\notin S\cup\{i\}}%
a_{j}\epsilon_{j},\quad a_{j}=0,1\right\}
\]
for some $S\subseteq\{1,\ldots,r\}$ and $i\notin S$.
Let $\sigma$ be some element of $\Sigma_r$, such that $\sigma (\{1,\ldots,|S|\}) = S$.
\end{proof}

There are number of equivalent definitions of a symmetric finite dimensional algebra, not all of which pass 
to infinite dimensions in a satisfactory way.

In this paper, we say an infinite dimensional algebra is symmetric, 
if there exists an associative, symmetric, non-degenerate bilinear form on $A$. 
A more functorial definition, which follows from the existence of such a form, 
is an equivalence
$Hom(P, M) \cong Hom(M, P)^*$, natural in $A$-modules $M$, and in projective $A$-modules $P$ 
\cite{Rickard-Symmetric}.

\begin{theorem} \label{symmetric}
$U_{\mathcal X}$ is symmetric.
\end{theorem}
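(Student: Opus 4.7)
The plan is to construct an associative, symmetric, non-degenerate bilinear form $\langle u, v \rangle = \tau(uv)$ on $U_{\CX}$, where $\tau: U_{\CX} \to k$ is a suitable trace functional. The key input is Lemma~\ref{Frob}: for each $x \in \CX$ it produces a canonical nonzero element
\[
\sigma_x = (-1)^{s(\xi)}\, e_x \circ \xi_1 \cdots \xi_r\, \xi_r^\omega \cdots \xi_1^\omega \;\in\; e_x U_{\CX} e_x
\]
of degree $2(r-1)$, independent of the choice of facet $\xi$ at $x$. Since the indecomposable projectives $U_{\CX} e_x$ have Loewy length $2r-1$ (Corollary~\ref{gldim}), this $\sigma_x$ sits in the top nonzero graded piece, is annihilated by $U_{\CX}^{>0}$, and therefore generates a simple submodule isomorphic to $L(x)$ in the socle of $U_{\CX} e_x$. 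Moreover one verifies, from the cellular basis combined with the Koszul grading, that $\mathrm{soc}(U_{\CX} e_x)$ is one-dimensional, spanned by $\sigma_x$. I then define $\tau$ by declaring $\tau(\sigma_x) = 1$ for each $x$ and requiring $\tau$ to vanish on all other canonical cellular basis elements.

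Associativity of $\langle\cdot,\cdot\rangle$ is automatic. For non-degeneracy, it suffices to show that for any nonzero $u \in e_y U_{\CX} e_x$ there exists $v \in e_x U_{\CX} e_y$ such that $uv$ has nonzero $\sigma_y$-coefficient. Expanding $u$ in the cellular basis $q_{yw}q_{wx}$ coming from Lemma~\ref{Ustandard} and applying the multiplicative relation $\gamma_{zy}q_{yx} = \pm q_{zx}$ whenever $d(z,x) = d(z,y)+d(y,x)$, one constructs for each nonzero summand an explicit dual cellular element whose product lands in $k\sigma_y$; the path combinatorics of Proposition~\ref{combinatorial} ensures these dual elements exist and can be chosen compatibly.

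The main obstacle is symmetry, $\tau(uv) = \tau(vu)$. My approach has two stages. First, observe that the expression $\xi_1\cdots\xi_r\xi_r^\omega\cdots\xi_1^\omega$ defining $\sigma_x$ is manifestly invariant under the anti-involution $\omega$ (which reverses a path and swaps $a$-arrows with $b$-arrows), so $\omega(\sigma_x) = \sigma_x$ and hence $\tau \circ \omega = \tau$. Second, and more delicately, reduce the symmetry to a cellular-level cyclic identity: for cellular basis elements $u \in e_y U_{\CX} e_x$ and $v \in e_x U_{\CX} e_y$ with total degree $2(r-1)$, the coefficient of $\sigma_y$ in $uv$ equals the coefficient of $\sigma_x$ in $vu$. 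The combinatorial heart of this cyclic identity is the same Heisenberg-plus-supercommutation calculation that underlies Lemma~\ref{Frob}, now applied to closed loops $y \to x \to y$ rather than to single-vertex facet permutations; the necessary sign bookkeeping is controlled by the case analysis of Lemma~\ref{cases}. Once this cyclic identity is established, symmetry follows, and we conclude that $\langle\cdot,\cdot\rangle$ is the desired invariant inner product.
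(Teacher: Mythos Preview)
Your overall setup---define $\tau$ to pick out the coefficient of the canonical top-degree element $\sigma_x$ from Lemma~\ref{Frob} and set $\langle u,v\rangle=\tau(uv)$---is the same as the paper's.  The gap is in non-degeneracy, and your symmetry argument is more laboured than it needs to be.

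\textbf{Non-degeneracy.}  You assert that $\mathrm{soc}(U_{\mathcal X}e_x)$ is one-dimensional ``from the cellular basis combined with the Koszul grading,'' and that explicit cellular duals ``can be chosen compatibly.''  Neither is justified.  A $\Delta$-filtration tells you each standard layer has simple socle (Lemma~\ref{Ustandard}), but does not by itself prevent those simple socles from surviving into $\mathrm{soc}(U_{\mathcal X}e_x)$; you must show all but one are absorbed into lower layers.  Your appeal to Proposition~\ref{combinatorial} is not to the point: that result concerns $\lambda x\cap\mu y$ and feeds into the $V_{\mathcal X}$-side combinatorics, not the pairing on the cellular basis of $U_{\mathcal X}$.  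The paper's argument is genuinely different and is the idea you are missing: it exploits all $r!$ quasi-hereditary structures at once.  If $0\neq u\in e_xU_{\mathcal X}e_y$ has degree $<2r-2$ and $U_{\mathcal X}^{>0}u=0$, then for \emph{any} conjugate partial order the $\Delta$-filtration of $U_{\mathcal X}e_y$ together with Lemma~\ref{Ustandard} forces $x=z^{op}$ (unique by Lemma~\ref{opp}) and $y\in\lambda z$; in particular $y\neq x$.  Lemma~\ref{suitfacet} then produces a facet $F$ with $x\in F$, $y\notin F$, and Lemma~\ref{conjelt} lets you switch to a partial order with $\lambda' z'=F$; now $z'^{op}=x$ but $y\notin\lambda' z'$, a contradiction.

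\textbf{Symmetry.}  Your first stage, $\tau\circ\omega=\tau$, only yields $\tau(uv)=\tau(\omega(v)\omega(u))$, not $\tau(vu)$, so it contributes nothing.  Your second stage, a ``cellular-level cyclic identity,'' is not actually carried out.  The paper's route is much shorter: once non-degeneracy is in hand, the Nakayama automorphism $N$ exists and is graded, hence determined by its effect on arrows; the freedom in Lemma~\ref{Frob} to choose the facet $\xi$ lets one write $\sigma_x$ with any prescribed arrow out of $x$ as its leading factor, from which $N(a)=a$ follows directly.
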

\begin{proof}

Let us define a bilinear form on $U_{\mathcal X}$ by the formula 
$$(u_1, u_2) = \sum_{x \in {\mathcal X}} c(u_1.u_2),$$ 
where $c(u)$ the coefficient of the element 
$(-1)^{s(\xi)} \xi_1\ldots\xi_{1} \xi_{r-1}^\omega\ldots\xi_{1}^\omega$
of lemma~\ref{Frob} in $u$. 
The form is clearly associative.
Let us prove its non-degeneracy.

By lemma~\ref{Frob}, we know that the the degree $2r-2$ part of $U_{\mathcal X}$ 
is isomorphic to $k{\mathcal X}$ as a $U_{\mathcal X}$-$U_{\mathcal X}$-bimodule, 
and that $U_{\mathcal X}$ vanishes in degree $2r-1$.
Therefore, let $0 \neq u \in e_x U_{\mathcal X} e_y$ be homogeneous of degree $i <2r-2$. We 
are required to show that $U_{\mathcal X}^{>0}.u \neq 0$.  Suppose not. Then
the socle of $U_{\mathcal X} e_y$ contains a summand isomorphic to $L(x)\langle i \rangle$.  
Hence the same is true
of the socle of one of the factors $\Delta_{U_{\mathcal{X}}}(z)\langle d(z,y) \rangle$, 
$y\in\lambda z$ in a standard filtration of $U_{\mathcal X} e_y$. 
By Lemma~\ref{opp} there is a unique $z\in\mathcal{X}$ such that $z^{op}=x$, 
and by Lemma~\ref{Ustandard} we must have $y\in\lambda z$ and $i=d(z,y)+r-1$. 
In particular we have $y\neq x$.  

Now the above argument remains valid for any conjugate of the partial order on $\mathcal{X}$. 
By Lemma~\ref{suitfacet} there is a facet $F$ of $\mathcal{X}$ such that $x\in F$ and $y\notin F$. 
Let $z'$ be the vertex of $F$ opposite $x$.
By lemma~\ref{conjelt}, we may choose a partial order $\succeq'$
on $\mathcal{X}$, with respect to which $\lambda' z'=F$. 
Then $z'^{op}=x$ with respect to $\lambda'$, but $y\notin \lambda' z'$, which is a contradiction. 

Let us finally observe that $(,)$ is symmetric. 
We need to see that the Nakayama automorphism $N$ of $U_{\mathcal X}$, defined by 
$$(x,y) = (N(y),x),$$
is trivial.
The Nakayama automorphism is a graded homomorphism, 
so it is enough to know that $N(x) = x$, for arrows $x$,
since these generate $U_{\mathcal X}$. 
The explicit formula of lemma~\ref{Frob} makes this clear.
\end{proof}

\begin{corollary} \label{gldim}
Every principal indecomposable $U_{\mathcal X}$-module has radical length $2r-1$. The global dimension of 
$V_{\mathcal X}$ is $2r-2$. 
\end{corollary}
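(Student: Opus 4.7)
The plan is to extract both assertions from Brauer--Humphreys reciprocity (Corollary~\ref{preformula}) together with the Koszul duality between $U_{\CX}$ and $V_{\CX}$ (Theorem~\ref{UKoszul}), using the explicit shape of $D_{U_{\CX}}(q)$. Since $D_{U_{\CX}}(q)_{zx} = q^{d(z,x)}$ when $x \in \lambda z$ and vanishes otherwise, Corollary~\ref{preformula} yields
$$C_{U_{\CX}}(q)_{xy} \;=\; \sum_{z \in \CX \,:\, x,y \in \lambda z} q^{d(z,x)+d(z,y)}.$$
Every term has degree at most $2(r-1)$ because $\lambda z$ is an $(r-1)$-cube, and the maximal degree $2r-2$ is attained exactly when $x = y$ and $z$ is chosen so that $x$ is opposite $z$ in $\lambda z$; such a $z$ exists and is unique by Lemma~\ref{opp}. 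Therefore $U_{\CX}^i = 0$ for $i > 2r-2$, while $e_x U_{\CX}^{2r-2} e_x$ is one-dimensional for every $x \in \CX$.

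For the radical length, I would use that since $U_{\CX}$ is Koszul (Theorem~\ref{UKoszul}), it is positively graded and generated in degrees zero and one, so its graded Jacobson radical equals $U_{\CX}^{>0}$. The radical length of the projective module $U_{\CX} e_x$ is therefore one plus the top grading degree in which $U_{\CX} e_x$ is supported. Reading off the $x$th column of the Cartan matrix just computed, this top degree is exactly $2r-2$, giving radical length $2r-1$.

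For the global dimension of $V_{\CX}$, Theorem~\ref{UKoszul} identifies $U_{\CX}$ with $\operatorname{Ext}^*_{V_{\CX}}(V_{\CX}^0, V_{\CX}^0)$ as graded algebras. The first step then gives $\operatorname{Ext}^i_{V_{\CX}}(L(x), L(y)) = 0$ for $i > 2r-2$ and $\operatorname{Ext}^{2r-2}_{V_{\CX}}(L(x), L(x)) \neq 0$ for each $x$. Since $V_{\CX}$ is positively graded and locally finite with semisimple degree-zero part, a standard argument produces $\operatorname{gl.dim}(V_{\CX}) = \operatorname{pd}_{V_{\CX}}(V_{\CX}^0) = \sup\{i \mid \operatorname{Ext}^i_{V_{\CX}}(V_{\CX}^0, V_{\CX}^0) \neq 0\} = 2r-2$. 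There is no serious obstacle, since all the combinatorial and homological input---the shape of $D_{U_{\CX}}$, Brauer--Humphreys reciprocity, Lemma~\ref{opp}, and the Koszul-dual identification of the Ext algebra---is already in place; the only point requiring care is the reduction of graded global dimension to the projective dimension of the semisimple top, which is standard in our setting.
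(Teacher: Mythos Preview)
Your argument is correct and is essentially the paper's intended approach, made explicit. The corollary is stated without proof immediately after Theorem~\ref{symmetric}; the relevant input, recorded in the proof of that theorem, is that the degree $2r-2$ part of $U_{\mathcal X}$ is one-dimensional at each vertex and that $U_{\mathcal X}$ vanishes in higher degrees. You recover exactly these facts directly from $C_{U_{\mathcal X}}(q)=D_{U_{\mathcal X}}(q)^T D_{U_{\mathcal X}}(q)$ and the bijectivity of $z\mapsto z^{op}$, and then pass to $V_{\mathcal X}$ via the Koszul-dual identification of $U_{\mathcal X}$ with the Ext-algebra. The only mild difference is that your route bypasses the symmetry theorem entirely, extracting the degree bounds from Corollary~\ref{preformula} and Lemma~\ref{opp} rather than from Lemma~\ref{Frob}; this is arguably cleaner, and shows that the radical-length and global-dimension statements do not actually depend on $U_{\mathcal X}$ being symmetric.
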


\begin{corollary}
$U_{\mathcal X}$ is Ringel self-dual. 
\end{corollary}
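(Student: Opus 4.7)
The plan is to identify the characteristic tilting module for the highest weight category $U_{\mathcal X}\mMod$ (Corollary~\ref{Uqh}) with the regular module $\bigoplus_{x\in\mathcal{X}} U_{\mathcal X} e_x = U_{\mathcal X}$, up to grading shifts. Once this identification is made, the Ringel dual $R(U_{\mathcal X}) = \operatorname{End}_{U_{\mathcal X}}(T)^{op}$ is just $U_{\mathcal X}^{op}$, and the anti-involution $\omega$ fixing each $e_x$ gives an isomorphism $U_{\mathcal X}^{op}\cong U_{\mathcal X}$, producing the desired Morita equivalence $R(U_{\mathcal X})\sim U_{\mathcal X}$.

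First I would show that each indecomposable projective $P(x) = U_{\mathcal X} e_x$ is a tilting module. Being projective in a graded highest weight category, it automatically has a $\Delta$-filtration. To exhibit a $\nabla$-filtration I appeal to the symmetric property (Theorem~\ref{symmetric}): the invariant non-degenerate form gives, for each $x\in\mathcal{X}$, an isomorphism $U_{\mathcal X} e_x \cong \operatorname{Hom}_k(e_x U_{\mathcal X},k)$ of graded left $U_{\mathcal X}$-modules (after a grading shift by $2r-2$, as forced by Lemma~\ref{Frob} and Corollary~\ref{gldim}). Since $U_{\mathcal X}^{op}\cong U_{\mathcal X}$ via $\omega$, the right module $e_x U_{\mathcal X}$ inherits, from the quasi-hereditary structure on $U_{\mathcal X}^{op}$ relative to the same order $\succeq$, a filtration by right-hand standards; dualising into $\operatorname{Hom}_k(-,k)$ turns these into left-hand costandards, producing the required $\nabla$-filtration of $P(x)$. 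This step uses essentially that $P(x)$ is finite-dimensional (Corollary~\ref{gldim}), so the $k$-dual is again in $U_{\mathcal X}\mMod$ and the filtration arguments are unproblematic.

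Next I would identify each $P(x)$ with a particular indecomposable tilting $T(y)$ via the bijection $y\mapsto y^{op}$ of Lemma~\ref{opp}. From the symmetric form and Lemma~\ref{Frob} one reads off that $P(x)$ has simple head $L(x)$ and simple socle $L(x)\langle 2r-2\rangle$; comparing with the fact that $T(y)$ has $\Delta(y)$ at the bottom of its $\Delta$-filtration and $\nabla(y)$ at the top of its $\nabla$-filtration, together with Lemma~\ref{Ustandard} (which describes the socle of $\Delta_{U_{\mathcal X}}(y)$ as $L(y^{op})\langle w\rangle$), pins down $P(x) \cong T(x^{op})\langle n_x\rangle$ for an integer grading shift $n_x$. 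Summing over $x\in\mathcal{X}$ then yields the characteristic tilting module $T \cong U_{\mathcal X}$ as a left module, up to grading shifts on the summands. Applying Rickard's theorem in the form of Theorem~\ref{Rickard} gives a derived equivalence $D^b(R(U_{\mathcal X})\mMod) \cong D^b(U_{\mathcal X}\mMod)$, and combined with $\omega$ this shows $R(U_{\mathcal X})$ is Morita equivalent to $U_{\mathcal X}$, i.e.\ Ringel self-duality.

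The main obstacle will be the first step: carefully translating the symmetric bilinear form on $U_{\mathcal X}$ into an identification of $P(x)$ as an injective that produces the right graded $\nabla$-filtration. In particular one needs to know that $\nabla_{U_{\mathcal X}}(y)$ is indeed the $\omega$-twisted $k$-dual of $\Delta_{U_{\mathcal X}}(y)$, that the filtration multiplicities match via the graded Brauer--Humphreys reciprocity established during the proof of Theorem~\ref{Vhwc}, and that all grading shifts line up consistently so that the sum $\bigoplus_x T(x^{op})\langle n_x\rangle$ really is isomorphic to $U_{\mathcal X}$ as a graded $U_{\mathcal X}$-module. A minor secondary issue, but purely bookkeeping, is passing from the derived Morita equivalence produced by Rickard's theorem back to an actual Morita equivalence between $R(U_{\mathcal X})$ and $U_{\mathcal X}$, which here follows from the fact that $T$ coincides with a genuine progenerator.
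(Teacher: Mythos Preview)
Your approach is correct and matches the paper's: since $U_{\mathcal X}$ is symmetric, projectives are injective and hence tilting, so the characteristic tilting module is the regular module and the Ringel dual is $U_{\mathcal X}^{op}\cong U_{\mathcal X}$ via $\omega$. The paper compresses this to a single line (``Projective modules are also injective, and therefore tilting''), skipping your careful identification of $P(x)$ with a specific $T(x^{op})$ and your detour through Rickard's theorem --- neither is needed for the bare statement of Ringel self-duality.
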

\begin{proof}

Projective modules are also injective, and therefore tilting. \end{proof}

R. Mart{\'{\i}}nez-Villa has characterised Koszul self-injective algebras 
by a noncommutative Gorenstein property \cite{Martinez-Villa-GSAKA}, 
providing a corollary to theorem~\ref{symmetric}:

\begin{corollary}
There is an isomorphism 
of graded $k{\mathcal X}$-$k{\mathcal X}$-bimodules, 
$$Ext^*_{V_{\mathcal X}\mMod}(k{\mathcal X}, V_{\mathcal X}) \cong k{\mathcal X} \langle 2r-2 \rangle. \Box$$
\end{corollary}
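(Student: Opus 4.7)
The plan is to apply Mart{\'{\i}}nez-Villa's theorem directly, as signalled by the remark preceding the statement. That result characterises Koszul self-injective algebras in terms of a noncommutative Gorenstein condition on the Koszul dual: if $A$ is a positively graded, locally finite Koszul algebra with top nonzero degree $d$, then $A$ is self-injective precisely when its Koszul dual $B$ satisfies $\operatorname{Ext}^i_{B\mMod}(B_0,B)=0$ for $i\neq d$ and $\operatorname{Ext}^d_{B\mMod}(B_0,B)\cong {}_\nu B_0\langle d\rangle$ as graded $B_0$-$B_0$-bimodules, where $\nu$ is an automorphism of $B_0$ corresponding under Koszul duality to the Nakayama automorphism of $A$.

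All the hypotheses needed to invoke this theorem have already been proved. By Theorem~\ref{UKoszul}, $U_{\mathcal X}$ is Koszul with Koszul dual $V_{\mathcal X}$; by Theorem~\ref{symmetric}, $U_{\mathcal X}$ is symmetric and hence self-injective; and by Corollary~\ref{gldim} the principal indecomposable $U_{\mathcal X}$-modules have radical length $2r-1$, so $U_{\mathcal X}$ is concentrated in degrees $0,\ldots,2r-2$ with top degree $d=2r-2$. Setting $A=U_{\mathcal X}$, $B=V_{\mathcal X}$ and $B_0=k{\mathcal X}$ in the theorem yields
$$\operatorname{Ext}^*_{V_{\mathcal X}\mMod}(k{\mathcal X}, V_{\mathcal X})\;\cong\;{}_\nu k{\mathcal X}\langle 2r-2 \rangle,$$
concentrated in cohomological degree $2r-2$.

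The remaining step is to identify $\nu$ with the identity. Since $U_{\mathcal X}$ is symmetric rather than merely self-injective, its Nakayama automorphism is trivial; this was verified explicitly at the end of the proof of Theorem~\ref{symmetric} using the formula of Lemma~\ref{Frob}. Under the Koszul duality correspondence this forces $\nu=\mathrm{id}$, and the stated isomorphism follows.

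The main obstacle I anticipate is bookkeeping rather than a substantial mathematical issue: Mart{\'{\i}}nez-Villa's theorem is classically stated for finite-dimensional algebras, while $V_{\mathcal X}$ is only locally finite. To justify applying the theorem in this setting, I would reduce to finite truncations. Each bimodule component $e_x\operatorname{Ext}^*_{V_{\mathcal X}\mMod}(k{\mathcal X}, V_{\mathcal X})e_y$ depends only on a bounded region of ${\mathcal X}$ around $x$ and $y$, and by approximating ${\mathcal X}$ by a Cubist subset ${\mathcal X}(x,N)$ as in Lemma~\ref{corner approximation} for $N\gg 0$ and passing to a finite truncation $V_{\mathcal T}$ as in the proof of Theorem~\ref{VKoszul}, the calculation takes place inside a finite-dimensional Koszul self-injective context where Mart{\'{\i}}nez-Villa's theorem applies verbatim.
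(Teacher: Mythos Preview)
Your proposal is correct and follows the same route as the paper, which simply cites Mart{\'{\i}}nez-Villa's characterisation of Koszul self-injective algebras and records the corollary with a $\Box$. You have in fact supplied more detail than the paper does: the verification of the hypotheses (Koszulity, symmetry, top degree $2r-2$), the observation that symmetry rather than mere self-injectivity makes the Nakayama twist trivial, and the truncation argument to handle local finiteness are all reasonable elaborations of what the paper leaves implicit.
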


Let $A$ be a $\mathbb{Z}_+$-graded algebra.
Let $A \Nil$ be the Serre subcategory of $A \mMod$, 
of modules on which $A_{>0}$ acts nilpotently.
Let $$A \QMod = A \mMod/A \Nil,$$ the non-commutative analogue of coherent sheaves on 
a projective scheme $Proj(A)$ associated to a commutative graded algebra $A$.
Let $A \qMod$ be the subcategory of $A \QMod$ of graded $V_{\mathcal X}$-modules
with projective resolutions whose terms are all finitely generated.  
A theorem of R. Mart{\'{\i}}nez-Villa and A. Martsinkovsky \cite{Martinez-Villa-Martsinkovsky} provides a second
corollary to theorem~\ref{symmetric}:

\begin{corollary} \label{Serre} (Serre duality)
For graded $V_{\mathcal X}$-modules $M,N$, 
we have,
$$Ext^i_{V_{\mathcal X} \qMod}(M,N) \cong Ext^{2r-3-i}_{V_{\mathcal X} \qMod}(N,M\langle-2r+2\rangle)^*. \Box$$
\end{corollary}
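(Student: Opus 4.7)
The plan is to invoke the noncommutative Serre duality theorem of Mart\'\i nez-Villa and Martsinkovsky, whose hypotheses are now at hand. Their theorem takes as input a Koszul AS-Gorenstein graded algebra $A$ of global dimension $n$, whose Gorenstein Ext is concentrated in cohomological degree $n$ with internal shift $\ell$, and produces a Serre functor on $A\qMod$ giving a duality of the form
$$Ext^i_{A\qMod}(M,N) \cong Ext^{n-1-i}_{A\qMod}(N, M\langle -\ell \rangle)^*.$$
Identifying $A$ with $V_{\mathcal X}$ and reading off the right numerical parameters will then yield the corollary immediately.

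First I would verify the hypotheses. By Theorem~\ref{VKoszul}, $V_{\mathcal X}$ is Koszul, and by Corollary~\ref{gldim} its global dimension is $2r-2$. The preceding corollary---a consequence of the symmetry of $U_{\mathcal X}$ (Theorem~\ref{symmetric}) via Mart\'\i nez-Villa's characterisation of Koszul self-injective algebras---provides the AS-Gorenstein property
$$Ext^*_{V_{\mathcal X}\mMod}(k{\mathcal X}, V_{\mathcal X}) \cong k{\mathcal X}\langle 2r-2 \rangle,$$
with the Ext concentrated in a single cohomological degree (necessarily $2r-2$) and internal shift $2r-2$. Thus the hypotheses are met with $n=\ell=2r-2$, and the general formula specialises to
$$Ext^i_{V_{\mathcal X}\qMod}(M,N) \cong Ext^{2r-3-i}_{V_{\mathcal X}\qMod}(N, M\langle -2r+2\rangle)^*,$$
as required. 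The Serre functor is just the shift $\langle -2r+2\rangle$, which is precisely the statement that the canonical bundle on this chimeric noncommutative projective variety is trivial.

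The work is therefore essentially reduced to a citation, and the main point to check is that the form of the Mart\'\i nez-Villa--Martsinkovsky theorem one invokes applies to locally finite-dimensional, rather than finite-dimensional, Koszul algebras. This should cause no real difficulty here because $V_{\mathcal X}\qMod$ is defined via a finite-presentation condition (projective resolutions with finitely generated terms), guaranteeing that each relevant Ext group is finite-dimensional in each internal degree, so the dualisation $(\,\cdot\,)^*$ makes sense and the standard arguments carry over unchanged. If the theorem as stated in the literature only covers the finite-dimensional case, one would replace it by the same argument carried out for each finite truncation $V_{\mathcal T}$ (which is quasi-hereditary and finite-dimensional, hence covered) and pass to the colimit using the recollements obtained in the preceding section.
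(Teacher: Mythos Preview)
Your proposal is correct and matches the paper's approach exactly: the paper simply cites the Mart\'\i nez-Villa--Martsinkovsky theorem as an immediate corollary of Theorem~\ref{symmetric} (via the preceding Gorenstein corollary), and you have spelled out the verification of its hypotheses in slightly more detail than the paper does.
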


Note that in the above duality, we do not twist the module $N$ by an automorphism of $V_{\mathcal X}$. 
The absence of such an automorphism (canonical bundle)
is known to some as the \emph{Calabi-Yau} property. 
The Calabi-Yau dimension of $V_{\mathcal X} \qMod$ is $2r-3$.
In particular, when $r=3$, these categories have Calabi-Yau dimension 3.
String theorists are said to be interested in such things.

Let us mention here that categories of \emph{ungraded} 
representations of the algebras $V_{\mathcal X}$ also possess the Calabi-Yau property, in dimension $2r-2$,
as we record below as corollary \ref{CY}. 

A triangulated category $\mathcal{T}$ over $k$ is said to be Calabi-Yau of dimension $d$, if we have
$Hom(M,N) \cong Hom(N,M[d])^*$, naturally in $M,N \in \mathcal{T}$.

If $B$ is a positively graded algebra, with positive part $B_+$, 
let us write $\widehat{B} = \underleftarrow{\lim} (B/B_+^n)$ 
for the inverse limit of quotients $B/B_+^n$ of $B$, for $n \geq 1$.
We have,

\begin{corollary} \label{CY}
The bounded derived category of ungraded $\widehat{V}_{\mathcal{X}}$-modules 
is a Calabi-Yau category, of dimension $2r-2$.
\end{corollary}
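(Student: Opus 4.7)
The plan is to establish the Calabi-Yau property via the bimodule criterion of Van den Bergh and Ginzburg. The starting point is the trivial-twist Gorenstein identity of the preceding corollary (after Mart\'{\i}nez-Villa and Martsinkovsky), combined with the symmetry of the Koszul dual $U_\CX$ (Theorem~\ref{symmetric}) and the finite global dimension $2r-2$ of $V_\CX$ (Corollary~\ref{gldim}).

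First, I would strengthen the preceding corollary, which computes $\operatorname{Ext}^*_{V_\CX}(k\CX, V_\CX)$, to a bimodule identity
$$\operatorname{Ext}^i_{V_\CX^e}(V_\CX, V_\CX^e) \cong \begin{cases} V_\CX\langle 2r-2\rangle & i = 2r-2, \\ 0 & \text{otherwise.} \end{cases}$$
This is precisely the Van den Bergh condition for $V_\CX$ to be a graded Calabi-Yau algebra of dimension $2r-2$, the only nontrivial twist being the internal grading shift $\langle 2r-2\rangle$; crucially there is no Nakayama automorphism, the graded counterpart of the fact that $U_\CX$ is symmetric rather than merely Frobenius. The cleanest derivation goes through Koszul duality: one computes $\operatorname{Ext}^*_{V_\CX^e}(V_\CX, V_\CX^e)$ via the Hochschild cochain complex of the Koszul dual $U_\CX$ with coefficients in $U_\CX\otimes U_\CX$, and the symmetric form on $U_\CX$ together with its Loewy length $2r-1$ furnishes the required identification.

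Second, I would pass to the completion. The ungraded algebra $\widehat{V}_\CX$ inherits the bimodule identity from $V_\CX$, but in the ungraded setting the internal shift $\langle 2r-2\rangle$ collapses to the identity, yielding
$$\operatorname{Ext}^i_{\widehat{V}_\CX^e}(\widehat{V}_\CX, \widehat{V}_\CX^e) \cong \begin{cases} \widehat{V}_\CX & i = 2r-2, \\ 0 & \text{otherwise.} \end{cases}$$
This is Ginzburg's definition of a Calabi-Yau algebra of dimension $2r-2$. The standard argument in which the diagonal bimodule $\widehat{V}_\CX$ acts as a Serre functor then delivers the natural isomorphism $\operatorname{Hom}(M,N) \cong \operatorname{Hom}(N, M[2r-2])^*$ in $D^b(\widehat{V}_\CX\text{-mod})$ for $M,N$ of finite length, which is the desired Calabi-Yau property.

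The main obstacle is the first step: one has to handle the infinite-dimensionality of $V_\CX$ when propagating the Gorenstein condition from the simple bimodule $k\CX$ to the diagonal bimodule $V_\CX$. I would circumvent this by working with the finite quasi-hereditary truncations $V_\CT$ introduced in Section~\ref{hw}, where the bimodule Ext calculation reduces to a classical statement about finite-dimensional Koszul algebras with symmetric Koszul dual, and then passing to the limit in the same spirit as the proof of Theorem~\ref{VKoszul}. The trivial-twist part is the content of the preceding corollary at the level of $k\CX$, and it is preserved by the limit precisely because no Nakayama automorphism intervenes locally.
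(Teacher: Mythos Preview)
Your approach is correct in spirit but takes a genuinely different route from the paper. The paper does not compute any bimodule Ext or invoke the Ginzburg--Van den Bergh ``smooth Calabi-Yau'' condition at all. Instead it argues purely at the level of triangulated categories: by Keller's dg-formulation of Koszul duality, the perfect category $U_\CX\perf$ (with $U_\CX$ viewed as a dg algebra with zero differential) is equivalent to the bounded derived category of ungraded $\widehat{V}_\CX$-modules; and $U_\CX\perf$ is $d$-Calabi-Yau precisely when $U_\CX$ carries a graded-symmetric nondegenerate associative form of degree $-d$, which is exactly what Theorem~\ref{symmetric} provides with $d=2r-2$. No bimodule resolution, no completion argument, and no passage through finite truncations is needed.

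Your route---establish $\operatorname{Ext}^i_{V_\CX^e}(V_\CX,V_\CX^e)\cong V_\CX\langle 2r-2\rangle$ concentrated in degree $2r-2$, then push this through completion, then run the Serre-functor machinery---would also work, and has the merit of exhibiting $V_\CX$ as Calabi-Yau in Ginzburg's algebraic sense, which is a somewhat stronger statement than the triangulated-category one. The cost is that you must handle the infinite-dimensionality of $V_\CX$ (your proposed limit over finite $V_{\mathcal T}$) and verify that the bimodule Ext computation survives completion, both of which are nontrivial bookkeeping that the paper's argument sidesteps entirely. In short: the paper trades the bimodule calculation for a single invocation of Keller's equivalence, making the corollary essentially a one-line consequence of the symmetry of $U_\CX$.
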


Corollary \ref{CY} is an immediate consequence of the following lemma, 
applied in case $A = U_{\mathcal{X}}$, $B = V_{\mathcal{X}}$.

\begin{proposition}
Suppose $A$ is a Koszul algebra, with Koszul dual $B$.
The bounded derived category of ungraded $\widehat{B}$-modules 
is Calabi-Yau of dimension $d$ if, and only if, 
$A$ possesses a super-symmetric associative bilinear form of degree $-d$.
\end{proposition}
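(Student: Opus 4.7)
The plan is to use Koszul duality to identify the graded algebra $A$ with the Yoneda Ext-algebra $E:=\mathrm{Ext}^*_{\widehat{B}}(k,k)$, where $k$ denotes the unique simple $\widehat{B}$-module. Since $A$ is Koszul with Koszul dual $B$, we have $A\cong \mathrm{Ext}^*_B(B_0,B_0)$ as graded algebras; this Ext is computed from a finitely generated linear projective resolution of $B_0$, so passing to the completion $\widehat{B}$ does not alter the computation and $A\cong E$. Under this identification, a nondegenerate associative super-symmetric bilinear form on $A$ of degree $-d$ is the same data as a nondegenerate super-symmetric pairing
\[
E^i\otimes E^{d-i}\longrightarrow E^d\xrightarrow{\sim} k,
\]
whose product is the Yoneda composition. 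This is the primary bridge between the two sides of the equivalence.

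For the direction (form $\Rightarrow$ Calabi-Yau), I would start from the pairing on $E$ and construct natural Calabi-Yau duality isomorphisms
\[
\mathrm{Hom}_{D^b(\widehat{B}\mMod)}(M,N)\;\cong\;\mathrm{Hom}_{D^b(\widehat{B}\mMod)}(N,M[d])^*
\]
for $M,N$ in $D^b(\widehat{B}\mMod)$. I would first handle the case $M=N=k$: the required pairing is precisely the one provided by the bilinear form on $A$. Then I would extend to arbitrary finite-length modules by d\'evissage, using long exact Ext sequences associated to short exact sequences of modules; naturality and associativity of the Yoneda product ensure that the resulting pairings assemble into a single natural isomorphism on finite-length modules. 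Finally, using that every object of $D^b(\widehat{B}\mMod)$ is built (via distinguished triangles and brutal truncations) from finite-length modules with finite-dimensional Ext groups, one extends the Calabi-Yau pairing to the whole bounded derived category.

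For the converse direction, assume $D^b(\widehat{B}\mMod)$ is Calabi-Yau of dimension $d$. Taking $M=N=k$ yields isomorphisms $E^i\cong (E^{d-i})^*$ induced by the Yoneda product, hence a nondegenerate bilinear form on $A$ of degree $-d$. Associativity is inherited from associativity of Yoneda composition combined with the naturality of the Calabi-Yau isomorphism (the standard argument that identifies the Calabi-Yau pairing with the trace of composition). Super-symmetry follows from the Koszul sign that appears when comparing $fg$ and $gf$ for composable Yoneda classes of degrees $i$ and $j$: the symmetry of the Calabi-Yau pairing under swap of arguments forces the form on $A$ to satisfy $(a,b)=(-1)^{|a||b|}(b,a)$.

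The main obstacle is the d\'evissage step on the $\widehat{B}$-side, in the ungraded complete setting. One must check that the Yoneda-induced pairings on Ext groups of simple modules extend \emph{naturally} to arbitrary objects of $D^b(\widehat{B}\mMod)$, and that the finiteness required for the duality (finite-dimensionality of all Hom-spaces appearing) is available. The needed finiteness is a consequence of $A$ being finite-dimensional and of the observation that $\widehat{B}$ is pseudo-compact with finite-dimensional Ext groups between finite-length modules; the naturality is controlled by the functoriality of Yoneda product on long exact sequences. Once these are in place, the equivalence of the two conditions reduces to the Koszul-dual matching between degree components of $A$ and Ext spaces of $\widehat{B}$ described above.
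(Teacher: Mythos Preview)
Your approach is sound in outline but takes a different, more hands-on route than the paper. The paper's proof is entirely modular: it first invokes Van den Bergh's characterisation that a graded algebra $A$ (viewed as a dg algebra with zero differential) has a nondegenerate super-symmetric associative form of degree $-d$ if and only if the perfect dg category $A\operatorname{perf}$ is Calabi--Yau of dimension $d$; it then invokes Keller's formulation of Koszul duality, which gives an equivalence between $D_{dg}(A)$ and the derived category of ungraded $B$-modules, under which $A\operatorname{perf}$ corresponds exactly to $D^b(\widehat{B}\mMod)$. The statement then follows by transporting the Calabi--Yau property across a triangulated equivalence.

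Your argument, by contrast, identifies $A$ with the Yoneda algebra $\mathrm{Ext}^*_{\widehat{B}}(k,k)$ and attempts to build the Serre duality directly by d\'evissage from the generator $k$. This is not wrong, but the d\'evissage step you flag as the ``main obstacle'' is genuinely the heart of the matter and is essentially a re-derivation of Van den Bergh's result: one must produce a \emph{natural transformation} $\mathrm{Hom}(M,N)\to\mathrm{Hom}(N,M[d])^*$ on the whole of $D^b(\widehat{B}\mMod)$, not merely a family of isomorphisms on Ext-groups of $k$, before the five-lemma can be applied along triangles. The cleanest way to organise this is to observe that $D^b(\widehat{B}\mMod)$ is the thick subcategory generated by $k$ with derived endomorphism algebra quasi-isomorphic to $A$, whence $D^b(\widehat{B}\mMod)\simeq A\operatorname{perf}$---but that is precisely the Keller equivalence the paper cites. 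So your strategy converges on the paper's once the d\'evissage is made rigorous; the paper simply outsources both steps to the literature, which makes for a shorter and more transparent proof, while your version would be more self-contained at the cost of reproving known results.
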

\begin{proof}
Given an associative algebra $A$, we
let $A \perf_0$ denote the category of perfect complexes of ungraded $A$-modules,
that is, the full subcategory of the derived category generated by $_AA$.
Then $A$ is symmetric if it obeys either of the two equivalent conditions: 

(1) $A$ possesses a symmetric, associative, non-degenerate bilinear form.

(2) $A \perf_0$ is a Calabi-Yau triangulated category of dimension $0$.

\bigskip

As we have assumed throughout our paper, $A$ is in fact a graded algebra.
Thinking of $A$ as a differential graded algebra, with zero differential,
we let $A \perf$ denote the full subcategory of the derived category $D_{dg}(A)$ of dg modules generated by $_AA$.
We say $A$ is $d$-symmetric if it obeys either of the two equivalent conditions (see \cite{VandenBergh}, A.5.1): 

(1) $A$ possesses a super-symmetric (= graded symmetric), 
associative, non-degenerate bilinear form of degree $-d$.

(2) $A \perf$ is a $d$-dimensional Calabi-Yau triangulated category.

\bigskip

Suppose now that $A$ is a Koszul algebra, with Koszul dual $B$.
By B. Keller's formulation of Koszul duality, $D_{dg}(A)$ is equivalent to the derived category of 
ungraded $B$-modules \cite{Keller}.
Under this equivalence, $A \perf$ corresponds to that subcategory of the 
derived category of $B$-modules
generated by the collection of finite dimensional $B^0$-modules.
In other words $A \perf$ is equivalent to the bounded derived category of ungraded $\widehat{B}$-modules.
The statement of the lemma is now clear. 
\end{proof}

To end this section, let us give a few definitions concerning the algebra $U_{\mathcal{X}}$ which will
be of some application in the sequel.

\begin{definition}
Let $\omega:kQ\longrightarrow kQ$ be the graded antiautomorphism, defined by
$\omega(e_{x})=e_{x}$, $\omega(a_{x,i})=b_{x+\epsilon_{i},i}$, $\omega
(b_{x,i})=a_{x-\epsilon_{i},i}$. We have $\omega(R)=R$ , so $\omega$ defines
antiautomorphisms on $U_{r}$ and $U_{\mathcal{X}}$ which we still call
$\omega$. 
\end{definition}

Given a graded $U_{\mathcal{X}}$-module $V=\oplus V_{n}$, we make
the dual $V^{\ast}=\oplus(V^{\ast})_{n}=\oplus V_{-n}^{\ast}$ a graded
$U_{\mathcal{X}}$-module via $(u\phi)(v)=\phi(\omega(u)v)$.

\begin{definition}
Let $\kappa$ be the automorphism of $\mathbb{Z}^{r}$ defined by $\kappa(x)=-x$. 
\end{definition}

Then
$G=\mathbb{Z}^{r}\rtimes\langle\kappa\rangle$ acts as a group of automorphisms
on $\mathbb{Z}^{r}$ (the $\mathbb{Z}^{r}$ by translations) and thus as a group
of automorphisms of $kQ$, with $g(e_{x})=e_{g(x)}$; $g(a_{x,i})=a_{g(x),i}$
and $g(b_{x,i})=b_{g(x),i}$ if $g\in\mathbb{Z}^{r}$; and $\kappa
(a_{x,i})=b_{g(x),i}$ and $\kappa(b_{x,i})=a_{g(x),i}$ if $g\notin
\mathbb{Z}^{r}$.

\bigskip

Let $x=(x_{1},\ldots,x_{r})$, $y=(y_{1},\ldots,y_{r})\in\mathbb{Z}^{r}$. All
paths in $Q$ from $x$ to $y$ of shortest length have the same image in $U_{r}%
$, up to sign, by virtue of the anticommutation relations. In order to be
precise, we make the following

\begin{definition}
Let $\gamma_{xy}$ $\in Q$ be the path of shortest length starting
at $x$, visiting $(y_{1},x_{2},\ldots,x_{n})$,
$(y_{1},y_{2},x_{3},\ldots
,x_{n}),~\ldots,~(y_{1},\ldots,y_{n-1},x_{n})$ in succession and ending at $y$.
\end{definition}

\section{Derived equivalences}
\label{section:derived}

In this section we show how certain mutations of Cubist subsets correspond to
derived equivalences of Cubist algebras.  These `flips' play an important role
in the study of rhombus tilings (see, e.g., \cite[\S2]{LMN}).

\begin{lemma} \label{flipable} Let $\mathcal{X}$ be a Cubist subset of $\mathbb{Z}^r$ and
let $z\in\mathcal{X}$. Then
the following statements are equivalent.
\begin{enumerate}
	\item $z$ is a maximal element of $\mathcal{X}$ with respect to the partial order $\leq$.
	\item For all $x\in\mathbb{Z}^r$, if $z[-1]<x\leq z$, then $x\in\mathcal{X}$.
	\item The subset $\mathcal{X}'$ of $\mathbb{Z}^r$ obtained from $\mathcal{X}$ be replacing 
	        $z$ by $z[-1]$ is a Cubist subset.
        \item The polytope ${\mathcal P}_z$ is an $r-1$-simplex.

\end{enumerate}
\end{lemma}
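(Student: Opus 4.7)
The plan is to prove the equivalences via $(1) \Leftrightarrow (2)$, $(1) \Leftrightarrow (3)$, and $(2) \Leftrightarrow (4)$, with the common thread being a precise description of which cubes of $\mathcal{X}$ contain $z$ under these hypotheses.

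First I would unwind the Cubist definition to establish $(1) \Leftrightarrow (2)$. Since $z \in \mathcal{X}$ forces $z[1] \notin \mathcal{X}^-$ and hence $(z+\epsilon_i)[1]=z[1]+\epsilon_i \notin \mathcal{X}^-$, condition $(1)$ is equivalent to $z+\epsilon_i \notin \mathcal{X}^-$ for every $i$. The elements $x$ with $z[-1]<x\leq z$ are exactly $z-\sum_{i\in S}\epsilon_i$ for proper subsets $S \subsetneq \{1,\ldots,r\}$, and such an $x$ lies in $\mathcal{X}$ iff $x[1]=z+\sum_{i\notin S}\epsilon_i$ is outside $\mathcal{X}^-$. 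Since $\mathcal{X}^-$ is downward-closed, requiring this for every nonempty $T=\{1,\ldots,r\}\setminus S$ reduces to the singleton case $T=\{i\}$, which is precisely $(1)$.

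For $(1) \Leftrightarrow (3)$, the point is that $\mathcal{X}^-\setminus\{z\}$ is an ideal exactly when no element of $\mathcal{X}^-$ strictly exceeds $z$, i.e.\ under $(1)$. Assuming $(1)$, I would let $(\mathcal{X}')^-:=\mathcal{X}^-\setminus\{z\}$ and verify nonempty-and-proper; then $(\mathcal{X}')^-[-1]=\mathcal{X}^-[-1]\setminus\{z[-1]\}$, from which a short set-theoretic calculation gives $(\mathcal{X}')^-\setminus(\mathcal{X}')^-[-1]=(\mathcal{X}\setminus\{z\})\cup\{z[-1]\}=\mathcal{X}'$, so $\mathcal{X}'$ is Cubist. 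Conversely, if $\mathcal{X}'$ is Cubist, the ideal generated by $\mathcal{X}'$ agrees with $\mathcal{X}^-$ whenever some $y\in\mathcal{X}$ satisfies $y>z$, and in that case the recovered set $(\mathcal{X}')^-\setminus(\mathcal{X}')^-[-1]$ equals $\mathcal{X}\neq\mathcal{X}'$; so $(1)$ must hold.

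For $(2) \Leftrightarrow (4)$, under $(2)$ every neighbor $z+\epsilon_i$ is absent from $\mathcal{X}$, so every cube of $\mathcal{X}$ containing $z$ lies in $[z[-1],z]$; those that avoid $z[-1]$ are parameterized by the proper subsets $T\subsetneq\{1,\ldots,r\}$ (with $|T|$ being the dimension of the cube). The resulting count of $\binom{r}{d}$ $d$-cubes and the inclusion poset among them match the face lattice of the $(r-1)$-simplex, so by the definition of the dual complex $\mathcal{P}_z$ is an $(r-1)$-simplex. For the converse, $\mathcal{P}_z$ a simplex gives exactly $r$ facets and $r$ edges at $z$; Corollary~\ref{def}(iii) forces these $r$ facets to have pairwise-distinct excluded directions, one per coordinate $i$. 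Tracking which facets contain a given edge $\{z,z+\tau_k\epsilon_k\}$, one sees the signs $\tau_k$ are globally consistent, so the local configuration is $\{z+\sum_{k\in S}\tau_k\epsilon_k : S\subsetneq\{1,\ldots,r\}\}$, and unwinding the Cubist condition $z[1]\notin\mathcal{X}^-$ together with Corollary~\ref{def}(ii) eliminates the possibility of any $\tau_k=+1$, yielding $(2)$.

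The hard part is the converse $(4) \Rightarrow (2)$: going from the combinatorial datum ``$\mathcal{P}_z$ is a simplex'' back to the geometric statement that all $r$ edges at $z$ point into the negative octant. The combinatorial counts alone leave the orientations $\tau_k$ undetermined, and one must invoke both the Cubist identity $\mathcal{X}=\mathcal{X}^-\setminus\mathcal{X}^-[-1]$ and Corollary~\ref{def} to rule out positive or mixed orientations; this is the step where one expects to do the most careful bookkeeping.
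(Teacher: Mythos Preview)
The paper states Lemma~\ref{flipable} without proof, so there is no argument in the paper to compare against; I will assess your sketch on its own merits.

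Your proofs of $(1)\Leftrightarrow(2)$, $(1)\Leftrightarrow(3)$, and $(2)\Rightarrow(4)$ are correct and cleanly written. In particular, for $(2)\Rightarrow(4)$ it is enough to observe that the $(r-1)$-polytope $\mathcal{P}_z$ has exactly $r$ vertices (one per facet of $\mathcal{X}$ through $z$), since any $(r-1)$-polytope with $r$ vertices is a simplex.

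The genuine gap is in your final step of $(4)\Rightarrow(2)$. You correctly reduce to a local configuration $\{z+\sum_{k\in S}\tau_k\epsilon_k : S\subsetneq\{1,\ldots,r\}\}\subset\mathcal{X}$ with fixed signs $\tau_k\in\{\pm 1\}$, but your claim that ``the Cubist condition $z[1]\notin\mathcal{X}^-$ together with Corollary~\ref{def}(ii) eliminates the possibility of any $\tau_k=+1$'' is false. The implication $(4)\Rightarrow(1)$ does not hold as stated: take $\mathcal{X}=-\mathcal{X}_{CC}=\{x\in\mathbb{Z}^r_{\geq 0}:x_i=0\text{ for some }i\}$ and $z=0$. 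The cubes of $\mathcal{X}$ through $0$ are exactly the negatives of those in $\mathcal{X}_{CC}$ through $0$, so $\mathcal{P}_0$ is an $(r-1)$-simplex; yet $\epsilon_1\in\mathcal{X}$ and $\epsilon_1>0$, so $0$ is not maximal. More conceptually, condition $(4)$ is invariant under the involution $x\mapsto -x$ (which preserves the class of Cubist subsets), whereas $(1)$ is not, so $(4)$ cannot imply $(1)$. Your argument \emph{does} successfully exclude the mixed case (some $\tau_k=+1$ and some $\tau_k=-1$): if $P=\{k:\tau_k=+1\}$ and $N=\{k:\tau_k=-1\}$ are both nonempty, then $w:=z+\sum_{k\in P}\epsilon_k$ and $w[-1]=z-\sum_{k\in N}\epsilon_k$ both lie in $\mathcal{X}$, contradicting Corollary~\ref{def}(i). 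But the case $\tau_k=+1$ for all $k$ survives.

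Thus the correct statement is $(1)\Leftrightarrow(2)\Leftrightarrow(3)\Rightarrow(4)$, which is all the paper actually uses (the only invocation of $(4)$ is in the proof of Lemma~\ref{basic}, where one needs that a maximal $z$ lies in exactly $r$ facets).
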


For most of the remainder of this section we fix a Cubist subset $\mathcal{X}\subset\mathbb{Z}^r$
and an element $z\in\mathcal{X}$ satisfying the conditions of
the Lemma. Our aim is the following result

\begin{theorem} \label{mutation theorem}
There exists an equivalence of triangulated categories%
\[
F:D^{b}(U_{\mathcal{X}^{\prime}}\mMod)\iso D^{b}%
(U_{\mathcal{X}}\mMod)
\]
such that $F(X\langle n \rangle)\cong F(X)\langle n \rangle$ for
$X\in D^{b}(U_{\mathcal{X}^{\prime}}\mMod)$ and  $n\in\mathbb{Z}$.
Moreover, for all $x\in\mathbb{Z}^r$, we have 
$F(U_{\mathcal{X}}'e_x)\cong\Gamma_x$, where $\Gamma_x$ is
a complex explicitly described in \S\ref{subsection:tiltingcomplex}.
\end{theorem}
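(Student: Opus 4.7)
The plan is to apply Rickard's Morita theorem (Theorem~\ref{Rickard}) with a carefully chosen collection of complexes $\{\Gamma_x\}_{x\in\mathcal{X}'}$ of projective $U_{\mathcal{X}}$-modules. Most of the choice is forced: for $x\in\mathcal{X}\cap\mathcal{X}'=\mathcal{X}\setminus\{z\}=\mathcal{X}'\setminus\{z[-1]\}$ we set $\Gamma_x = U_{\mathcal{X}}e_x$ concentrated in degree zero, and the real content lies in the definition of $\Gamma_{z[-1]}$, since $z[-1]$ belongs to $\mathcal{X}'$ but not to $\mathcal{X}$.

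By Lemma~\ref{flipable}(4), $\mathcal{P}_z$ is an $(r-1)$-simplex, so the neighbours of $z$ in $\mathcal{X}$ are exactly $z-\epsilon_1,\ldots,z-\epsilon_r$. I would define $\Gamma_{z[-1]}$ as a two-term complex
\[
\Gamma_{z[-1]}\colon\quad U_{\mathcal{X}}e_z\langle -1\rangle \;\longrightarrow\; \bigoplus_{j=1}^r U_{\mathcal{X}}e_{z-\epsilon_j},
\]
whose $j$-th component of the differential is (a sign-twisted version of) right multiplication by the arrow $b_{z,j}$; this follows the standard Okuyama--Rickard recipe for mutating a simplicial top vertex of a symmetric algebra, and the grading shift is dictated by the requirement that the chain map be homogeneous of degree zero once the arrows are placed in internal degree one.

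With this complex in hand, the first step is to verify the $\operatorname{Hom}$-vanishing hypothesis of Theorem~\ref{Rickard}. The only non-obvious case involves $\Gamma_{z[-1]}$, and it reduces to a computation in $U_{\mathcal{X}}$ using the square relations~\ref{U1} and supercommutation relations~\ref{U2} together with the local simplex geometry around $z$, which forces the relevant paths of length two to vanish. The generation hypothesis is then automatic: for $x\neq z$ one has $U_{\mathcal{X}}e_x=\Gamma_x$ already, while $U_{\mathcal{X}}e_z\langle -1\rangle$ fits into a triangle with $\Gamma_{z[-1]}$ and $\bigoplus_j U_{\mathcal{X}}e_{z-\epsilon_j}$, both of which lie in the triangulated subcategory generated by the $\Gamma_x$'s.

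The main obstacle will be identifying the graded endomorphism algebra $E=\bigoplus_n\operatorname{Hom}_{D^b(U_{\mathcal{X}}\mMod)}(\bigoplus_x\Gamma_x\langle n\rangle,\bigoplus_x\Gamma_x)$ with $U_{\mathcal{X}'}$. Generators of $U_{\mathcal{X}'}$ coming from arrows that do not touch $z[-1]$ correspond tautologically to endomorphisms between the $\Gamma_x$'s for $x\in\mathcal{X}\setminus\{z\}$, while the arrows in and out of $z[-1]$ must be written down by hand as chain maps involving $\Gamma_{z[-1]}$. Verifying the defining relations of $U_{\mathcal{X}'}$ then amounts to a series of diagram chases; in particular the new Heisenberg relation~\ref{U3} at $z[-1]$ must be recovered from the Heisenberg relation at $z$ in $U_{\mathcal{X}}$ via the cone construction, and this identification is the computational heart of the argument. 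Once $E\cong U_{\mathcal{X}'}$ is established, Rickard's theorem produces the desired equivalence $F$, with $F(U_{\mathcal{X}'}e_x)\cong\Gamma_x$ and compatibility with grading shifts built into its conclusion.
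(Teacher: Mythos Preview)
Your tilting complex is wrong for $r\geq 3$, and the error is visible already at the level of graded Cartan matrices. In your proposal, for every $x\in\mathcal{X}'\setminus\{z[-1]\}=\mathcal{X}\setminus\{z\}$ you take $\Gamma_x=U_{\mathcal{X}}e_x$ unchanged. Hence for any two such vertices $x,y$ you would get
\[
\bigoplus_{n\in\mathbb{Z}}\operatorname{Hom}_{D^b}(\Gamma_x\langle n\rangle,\Gamma_y)\;\cong\;e_xU_{\mathcal{X}}e_y,
\]
so the corresponding entry of the Cartan matrix of your endomorphism ring equals $C_{U_{\mathcal{X}}}(q)_{x,y}$. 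But Lemma~\ref{tiltcartan} together with the identification $C_{E^{\mathrm{op}}}(q)=C_{U_{\mathcal{X}'}}(q)$ shows that
\[
C_{U_{\mathcal{X}'}}(q)_{x,y}=C_{U_{\mathcal{X}}}(q)_{x,y}-q^{r-1}[r-d(z,x)-d(z,y)]_q
\]
whenever $z[-1]\leq x,y\leq z$. Taking $x=z-\epsilon_1$ and $y=z-\epsilon_2$ gives a discrepancy of $q^{r-1}[r-2]_q$, which is nonzero for $r\geq 3$. So your endomorphism ring cannot be $U_{\mathcal{X}'}$.

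The paper's remedy is to modify $\Gamma_x$ not only at $x=z[-1]$ but at \emph{every} vertex of the box $z[-1]\leq x\leq z$: one sets
\[
\Gamma_x=\operatorname{cone}\bigl(U_{\mathcal{X}}e_z\langle d(z,x)\rangle\xrightarrow{\rho(\gamma_{z,x})}U_{\mathcal{X}}e_x\bigr)
\]
for all such $x$ (so $\Gamma_z$ is contractible and $\Gamma_{z[-1]}\cong U_{\mathcal{X}}e_z\langle r\rangle[1]$, a single shifted projective rather than your $r$-fold sum). The Hom-vanishing then follows from Lemma~\ref{basic}(4) and the symmetry of $U_{\mathcal{X}}$, and the identification $E^{\mathrm{op}}\cong U_{\mathcal{X}'}$ is completed by constructing a surjection $kQ\to E^{\mathrm{op}}$ factoring through $U_{\mathcal{X}'}$ and then matching Cartan matrices via a back-and-forth argument using the dual flip $\mathcal{X}'\to\mathcal{X}$. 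Your outline of the endgame (write down chain maps for arrows, check relations) is in the right spirit, but it is applied to the wrong collection of complexes.
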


\subsection{Structure around a flip}
 We begin by describing
the structure of $U_{\mathcal{X}}$ near $  z  $.
Given $n\in\bbz$, let
$$[n]_q:=\frac{q^n-q^{-n}}{q-q^{-1}}\in\bbz[q,q^{-1}]$$
be the associated `quantum integer'. Note that $[-n]=-[n]$.
\begin{lemma}
\label{basic}

\begin{enumerate}
\item The element $\zeta_{  z  }=e_{  z  }b_{  z  ,i}a_{  z  -\epsilon_{i},i}e_{  z  }$ in
$U_{\mathcal{X}}$ is independent of $i$.
\item
The elements
\[
\gamma_{ x ,  z  }\zeta_{  z  }^{s}, \qquad z[-1]<x\leq z,\quad 0\leq s\leq r-1-d(  z  , x )
\]
form a basis of $U_{\mathcal{X}}e_{  z  }$, and the elements%
\[
\zeta_{  z  }^{s}\gamma_{  z  , x }, \qquad z[-1]<x\leq z,\quad0\leq s\leq r-1-d(  z  , x )
\]
form a basis of $e_{  z  }U_{\mathcal{X}}$.
\item We have
$C_{U_\calx}(q)_{zy}=q^{-d(z,x)}+q^{-d(z,x)+2}+\ldots+q^{d(z,x)}=
q^{r-1}[r-d(z,x)]_q$.
\item Suppose $z[-1]<x\leq z$ and let
$\rho(\gamma_{  z  , x }) : U_{\mathcal{X}}e_{  z  }\langle d(  z  , x ) \rangle
\rightarrow U_{\mathcal{X}}e_{ x }$
be the homomorphism defined by right multiplication by $\gamma_{z,x}$. Then 
$\operatorname{Hom}_{U_{\mathcal{X}}}(U_{\mathcal{X}}e_{ z }
\langle n\rangle, \operatorname{coker}(\rho(\gamma_{  z  , x })))=0$
 for all
$n\in\mathbb{Z}$.
\end{enumerate}
\end{lemma}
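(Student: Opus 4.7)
The strategy is to prove (1), (3), (2), (4) in this order, with (3) providing the numerical framework that makes (2) and (4) follow by short arguments. For part (1), I would apply the Heisenberg relation \ref{U3} at the vertex $z$, sandwiched by $e_z$ on both sides; since $z$ is maximal in $\calx$, the elements $z + \epsilon_j$ are outside $\calx$ and so all arrows $a_{z,j}$ vanish in $U_\calx$. The Heisenberg relation then collapses to $e_z b_{z,i} a_{z-\epsilon_i,i} e_z = e_z b_{z,i+1} a_{z-\epsilon_{i+1},i+1} e_z$ for each $i < r$, yielding independence from $i$.

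For part (3), combine Brauer--Humphreys reciprocity (Corollary~\ref{preformula}) with Lemma~\ref{Ustandard} to obtain
\[
C_{U_\calx}(q)_{zy} \;=\; \sum_{x \,:\, z,\, y \,\in\, \lambda x} q^{d(x,z) + d(x,y)}.
\]
The simplex hypothesis on $\mathcal{P}_z$ gives exactly $r$ facets of $\calx$ through $z$, indexed by $i \in \{1,\ldots,r\}$: under the standard order these are $\lambda(x^{(i)})$ with $x^{(i)} = z - \sum_{j<i} \epsilon_j$, and the vertex set of $\lambda(x^{(i)})$ is $\{z - \sum_{j \in S}\epsilon_j : S \subseteq \{1,\ldots,r\}\setminus\{i\}\}$. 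Writing $y = z - \sum_{j \in S_y}\epsilon_j$, the facet $\lambda(x^{(i)})$ contains $y$ iff $i \notin S_y$, and a direct computation gives $d(x^{(i)},z) + d(x^{(i)},y) = d(z,y) + 2(k-1)$, where $i$ is the $k$-th smallest element of $\{1,\ldots,r\}\setminus S_y$. Summing over $k=1,\ldots,r-d(z,y)$ produces the polynomial $q^{r-1}[r - d(z,y)]_q$; for $y$ outside the corner cube $\{z[-1] < y \leq z\}$ the sum is empty and the entry vanishes.

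For part (2), each $\gamma_{x,z}\zeta_z^s$ lies in $e_x U_\calx e_z$ in degree $d(z,x) + 2s$. For $z[-1] < x \leq z$ and $0 \leq s \leq r-1-d(z,x)$ these degrees exhaust the support of the Hilbert polynomial from (3), with each graded component one-dimensional. Non-vanishing is verified using cellularity: each graded component of $e_x U_\calx e_z$ is spanned by a cellular basis element of the form $q_{x,x^{(i)}} q'_{x^{(i)},z}$, and a direct rewriting using part (1) and supercommutation to reorder $b$-arrows past the Heisenberg-class loops shows that $\gamma_{x,z}\zeta_z^s$ is a nonzero scalar multiple of the cellular basis element in its degree. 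The basis for $e_z U_\calx$ then follows by applying the antiautomorphism $\omega$, which exchanges $\gamma_{x,z} \leftrightarrow \pm \gamma_{z,x}$ and fixes $\zeta_z$ up to sign. Part (4) is immediate from (2): any element of $e_z U_\calx e_x$ has the form $\left(\sum_s c_s \zeta_z^s\right) \gamma_{z,x}$, hence lies in $e_z U_\calx e_z \cdot \gamma_{z,x} \subseteq \operatorname{im}(\rho(\gamma_{z,x}))$, so $e_z \operatorname{coker}(\rho(\gamma_{z,x})) = 0$ and the Hom-vanishing follows from $\operatorname{Hom}_{U_\calx}(U_\calx e_z\langle n\rangle, M) \cong (e_z M)_{-n}$.

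The main obstacle is the combinatorial computation in part (3): parametrizing the $r$ facets through $z$ in terms of the standard order via the simplex $\mathcal{P}_z$, and verifying the degree identity $d(x^{(i)},z) + d(x^{(i)},y) = d(z,y) + 2(k-1)$. A secondary subtlety lies in identifying $\gamma_{x,z}\zeta_z^s$ with an explicit cellular basis element in (2), which relies on the $i$-independence from (1) together with repeated use of supercommutation to regroup the $b$- and $a$-arrows.
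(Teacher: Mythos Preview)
Your argument for (1) matches the paper exactly, and (4) is handled identically.

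For (2) and (3) you take a genuinely different route. The paper proves (2) first by constructing an explicit $U_{\mathcal X}$-module $W$ with basis $\{w_{x,s}\}$ and a surjection $\psi:U_{\mathcal X}e_z\to W$ sending $\gamma_{x,z}\zeta_z^s\mapsto w_{x,s}$; a dimension count (also via the decomposition numbers, giving $r2^{r-1}$ on both sides) then shows $\psi$ is an isomorphism, and (3) follows immediately. You invert this: prove (3) first via Brauer--Humphreys and the facet combinatorics around $z$, then use the resulting one-dimensionality of each graded piece of $e_xU_{\mathcal X}e_z$ to reduce (2) to a non-vanishing statement. Your computation for (3) is correct; the parametrisation $x^{(i)}=z-\sum_{j<i}\epsilon_j$ of the $r$ facets through $z$ and the identity $d(x^{(i)},z)+d(x^{(i)},y)=d(z,y)+2(k-1)$ both check out.

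The weakest point is the non-vanishing of $\gamma_{x,z}\zeta_z^s$ in (2). Knowing the graded piece is one-dimensional does not by itself prevent $\gamma_{x,z}\zeta_z^s$ from being zero; you still need an honest argument that it coincides (up to sign) with the cellular basis element $\gamma_{x,x^{(i)}}\gamma_{x^{(i)},z}$. This is true, but the ``direct rewriting'' involves moving the $b$-arrows across a growing number of $a$-arrows while checking that every intermediate vertex stays in the corner cube $\{y:z[-1]<y\leq z\}\subset\mathcal X$, and the bookkeeping is not as immediate as you suggest (already for $s=1$ one must track that $S_x\cup\{i_0\}\subsetneq\{1,\ldots,r\}$). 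A cleaner way to close this, staying within your framework, is to show once that $\gamma_{x,z}\zeta_z^{\,r-1-d(z,x)}\gamma_{z,x}$ is, up to sign, the Frobenius socle element of $e_xU_{\mathcal X}e_x$ (Lemma~\ref{Frob}); non-degeneracy of the symmetric form then forces every $\gamma_{x,z}\zeta_z^s$ to be nonzero. The paper's explicit module $W$ sidesteps all of this: $\psi(\gamma_{x,z}\zeta_z^s)=w_{x,s}$ is a declared basis vector, so non-vanishing is automatic. Your approach trades that construction for the structural machinery (quasi-heredity, cellularity, symmetry), which is elegant but front-loads the dependence on those theorems; the paper's proof of this lemma is more self-contained.
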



\begin{proof}
Part (1) is a consequence of the Heisenberg relations because $  z  +\epsilon
_{i}\notin\mathcal{X}$ for all $i$. Define a $kQ$-module $W$ with basis
\[
\left\{w_{ x ,s}\mid\: z[-1]<x\leq z,\quad0\leq s\leq
r-1-d(  z  , x )\right\}
\]
with $\deg(w_{ x ,s})= d(z,x) +2s$ and action
\begin{align*}
e_{ y }w_{ x ,s}  &  =%
\begin{cases}
w_{ x ,s} & \text{if } y = x ,\\
0 & \text{otherwise,}%
\end{cases}
\\
a_{ y ,i}w_{ x ,s}  &  =
\begin{cases}
(-1)^{\sigma}w_{ x +\epsilon_{i},s+1} & \text{if } y = x \text{ and }x+\epsilon_i \in {\mathcal X},\\
0 & \text{otherwise},
\end{cases}
\\
b_{ y ,i}w_{ x ,s}  &  =%
\begin{cases}
(-1)^{\sigma}w_{ x -\epsilon_{i},s} & \text{if } y = x \text{, }x-\epsilon_i \in {\mathcal X},
\text{ and }s\neq r-1-d(  z  , x ),\\
0 & \text{otherwise},%
\end{cases}
\end{align*}
where $\sigma=\#\{j\in\{1,\ldots,i-1\}\mid  y_{j}=x_{j}-1\}$. It is easy to
check that the defining
relations of $U_{r}$ hold, and that $W$ is annihilated by $e_x$ for all $ x \notin\mathcal{X}$.
Hence $W$ is a $U_{\mathcal{X}}$-module. There is a unique
homomorphism $\psi:U_{\mathcal{X}}e_{  z  }\longrightarrow W$ such that
$\psi(e_{  z  })=w_{z,0}$. We have $\psi(\gamma_{ x ,  z  }\zeta_{  z  }^{s})=w_{ x ,s}$, so
$\psi$ is surjective.
The dimension of $W$ is
\[
\sum_{z[-1]<x\leq z}(r-d(  z  , x ))=\sum_{j=1}%
^{r}\binom{r}{j}j=r\sum_{j=1}^{r}\binom{r-1}{j-1}=r2^{r-1}.
\]
On the other hand the dimension of $U_{\mathcal{X}}e_{  z  }$ is $\sum
D_{U_{\mathcal{X}}}(1)_{yz}D_{U_{\mathcal{X}}}(1)_{yx}$, where the sum is over
all $ x , y $ such that $  z  , y \in\lambda  x $. This is also $r2^{r-1}$, since $  z  $ is
contained in exactly $r$ facets of $\mathcal{X}$, by Lemma~\ref{flipable}.  We deduce that $\psi$ is an
isomorphism and that the first half of part (2) is true. The second half is
obtained by applying the antiautomorphism $\omega$. Part (3) is follows immediately.
The last part is also a consequence of the second, because
\[
\operatorname*{Hom}\nolimits_{U_{\mathcal{X}}}(U_{\mathcal{X}}e_{  z  }\langle n\rangle
,\operatorname{coker}(\rho(\gamma_{z,x})))\cong\left(  \frac{e_{  z  }U_{\mathcal{X}}e_{ x }}%
{e_{  z  }U_{\mathcal{X}}\gamma_{  z , x  }}\right)  _{n}.
\]
\end{proof}

\subsection{The tilting complex}\label{subsection:tiltingcomplex}
For each $ x \in\mathbb{Z}^{r}$ we define%
\[
\Gamma_{ x }=%
\begin{cases}
\operatorname{cone}(U_{\mathcal{X}}e_{  z  }\langle d(  z  , x ) \rangle\overset
{\rho(\gamma_{  z  , x })}{\longrightarrow}U_{\mathcal{X}}e_{ x }) & \text{if } z[-1]\leq x \leq z,\\
U_{\mathcal{X}}e_{ x } & \text{otherwise}.
\end{cases}
\]
 So each $\Gamma_x$ is a
complex of projective $U_{\mathcal{X}}$-modules concentrated in homological 
degrees $-1$ and $0$. Observe that
 $\Gamma_{  z  }$ is contractible and that
	$\Gamma_{  z  [-1]}$ is isomorphic to $U_{\mathcal{X}}e_{  z  }\langle r\rangle[1]$.
	Hence $\Gamma_z$
is nonzero as an object of $D^{b}(U_{\mathcal{X}}\mMod)$ if and
only if $x \in\mathcal{X}^{\prime}$.

\begin{proposition}
The complexes $\Gamma_x$ satisfy the hypotheses of Theorem \ref{Rickard}.
\end{proposition}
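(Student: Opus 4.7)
The plan is to verify the two hypotheses of Theorem~\ref{Rickard}, using the defining triangle
$$U_\calx e_z\langle d(z,x)\rangle\xrightarrow{\rho(\gamma_{z,x})}U_\calx e_x\to\Gamma_x\to U_\calx e_z\langle d(z,x)\rangle[1]$$
(valid for $z[-1]<x\leq z$) together with the structural content of Lemma~\ref{basic}. Hypothesis (2) is essentially immediate: for $x\notin[z[-1],z]$ one has $\Gamma_x=U_\calx e_x$ tautologically; the isomorphism $\Gamma_{z[-1]}\cong U_\calx e_z\langle r\rangle[1]$ places $U_\calx e_z$ in the triangulated subcategory generated by the $\Gamma_y\langle n\rangle$; and the defining triangle then supplies $U_\calx e_x$ for each remaining $x$ with $z[-1]<x<z$. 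The contractible object $\Gamma_z$ may be ignored.

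For hypothesis (1), applying $\operatorname{Hom}_{D^b}(-,\Gamma_y[m])$ to the defining triangle for $\Gamma_x$ and using the identification $\operatorname{Hom}_{D^b}(U_\calx e_u,C[m])=H^m(e_uC)$ for bounded complexes $C$, one obtains the four-term exact sequence
$$H^{m-1}(e_z\Gamma_y)\to\operatorname{Hom}_{D^b}(\Gamma_x\langle n\rangle,\Gamma_y[m])\to H^m(e_x\Gamma_y)\xrightarrow{\gamma_{z,x}\cdot}H^m(e_z\Gamma_y)$$
(with grading shifts suppressed), the rightmost arrow being precomposition with $\rho(\gamma_{z,x})$—that is, left multiplication by $\gamma_{z,x}$. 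Since $\Gamma_y$ is concentrated in cohomological degrees $-1$ and $0$, the flanking terms vanish for $|m|\geq 2$. In the case $m=1$, the key input $H^0(e_z\Gamma_y)=e_z\operatorname{coker}\rho(\gamma_{z,y})$ vanishes by Lemma~\ref{basic}(4), forcing the middle Hom to vanish.

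The main obstacle is the case $m=-1$. Here the Hom injects into the kernel of the left-multiplication map $\gamma_{z,x}\cdot\colon e_x\ker\rho(\gamma_{z,y})\to e_z\ker\rho(\gamma_{z,y})$, so it suffices to show this map is injective. Using the basis $\{\gamma_{x,z}\zeta_z^s\mid 0\leq s\leq r-1-d(z,x)\}$ of $e_xU_\calx e_z$ from Lemma~\ref{basic}(2), the task reduces to the identity $\gamma_{z,x}\gamma_{x,z}=\pm\zeta_z^{d(z,x)}$, provable by direct manipulation of the supercommutation and Heisenberg relations. Since $s+d(z,x)\leq r-1$, the resulting powers of $\zeta_z$ remain within the basis $\{\zeta_z^0,\ldots,\zeta_z^{r-1}\}$ of $e_zU_\calx e_z$, yielding the required injectivity and completing the verification.
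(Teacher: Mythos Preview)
Your proof is correct, and for the generation hypothesis and the $m=1$ case it is essentially identical to the paper's argument. For $m=-1$, however, you take a genuinely different route. The paper reduces the vanishing of $\operatorname*{Hom}(\Gamma_x\langle n\rangle,\Gamma_{x'}[-1])$ to the vanishing of $\operatorname*{Hom}_{U_\calx}(H^0(\Gamma_x)\langle n\rangle, U_\calx e_z)$, and then invokes the symmetry of $U_\calx$ (Theorem~\ref{symmetric}): since $U_\calx e_z$ is injective as well as projective, this Hom space is dual to $e_z H^0(\Gamma_x)$, which vanishes by Lemma~\ref{basic}(4). Your argument instead stays entirely inside Lemma~\ref{basic}(2) and the defining relations, showing directly that left multiplication by $\gamma_{z,x}$ is injective on $e_x U_\calx e_z$ via the identity $\gamma_{z,x}\gamma_{x,z}=\pm\zeta_z^{d(z,x)}$ and the bound $s+d(z,x)\le r-1$. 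Your approach is more elementary in that it avoids the appeal to symmetry, at the cost of a small explicit computation; the paper's approach is slicker once symmetry is in hand. A minor point: your phrase ``valid for $z[-1]<x\le z$'' for the defining triangle should really read $z[-1]\le x\le z$, but since $e_{z[-1]}=0$ in $U_\calx$ the boundary case degenerates correctly and causes no trouble.
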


\begin{proof}
The generation condition clearly holds, because $U_{\mathcal{X}}e_{  z  }%
\cong\Gamma_{  z  [-1]}\langle-r\rangle\lbrack-1]$ and, for all $ x \neq   z  $,
$U_{\mathcal{X}}e_{ x }$ is isomorphic to $\Gamma_x$ or to the cone of a morphism from $\Gamma_{ x }$
to $U_{\mathcal{X}}e_{  z  }\langle d(  z  , x ) \rangle\lbrack-1]$. So it remains to prove
that for $ x , x ^{\prime}\in\mathcal{X}$ and $m,n\in\mathbb{Z}$ with $m\neq0$,%
\[
\operatorname*{Hom}\nolimits_{D^{b}(U_{\mathcal{X}}\mMod)}(\Gamma_{ x }\langle
n\rangle,\Gamma_{ x ^{\prime}}[m])=0.
\]
This is clear unless $m=1$ or $m=-1$, since $\Gamma_{ x }$ and $\Gamma
_{ x ^{\prime}}$ are complexes concentrated in degree $-1$ and $0$. Thus it
suffices to show that $\operatorname*{Hom}\nolimits_{U_{\mathcal{X}}}(U_{\mathcal{X}}%
e_{ z }\langle n\rangle,H^{0}(\Gamma_{ x ^{\prime}}))=0$ and $\operatorname*{Hom}%
\nolimits_{U_{\mathcal{X}}}(H^{0}(\Gamma_{ x })\langle n\rangle, U_{\mathcal{X}}e_{  z  })=0$
for all $n\in\mathbb{Z}$.
The first is true by Lemma~\ref{basic} and the second follows because
$U_\mathcal{X}$ is a symmetric algebra (Theorem~\ref{symmetric}).
\end{proof}

We form the graded endomorphism ring
\[
E:=\oplus_{n\in\mathbb{Z}}E_{n},
\qquad
E_{n}=\oplus_{ x , y \in\mathbb{Z}^{r}}\operatorname*{Hom}\nolimits_{D^{b}%
(U_{\mathcal{X}}\mMod)}(\Gamma_{ x }\langle n\rangle,\Gamma_{ y }),
\]
and put
$
e_{ x }^{\prime}=id_{\Gamma_{ x }}$ for each $x \in\mathbb{Z}^{r}$.
By Theorem \ref{Rickard} there exists an equivalence%
\[
F:D^{b}(\operatorname{mod}(E^{op}))\iso D^{b}(\operatorname{mod}%
(U_{\mathcal{X}}))
\]
such that $F(E^{op}e_{ x }^{\prime})\cong\Gamma_{ x }$ for all $ x \in\mathbb{Z}%
^{r}$,
and
$F(X\langle n \rangle) \cong F(X)\langle n \rangle$ for $X\in D^b(E^{op}\mMod)$ and $n\in\mathbb{Z}$.
To complete the proof of Theorem~\ref{mutation theorem} we shall construct
an isomorphism between $U_{\mathcal{X}'}$ and $E^{\operatorname{op}}$.
\subsection{Identification of the endomorphism ring}

We define a graded homomorphism%
\[
\Phi:kQ\longrightarrow E^{\operatorname*{op}}%
\]
as follows:%
\begin{align*}
\Phi(e_{ x })  :  \Gamma_{ x }\rightarrow\Gamma
_{ x } &  =  \operatorname{id}_{\Gamma_{ x }}\quad\\
\Phi(a_{ x ,i})  :  \Gamma_{ x }\langle1\rangle\rightarrow\Gamma
_{ x +\epsilon_{i}} & =%
\begin{cases}
((-1)^{\sigma_{ x ,i}}\rho(\zeta_{  z  }),\rho(a_{ x ,i}))   & \text{if }
z[-1] \,\leq\, x , x +\epsilon_{i}\,\leq\, z,\\
(0,\rho(a_{ x ,i})) &   \text{otherwise},%
\end{cases}
\\
\Phi(b_{ x ,i})   : \Gamma_{ x }\langle1\rangle\rightarrow\Gamma
_{ x -\epsilon_{i}} & =%
\begin{cases}
((-1)^{\sigma_{ x ,i}}\operatorname{id},\rho(b_{ x ,i}))  & \text{if }
z[-1] \,\leq\, x , x -\epsilon_{i}\,\leq\, z,\\
(0,\rho(b_{ x ,i})) &   \text{otherwise},%
\end{cases}
\end{align*}
where $\sigma_{ x ,i}=\#\{j<i\mid  y_{j}\neq x_{j}\}$
and chain maps $(f_{-1},f_0)$ are specified by their components $f_{-1}, f_0$ in degrees $-1$ and $0$.
Using Lemma \ref{basic} it is straightforward to check that these are indeed
chain maps.

\begin{proposition}
$\Phi$ is surjective.
\end{proposition}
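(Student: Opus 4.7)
The plan is to work componentwise on each graded Hom space $e'_y E^{\operatorname{op}}_n e'_x = \operatorname{Hom}_{D^{b}(U_{\mathcal{X}}\mMod)}(\Gamma_x\langle n\rangle, \Gamma_y)$, showing every morphism is a $k$-linear combination of images under $\Phi$ of length-$n$ paths in $Q$ from $x$ to $y$. The analysis splits by the positions of $x$ and $y$ relative to the flip locus $\{w \in \mathbb{Z}^r : z[-1] \leq w \leq z\}$.

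The degenerate cases are easy. Since $\Gamma_z$ is contractible, Hom spaces involving $\Gamma_z$ vanish in $D^b$, matching $\Phi(e_z) = \operatorname{id}_{\Gamma_z} = 0$ there. Since $\Gamma_{z[-1]} \cong U_{\mathcal{X}} e_z\langle r\rangle[1]$, morphisms with $\Gamma_{z[-1]}$ reduce to graded module maps between $U_\mathcal{X} e_z$ and some $U_\mathcal{X} e_?$, and the explicit bases of Lemma~\ref{basic}(2) express these in terms of elements of $U_\mathcal{X}$, hence as images of paths. When neither $x$ nor $y$ lies in the flip region we have $\Gamma_x = U_\mathcal{X} e_x$ and $\Gamma_y = U_\mathcal{X} e_y$, so $\operatorname{Hom}_{D^b}(\Gamma_x\langle n\rangle, \Gamma_y) = (e_y U_\mathcal{X} e_x)_n$ is directly spanned by paths under the surjection $kQ\twoheadrightarrow U_\mathcal{X}$ that factors through $\Phi$.

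The key case is when $x$ and/or $y$ is strictly inside the flip region, so at least one of $\Gamma_x, \Gamma_y$ is a nontrivial two-term complex of projectives. A chain map $\Gamma_x\langle n\rangle \to \Gamma_y$ is a pair $(f_{-1}, f_0)$ of graded module maps making the evident square commute, considered modulo chain homotopy $h : U_\mathcal{X} e_x\langle n\rangle \to U_\mathcal{X} e_z\langle d(z,y)\rangle$. By projectivity of $U_\mathcal{X} e_z$ and Lemma~\ref{basic}(4), every graded homomorphism $U_\mathcal{X} e_z\langle m\rangle \to U_\mathcal{X} e_y$ lifts through $\rho(\gamma_{z,y})$; this lets one normalise within each homotopy class so that $f_0$ is determined (modulo the image of $\rho(\gamma_{z,y})$) once $f_{-1}$ is chosen, reducing the computation to the enumeration of $f_{-1}$ in $\operatorname{End}(U_\mathcal{X} e_z\langle\ast\rangle)$, which admits an explicit basis from Lemma~\ref{basic}(2). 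One then matches this enumeration against the spanning set produced by $\Phi$: the chain maps $\Phi(a_{x,i}), \Phi(b_{x,i})$ for arrows internal to the flip region supply the nontrivial degree$-(-1)$ components $\rho(\zeta_z)$ or $\operatorname{id}$, while compositions with $\Phi$ of arrows outside the region supply the remaining projective-to-projective components.

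The main obstacle will be the bookkeeping in this key case: verifying that the signed two-component chain maps $\Phi(a_{x,i}), \Phi(b_{x,i})$, combined with products of ordinary arrows, exhaust each Hom space with no residual morphism escaping the image. The sign conventions $\sigma_{x,i}$ are precisely calibrated so that the $U_r$ relations (and eventually the $U_{\mathcal{X}'}$ relations, in the next step of the paper) will hold in $E^{\operatorname{op}}$; once these signs are tracked, the dimension count furnished by Lemma~\ref{basic}(2)--(3) should fit the spanning set exactly, yielding surjectivity.
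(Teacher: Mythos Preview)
Your overall strategy matches the paper's: first hit the degree-$0$ component of any chain map using the obvious fact that $\Phi$ surjects onto $U_{\mathcal{X}}$ in homological degree $0$, then reduce to morphisms of the form $(\rho(\zeta_z^s),0)$ with $z[-1]\leq x,y\leq z$. Two points need attention, however.

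First, the ``normalisation'' you describe is not accomplished by homotopy. What kills $f_0$ is subtracting a chain map already in the image of $\Phi$ whose degree-$0$ component equals $f_0$; Lemma~\ref{basic}(4) is not needed for this step. Once $f_0=0$, the chain-map condition reads $\rho(\gamma_{z,y})\circ f_{-1}=0$, i.e.\ $\zeta_z^s\gamma_{z,y}=0$, and Lemma~\ref{basic}(2) then forces $s\geq r-d(z,y)$, hence $n=2s-d(z,x)+d(z,y)\geq 2r-d(z,x)-d(z,y)\geq 0$. You do not record this constraint, but it is essential: without it there is no lower bound on $n$ and no base for an induction.

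Second, your proposed finish by a dimension count is circular at this point in the paper. You would need the dimension of the image of $\Phi$ in each graded piece, which requires knowing the relations satisfied by the $\Phi(a_{x,i}),\Phi(b_{x,i})$; but those relations are only established in the \emph{next} proposition, and the eventual dimension match (via $C_{U_{\mathcal{X}'}}(q)=C_{E^{\operatorname{op}}}(q)$) is what finally proves the map is an isomorphism. The paper instead finishes surjectivity directly by induction on $n$: for $n=0,1$ one identifies $(\rho(\zeta_z^s),0)$ explicitly as $\Phi(e_{z[-1]})$ or $\Phi$ of a single arrow, and for $n\geq 2$ one factors $(\rho(\zeta_z^s),0)$ as a composite of $\Phi$ of one arrow with a shorter chain map of the same shape (choosing $i$ so that $x+\epsilon_i$ or $y-\epsilon_i$ stays in the box). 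Replacing your dimension-count hope with this short induction would complete your argument.
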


\begin{proof}
Let $f:\Gamma_{ x }\langle n\rangle\rightarrow\Gamma_{ y }$ be a chain map. We
want to show that the image of $\Phi$ contains $f$. It
certainly contains a chain map whose degree $0$ component agrees with $f$,
 so by taking their difference and scaling we may assume that 
$z[-1]\,\leq\,x,y\,\leq\,z$,
and that $f=(\rho(\zeta_{  z  }^{s}),0)$ where
$2s=n+d(  z  , x )-d(  z  , y )$. Since $f$ is a chain map, we have $\zeta_{  z  }^{s}%
\gamma_{  z  , y }=0$ which implies by Lemma \ref{basic} that $s\geq r-d(  z  , y )$.
Hence $n\geq2r-d(  z  , x )-d(  z  , y )\geq0$.

If $n=0$ then $f=\Phi(e_{  z  [-1]})$. If $n=1$ then $f=\Phi(a_{  z  [-1],i})$ or
$f=\Phi(b_{  z  [-1]+\epsilon_{i},i})$ for some $i$. Now we assume $n\geq2$ and
argue by induction on $n$. Suppose that $ y =  z  [-1]$. Since $\Gamma_{  z  }$ is
contractible we may assume that  $x \neq   z  $ and choose $i$ such that
$z[-1]\leq x +\epsilon_{i}\leq   z$.
 Then $f$ is the composition of
$\Phi(a_{ x ,i})\langle n-1\rangle$ and $(\rho(\zeta_{  z  }^{s}),0):\Gamma
_{ x +\epsilon_{i}}\langle n-1\rangle\rightarrow\Gamma_{  z  [-1]}$.

Suppose on the other hand that $ y \neq   z  [-1]$. Choose $i$ such that
$z[-1]\leq y -\epsilon_{i}\leq  z$.
 Then $f$ is the composition of $(\rho
(\zeta_{  z  }^{s-1}),0):\Gamma_{ x }\langle n\rangle\rightarrow\Gamma
_{ y -\epsilon_{i}}\langle1\rangle$ and $\Phi(b_{ y -\epsilon_{i},i})$. The former
is a chain map because $s-1\geq r-d(  z  , y -\epsilon_{i})$.
\end{proof}

\begin{proposition}
$\Phi$ factors through the natural homomorphism $kQ\rightarrow
U_{\mathcal{X}^{\prime}}$.
\end{proposition}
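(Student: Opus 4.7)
The plan is to verify that the defining ideal of $U_{\mathcal{X}^{\prime}}$ inside $kQ$ lies in the kernel of $\Phi$. This ideal is generated by (i) the idempotents $e_x$ for $x\notin\mathcal{X}^{\prime}$, and (ii) the square, supercommutation, and Heisenberg relations of $U_r$.

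For (i), note that $\mathbb{Z}^r\setminus\mathcal{X}^{\prime} = \bigl((\mathbb{Z}^r\setminus\mathcal{X})\setminus\{z[-1]\}\bigr)\cup\{z\}$. If $x\notin\mathcal{X}$ and $x\neq z[-1]$, then $e_x=0$ already in $U_{\mathcal{X}}$, so $\Gamma_x=0$ and $\Phi(e_x)=0$ trivially. For $x=z$, the complex $\Gamma_z=\operatorname{cone}(\rho(\gamma_{z,z}))=\operatorname{cone}(\mathrm{id}_{U_{\mathcal{X}}e_z})$ is contractible, so $\Phi(e_z)=\mathrm{id}_{\Gamma_z}$ vanishes in $D^b(U_{\mathcal{X}}\mMod)$. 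This handles the idempotent relations.

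For (ii), the verification has a uniform structure. Every composition $\Phi(\alpha)\Phi(\beta)$ of two arrows decomposes into a degree-$0$ component and a degree-$(-1)$ component between the mapping cones. The degree-$0$ component is always $\rho$ of the corresponding product of arrows in $U_{\mathcal{X}}$. Hence any relation that holds in $U_{\mathcal{X}}$ has its degree-$0$ component automatically zero, and what remains is to verify that the degree-$(-1)$ component also vanishes. I would split this into three cases. First, the \emph{square relations}, which follow from the square relations in $U_{\mathcal{X}}$ together with the observation that $\zeta_z$ annihilates the relevant projectives via Lemma~\ref{basic}(2). Second, the \emph{supercommutation relations}; the sign convention $\sigma_{x,i}=\#\{j<i\mid y_j\neq x_j\}$ is calibrated precisely so that the two summands $a_{x,i}a_{x+\epsilon_i,j}$ and $a_{x,j}a_{x+\epsilon_j,i}$ (and analogously for $b$'s and for mixed $ab$) produce $(-1)$-components differing by a sign and with matching image, and cancel directly. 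Third, the \emph{Heisenberg relations}; away from the flipped region these reduce to the Heisenberg relations already holding in $U_{\mathcal{X}}$.

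The hard case, and the main obstacle, is the Heisenberg relation at the new vertex $x=z[-1]$. Here the two sides of the relation produce loops in $\mathrm{End}_{D^b(U_{\mathcal{X}}\mMod)}(\Gamma_{z[-1]})$ of internal degree $2$. Using the identification $\Gamma_{z[-1]}\cong U_{\mathcal{X}}e_z\langle r\rangle[1]$, whose endomorphism ring is $e_zU_{\mathcal{X}}e_z\cong k[\zeta_z]/(\zeta_z^r)$ by Lemma~\ref{basic}(2,3), I expect the $(-1)$-component of each loop $b_{z[-1],i}a_{z[-1]-\epsilon_i,i}+a_{z[-1],i}b_{z[-1]+\epsilon_i,i}$ to reduce, after sign-tracking of the $\sigma_{\cdot,i}$, to a fixed nonzero multiple of $\zeta_z$; the independence of this element from $i$ is exactly the content of Lemma~\ref{basic}(1). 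Once this case is in hand, the remaining cases are routine sign-bookkeeping combined with relations known to hold in $U_{\mathcal{X}}$, and $\Phi$ descends to a well-defined homomorphism $U_{\mathcal{X}^{\prime}}\to E^{\operatorname{op}}$.
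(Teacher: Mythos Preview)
Your overall strategy matches the paper's, but there are two concrete gaps.

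\textbf{Square relations.} Your claim that these follow from ``$\zeta_z$ annihilating the relevant projectives via Lemma~\ref{basic}(2)'' is not right: Lemma~\ref{basic}(2) says precisely that $\zeta_z^s$ is \emph{nonzero} for $s\le r-1$. The actual reason the degree-$(-1)$ component of $\Phi(a_{x,i}a_{x+\epsilon_i,i})$ vanishes is purely combinatorial: the box $z-\{0,1\}^r$ has width one in each coordinate, so $x$, $x+\epsilon_i$, $x+2\epsilon_i$ cannot all lie in it. Hence at least one factor has trivial degree-$(-1)$ component. The paper makes exactly this observation.

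\textbf{Heisenberg relations.} You single out $x=z[-1]$ as ``the hard case'', but the degree-$(-1)$ component is nontrivial at \emph{every} vertex $x$ with $z[-1]\le x\le z$, not just at $z[-1]$. For such $x$ and each $i$, exactly one of $x\pm\epsilon_i$ lies in the box (again because the box has width one), and a direct computation shows the degree-$(-1)$ component of $\Phi(a_{x,i}b_{x+\epsilon_i,i}+b_{x,i}a_{x-\epsilon_i,i})$ equals $\rho(\zeta_z)$, independently of $i$. There is no need to invoke Lemma~\ref{basic}(1) at this stage: the independence is immediate once you see that the surviving term contributes the \emph{same} element $\zeta_z$ each time. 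Your proposed argument at $z[-1]$ is a special case of this, but the other vertices in the box must be handled too, and your phrase ``away from the flipped region these reduce to the Heisenberg relations already holding in $U_{\mathcal{X}}$'' does not cover them.

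Once these two points are corrected, your argument is essentially the paper's.
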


\begin{proof}
Since $\Phi(e_{ x })=id_{\Gamma_{ x }}=0$ for $ x \notin\mathcal{X}^{\prime}$, it
suffices to show that $\Phi$ kills the defining relations of $U_{r}$. We shall
show that in fact the image of any relation under $\Phi$ is the zero chain map
(not merely nullhomotopic); note that this is clear in homological degree $0$.

\begin{itemize}
\item Square relations \ref{U1}: For any  $x \in\mathbb{Z}^{r}$, at least one of $ x $,
$ x +\epsilon_{i}$, and $ x +2\epsilon_{i}$ is not in $  z  -\{0,1\}^{r}$ and hence
$\Phi(a_{ x ,i}a_{ x +\epsilon_{i},i})=0$. A similar argument applies to
$\Phi(b_{ x ,i}b_{ x -\epsilon_{i},i})$.

\item Supercommutation relations \ref{U2}: Consider $\Phi(a_{ x ,i}a_{ x +\epsilon_{i}
,j}+a_{ x ,j}a_{ x +\epsilon_{j},i})$. We may assume that both $\Gamma_x$ and
$\Gamma_{ x +\epsilon_{i}+\epsilon_{j}}$are nonzero in degree $-1$, and
therefore that $ x , x +\epsilon_{i}, x +\epsilon_{j}, x +\epsilon_{i}+\epsilon_{j}\in
  z  -\{0,1\}^{r}$. Then the component of $\Phi(a_{ x ,i}a_{ x +\epsilon_{i}%
,j}+a_{ x ,j}a_{ x +\epsilon_{j},i})=0$ in degree $-1$ is right multiplication by%
\[
\left((-1)^{\sigma_{ x ,i}+\sigma_{ x +\epsilon_{i},j}}+(-1)^{\sigma_{ x ,j}%
+\sigma_{ x +\epsilon_{j},i}}\right)\zeta_{  z  }^{2}=0.
\]
The argument that $\Phi(b_{ x ,i}b_{ x +\epsilon_{i},i}+b_{ x ,j}b_{ x +\epsilon
_{j},i})=0$ and $\Phi(a_{ x ,i}b_{ x +\epsilon_{i},j}+b_{ x ,j}a_{ x -\epsilon_{j}%
,i})=0$ is similar.

\item Heisenberg relations \ref{U3}: If $ x \notin   z  -\{0,1\}^{r}$, then the degree $-1$
component of $\Phi(a_{ x ,i}b_{ x +\epsilon_{i},i}+b_{ x ,i}a_{ x -\epsilon_{i},i})$
is $0$. If $ x \in   z  -\{0,1\}^{r}$ then exactly one of  $x +\epsilon_{i}$ and
$ x -\epsilon_{i}$ is in $  z  -\{0,1\}^{r}$, and therefore the degree $-1$
component of $\Phi(a_{ x ,i}b_{ x +\epsilon_{i},i}+b_{ x ,i}a_{ x -\epsilon_{i},i})$
is $\rho(\zeta_{  z  })$, which does not depend on $i$.
\end{itemize}
\end{proof}

By virtue of these two propositions we have a surjective homomorphism
$U_{\mathcal{X}^{\prime}}\rightarrow E^{^{\operatorname{op}}}$. We now
show that this is actually an isomorphism, by demonstrating that
$C_{U_{\mathcal{X}^{\prime}}}(q)=C_{E^{\operatorname{op}}}(q)$.

\begin{lemma}
\label{tiltcartan}We have%
\[
C_{E^{\operatorname{op}}}(q)_{ x , y }=%
\begin{cases}%
C_{U_{\mathcal{X}}}(q)_{ x , y }-q^{r-1}[r-d(  z  , x )-d(  z  , y )]_{q} & \text{if
} z[-1]\leq x \leq      z,\\
C_{U_{\mathcal{X}}}(q)_{ x , y } & \text{otherwise}.%
\end{cases}
\]

\end{lemma}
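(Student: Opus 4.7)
The plan is to compute $C_{E^{op}}(q)_{x,y}=\sum_n q^n\dim\operatorname{Hom}_{D^{b}(U_\calx\mMod)}(\Gamma_x\langle n\rangle,\Gamma_y)$ by passing to graded Euler characteristics and exploiting the defining triangles of the $\Gamma_w$. Indeed, the preceding proposition gives $\operatorname{Hom}_{D^{b}}(\Gamma_x\langle n\rangle,\Gamma_y[m])=0$ for all $m\neq 0$, so this dimension agrees with $\chi_n(\Gamma_x,\Gamma_y):=\sum_m(-1)^m\dim\operatorname{Hom}(\Gamma_x\langle n\rangle,\Gamma_y[m])$, an invariant that is bi-additive on distinguished triangles. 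The key structural input is that for $w\in z-\{0,1\}^r$ one has the triangle $U_\calx e_z\langle d(z,w)\rangle\to U_\calx e_w\to\Gamma_w\to$, yielding $[\Gamma_w]=[U_\calx e_w]-[U_\calx e_z\langle d(z,w)\rangle]$ in the Grothendieck group of $D^{b}(U_\calx\mMod)$.

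For projective modules $\chi_n(U_\calx e_u,U_\calx e_v)=\dim(e_u U_\calx e_v)_n$, whose generating function in $q$ is $C_{U_\calx}(q)_{u,v}$. Applying bilinearity of $\chi$ in the two arguments, and treating separately the cases in which $x$ (respectively $y$) lies in the box $z-\{0,1\}^r$, one obtains a formula for $\sum_n q^n\chi_n(\Gamma_x,\Gamma_y)$ as a $\mathbb{Z}[q,q^{-1}]$-linear combination of the four entries $C_{U_\calx}(q)_{x,y}$, $C_{U_\calx}(q)_{x,z}$, $C_{U_\calx}(q)_{z,y}$, $C_{U_\calx}(q)_{z,z}$ with $q$-shifts determined by $d(z,x)$ and $d(z,y)$. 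To simplify this expression I substitute Lemma~\ref{basic}(3) together with the symmetry of $U_\calx$ (Theorem~\ref{symmetric}), which gives $C_{U_\calx}(q)_{u,z}=C_{U_\calx}(q)_{z,u}=q^{r-1}[r-d(z,u)]_q$ for $u\in z-\{0,1\}^r$ and $C_{U_\calx}(q)_{z,z}=q^{r-1}[r]_q$. Moreover the explicit basis of $U_\calx e_z$ in Lemma~\ref{basic}(2) is supported on $z-\{0,1\}^r\setminus\{z[-1]\}$, so $C_{U_\calx}(q)_{z,u}=0$ whenever $u$ lies outside the box; in the cases where $x$ or $y$ is outside the box these vanishings kill the correction terms and leave $C_{E^{op}}(q)_{x,y}=C_{U_\calx}(q)_{x,y}$, matching the ``otherwise'' branch.

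In the main case $z[-1]\leq x,y\leq z$, substitution yields
\[
C_{E^{op}}(q)_{x,y}=C_{U_\calx}(q)_{x,y}+q^{r-1}\Bigl(-q^{d(z,y)}[r-d(z,x)]_q-q^{-d(z,x)}[r-d(z,y)]_q+q^{d(z,y)-d(z,x)}[r]_q\Bigr),
\]
so the lemma reduces to the quantum-integer identity
\[
-q^{b}[r-a]_q-q^{-a}[r-b]_q+q^{b-a}[r]_q=-[r-a-b]_q,\qquad a=d(z,x),\ b=d(z,y),
\]
in $\mathbb{Z}[q,q^{-1}]$. This is checked by clearing $q-q^{-1}$ and expanding $[n]_q(q-q^{-1})=q^n-q^{-n}$: two of the resulting six monomials, namely $\pm q^{r+b-a}$, cancel, and the remaining four rearrange as $q^{a+b-r}-q^{b-a-r}=-(q^{r-a-b}-q^{-(r-a-b)})=-(q-q^{-1})[r-a-b]_q$.

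The main obstacle is not a single deep step but rather the bookkeeping: one must track which of the two defining triangles applies in each of the four cases (both, one, or neither of $x,y$ in the box), confirm that the Rickard vanishing from the preceding proposition holds uniformly (which in turn rests on Lemma~\ref{basic}(4) and the symmetry of $U_\calx$), and finally apply the quantum-integer identity. All of these are routine once organised along the above lines.
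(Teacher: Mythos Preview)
Your proposal is correct and follows essentially the same approach as the paper: expand the Euler characteristic of $\operatorname{Hom}(\Gamma_x,\Gamma_y)$ via the defining cones into four Cartan entries, substitute the values $C_{U_\calx}(q)_{z,u}=q^{r-1}[r-d(z,u)]_q$ from Lemma~\ref{basic}, and reduce to the quantum-integer identity. The only cosmetic slip is in your bookkeeping of the identity: in fact two pairs of monomials cancel (namely $\pm q^{r+b-a}$ and $\pm q^{b-a-r}$), leaving $q^{a+b-r}-q^{r-a-b}$ rather than four terms, but the conclusion is unaffected.
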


\begin{proof}
By Lemma~\ref{basic},
\[
C_{U_{\mathcal{X}}}(q)_{  z  , x }=%
\begin{cases}
q^{r-1}[r-d(  z  , x )]_{q}   & \text{if } z[-1]\leq x \leq      z,\\
0 &   \text{otherwise},%
\end{cases}
\]
and since $\omega(e_{ x }U_{\mathcal{X}}e_{  z  })=e_{  z  }U_{\mathcal{X}}e_{ x }$, we
have $C_{U_{\mathcal{X}}}(q)_{ x ,  z  }=C_{U_{\mathcal{X}}}(q)_{  z  , x }$.

Hence%
\begin{align*}
C_{E^{\operatorname{op}}}(q)_{ x , y }  &  =\sum_{n\in\mathbb{Z}}\dim
\operatorname{Hom}_{D^{b}(U_{\mathcal{X}}\mMod
)}(\Gamma_{ x }\langle n\rangle,\Gamma_{ y })~q^{n}\\
&  =\sum_{n\in\mathbb{Z}}\dim\operatorname*{Hom}\nolimits_{U_{\mathcal{X}}%
}(U_{\mathcal{X}}e_{ x }\langle n\rangle,U_{\mathcal{X}}e_{ y })~q^{n}\\
&\qquad  +\sum_{n\in\mathbb{Z}}\dim\operatorname*{Hom}\nolimits_{U_{\mathcal{X}}%
}(U_{\mathcal{X}}e_{  z  }\langle d(  z  , x )+n\rangle,U_{\mathcal{X}}e_{  z  }\langle
d(  z  , y )\rangle)~q^{n}\\
&\qquad  -\sum_{n\in\mathbb{Z}}\dim\operatorname*{Hom}\nolimits_{U_{\mathcal{X}}%
}(U_{\mathcal{X}}e_{ x }\langle n\rangle,U_{\mathcal{X}}e_{  z  }\langle
d(  z  , y )\rangle)~q^{n}\\
&\qquad  -\sum_{n\in\mathbb{Z}}\dim\operatorname*{Hom}\nolimits_{U_{\mathcal{X}}%
}(U_{\mathcal{X}}e_{  z  }\langle d(  z  , x )+n\rangle,U_{\mathcal{X}}e_{ y })~q^{n}.
\end{align*}
So if $z[-1]\,\leq\, x , y \,\leq\,  z$, then
\begin{align*}
C_{E^{\operatorname{op}}}(q)_{ x , y }  &  =C_{U_{\mathcal{X}}}(q)_{ x , y }%
+q^{r-1+d(  z  , y )-d(  z  , x )}[r]_{q}\\
&\qquad -q^{r-1+d(  z  , y )}[r-d(  z  , x )]_{q}-q^{r-1-d(  z  , x )}%
[r-d(  z  , y )]_{q}\\
&  =C_{U_{\mathcal{X}}}(q)_{ x , y }-q^{r-1}[r-d(  z  , x )-d(  z  , y )]_{q},
\end{align*}
and otherwise $C_{E^{\operatorname{op}}}(q)_{ x , y }=C_{U_{\mathcal{X}}}%
(q)_{ x , y }$.
\end{proof}

By using the automorphism $\kappa$ we see that the results of this section
apply to a dual situation in which we specify an element of a Cubist subset
minimal with respect to $\leq$. Taking in particular the Cubist subset
$\mathcal{X}^{\prime}$ and the minimal element $  z  [-1]$, we obtain a graded
endomorphism ring $E^{\prime}$ together with an epimorphism $U_{\mathcal{X}%
}\rightarrow(E^{\prime})^{\operatorname{op}}$, and the formula
\[
C_{(E^{\prime})^{\operatorname{op}}}(q)_{ x , y }=%
\begin{cases}
C_{U_{\mathcal{X}^{\prime}}}(q)_{ x , y }-q^{r-1}[r-d(  z  [-1], x )-d(  z  [-1], y )]_{q} 
& \text{if } z[-1]\,\leq\, x , y \,\leq\, z,\\
C_{U_{\mathcal{X}^{\prime}}}(q)_{ x , y }   & \text{otherwise}.%
\end{cases}
\]
There is an equivalence of categories,
\[
F':D^{b}((E^{\prime})^{op}\mMod))\iso D^{b}%
(U_{\mathcal{X}'}\mMod).
\]
Note that $r-d(  z  [-1], x )-d(  z  [-1], y )=d(  z  , x )+d(  z  , y )-r$.

We are now ready to show that $C_{U_{\mathcal{X}^{\prime}}}(q)_{xy}%
=C_{E^{\operatorname{op}}}(q)_{xy}$ for all $ x,y \in\mathbb{Z}^{r}$, and thus
complete the proof of Theorem~\ref{mutation theorem}. Because
$\Gamma_{  z  }$ is contractible we may assume that   $x \neq   z  $ and $ y \neq   z  $. If
$z[-1]\,\leq\, x , y \,\leq\, z$, then%
\begin{align*}
C_{E^{\operatorname{op}}}(q)_{ x , y }  &  =C_{U_{\mathcal{X}}}(q)_{ x , y }%
+q^{r-1}[r-d(  z  , x )-d(  z  , y )]_{q}\\
&  \geq C_{(E^{\prime})^{\operatorname{op}}}(q)_{ x , y }+q^{r-1}%
[r-d(  z  , x )-d(  z  , y )]_{q}\\
&  =C_{U_{\mathcal{X}^{\prime}}}(q)_{ x , y }\\
&  \geq C_{E^{\operatorname{op}}}(q)_{ x , y },
\end{align*}
where $\geq$ means an inequality holds for each pair of corresponding coefficients. We deduce that the inequalities
are actually equalities.

\begin{remark}
By Koszul duality, the derived categories of $U_{\mathcal X} \mMod, V_{\mathcal X} \mMod$ are equivalent.
Therefore, there also exists an equivalence of triangulated categories%
\[
D^{b}(V_{\mathcal{X}^{\prime}}\mMod)\iso D^{b}%
(V_{\mathcal{X}}\mMod).
\]
\end{remark}

\begin{remark}
The simple $V_{\mathcal X}$-module $L(z)$ 
has extension algebra $e_z U_{\mathcal X} e_z \cong k[\zeta_z]/\zeta_z^r \cong H^*(\mathbb{P}^{r-1})$ in
the category of ungraded $V_{\mathcal X}$-modules.
Objects whose extension algebras are isomorphic to $H^*(\mathbb{P}^n)$
are called $\mathbb{P}^n$-objects by D. Huybrechts and R. Thomas
~\cite{Huybrechts-Thomas}, and give rise to self-equivalences 
whenever they appear in the derived category of a smooth projective variety.

In our setting,  
self-equivalences of $D^b(U_{\mathcal X}\mMod)$ can be obtained by composing equivalences,
$$F\circ F': D^b(U_{\mathcal X}\mMod) \iso D^b(U_{\mathcal X'}\mMod) \iso D^b(U_{\mathcal X}\mMod).$$
\end{remark}

\subsection{Another formula for the entries of the graded Cartan matrix of $U_{\mathcal{X}}$}

We can now derive a
different formula for the entries of $C_{U_{\mathcal{X}}}(q)$, one which
isn't tied to a choice of quasi-hereditary structure on $U_{\mathcal{X}}$.

For any $  x  \in\mathcal{X}$ we define%
\[
I_{\mathcal{X}}(  x  )=\{ z \in\mathcal{X}\mid  x\leq z \leq x[1]\}.
\]

\begin{lemma} \label{Cartan locality}
Let $\mathcal{X}_{1}$ and $\mathcal{X}_{2}$ be Cubist subsets containing $  x  $,
and suppose that $I_{\mathcal{X}_{1}}(  x  )=I_{\mathcal{X}_{2}}(  x  )$. Then for all
$ y \in\mathbb{Z}^{r}$, we have $C_{U_{\mathcal{X}_{1}}}(q)_{  xy  }%
=C_{U_{\mathcal{X}_{2}}}(q)_{  xy  }$.
\end{lemma}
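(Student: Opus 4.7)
The plan is to combine the localization afforded by Lemma~\ref{corner approximation} with the derived equivalences coming from flips (Theorem~\ref{mutation theorem} and Lemma~\ref{tiltcartan}), in order to show that $C_{U_{\mathcal{X}}}(q)_{xy}$ is unchanged by any flip whose pivot lies outside $I_{\mathcal{X}}(x)$.

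First I localize. Since $U_\mathcal{X}$ has Loewy length at most $2r-1$ by Corollary~\ref{gldim}, each graded entry $C_{U_{\mathcal{X}}}(q)_{xy}$ lives in degrees at most $2r-2$ and is represented by paths of length at most $2r-2$ starting at $x$, which remain within distance $2r-2$ of $x$ in $\mathbb{Z}^r$. Because the defining relations of $U_\mathcal{X}$ are local in the quiver, this entry depends only on the portion of $\mathcal{X}$ within distance $2r-2$ of $x$. Applying Lemma~\ref{corner approximation} to each $\mathcal{X}_i$, I may replace the pair by Cubist subsets $\widetilde{\mathcal{X}}_1,\widetilde{\mathcal{X}}_2$ agreeing with $\mathcal{X}_1,\mathcal{X}_2$ in this region and, after a common translation, obtainable by removing finitely many elements from a shared shifted corner configuration. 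The equalities $I_{\widetilde{\mathcal{X}}_1}(x)=I_{\widetilde{\mathcal{X}}_2}(x)$ and $C_{U_{\widetilde{\mathcal{X}}_i}}(q)_{xy}=C_{U_{\mathcal{X}_i}}(q)_{xy}$ persist, so we are reduced to the finitely-supported case.

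Next, Lemma~\ref{tiltcartan} says that a flip of $\mathcal{X}$ at a maximal element $z$ changes $C_{U_{\mathcal{X}}}(q)_{xy}$ only when $z[-1]\leq x,y\leq z$, equivalently when $z\in I_\mathcal{X}(x)\cap I_\mathcal{X}(y)$. Furthermore, a flip at $z\notin I_\mathcal{X}(x)$ preserves $I_\mathcal{X}(x)$: the newly added element $z[-1]$ would lie in $I_{\mathcal{X}'}(x)$ only when $z\in x+\{1,2\}^r$, which forces $x[1]\leq z$ and hence $x[1]\in\mathcal{X}^-$, contradicting the fact that $x\in\mathcal{X}=\mathcal{X}^-\setminus\mathcal{X}^-[-1]$. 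Thus any flip whose pivot lies outside $I_\mathcal{X}(x)$ leaves both $I_\mathcal{X}(x)$ and every entry $C_{xy}$ intact.

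The lemma then reduces to the combinatorial claim that $\widetilde{\mathcal{X}}_1$ and $\widetilde{\mathcal{X}}_2$ can be connected by a sequence of flips, together with their inverses obtained via the symmetry $w\mapsto-w$, all of whose pivots lie outside $I_\mathcal{X}(x)$. I would prove this by induction on the number of lattice points at which $\widetilde{\mathcal{X}}_1$ and $\widetilde{\mathcal{X}}_2$ disagree, using Lemma~\ref{flipable} to select at each stage a pivot outside $I_\mathcal{X}(x)$ whose flip strictly reduces this number. The main obstacle is this combinatorial claim; its verification requires careful analysis of maximal elements of the symmetric difference lying outside $I_\mathcal{X}(x)$ to ensure that such a flip is always available, exploiting the fact that both configurations share the same restriction to the unit cube $x+\{0,1\}^r$.
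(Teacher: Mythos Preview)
Your approach is viable but substantially more elaborate than the paper's, and step~3 is left as an unverified combinatorial claim. That claim can in fact be settled: since $x\in\mathcal{X}_i$ forces $\mathcal{X}_i^-\cap[x,x[1]]=I_{\mathcal{X}_i}(x)$, the symmetric difference $\widetilde{\mathcal{X}}_1^-\triangle\widetilde{\mathcal{X}}_2^-$ avoids $[x,x[1]]$; one may then add the elements of $\widetilde{\mathcal{X}}_2^-\setminus\widetilde{\mathcal{X}}_1^-$ in increasing order (each is minimal in the complement at its turn) and afterwards remove those of $\widetilde{\mathcal{X}}_1^-\setminus\widetilde{\mathcal{X}}_2^-$ in decreasing order, checking that every pivot lies outside $[x,x[1]]$. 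But carrying this out is already as much work as the paper's entire proof, and it drags in Theorem~\ref{mutation theorem}, Lemma~\ref{tiltcartan} and the corner-approximation machinery.

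The paper's argument is direct and avoids derived equivalences altogether. From $C_{U_\mathcal{X}}(q)=D_{U_\mathcal{X}}(q)^TD_{U_\mathcal{X}}(q)$ and Lemma~\ref{Ustandard} one obtains
\[
C_{U_\mathcal{X}}(q)_{xy}=\sum_{\substack{F\in\mathcal{F}_\mathcal{X}\\ x,y\in F}} q^{\,d(x,\lambda^{-1}F)+d(y,\lambda^{-1}F)}.
\]
The crucial observation is that every facet has a \emph{unique} expression $F=z+F_i$, so $\lambda^{-1}(F)=z$ is determined by $F$ alone, independently of $\mathcal{X}$. Hence the entry depends only on \emph{which} facets through $x$ lie in $\mathcal{X}$. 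Writing such a facet via its top vertex $x+\sum_{j\in S}\epsilon_j$ and omitted direction $i\notin S$, it lies in $\mathcal{X}$ exactly when $x+\sum_{j\in S}\epsilon_j\in\mathcal{X}$ and $x+\sum_{j\in S\cup\{i\}}\epsilon_j\notin\mathcal{X}$, a condition that is read off from $I_\mathcal{X}(x)$. This finishes the proof in a few lines.

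Your route does buy something conceptually---it makes transparent that the Cartan entry is a flip-invariant of the local picture---but as a proof of the lemma it is a detour.
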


\begin{proof} Let $\calx\subset\bbz^r$
be a Cubist subset.
Using the fact that $U_{\mathcal{X}}$ is quasihereditary (Corollary~\ref{Uqh})
and the formula for its decomposition matrix given in Lemma~\ref{Ustandard},
we have
$$
C_{U_{\mathcal{X}}}(q)_{  xy  }
 = \sum_{\substack{z\in\calx \\ x,y\in\lambda z}}q^{d(x,z)+d(y,z)} 
 = \sum_{\substack{F\in\mathcal{F}_\calx \\ x,y\in F}}
q^{d(x,\lambda^{-1}(F))+d(y,\lambda^{-1}(F))}. 
$$
Hence it suffices to
show that whether or not a facet $F\in\mathcal{F}$ containing $  x  $ is contained in
$\mathcal{X}$ depends only on $I_{\mathcal{X}}(  x  )$.
For some $S\subseteq\{1,\ldots,r\}$ and $i\notin S$, the facet $F$ 
consists of all $x'\in\bbz^r$ such that 
$x'\leq
x  +\sum_{j\in S}a_{j}\epsilon_{j}$
and
$x'\geq x-
\sum_{j\notin S\cup\{i\}}
a_{j}\epsilon_{j}$.
Hence
$F$ is
contained in $\mathcal{X}$ if and only if 
$  x  +\sum_{j\in S}\epsilon_{j}\in\mathcal{X}$ and $  x  +\sum_{j\in S\cup
\{i\}}\epsilon_{j}\notin\mathcal{X}$.
\end{proof}

\begin{proposition}\label{alternativeformula}
Let $\mathcal{X}$ be a Cubist subset. Then for all $  x  , y \in\mathcal{X}$, we
have%
\[
C_{U_{\mathcal{X}}}(q)_{  x   y  }=\sum_{ z \in I_{\mathcal{X}}(x)
\cap I_{\mathcal{X}}(y)}q^{r-1}[r-d( z ,  x  )-d( z , y )]_{q}.
\]

\end{proposition}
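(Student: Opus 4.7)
The plan is to establish the identity by induction, using locality to reduce to finite cases and flips to reduce to the case where $x$ is maximal in $\mathcal{X}$, which is handled by Lemma~\ref{basic}(3). Both sides of the identity depend only on the local configuration of $\mathcal{X}$ near $x$ and $y$: the left-hand side by Lemma~\ref{Cartan locality}, and the right-hand side by inspection. Hence by Lemma~\ref{corner approximation} we may assume that $\mathcal{X}^-$ differs from a shift of $\mathcal{X}_{CC}^-$ by only finitely many elements, so that there are finitely many $z\in\mathcal{X}$ with $z>x$.

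If $x$ is maximal in $\mathcal{X}$, then $I_{\mathcal{X}}(x)=\{x\}$, so the right-hand side reduces to $q^{r-1}[r-d(x,y)]_q$ when $x\in I_{\mathcal{X}}(y)$ (equivalently when $x[-1]\leq y\leq x$) and vanishes otherwise; by Lemma~\ref{basic}(3) this is exactly $C_{U_{\mathcal{X}}}(q)_{xy}$.

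Otherwise, I pick any maximal $z\in\mathcal{X}$ with $z>x$ and apply the flip of Section~\ref{section:derived} to produce $\mathcal{X}'=(\mathcal{X}\setminus\{z\})\cup\{z[-1]\}$; the finite measure $\sum_{w\in\mathcal{X},\,w>x}|w-x|_{1}$ strictly decreases under this flip, supporting the induction. By inductive hypothesis the identity holds for $\mathcal{X}'$. For pairs $(u,v)\in(\mathcal{X}\cap\mathcal{X}')^2=(\mathcal{X}\setminus\{z\})^2$, Lemma~\ref{tiltcartan} gives
\[
C_{U_{\mathcal{X}'}}(q)_{uv}-C_{U_{\mathcal{X}}}(q)_{uv}=\begin{cases}-q^{r-1}[r-d(z,u)-d(z,v)]_q&\text{if }u,v\in[z[-1],z],\\ 0&\text{otherwise,}\end{cases}
\]
and a case analysis shows that the right-hand side of the proposition changes by exactly the same amount. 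The key observation is that $z[-1]\in\mathcal{X}^-[-1]$ implies $z[-1]\notin I_{\mathcal{X}'}(w)$ for every $w\in\mathcal{X}\setminus\{z\}$ (any $w\leq z[-1]$ would lie in $\mathcal{X}^-[-1]$, contradicting $w\in\mathcal{X}$), so flipping $z$ alters $I(u)\cap I(v)$ only by removing $z$, which happens precisely when $u,v\in[z[-1],z]$. Pairs $(z,y)$ or $(u,z)$ involving $z$ itself are handled directly by the base case applied to $\mathcal{X}$, since $z$ is maximal there.

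The principal difficulty is the case analysis for the change in the right-hand side under a flip: one must carefully verify, for each configuration of $u$ and $v$ relative to the cube $[z[-1],z]$ (both inside, both outside, one inside and one outside), that the sum $\sum_{z'\in I_{\mathcal{X}}(u)\cap I_{\mathcal{X}}(v)}q^{r-1}[r-d(z',u)-d(z',v)]_q$ changes exactly as prescribed by Lemma~\ref{tiltcartan}, using the ideal structure of $\mathcal{X}^-$ to control when $z[-1]$ can or cannot enter the relevant intersections.
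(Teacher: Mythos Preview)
Your proof is correct and follows essentially the same strategy as the paper: locality to make the problem finite, then induction via flips using Lemma~\ref{tiltcartan}, with Lemma~\ref{basic}(3) as the base case.

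The main organizational difference is in how locality is invoked. The paper uses Lemma~\ref{Cartan locality} more sharply: it replaces $\mathcal{X}^-$ by the ideal generated by $I_{\mathcal{X}}(x)$, so that any maximal element $v$ of $I_{\mathcal{X}}(x)$ is automatically maximal in $\mathcal{X}$. The induction is then directly on $|I_{\mathcal{X}}(x)|$, and each flip visibly removes one element from $I_{\mathcal{X}}(x)$. Your route via Lemma~\ref{corner approximation} leaves open the possibility that the maximal $z>x$ you flip lies outside $I_{\mathcal{X}}(x)$; in that case $x\notin[z[-1],z]$, so by Lemma~\ref{tiltcartan} neither side changes and the flip is vacuous. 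This is harmless but means your measure $\sum_{w>x}|w-x|_1$ may decrease through several ``idle'' flips before anything happens to the two sides of the identity. The paper's choice avoids this.

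Your handling of the case $y=z$ via symmetry and the base case is fine; the paper does not single this out because, with the convention that Cartan entries vanish at $e_v=0$, Lemma~\ref{tiltcartan} already gives $C_{U_{\mathcal{X}}}(q)_{xv}=0+q^{r-1}[r-d(v,x)]_q$ directly, matching the right-hand side since $I_{\mathcal{X}}(v)=\{v\}$. Either way the argument goes through.
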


\begin{proof}
We induct on $|I_{\mathcal{X}}(  x  )|.$ If $I_{\mathcal{X}}(  x  )=\{  x  \}$, then  the sum on the right hand side of the desired equality contains a
single term $q^{r-1}[r-d(  x  , y )]_{q}$ if $y\leq x\leq y[1]$ and is zero otherwise. This is in agreement with part (3) of Lemma~\ref{basic}.

Now suppose $\left\vert I_{\mathcal{X}}(  x  )\right\vert >1$. By Lemma~\ref{Cartan locality}
we may assume that $\mathcal{X}=\mathcal{X}^{-}\setminus\mathcal{X}^{-}[-1]$,
where $\mathcal{X}^{-}$ is the ideal in $\mathbb{Z}^r$ generated by $I_{\mathcal{X}}(  x  )$. Choose
an element $v\in I_{\mathcal{X}}(  x  )$ maximal with respect to $\leq$. Then $v$ is also
a maximal element of $\mathcal{X}$ with respect to $\leq$. By Lemma~\ref{flipable} the
subset $\mathcal{X}'$ of $\mathbb{Z}^r$ obtained from $\mathcal{X}$ by
replacing $v$ by $v[-1]$ is Cubist.  We have
$I_{\mathcal{X}^{\prime}}(  x  )=I_{\mathcal{X}%
}(  z  )\setminus\{v\}$, so by induction the stated formula holds for
$C_{U_{\mathcal{X}^{\prime}}}(q)_{  xy  }$. Hence for all
 $ y \in\mathcal{X}$ we have
\begin{align*}
C_{U_{\mathcal{X}}}(q)_{  x   y  }  &  =%
\begin{cases}
C_{U_{\mathcal{X}^{\prime}}}(q)_{  x   y  }+q^{r-1}[r-d(v,  x  )-d(v, y )]_{q} &  
\text{if } v\in I_{\mathcal{X}}(x),\\
C_{U_{\mathcal{X}^{\prime}}}(q)_{  x   y  } &   \text{otherwise}%
\end{cases}
\\
&  =\sum_{ z \in I_{\mathcal{X}}(x)
\cap I_{\mathcal{X}}(y)}q^{r-1}
[r-d( z ,  x  )-d( z , y )]_{q},
\end{align*}
where the first equality is by Lemma \ref{tiltcartan}, and the second by induction.
\end{proof}

\section {Formulae}

We assemble nine elegant formulae, which combine to give purely combinatorial relations.
It seems difficult to imagine how such expressions could have been conceived, 
without the Cubist algebras.  

Note that the formulae involving decomposition matrices hold for any of the $r!$ possible highest weight structures,
but depend on the given highest weight structure.
The formulae which do not involve decomposition matrices are independent of highest weight structure.

\begin{theorem}
Combinatorial formulae for decomposition matrices:
$$D_{U_{\mathcal X}}(q)_{xy} = \sum_{z \in \lambda x} \delta_{zy} q^{d(z,x)},$$
$$D_{V_{\mathcal X}}(q)_{xy} = \sum_{z \in \mu x} \delta_{zy} q^{d(z,x)}.$$
Combinatorial formulae for Cartan matrices:
$$C_{U_{\mathcal{X}}}(q)_{  x   y  }=\sum_{ z \in I_{\mathcal{X}}(x)
\cap I_{\mathcal{X}}(y)}q^{r-1}[r-d( z ,  x  )-d( z , y )]_{q}.$$
$$C_{V_{\mathcal X}}(q)_{xy} = (1-q^2)^{1-r} q^{d(x,y)},$$
Brauer formulae for Cartan matrices:
$$C_{U_{\mathcal X}}(q) = D_{U_{\mathcal X}}(q)^T D_{U_{\mathcal X}}(q),$$
$$C_{V_{\mathcal X}}(q) = D_{V_{\mathcal X}}(q)^T D_{V_{\mathcal X}}(q),$$
Transpose formulae:
$$C_{U_{\mathcal X}}(q) = C_{U_{\mathcal X}}(q)^T.$$
$$C_{V_{\mathcal X}}(q) = C_{V_{\mathcal X}}(q)^T.$$
Inverse formulae:
$$D_{U_{\mathcal X}}(q)^T.D_{V_{\mathcal X}}(-q) = 1.$$
$$C_{U_{\mathcal X}}(q).C_{V_{\mathcal X}}(-q) = 1.$$
Symmetry formula:
$$C_{U_{\mathcal X}}(q^{-1}) = q^{2-2r} C_{U_{\mathcal X}}(q).$$
\end{theorem}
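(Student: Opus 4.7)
The plan is to assemble the nine formulae from results already established in the paper; each is either a restatement, a direct consequence of Brauer-Humphreys reciprocity, or a two-line manipulation. I will handle them in four bundles.

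First, the decomposition matrix formulae are essentially restatements. For $U_{\mathcal X}$ this is Lemma~\ref{Ustandard}, which exhibits the basis $\{q_y\mid y\in\lambda x\}$ of $\Delta_{U_{\mathcal X}}(x)$ with $q_y$ in degree $d(x,y)$. For $V_{\mathcal X}$ I combine the identification $\Delta_{V_{\mathcal X}}(x)\cong f_{\mathcal X}\Delta_{V,i_x}(x)$ of Lemma~\ref{lin} with the composition-factor computation of Corollary~\ref{Vdec}. The two combinatorial Cartan formulae are likewise already on paper: Remark~\ref{CartanV} for $V_{\mathcal X}$ and Proposition~\ref{alternativeformula} for $U_{\mathcal X}$.

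Second, both Brauer formulae follow from the Brauer-Humphreys reciprocity $C_A(q)=D_A(q)^T D_{A^{op}}(q)$ recalled in Section 3, applied to the highest weight categories $U_{\mathcal X}\mMod$ (Corollary~\ref{Uqh}) and $V_{\mathcal X}\mMod$ (Theorem~\ref{Vhwc}). The only additional observation needed is that the anti-involution $\omega$ of each algebra fixes all primitive idempotents, so $D_A(q)=D_{A^{op}}(q)$.

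Third, the first inverse formula $D_{U_{\mathcal X}}(q)^T D_{V_{\mathcal X}}(-q)=1$ is Lemma~\ref{formula}, once the identifications $\tilde D_{U_{\mathcal X}}=D_{U_{\mathcal X}}$ and $\tilde D_{V_{\mathcal X}}=D_{V_{\mathcal X}}$ furnished by the first bundle are in place. Transposing this identity shows that $D_{V_{\mathcal X}}(-q)^T$ is both a left and right inverse of $D_{U_{\mathcal X}}(q)$, so combining with the Brauer formulae gives
\begin{equation*}
C_{U_{\mathcal X}}(q)\, C_{V_{\mathcal X}}(-q) = D_{U_{\mathcal X}}(q)^T\bigl(D_{U_{\mathcal X}}(q) D_{V_{\mathcal X}}(-q)^T\bigr) D_{V_{\mathcal X}}(-q) = D_{U_{\mathcal X}}(q)^T D_{V_{\mathcal X}}(-q) = 1.
\end{equation*}
The transpose formula for $V_{\mathcal X}$ is immediate from its explicit formula together with $d(x,y)=d(y,x)$; for $U_{\mathcal X}$ the anti-involution $\omega$ induces a grading-preserving isomorphism $e_x U_{\mathcal X} e_y\iso e_y U_{\mathcal X} e_x$, giving the required symmetry of $C_{U_{\mathcal X}}(q)$.

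The final formula draws on the deepest input of the paper, the symmetry of $U_{\mathcal X}$ (Theorem~\ref{symmetric}), and is the only step that is more than pure bookkeeping; the main obstacle is just keeping the degrees straight. By Corollary~\ref{gldim} each $e_x U_{\mathcal X} e_y$ is concentrated in degrees $[0,2r-2]$, so the invariant bilinear form of Theorem~\ref{symmetric} has degree $-(2r-2)$ and restricts to a perfect pairing
\begin{equation*}
e_x (U_{\mathcal X})_i e_y \times e_y (U_{\mathcal X})_{2r-2-i} e_x \longrightarrow k
\end{equation*}
for every $i\in\bbz$. Combining this with the transpose formula already established yields $\dim e_x(U_{\mathcal X})_i e_y=\dim e_x(U_{\mathcal X})_{2r-2-i}e_y$, which is precisely the statement that each entry $C_{U_{\mathcal X}}(q)_{xy}$ is a palindromic polynomial of degree $2r-2$, equivalently $C_{U_{\mathcal X}}(q^{-1})=q^{2-2r}C_{U_{\mathcal X}}(q)$.
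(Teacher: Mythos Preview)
Your proof is correct and follows essentially the same assembly as the paper's own proof: decomposition formulae from Lemma~\ref{Ustandard} and Corollary~\ref{Vdec}/Lemma~\ref{lin}, Cartan formulae from Remark~\ref{CartanV} and Proposition~\ref{alternativeformula}, Brauer formulae from reciprocity plus $\omega$, the decomposition inverse from Lemma~\ref{formula}, transpose from $\omega$, and the symmetry formula from Theorem~\ref{symmetric}. The one place you diverge is the Cartan inverse formula $C_{U_{\mathcal X}}(q)C_{V_{\mathcal X}}(-q)=1$: the paper invokes the general Koszul-duality fact \cite[Theorem~2.11.1]{MR1322847}, whereas you deduce it elementarily by sandwiching $D_{U_{\mathcal X}}(q)D_{V_{\mathcal X}}(-q)^T=1$ between the two Brauer factorisations; this is a pleasant self-contained alternative that avoids the external citation.
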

\begin{proof}

The combinatorial formulae for the decomposition matrices were proved during our study of highest 
weight categories (corollary~\ref{Vdec}, lemma~\ref{Ustandard}). 
The combinatorial formula for $C_{U_{\mathcal X}}(q)$ was proved in the last section
(proposition~\ref{alternativeformula}). The combinatorial formula for $C_{V_{\mathcal X}}(q)$ is recorded as
remark~\ref{CartanV}.

The Brauer formulae for Cartan matrices in terms are abstract consequences of 
$U_{\mathcal X}\mMod, V_{\mathcal X}\mMod$ being highest weight categories 
with duality \cite[Theorem 3.1.11]{MR961165}.
The transpose formulae follow immediately.

The inverse formula relating the decomposition matrices of $U_{\mathcal X}, V_{\mathcal X}$ was proven as lemma~\ref{formula}.
The inverse formula relating the Cartan matrices of $U_{\mathcal X}, V_{\mathcal X}$ is an abstract consequence 
of $U_{\mathcal X}, V_{\mathcal X}$ being Koszul dual (\cite{MR1322847}, Theorem 2.11.1).

The symmetry formula holds because $U_{\mathcal X}$ is symmetric.
\end{proof}

\section{Interpretation}

The algebras $U_{\mathcal X}$ look like blocks of finite group algebras.
Let us detail this metaphor. 

\bigskip

Consider the diagram,  

$$ \textrm{ Blocks of finite groups }  \longrightarrow \textrm{ Abelian Categories }$$
$$\searrow \hspace{3cm} \swarrow$$
$$\textrm{ Triangulated categories }.$$

Here the horizontal arrow describes a functor $\Phi$, which takes $B$ to its module category $B\mMod$.
The southwest pointing functor $\Psi$ carries an abelian category to its derived category.
The southeast pointing functor $\Upsilon$ takes a block to its derived category.
Here, the categories are considered up to equivalence.
We have the following vague conjectures:

\begin{conjecture} \label{vague}

1. The image of $\Phi$ is small.

2. The image of $\Upsilon$ is very small.
\end{conjecture}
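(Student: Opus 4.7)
The conjecture as stated is deliberately vague, so my first task is to formulate precise statements. A natural interpretation of part~(1) is Donovan's conjecture: for each finite $p$-group $D$, only finitely many Morita equivalence classes of blocks of finite group algebras have defect group $D$. Part~(2) would then be the analogous statement for derived equivalence, which one expects to be substantially more restrictive because many Morita equivalence classes should collapse under the passage from $\Phi$ to $\Upsilon$.

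The plan is to reduce to quasi-simple groups via Clifford-theoretic machinery. First I would apply Fong reductions and the theorems of K\"ulshammer and Puig, which control blocks of normal subgroups up to Morita (and, in favourable situations, derived) equivalence; this reduces both parts to statements about blocks of quasi-simple groups, at which point the classification of finite simple groups becomes available. For symmetric and alternating groups one invokes the Chuang--Rouquier derived equivalences coming from $\mathfrak{sl}_2$-categorification, which already yield finiteness of derived equivalence classes of blocks of fixed weight; this is precisely the prototype for the Cubist derived equivalences constructed in \S\ref{section:derived}. For sporadic groups there are only finitely many to check by hand. The bulk of the work then concerns finite groups of Lie type, where one would appeal to Brou\'e's abelian defect conjecture (in cases where it is proven) together with Deligne--Lusztig theory to reduce to unipotent blocks, and then inspect the finitely many families that occur.

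The hard part will be that this programme rests on deep unresolved conjectures in modular representation theory, including most cases of Brou\'e's abelian defect conjecture and the bulk of Donovan's conjecture. At the level of abelian categories, substantial unconditional progress is available only for specific defect groups (cyclic, Klein four, dihedral, and more recently abelian of small rank), so the vague assertion of part~(1) is at present aspirational. For part~(2), the philosophy of the present paper suggests a sharper guess: the Cubist phenomenon displays a large family of abelian equivalence classes collapsing to a single derived class under local flips, so one expects the number of derived equivalence classes of blocks with defect group $D$ to be controlled by a simple combinatorial invariant of $D$, perhaps expressible through $\operatorname{Out}(D)$ together with the fusion data on $D$. Any genuine proof of Conjecture~\ref{vague} is out of reach today; what one can reasonably do, and what this paper exemplifies, is accumulate evidence by constructing and studying concrete derived equivalences -- such as the flips of \S\ref{section:derived} -- that merge otherwise-distinct abelian categories inside $\operatorname{im}(\Phi)$ into single objects of $\operatorname{im}(\Upsilon)$.
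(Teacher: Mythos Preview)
This statement is a \emph{conjecture}, not a theorem, and the paper offers no proof of it. Immediately after stating Conjecture~\ref{vague}, the paper makes the same move you do: it sharpens part~(1) into Donovan's conjecture and part~(2) into Brou\'e's abelian defect conjecture (Conjecture~\ref{cat}), and then records the known theorems of Scopes and Chuang--Rouquier for symmetric groups (Theorem~\ref{proven}) as evidence. So your interpretation is exactly in line with the paper's own exposition, and your acknowledgement that a genuine proof is out of reach is appropriate.

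That said, your write-up drifts from ``proof proposal'' into survey. The paper makes no attempt at the reduction-to-quasi-simple programme you outline; it simply states the conjectures and moves on to the analogy with Cubist algebras. If the intent was to reproduce the paper's treatment, the honest answer is: there is no proof to reproduce, and your discussion of Fong reductions, K\"ulshammer--Puig, and Deligne--Lusztig theory, while reasonable background, goes well beyond anything the paper claims or needs.
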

 
We should be more precise. Let $k$ have characteristic $p$.
Let $P$ be a $p$-group, and $B_P$ be the set of blocks with defect group $P$.
Let $b$ be a block of some group in which $P$ is normal, and let $B_b$ be the set of blocks whose 
Brauer correspondent is Morita equivalent to $b$.

\begin{conjecture} \label{cat}

1. (P. Donovan) For any $P$, $| im (\Phi_{B_P}) |  < \infty$.

2. (M. Brou\'e) For abelian $P$, $| im (\Upsilon_{B_b})|  = 1$.
\end{conjecture}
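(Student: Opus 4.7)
The plan is to treat both parts as deep open conjectures and describe a strategy built from the Cubist machinery developed in the body of the paper, since the authors are explicitly positioning $U_{\mathcal{X}}$ and the conjectural family $\mathcal{A}_\tau$ as a model for blocks.

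For Broué's statement (part 2), the philosophy I would follow is exactly that of Section~\ref{section:derived}: every derived equivalence in sight should factor as a composition of elementary two-term ``flip'' equivalences, each corresponding to a single local mutation of combinatorial data. The mutation theorem~\ref{mutation theorem} uses only Rickard's Morita theorem~\ref{Rickard} together with the explicit tilting complexes $\Gamma_x$, and so any two blocks whose combinatorial fingerprints can be connected by a chain of flips are automatically derived equivalent. The first step, therefore, is to identify the Brauer correspondent $b$ with a standard combinatorial model --- for $r=3$ this is meant to be the Rouquier block, matched with a translation-invariant Cubist subset $\mathcal{X}$ --- and then to exhibit each block in $B_b$ as obtainable from $b$ by a finite sequence of Cubist-type flips. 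Composing the equivalences $F$ of Theorem~\ref{mutation theorem} along this path produces the required tilting complex.

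For Donovan's statement (part 1), the plan is to deduce finiteness from (a version of) (2) together with the rigidity supplied by the strong homological properties. Once each block in $B_P$ is known to be derived equivalent to a member of a fixed combinatorial family, the constraints listed in the introduction --- symmetry (Theorem~\ref{symmetric}), Koszulity (Theorem~\ref{UKoszul}), quasi-heredity with Koszul standards (Theorem~\ref{Uqh}), together with the transpose and inverse formulae relating $C_{U_{\mathcal{X}}}$ and $C_{V_{\mathcal{X}}}$ --- should cut out only a bounded discrete set of Morita classes. Enumerating Cubist subsets of a region whose size is controlled by $|P|$, modulo the action of $\mathbb{Z}^r\rtimes\langle\kappa\rangle$, would then yield the desired finiteness of Morita classes.

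The principal obstacle, in both cases, is global rather than local. The flip construction of Section~\ref{section:derived} supplies the necessary elementary moves, but to turn this into a proof one must produce a single combinatorial family that captures \emph{all} blocks with a given defect group, and prove that its flip graph is connected. Even for weight-$2$ symmetric group blocks the paper only asserts that \emph{portions} of such blocks match portions of certain Cubist algebras, so the identification is local. Extending the Cubist dictionary beyond $r=3$ --- by constructing and analyzing the conjectural algebras $\mathcal{A}_\tau$ --- and proving the corresponding connectivity statement is exactly the content of the sweeping open conjectures, and it is where I would expect any honest attempt to stall.
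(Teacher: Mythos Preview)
The statement you are asked to prove is a \emph{conjecture}, not a theorem. The paper provides no proof: Conjecture~\ref{cat} records two celebrated open problems in modular representation theory---Donovan's conjecture and Brou\'e's abelian defect group conjecture---as context and motivation for the Cubist-algebra framework. There is therefore no proof in the paper against which to compare your proposal.

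You appear to recognise this, since you describe both parts as ``deep open conjectures'' and close by identifying exactly where ``any honest attempt to stall.'' That self-assessment is accurate. What you have written is not a proof but a heuristic programme, and it is honest about its own incompleteness. A few remarks are still in order. First, your strategy for part~(2) conflates Brou\'e's conjecture in full generality (arbitrary finite groups with abelian defect) with the symmetric-group case; the Cubist machinery and the flip equivalences of Section~\ref{section:derived} are tailored to the latter, and the paper makes no claim that $U_{\mathcal X}$ or the conjectural $\mathcal{A}_\tau$ model blocks of arbitrary finite groups. Second, your plan for part~(1)---deducing Donovan from Brou\'e plus rigidity---does not work even in principle: derived equivalence does not by itself bound the number of Morita classes, and the homological constraints you list (Koszulity, symmetry, quasi-heredity) are properties of the Cubist algebras, not established properties of arbitrary blocks with defect $P$. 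Third, the paper does more than assert that ``portions'' of weight-$2$ blocks match Cubist algebras: Theorem~\ref{blockiscube} gives a precise isomorphism of idempotent truncations. But this is still far from either conjecture.

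In short: there is nothing to prove here, and your proposal correctly does not pretend otherwise.
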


Specialising to symmetric groups, both parts of conjecture~\ref{cat} are theorems.
Let $B_{\Sigma,w}$ be the set of blocks of symmetric
groups of weight $w$. 
 
\begin{theorem} \label{proven}

1. (J. Scopes) $| im(\Phi_{B_{\Sigma,w}})|  < \infty$.

2. (J. Chuang, R. Rouquier) $| im(\Upsilon_{B_{\Sigma,w}})|     = 1.$
\end{theorem}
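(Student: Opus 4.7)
The plan for part (1) is to follow Scopes's analysis via abacus displays. Each block of $\Sigma_n$ of weight $w$ is parametrised by its $p$-core, which may be encoded as an abacus configuration of beads on $p$ runners. First I would establish that whenever two blocks $B,B'$ have cores whose abaci differ by interchanging adjacent runners $i,i{+}1$ with runner $i$ holding at least $w$ more beads than runner $i{+}1$, the $i$-restriction functor provides a Morita equivalence $B\mMod\iso B'\mMod$. The key observation is that under this numerical condition, each simple module in $B$ restricts multiplicity-freely to a simple in $B'$, so that an explicit idempotent truncation of $\operatorname{Res}_i$ implements the equivalence. Second, a bounded-region argument on abacus displays shows that every block of weight $w$ is Scopes-equivalent to one whose abacus lies in a finite region whose size depends only on $w$ and $p$; there are only finitely many such configurations, which completes (1).

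The plan for part (2) is to invoke the $\mathfrak{sl}_2$-categorification machinery of Chuang and Rouquier. First I would recall that the tower $\bigoplus_{n\geq 0} k\Sigma_n\mMod$ carries an action of the affine Kac--Moody algebra $\widehat{\mathfrak{sl}_p}$: for each $i\in\mathbb{Z}/p\mathbb{Z}$, the $i$-induction and $i$-restriction functors $F_i,E_i$ are biadjoint and equipped with natural transformations lifting a degenerate affine Hecke algebra action, so that each pair $(E_i,F_i)$ realises a minimal $\mathfrak{sl}_2$-categorification on the sum of blocks lying in a single $\widehat{\mathfrak{sl}_p}$-string. Second, the main theorem of Chuang--Rouquier then produces a Rickard complex $\Theta_i$, built from the divided powers $E_i^{(n)}$ and $F_i^{(n)}$, that induces a derived equivalence between any two weight spaces related by the simple reflection $s_i$ of the affine Weyl group. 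The main technical obstacle, and the heart of their work, is the verification that $\Theta_i$ is indeed a two-sided tilting complex; this is achieved by establishing a double centraliser property between divided powers and the nilpotent affine Hecke algebra, which yields a Koszul-type resolution of the identity bimodule.

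To conclude (2), I would use the standard fact that the affine Weyl group of $\widehat{\mathfrak{sl}_p}$ acts transitively on the set of $p$-cores of a given weight $w$. Composing the Rickard complexes $\Theta_{i_1},\ldots,\Theta_{i_\ell}$ along a sequence of simple reflections carrying one $p$-core to another yields a derived equivalence between any two blocks of weight $w$, giving $|\operatorname{im}(\Upsilon_{B_{\Sigma,w}})|=1$. I note that this argument in fact upgrades part (2) to a stronger statement than (1), since Morita equivalence implies derived equivalence but not conversely; the Scopes-equivalent classes within a weight $w$ are generally many, but after passing to derived categories they collapse to a single class.
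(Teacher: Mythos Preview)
The paper does not actually supply a proof of this theorem: it is stated purely as a known result, with attribution to Scopes for part~(1) and to Chuang--Rouquier for part~(2), and the authors move on immediately to the next conjecture. So there is no ``paper's own proof'' against which to compare your proposal.

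That said, your outline is a faithful sketch of the original arguments in the cited works. For part~(1) you correctly identify the Scopes mechanism: a $[w:k]$-pair with $k\geq w$ gives a Morita equivalence via $i$-restriction, and a finiteness argument on abacus configurations reduces every weight-$w$ block to one of finitely many representatives. For part~(2) you correctly identify the Chuang--Rouquier strategy: endow $\bigoplus_n k\Sigma_n\mMod$ with an $\widehat{\mathfrak{sl}}_p$-categorification, build the Rickard complex $\Theta_i$ from divided powers of $E_i,F_i$, verify it is invertible in the derived category, and then use transitivity of the affine Weyl group on $p$-cores of fixed weight to connect any two blocks. Your closing remark that~(2) refines~(1) at the derived level is also apt.

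One small caution on part~(1): in Scopes's argument the condition for a Morita equivalence is that the difference in bead counts between the two runners is at least $w$ (not merely that the $i$-restriction is multiplicity-free on simples, which is a consequence rather than the hypothesis); and the finiteness is usually phrased as the existence of finitely many Scopes classes rather than a literal ``bounded region'' of abaci, though the two formulations are equivalent. These are matters of phrasing, not genuine gaps.
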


Various investigations into blocks of symmetric groups suggest the following 
parallel to theorem~\ref{proven}:

\begin{conjecture} \label{sting} Let $w< p$.
$$im(\Phi_{B_{\Sigma,w}}) \subset$$ 
$$\left\{ 
\begin{array}{cc}
e{\mathcal A}e \mMod,
& {\mathcal A} \textrm{ a standard Koszul, symmetric algebra,} \\ 
& \textrm{graded in degrees } 0,1,\ldots,2w, \\
& e \textrm{ an idempotent in } {\mathcal A} \\
\end{array}
\right\}.$$
\end{conjecture}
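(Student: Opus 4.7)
The plan is to use the derived equivalence for weight-$w$ blocks of symmetric groups (Theorem~\ref{proven}(2)) to reduce to a single representative per derived class, together with an iterated application of the local flip equivalences developed in Section~\ref{section:derived}. Specifically, since every block $B\in B_{\Sigma,w}$ is derived equivalent to the Rouquier block $\mathcal{R}_w$, and since the Chuang--Rouquier equivalences decompose into Scopes-type moves that closely mirror the local flips of Cubist tilings, the task splits into (i) proving the statement for $\mathcal{R}_w$, and (ii) showing that the class of algebras of the form $e\mathcal{A}e$, with $\mathcal{A}$ standard Koszul and symmetric of degree range $0,\ldots,2w$, is stable under the relevant tilting procedures.

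For step (i), the natural approach is to construct an ambient algebra $\mathcal{A}_{\tau_w}$ by generalising the Cubist construction to $r=w+1$, where the combinatorial datum $\tau_w$ encodes the abacus description of the simple modules in $\mathcal{R}_w$. The Rouquier block should then appear as the corner $e\mathcal{A}_{\tau_w}e$ with $e$ the idempotent picking out $p$-regular partitions. The defining relations of $\mathcal{A}_{\tau_w}$ would resemble the quiver presentation of $U_r$ but with the Heisenberg relations replaced by ones reflecting the wreath-product combinatorics of $\mathcal{R}_w$. The homological properties would be established by copying the architecture of Sections~\ref{hw} and onwards: a combinatorial analogue of Proposition~\ref{combinatorial} yielding a linear projective resolution of the standards, an appeal to \'Agoston--Dlab--Luk\'acs to extract Koszulity and standard Koszulity on finite truncations, and an invariant form constructed facet-by-facet in the manner of Theorem~\ref{symmetric} to obtain symmetry with the correct grading length $2w$.

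For step (ii), I would verify that passing from $\mathcal{A}_\tau$ to $\mathcal{A}_{\tau'}$ via a local flip corresponds, at the level of corners, to the Chuang--Rouquier derived equivalence associated to an $[i{:}1]$-pair of blocks. The two-term tilting complexes $\Gamma_x$ of Section~\ref{subsection:tiltingcomplex} would restrict via $e$ to tilting complexes for $e\mathcal{A}_\tau e$, and the endomorphism ring calculation (Lemma~\ref{tiltcartan}) should reproduce the Morita type of the Scopes-adjacent block. Combined with step (i), this would propagate the structural conclusion from $\mathcal{R}_w$ to every block of weight $w$.

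The main obstacle will be step (i): constructing $\mathcal{A}_\tau$ and in particular identifying an idempotent $e$ for which $e\mathcal{A}_\tau e$ is \emph{literally} Morita equivalent to a symmetric group block, rather than merely sharing gross numerical invariants. For $w=2$ this looks tractable, because the $r=3$ Cubist algebras already reproduce the correct Cartan data modulo localisation, as hinted at by the remarks following Remark~\ref{smallrank}. For $w\geq 3$ the combinatorics of $w$-tuples of partitions is substantially richer than Cubist combinatorics, and encoding the required Heisenberg-type relations while retaining standard Koszulity is likely to be the crux; it may well be necessary to weaken the conclusion to Morita equivalence after completion, or to let $\mathcal{A}_\tau$ depend on data finer than a Cubist subset, as the authors themselves anticipate in their introduction.
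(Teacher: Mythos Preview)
The statement you are attempting to prove is labelled as a \emph{Conjecture} in the paper, not a theorem. The paper gives no proof; on the contrary, the authors write immediately afterward that ``the stated conjecture already appears to be quite deep.'' So there is no proof in the paper against which to compare your proposal.

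What the paper does establish is a partial $w=2$ case (Theorem~\ref{blockiscube}): for each weight-$2$ block $B$ there is a Cubist algebra $U_{\mathcal{X}_B}$ and idempotents $e,f$ with $eU_{\mathcal{X}_B}e\cong fBf$, where $fBf$ has $\tfrac{1}{2}(p-1)p$ simples out of the $\tfrac{1}{2}(p-1)(p+2)$ in $B$. Your proposed architecture --- settle the Rouquier block directly, then propagate via matching the Cubist flips of \S\ref{section:derived} against Scopes/Rickard equivalences --- is exactly the strategy used there. In that sense your outline is sound as a template, and you correctly identify that step~(i), the construction of the ambient algebra $\mathcal{A}_\tau$ for general $w$, is where the real difficulty lies; the paper itself flags this in the introduction (the algebras $\mathcal{A}_\tau$ are expected to exist but are not constructed).

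Two points you should not underestimate. First, even for $w=2$ the conjecture is not fully established by the paper: the $p-1$ ``diagonal'' simples indexed by $\mathcal{S}\setminus\mathcal{P}$ are excluded from the truncation, so the full block is not realised as $e\mathcal{A}e$. Your step~(i) would need to do better than the paper already does in the one case it treats. Second, in step~(ii) you need more than derived equivalence: the conclusion is about Morita types, so you must show that the Cubist flip and the symmetric-group tilt intertwine to give an honest isomorphism of truncated algebras, not merely a derived equivalence. The paper achieves this in the proof of Theorem~\ref{blockiscube} by checking that the two tilting complexes match termwise under $F_B$, which is a nontrivial compatibility and not automatic from the abstract statement of Theorem~\ref{mutation theorem}.
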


\bigskip

The relation between the algebras $U_{\mathcal X}$ and blocks of symmetric groups should now be clear. 
We have proved the $U_{\mathcal X}$'s possess all of the
strong properties of the algebras ${\mathcal A}$ of the above conjecture, as well as various refinements
of these properties.
We have also revealed a multitude of derived equivalences between $U_{\mathcal X}$'s, as one expects
to find between blocks of finite groups.

In fact, the similarity between blocks of symmetric groups and Cubist algebras appears to be 
more than merely formal. In the following section, we make a precise connection between the algebras 
$U_{\mathcal X}$ in case $r=3$, and symmetric group blocks of defect $2$.
 
It would be interesting if there were generalisations of conjecture \ref{sting} 
to other families of finite groups. The stated conjecture already appears to be quite deep.

\begin{remark}
Let $G$ be a finite group.
Let $\underline{\chi}$ be the submatrix of the ordinary character table of $G$, whose columns are 
indexed by $p$-regular elements. 
The matrix $\underline{\mathcal B}$ of $p$-Brauer characters of $G$, is related to $\underline{\chi}$ by the formula,
$$\underline{\chi} = D.\underline{\mathcal B},$$
where $D$ is the $p$-decomposition matrix of $G$. Therefore, to
compute $\underline{\mathcal B}$ from $\underline{\chi}$, one uses the formula
$\underline{\mathcal B} = D^{-1} \underline{\chi}$, where $D^{-1}$ is a left inverse of $D$.

It is always easier to find the ordinary character table of $G$ than the table of Brauer characters. 
Therefore, from a computational point of view, $D^{-1}$ is more important
than $D$ itself. 

For the algebra $U_{\mathcal X}$, 
we know exactly what the inverse of the decomposition matrix is:
it is the $q$-decomposition matrix of $V_{\mathcal X}$, evaluated at $q=-1$.

For the algebras ${\mathcal A}$ which we expect to control blocks of symmetric groups of abelian defect,  
the same situation ought to arise. 
The Koszul dual of such an algebra will have a $q$-decomposition matrix. 
It is the evaluation of this matrix at $q=-1$ which should allow for the direct
computation of the Brauer character table $\underline{\mathcal B}$ of a related block from the 
ordinary character table $\underline{\chi}$ of the relevant symmetric group.
\end{remark}

\section{Symmetric group blocks and rhombal algebras}

\subsection{Overview}
Here we establish a direct connection between rhombal algebras and some blocks
of symmetric groups, making more precise and complete the observations of
Michael Peach \cite[\S 4]{Peach}. Let $B$ be a weight $2$ block of a
symmetric group in characteristic $p\neq2$. Then $B$ has $\frac{1}{2}\left(
p-1\right)  \left(  p+2\right)  $ simple modules. In this section we will
prove the following result.

\begin{theorem}
There exists a Cubist subset $\mathcal{X}\subset\mathbb{Z}^{3}$, an idempotent
$e\in U_{\mathcal{X}}$ and an idempotent $f\in B$ such that 
$eU_{\mathcal{X}}e$ and $fBf$ are isomorphic as algebras, each having
$\frac{1}{2}\left(  p-1\right)  p$ simple modules.
\end{theorem}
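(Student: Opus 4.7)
The plan is to reduce to a well-understood representative of the weight 2 derived equivalence class, identify a truncation of it inside a canonical rhombal algebra, and then transport that identification to $B$ via a matched sequence of derived equivalences. First I would invoke Theorem~\ref{proven}(2) of Chuang and Rouquier, which asserts that $B$ is derived equivalent to the Rouquier (RoCK) weight 2 block $R$. The block $R$ admits an explicit quiver-and-relations presentation, and the combinatorics of its simples is governed by the $p$-abacus: the $\binom{p}{2}=\tfrac{1}{2}(p-1)p$ partitions with two beads on distinct runners form a naturally hexagonal family, while the remaining $p-1$ simples (two beads on one runner) are of a different, more exceptional character. I would then look for a Cubist subset $\mathcal{X}_0\subset\mathbb{Z}^3$ whose rhombus tiling (Proposition~\ref{homeomorphism}) realises this hexagonal pattern, and for idempotents $f_0\in R$ and $e_0\in U_{\mathcal{X}_0}$ cutting out the $\binom{p}{2}$ ``good'' simples on each side. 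The expected first statement is then an isomorphism $e_0U_{\mathcal{X}_0}e_0 \cong f_0 R f_0$, to be proved by matching Peach's star, mirror, and two-rhombuses relations of Section~\ref{subsection:introrhombal} against the known Ext-quiver and quadratic relations of weight 2 blocks due to Chuang, Tan, Martin, and others.

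Next I would propagate this identification from $R$ to the arbitrary weight 2 block $B$. A derived equivalence $D^b(R\mMod)\simeq D^b(B\mMod)$ may be realised as a composition of reflection/Scopes-type functors coming from the $\mathfrak{sl}_2$-categorification of \cite{}. Each such move should be mirrored, on the Cubist side, by a single local flip as in Theorem~\ref{mutation theorem}, producing from $\mathcal{X}_0$ a new Cubist subset $\mathcal{X}$ together with a tilting complex whose endomorphism algebra is isomorphic to $U_{\mathcal{X}}$. Tracking the images of $f_0$ and $e_0$ step-by-step through these equivalences yields idempotents $f\in B$ and $e\in U_{\mathcal{X}}$, and functoriality of idempotent truncation then delivers $eU_{\mathcal{X}}e\cong fBf$. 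The count $\tfrac{1}{2}(p-1)p$ of simples on both sides is preserved at every stage, providing a running consistency check.

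The main obstacle will be the matching of two rather different derived equivalence frameworks: the $\mathfrak{sl}_2$-categorification equivalences between weight 2 blocks on the symmetric group side, and the flip equivalences of Section~\ref{section:derived} on the Cubist side. One must verify that each Scopes-type move lifts to a single (or a controlled short composition of) local flips in a manner compatible with the chosen truncation, so that the isomorphism $e_0U_{\mathcal{X}_0}e_0\cong f_0Rf_0$ can be pushed along the derived path without developing obstructions. The initial identification at the RoCK block, which requires a hands-on comparison of the two presentations, and the combinatorial bookkeeping required to align the two flip/Scopes sequences, will together be the heart of the argument.
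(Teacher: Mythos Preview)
Your proposal follows the same architecture as the paper: establish the isomorphism at a Rouquier block and then propagate to an arbitrary weight~2 block by matching equivalences on the symmetric-group side with flip equivalences on the Cubist side (Theorem~\ref{mutation theorem}).

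The paper differs from your plan in execution on two points worth noting. For the Rouquier base case it does not compare quiver presentations directly; instead it passes through the wreath product $A\wr\mathfrak S_2$ via the Chuang--Kessar Morita equivalence, where $A$ is the Brauer tree algebra of an infinite line. The truncated Cubist algebra is mapped explicitly into a truncation of $A\wr\mathfrak S_2$, and the map is shown to be an isomorphism by a Cartan-matrix dimension count together with a short surjectivity argument. For the propagation step, rather than invoking Chuang--Rouquier $\mathfrak{sl}_2$-categorification, the paper uses the more concrete Scopes framework of $[2{:}m]$ pairs: for $m\ge 2$ one has a Scopes Morita equivalence and the Cubist subset is unchanged, while for $m=1$ Rickard's derived equivalence is matched with a \emph{single} flip. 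This dissolves the bookkeeping you flagged as the heart of the argument: the Cubist subset $\mathcal X_B$ is written down explicitly from the pyramid $\mathcal P_B$, each $[2{:}1]$ move enlarges the pyramid by one element and flips exactly one vertex, and the tilting complexes on the two sides are visibly of the same shape, so projectives go to projectives and the composite derived equivalence restricts to a Morita equivalence.
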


Actually we shall obtain a more precise result, in which $\mathcal{X}$ is
described explicitly in terms of the combinatorics associated to
$B$. The strategy of the proof is to first construct an isomorphism directly
for a special class of blocks, the Rouquier blocks, which are known to have a
description in terms of wreath products 
\cite{Chuang-Kessar}.
 Then the result is
extended to all blocks using the derived equivalences between Cubist algebras
in \S\ref{section:derived}, together with known equivalences between blocks of symmetric
groups: the Morita equivalences of Scopes \cite{Scopes-CMMEBSG}
and the derived equivalences of Rickard \cite{Rickard-MSRI}.

\subsection{Blocks of symmetric groups} \label{subsection:blocks}

We begin by sketching the combinatorics of the block theory of the
symmetric groups, referring the reader to the standard references \cite{James1978}
and \cite{Mathasbook} for more details. Let $\mathfrak{S}_{n}$ be the
symmetric group of degree $n$ and let $k$ be a field of characteristic $p$.
The simple modules $D^{\lambda}$ of $k\mathfrak{S}_{n}$ are parametrized by
$p$-regular partitions $\lambda$ of $n$.

 We describe a method due to Gordon
James \cite{Jamesabacus} for representing partitions which is useful in this context.
We consider an abacus with $p$ vertical half-infinite runners, with positions
labelled $0,1,\ldots$ from left to right and then top to bottom. Thus the
positions on the $i$-th runner are labelled $i,i+p,i+2p,\ldots$. Given any
partition $\lambda=(\lambda_{1},\lambda_{2},\ldots)$ of $n$, an abacus
representation of $\lambda$ is obtained by placing $N$ beads in positions
$\lambda_{1}+N-1,~\lambda_{2}+N-2,~\ldots,~\lambda_{N}$, where $N$ is any integer at least
as big as the number of parts of $\lambda$. By moving all the beads as far up
their runners as possible one obtains an abacus representation of a partition
of $n-wp$ for some $w\geq0$. This partition is the \emph{core} of $\lambda$
and $w$ is the \emph{weight} of $\lambda$. The simple modules $D^{\lambda}$
and $D^{\mu}$ belong to the same block of $k\mathfrak{S}_{n}$ if and only if
$\lambda$ and $\mu$ have the same core (and therefore the same weight). This
statement, known as Nakayama's Conjecture, allows us to assign a core $\kappa$
and weight $w$ to each block $B$ of $k\mathfrak{S}_{n}$. Any two
blocks of the same weight have the same number of
simple modules. We denote by $\Lambda_{B}$ the set of all partitions (of $n$)
with core $\kappa$ and weight $w$. Then the simple modules $D^{\lambda}$ of
$B$ are indexed by the $p$-regular partitions in $\Lambda_{B}$.

 We choose
mutually orthogonal idempotents $f_{\lambda}$ $\in B$ such that $Bf_{\lambda}$
is a projective cover of $D^{\lambda}$. Then $(\sum f_{\lambda})B(\sum
f_{\lambda})$ is a basic algebra Morita equivalent to $B$.

\subsection{Weight $2$ blocks}\label{subsection:weight2blocks}
We now assume that $p>2$ and restrict our attention to blocks of weight $2$.
This class of blocks has been well studied. Peach and our work on rhombal algebras has been partly inspired by the general results of Scopes \cite{Scopes-SGBDT} and Tan \cite{Tan-thesis},
the determination of decomposition numbers by Richards
\cite{Richards-SDNHAGLG} and the calculation of
quivers and relations by Erdmann and Martin \cite{
Erdmann-Martin-QRPPS} and by
Nebe \cite{Nebe}.

We shall describe a natural parametrization of the simple modules in any block
of weight $2$ by the set%
\[
\mathcal{\mathcal{S}}=\{(u,v)\in\mathbb{Z}^{2}\mid 0\leq u\leq v\leq
p-1,\;(u,v)\neq(0,0)\}.
\]
The simple modules corresponding to the subset%
\[
\mathcal{P}=\{(u,v)\in\mathbb{Z}^{2}\mid 0\leq u<v\leq p-1\}
\]
will survive in a truncation of the block which will be shown to
be isomorphic to a truncation of a rhombal algebra.

Let $B$ be a block of $k\mathfrak{S}_{n}$ of weight $2$. Consider an abacus
representation of the associated $p$-core partition $\kappa$. Let
$q_{0},\ldots,q_{p-1}$ be the first unoccupied positions in each of the $p$
runners, relabelled so that $q_{0}<\ldots<q_{p-1}$, and define the \emph{pyramid} of $B$ to
be%
\[
\mathcal{\mathcal{P}}_{B}=\{(u,v)\in\mathcal{\mathcal{P}}\mid  q_{v}%
-q_{u}<p\}.
\]
This is a corruption of a notion introduced by Matthew Richards \cite{Richards-SDNHAGLG};
his pyramid, defined for any weight, contains the same information in weight
$2$ as ours. Richards proves that if $(u,v)\in\mathcal{P}_{B}$, then
$(u,w),(w,v)\in\mathcal{P}_{B}$ whenever $u<w<v$, and that any subset of
$\mathcal{P}$ with this property is equal to $\mathcal{P}_{B}$ for some block
$B$ of weight $2$. We also define%
\begin{align*}
\mathcal{S}_{B}  &  =\{(u,v)\in\mathcal{\mathcal{S}}\mid  q_{v}-q_{u}<p\}.\\
&  =\mathcal{P}_{B}\cup\{(u,u)\in\mathbb{Z}^{2}\mid 1\leq u\leq p-1\}.
\end{align*}

\begin{example} \label{weight2exampleA}
Let $k$ be a field of characteristic $7$, and let $B$ be the block
of $k\mathfrak{S}_{42}$ of weight $2$ corresponding to the $7$-core partition
$\kappa=(12,6,6,1,1,1,1)$. Choosing $N=7$ we get place beads on the abacus in positions $1,2,3,4,10,12,18$. The first unoccupied positions on each of the runners are, from left to right, $0,8,9,17,25,5,6$. Hence
 $$(q_0,q_1,q_2,q_3,q_4,q_5,q_6)=(0,5,6,8,9,17,25),$$ 
$$\mathcal{P}_B=\left\{(0,1),(0,2),(1,2),(1,3),(1,4),(2,3),(2,4),(3,4)\right\},$$
$$\mathcal{S}_B=\left\{(1,1),(2,2),(3,3),(4,4),(5,5),(6,6),
(0,1),(0,2),(1,2),(1,3),(1,4),(2,3),(2,4),(3,4)\right\}.$$

\end{example}

We shall use a variation of a shorthand, due to Scopes \cite{Scopes-CMMEBSG, Scopes-SGBDT}, for
labelling the partitions in $\Lambda_{B}$:
\begin{itemize}
\item $\left\langle u,v\right\rangle $ for the partition (whose abacus display
is) obtained (from the abacus display of $\kappa$) by moving the beads at
positions $q_{u}-p$ and $q_{v}-p$ down one position, i.e. to positions $q_{u}$
and $q_{v}$. Here $u\neq v$.
\item $\left\langle u\right\rangle $ for the partition obtained by moving the
bead at $q_{u}-p$ to $q_{u}+p$.
\item $\left\langle u,u\right\rangle $ for the partition obtained by moving
the bead at $q_{u}-2p$ to $q_{u}-p$, and the beas at $q_u-p$ to $q_u$.
\end{itemize}
The same set of shorthands labels is used for all blocks of weight $2$. However the subset of labels that corresponds to $p$-regular partitions in $\Lambda_{B}$ depends on $B$.

Scopes \cite{Scopes-CMMEBSG} considers pairs of blocks related to each other by `swapping adjacent
runners'. Even though her results are valid for blocks of arbitrary weight, here we just describe
the weight $2$ case.
Suppose that there exists an abacus display of $\kappa$ and $0\leq
s<t\leq p-1$ such that $q_{t}-q_{s}=mp+1$, where $m>0$. Then by moving the
beads from positions $q_{t}-p,~q_{t}-2p,\ldots,~q_{t}-mp$, to the unoccupied
positions $q_{t}-p-1,~q_{t}-2p-1,\ldots,~q_{t}-mp-1$ we obtain the abacus
display of a $p$-core partition $\bar{\kappa}$. Then $B$ and the block
$\bar{B}$ of weight $2$ with $p$-core $\bar{\kappa}$ are said to form a
$[2:m]$ pair. It is easy to describe the relationship between the pyramids of
$B$ and $\bar{B}$: if $m$ $\geq2$ then $\mathcal{P}_{\bar{B}}=\mathcal{P}_{B}%
$, and if $m=1$ then $\mathcal{P}_{\bar{B}}$ is the disjoint union of
$\mathcal{P}_{B}$ and $\left\{  \left(  s,t\right)  \right\}  $. For an
arbitrary block $B$ of weight $2$, there exists a sequence $B_{0},\ldots
,B_{l}$ of blocks of weight $2$ such that $\mathcal{P}_{B_{0}}=\emptyset$,
$B_{l}=B$ and for $i=1,\ldots,l$, the blocks $B_{i-1}$ and $B_{i}$ form a
$[2:m]$ pair for some $m$.

By Scopes \cite{Scopes-SGBDT}, there exists a bijection
\[
\Phi=\Phi_{B,\bar{B}}:\Lambda_{B}\iso\Lambda_{\bar{B}}%
\]
such that
\begin{itemize}
\item $\Phi(\lambda)$ is $p$-regular if and only if $\lambda$ is $p$-regular,
\item $\Phi(\lambda)$ and $\lambda$ have the same shorthand notation, except
in the following cases when $m=1$:%
\begin{align*}
\Phi\left(  \left\langle t,t\right\rangle \right)   &  =\left\langle
s\right\rangle ,\\
\Phi\left(  \left\langle s,t\right\rangle \right)   &  =\left\langle
t,t\right\rangle ,\\
\Phi\left(  \left\langle s\right\rangle \right)   &  =\left\langle
s,t\right\rangle .
\end{align*}
\end{itemize}
In case $m=1$, we are extending Scopes' Definition 3.4 in \cite{Scopes-SGBDT} by taking,
in her notation, $\Phi(\alpha)=\bar{\alpha}$, $\Phi(\beta)=\bar{\gamma}$, and
$\Phi(\gamma)=\bar{\beta}$.

We now produce the promised parametrization of simple modules in blocks of
weight $2$.

\begin{proposition}\label{parametrise}
\begin{enumerate}
\item Let $B$ be a block of weight $2$. Then the map%
\[
\lambda_{B}:\mathcal{S\rightarrow}\Lambda_{B}%
\]
defined by%
\[
\lambda_{B}(u,v)=%
\begin{cases}%
\left\langle u+1,v\right\rangle  &   \text{if }    (u,v)\notin
\mathcal{S}_{B}\text{ and }(u+1,v)\notin\mathcal{S}_{B}\\
\left\langle v,v\right\rangle  &   \text{if }    (u,v)\notin\mathcal{S}%
_{B}\text{ and }(u+1,v)\in\mathcal{S}_{B}\\
\left\langle u,v+1\right\rangle    & \text{if }    (u,v)\in\mathcal{S}%
_{B}\text{ and }(u,v+1)\in\mathcal{S}_{B}\\
\left\langle u\right\rangle  &   \text{if }    (u,v)\in\mathcal{S}_{B}\text{
and }(u,v+1)\notin\mathcal{S}_{B}%
\end{cases}
\]
is an bijection of $\mathcal{S}$ onto the set of $p$-regular partitions in
$\Lambda_{B}$.

\item If $B$ and $\bar{B}$ form a $[2:m]$ pair, then%
\[
\Phi_{B,\bar{B}}\circ\lambda_{B}=\lambda_{\bar{B}}.
\]

\end{enumerate}
\end{proposition}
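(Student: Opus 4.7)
The plan is to prove both parts by direct combinatorial analysis on the abacus display of $\kappa$. For part (1), the first step is to translate $p$-regularity of a partition in $\Lambda_B$ into a condition on its shorthand label. Using the fact that $\langle u,v\rangle$ moves beads from $q_u - p$ and $q_v - p$ down to $q_u$ and $q_v$, while $\langle u\rangle$ and $\langle u,u\rangle$ are analogous single-bead displacements, I would show that a partition is $p$-regular if and only if no runner contains $p$ consecutive empty positions above the lowest bead. Since adjacency of the $q_i$ modulo $p$ is governed precisely by the inequality $q_v - q_u < p$ defining $\mathcal{S}_B$, this criterion produces an explicit list of $p$-regular labels in $\Lambda_B$ phrased entirely in terms of membership of pairs in $\mathcal{S}_B$.

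With this list in hand, the definition of $\lambda_B$ is verified as follows. The four clauses partition $\mathcal{S}$ into disjoint regions according to whether $(u,v)$ and its neighbour ($(u+1,v)$ or $(u,v+1)$) lie in $\mathcal{S}_B$. Each clause yields a distinct shorthand, and a direct count gives $|\mathcal{S}| = \tfrac{1}{2}(p-1)(p+2)$, matching the number of simple modules of a weight $2$ block. Comparing the outcomes of the four clauses with the enumeration of $p$-regular labels established above, we obtain a bijection, completing part (1). Some care is needed at the boundary ($u=0$ or $v=p-1$), where the unused clauses explain why $(0,0)$ is excluded from $\mathcal{S}$.

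For part (2), I would proceed case by case, using Scopes' description of $\mathcal{P}_{\bar B}$ relative to $\mathcal{P}_B$. When $m\geq 2$, the sets $\mathcal{S}_B$ and $\mathcal{S}_{\bar B}$ coincide, so the four clauses assign the same shorthand to $\lambda_B(u,v)$ and $\lambda_{\bar B}(u,v)$; since $\Phi_{B,\bar B}$ preserves shorthands here, the identity holds. When $m = 1$, the pyramids differ exactly by the pair $(s,t)$. I would identify the (at most two) pairs $(u,v)\in\mathcal{S}$ for which $\lambda_B(u,v)$ or $\lambda_{\bar B}(u,v)$ is one of the three exceptional shorthands $\langle t,t\rangle$, $\langle s,t\rangle$, $\langle s\rangle$, and check directly that flipping $(s,t)$ between the first and third clauses of the definition of $\lambda$ cycles the shorthand $\langle s\rangle \mapsto \langle s,t\rangle \mapsto \langle t,t\rangle \mapsto \langle s\rangle$ in precisely the manner dictated by $\Phi_{B,\bar B}$.

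The main obstacle is the initial translation of $p$-regularity into the language of $\mathcal{S}_B$; this requires inspecting, for each type of shorthand move, whether the bead configuration after the move introduces $p$ equal parts, and showing this happens exactly when certain adjacent pairs fail or succeed to lie in $\mathcal{S}_B$. Once that lemma is in place, the remainder is a disciplined enumeration; in particular part (2) reduces to inspecting how the three exceptional shorthands are produced by the four clauses of $\lambda_B$ and $\lambda_{\bar B}$.
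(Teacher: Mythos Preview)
Your approach to part (2) is essentially the same as the paper's, including the case split on $m\ge 2$ versus $m=1$ and the explicit check of the three exceptional shorthands (note: there are three affected elements of $\mathcal{S}$, namely $(s,t)$, $(s-1,t)$, $(s,t-1)$, not ``at most two'').

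Your approach to part (1), however, differs substantially from the paper's and contains a genuine error. The characterisation you propose for $p$-regularity --- ``no runner contains $p$ consecutive empty positions above the lowest bead'' --- is false. A partition is $p$-singular exactly when its abacus display has $p$ consecutive \emph{occupied} positions (across runners, in the linear ordering $0,1,2,\ldots$), since $p$ equal parts $\lambda_i=\cdots=\lambda_{i+p-1}$ give beads at $p$ consecutive integers. This has nothing to do with gaps on a single runner. Your subsequent steps depend on this mistranslation, so the direct enumeration you sketch does not go through as written. A correct direct characterisation of which shorthand labels are $p$-regular in terms of $\mathcal{S}_B$ is possible but fiddly, and you have not supplied it.

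The paper sidesteps this difficulty entirely by proving (1) and (2) \emph{together} by induction along a chain of $[2{:}m]$ pairs starting from a block with empty pyramid. In the base case $\mathcal{P}_B=\emptyset$ one has $\mathcal{S}_B=\{(u,u):1\le u\le p-1\}$, so $\lambda_B(u,u)=\langle u\rangle$ and $\lambda_B(u,v)=\langle u+1,v\rangle$ for $u<v$; here the $p$-regular partitions are exactly those whose shorthand avoids the index $0$, and the bijection is immediate. Then, assuming (1) for $B$, one checks (2) for the pair $(B,\bar B)$ exactly as you outline; since $\Phi_{B,\bar B}$ is a bijection onto the $p$-regular partitions of $\bar B$, this yields (1) for $\bar B$ for free. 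This inductive packaging is the key idea you are missing: it eliminates any need to analyse $p$-regularity for a general block.
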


\begin{proof}
First suppose that $\mathcal{P}_{B}=\emptyset$. Then $\lambda_{B}(u,u)=\langle
u\rangle$ and $\lambda_{B}(u,v)=\langle u+1,v\rangle$ if $u<v$. Thus
$\lambda_{B}$ is a bijection onto the set of partitions in $\Lambda_{B}$ whose
shorthand labels do not involve $0$, which are precisely the $p$-regular ones.
This proves statement (1) in this special case.

For an arbitrary block $B$, there is a sequence of blocks starting at one with
empty pyramid and ending at $B$ such that each successive pair of blocks forms
a $[2:m]$ pair for some $m$. Hence in order to prove both statements in
general it suffices to show that statement (2)\ holds for a fixed $[2:m]$-pair
of blocks $B$ and $\bar{B}$ under the assumption that statement (1) holds for
$B$. This is clearly true if $m\geq2$, because then $\mathcal{S}%
_{B}=\mathcal{S}_{\bar{B}}$ and $\Phi$ preserves shorthand labels. So let us
suppose that $B$ and $\bar{B}$ form a $[2:1]$ pair. We have $\mathcal{P}%
_{\bar{B}}=\mathcal{P}_{B}\cup\left\{  \left(  s,t\right)  \right\}  $ for
some $0\leq s<t\leq p-1$. Note that $(s+1,t),(s,t-1)\in\mathcal{S}_{B}$ and
$\left(  s-1,t\right)  ,(s,t+1)\notin\mathcal{\mathcal{S}}_{B}$. Hence%
\begin{align*}
\Phi(\lambda_{B}(s,t))  &  =\Phi(\left\langle t,t\right\rangle )=\left\langle
s\right\rangle =\lambda_{\bar{B}}(s,t),\\
\Phi(\lambda_{B}(s-1,t))  &  =\Phi\left(  \langle s,t\rangle\right)
=\left\langle t,t\right\rangle =\lambda_{\bar{B}}\left(  s-1,t\right)  ,\\
\Phi(\lambda_{B}(s,t-1))  &  =\Phi(\left\langle s\right\rangle )=\left\langle
s,t\right\rangle =\lambda_{\bar{B}}(s,t-1).
\end{align*}
Remembering our assumption that statement (1) holds for $B$, we also see that
for any $(u,v)\in\mathcal{S}\setminus\left\{  (s,t),\left(  s-1,t\right)
,\left(  s,t-1\right)  \right\}  $, the shorthand labels for $\Phi(\lambda
_{B}(u,v))$ and $\lambda_{B}(u,v)$ are the same and therefore that
$\Phi(\lambda_{B}(u,v))=\lambda_{\bar{B}}(u,v)$.
\end{proof}

\begin{example}
We take $p=7$ and $\kappa=(12,6,6,1,1,1,1)$ as in Example~\ref{weight2exampleA}.
The graph in Figure~\ref{blockfigure1} records the bijection of Proposition~\ref{parametrise}. Its vertices are in bijection with the $p$-regular partitions in $\Lambda_B$, each of which has a shorthand label as well as a label by an element of $\mathcal{S}$, via $\lambda_B$. The shorthand labels are placed to the right of the vertices, and the $\mathcal{S}$-labels to the left, in boldface. The subset $\mathcal{P}$ is indicated by black vertices and the pyramid $\mathcal{P}_B$ by square black vertices.  Two vertices are connected by an edge if an only if there exists a nonsplit extension of one of the corresponding simple $B$-modules by the other; in fact, by replacing each edge by a pair of directed edges in opposite directions one obtains the `extension quiver' of $B$.

\begin{figure}[h]
\[
\xymatrix
@C=4.5ex@R=2.625ex
@M=0.3EX
{
 && \umv  \ar@{-}[dd] \ar@{-}[ddrr] 
\ar@{}[l]|<(.16){\bf_{11}} \ar@{}[r]|<(.18){_{12}}
&& \umv \ar@{-}[ddll]|!{[dd];[ll]}\hole \ar@{-}[dd] \ar@{-}[ddrr] 
\ar@{}[l]|<(.16){\bf_{22}} \ar@{}[r]|<(.18){_{23}}
&& \umv \ar@{-}[ddll]|!{[dd];[ll]}\hole \ar@{-}[dd]  \ar@{-}[ddrr] 
 \ar@{}[l]|<(.16){\bf_{33}} \ar@{}[r]|<(.18){_{34}}
&& \umv \ar@{-}[ddll]|!{[dd];[ll]}\hole \ar@{-}[dd] \ar@{-}[dddr] 
\ar@{}[l]|<(.16){\bf_{44}} \ar@{}[r]|<(.18){_{4}}
&& \umv \ar@{-}[ddll] |!{[dddl];[ll]}\hole \ar@{-}[dddl] \ar@{-}[dddr] 
\ar@{}[l]|<(.16){\bf_{55}} \ar@{}[r]|<(.18){_{5}}
&& \umv \ar@{-}[dddl]
\ar@{}[l]|<(.16){\bf_{66}} \ar@{}[r]|<(.18){_{6}}
 &&  \\
&&&&&&&&&&&&&&
\\
&& \smv \ar@{-}[dr] 
\ar@{}[l]|<(.16){\bf_{01}} \ar@{}[r]|<(.18){_{02}}
&& \smv \ar@{-}[dl] \ar@{-}[dd] \ar@{-}[dr]
\ar@{}[l]|<(.16){\bf_{12}} \ar@{}[r]|<(.18){_{13}}
 && \smv \ar@{-}[dl] \ar@{-}[dr] 
 \ar@{}[l]|<(.16){\bf_{23}} \ar@{}[r]|<(.18){_{24}}
&& \smv \ar@{-}[dl] \ar@{-}[dd]
\ar@{}[l]|<(.16){\bf_{34}} \ar@{}[r]|<(.18){_{3}}
 && \mv \ar@{-}[dl] \ar@{-}[dr]
\ar@{}[l]|<(.16){\bf_{45}} \ar@{}[r]|<(.18){_{55}}   
&& \mv \ar@{-}[dl]
\ar@{}[l]|<(.16){\bf_{56}} \ar@{}[r]|<(.18){_{66}}
 &&\\
&&& \smv  
\ar@{}[l]|<(.16){\bf_{02}} \ar@{}[r]|<(.18){_{0}}
&& \smv \ar@{-}[dd] \ar@{-}[dr]
\ar@{}[l]|<(.16){\bf_{13}} \ar@{}[r]|<(.18){_{14}}
 && \smv \ar@{-}[dl] \ar@{-}[dd]  
\ar@{}[l]|<(.16){\bf_{24}} \ar@{}[r]|<(.18){_{2}} 
&& \mv \ar@{-}[dl] \ar@{-}[dr]
\ar@{}[l]|<(.16){\bf_{35}} \ar@{}[r]|<(.18){_{45}}
 && \mv \ar@{-}[dl] 
 \ar@{}[l]|<(.16){\bf_{46}} \ar@{}[r]|<(.18){_{56}} 
&& & \\
&& 
&& \mv \ar@{-}[dr]
\ar@{}[l]|<(.16){\bf_{03}} \ar@{}[r]|<(.18){_{33}}
 && \smv \ar@{-}[dd] 
 \ar@{}[l]|<(.16){\bf_{14}} \ar@{}[r]|<(.18){_{1}} 
&& \mv \ar@{-}[dl] \ar@{-}[dr]
\ar@{}[l]|<(.16){\bf_{25}} \ar@{}[r]|<(.18){_{35}}
 && \mv \ar@{-}[dl]  
 \ar@{}[l]|<(.16){\bf_{36}} \ar@{}[r]|<(.18){_{46}}
&&  && \\
&  && 
&& \mv  \ar@{-}[dr] 
\ar@{}[l]|<(.16){\bf_{04}} \ar@{}[r]|<(.18){_{44}}
&& \mv \ar@{-}[dl] \ar@{-}[dr]  
\ar@{}[l]|<(.16){\bf_{15}} \ar@{}[r]|<(.18){_{25}}
&& \mv \ar@{-}[dl] 
\ar@{}[l]|<(.16){\bf_{26}} \ar@{}[r]|<(.18){_{36}}&& 
&&  & \\
 &&  && 
 && \mv  \ar@{-}[dr]  
 \ar@{}[l]|<(.16){\bf_{05}} \ar@{}[r]|<(.18){_{15}}
&& \mv \ar@{-}[dl] 
\ar@{}[l]|<(.16){\bf_{16}} \ar@{}[r]|<(.18){_{26}} && 
&&  && \\
&  && 
&&  && \mv 
\ar@{}[l]|<(.16){\bf_{06}} \ar@{}[r]|<(.18){_{16}}
&&  &&  
&&  & \\
}
\]
\caption{Extension quiver of $B$ when $p=7$ and $\kappa=(12,6,6,1,1,1,1)$}
\label{blockfigure1}
\end{figure}
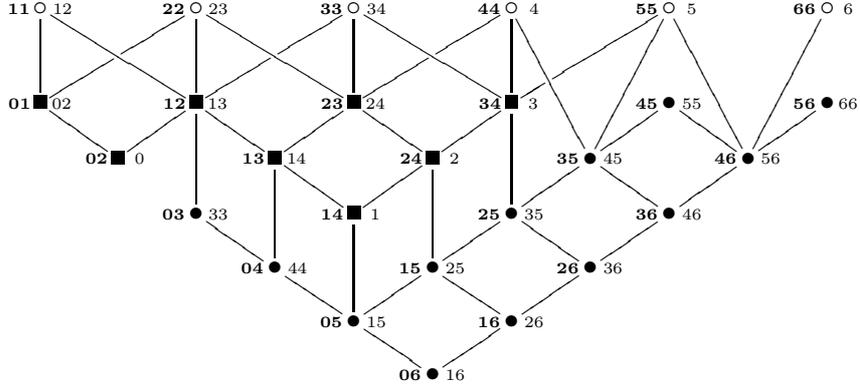
\FloatBarrier

\end{example}

\subsection{Gradings}

We expect that all blocks of weight $w<p$ should have gradings compatible with
radical filtrations. This is easy to verify when $w<2$, and has been proved by Peach 
for $w=2$.

\begin{theorem}[Peach \cite{Peach}]
Let $B$ be a block of weight $2$. Then there exists a grading
$B=\oplus_{i=0}^4 B_i$ such that $\operatorname{rad}^j(B)=\oplus_{i=j}^4 B_i$.
\end{theorem}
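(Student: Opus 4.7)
The plan is to propagate a natural grading from Rouquier blocks to arbitrary weight $2$ blocks along the graph of Scopes moves, using the Morita equivalences of Scopes for $[2:m]$-pairs with $m\geq 2$ and derived equivalences modelled on the Cubist flips of Section~\ref{section:derived} for $[2:1]$-pairs.

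First I would handle a Rouquier block $R$ of weight $2$. By Chuang--Kessar \cite{Chuang-Kessar}, $R$ is Morita equivalent to $P\wr\mathfrak{S}_2$, where $P$ is the principal block of $k\mathfrak{S}_p$, itself the Brauer tree algebra on the straight line with $p-1$ edges. By Remark~\ref{smallrank}, $P$ is (a truncation of) $U_\mathcal{X}$ for a flat Cubist subset $\mathcal{X}\subset\mathbb{Z}^2$, hence carries a positive grading with $\operatorname{rad}^j(P)=\bigoplus_{i\geq j}P_i$ and Loewy length $3$. Taking the wreath product with $\mathfrak{S}_2$ (whose order is coprime to $p$) preserves both the grading and the compatibility with radical filtration, and produces an algebra of Loewy length $5$. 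Transporting along the Morita equivalence yields the desired grading on $R$.

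Next, for a general weight $2$ block $B$, I would fix a sequence $B_0,B_1,\ldots,B_l=B$ of weight $2$ blocks in which $B_0=R$ and each successive pair $(B_{i-1},B_i)$ is a $[2:m_i]$-pair. I would establish the grading on $B_i$ by induction on $i$, distinguishing two cases. When $m_i\geq 2$, Scopes' equivalence is a Morita equivalence matching simple modules via $\Phi$ of \S\ref{subsection:weight2blocks} and transferring the grading together with its radical-filtration compatibility. When $m_i=1$, Rickard's equivalence $D^b(B_{i-1}\operatorname{-mod})\iso D^b(B_i\operatorname{-mod})$ is realized by a two-term tilting complex, and I would model this on the Cubist flip construction of \S\ref{section:derived}: the graded endomorphism ring of the analogous two-term complex of graded projective $B_{i-1}$-modules inherits a grading, and the isomorphism of this endomorphism ring with $B_i$ (via the main classification theorem of this section, which identifies truncations of weight $2$ blocks with truncations of Cubist algebras) transports the grading.

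The main obstacle will be showing that the grading so obtained on $B_i$ is in fact compatible with the radical filtration, i.e.\ that $\operatorname{rad}^j(B_i)=\bigoplus_{k\geq j}(B_i)_k$. The strategy is rigidity: a positive grading $B_i=\bigoplus_{k=0}^4 (B_i)_k$ with $(B_i)_0$ semisimple and grading length equal to the Loewy length $5$ forces $(B_i)_{\geq j}\subseteq\operatorname{rad}^j(B_i)$, and equality follows from counting composition factors using the graded Cartan data, which by the identification with truncations of $U_\mathcal{X}$ matches the Cartan data of $B_i$ in degree. Once this rigidity is in place the induction closes, and the theorem is proved.
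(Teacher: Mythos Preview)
The paper does not prove this theorem: it is stated as a result of Peach \cite{Peach} and used as input. There is no proof in the paper to compare your proposal against.

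That said, your strategy has a genuine circularity. You propose, in the $[2:1]$ step, to identify the graded endomorphism ring of the two-term tilting complex with $B_i$ ``via the main classification theorem of this section''. But Theorem~\ref{blockiscube} and the surrounding material already \emph{use} the grading on $B$ supplied by Peach's theorem (the derived equivalences $F_{B,\bar B}$ are stated with grading shifts $\langle 3\rangle$, and the idempotents $f_\lambda$ are taken to lie in $B_0$). You cannot invoke those results to produce the grading they presuppose.

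There is also a gap in your rigidity argument. From $(B_i)_0$ semisimple one gets $\operatorname{rad}(B_i)=(B_i)_{>0}$, hence $\operatorname{rad}^j(B_i)=\bigl((B_i)_{>0}\bigr)^j$. But $\bigl((B_i)_{>0}\bigr)^j=(B_i)_{\geq j}$ only if the algebra is generated in degrees $0$ and $1$, which you have not established for the graded endomorphism ring. Without that, you obtain only the inclusion $\operatorname{rad}^j(B_i)\subseteq (B_i)_{\geq j}$, not equality. Peach's actual argument (in his thesis) proceeds instead from the explicit quiver-and-relations presentations of weight~$2$ blocks due to Erdmann--Martin and Nebe, where path-length gives the grading directly and compatibility with the radical series is immediate once the relations are known to be homogeneous.
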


We may assume that the idempotents $f_\lambda$ introduced in \S\ref{subsection:blocks}
are in $B_0$.
\subsection{Morita and derived equivalences}

There is a strong relationship between the module categories of blocks in a
Scopes pair. Suppose that $B$ and $\bar{B}$ form a $[2:m]$ pair of blocks.
Scopes \cite{Scopes-CMMEBSG} proves that, if $m\geq2$, there is an equivalence%
\[
F^{\prime}:B\mMod\iso\bar{B}\mMod
\]
such that
\[
F^{\prime}(Bf_{\lambda})\cong\bar{B}f_{\Phi(\lambda)}%
\]
\newline for all $p$-regular $\lambda\in\Lambda_{B}$. Her results are true in
greater generality, for blocks of arbitrary weight. Rickard \cite{Rickard-MSRI} 
(see also \cite{Chuang-Rouquier}) built on the ideas of Scopes, proving the existence of some derived
equivalences between blocks. A special case of Rickard's result (see \cite{Chuang-DCSBSGCB}
and \cite[\S5]{Peach}) states that if $B$ and $\bar{B}$ form a $[2:1]$
pair, then there exists an equivalence%
\[
F^{\prime}:D^{b}(B\mMod)\iso D^{b}
(\bar{B}\mMod),
\]
such that, in the notation of the analysis of $[2:1]$ pairs in \S\ref{subsection:weight2blocks},
\[
F^{\prime}(Bf_{\langle t,t\rangle}\langle 3 \rangle [1])\cong\bar{B}f_{\Phi(\left\langle
t,t\right\rangle )}%
\]
and
\[
F^{\prime}(\operatorname{cone}(P'_\lambda\overset{\zeta_{\lambda
}^{\prime}}{\longrightarrow}Bf_{\lambda}))\cong Bf_{\Phi(\lambda)}%
\]
for all $p$-regular $\lambda\in\Lambda_{B}$ apart from $\langle t,t\rangle$,
where $\zeta_{\lambda}^{\prime}$ is a projective cover of the smallest submodule $M$ of
$Bf_{\lambda}$ such that $\operatorname{Hom}_B(Bf_{\langle t,t \rangle}\langle n \rangle,
Bf_{\lambda}/M)=0$ for all $n\in\mathbb{Z}$.

We will only make use of truncated versions of these equivalences. Let%
\[
f=\sum_{(u,v)\in\mathcal{P}}f_{\lambda_{B}(u,v)}\in B\quad\text{and }\quad
\bar{f}=\sum_{(u,v)\in\mathcal{P}}f_{\lambda_{\bar{B}}(u,v)}\in\bar{B}.
\]
If $m\geq2$, then by Proposition~\ref{parametrise} the Morita equivalence $F'$ above induces an equivalence%
\[
F=F_{B,\bar{B}}:fBf\mMod\iso\bar{f}
\bar{B}\bar{f}\mMod
\]
such that%
\[
F(fBf_{\lambda_{B}(u,v)})\cong\bar{f}\bar{B}f_{\lambda_{\bar{B}(u,v)}}%
\]
for all $(u,v)\in\mathcal{P}$. If $m=1$, then $\langle t,t\rangle
=\lambda_B(s,t)$ and, by Proposition~\ref{parametrise}, we have 
$\lambda\in\lambda_B(\mathcal{P})$ if and only if
$\Phi(\lambda)\in\lambda_{\bar{B}}(\mathcal{P})$.
Hence the equivalence $F^{\prime}:D^{b}%
(B\mMod)\iso D^{b}(\bar{B}\mMod)$ induces an
equivalence
\[
F=F_{B,\bar{B}}:D^{b}(fBf\mMod)\iso D^{b}(\bar{f}\bar{B}\bar{f}\mMod)
\]
such that
\[
F(fBf_{\lambda_{B}(s,t)}\langle 3 \rangle [1])\cong\bar{f}\bar{B}f_{\lambda_{\bar{B}(s,t)}}%
\]
and
\[
F(\operatorname{cone}(P_{(u,v)}\overset{\zeta_{(u,v)}%
}{\longrightarrow}fBf_{\lambda_{B}(u,v)}))\cong\bar{f}\bar{B}f_{\lambda
_{\bar{B}}(u,v)}%
\]
for $(u,v)\in\mathcal{P}\setminus\{(s,t)\}$, where $\zeta_{(u,v)}$ is a projective
cover of the smallest submodule $M$ of
$fBf_{\lambda_{B}(u,v)}$ such that $\operatorname{Hom}_{fBf}(fBf_{\lambda_B(s,t)}\langle n \rangle,
fBf_{\lambda_B(u,v)}/M)=0$ for all $n\in\mathbb{Z}$.

\subsection{Main result}

We are now ready to state and prove our result linking Cubist algebras and
blocks of symmetric groups. Let $B$ be a block of weight $2$. Define%
\[
x_{B}:\left\{  \left(  u,v\right)  \in\mathbb{Z}^{2}\mid  u < v\right\}
\longrightarrow\mathbb{Z}^{3}%
\]
by 
$$x_{B}(u,v)=
\begin{cases}
\left(  -u-1,v,0\right)  & \text{if } (u,v)\in\mathbb{Z}^{2}
\setminus\mathcal{\mathcal{P}}_{B},  \\
(-u,1+v,1) & \text{if } \left(  u,v\right)  \in\mathcal{\mathcal{P}}_{B}.
\end{cases}
$$
 Then
\[
\mathcal{X}_{B}=\operatorname{Im}(x_{B})\cup\{(i,j,1)\in\mathbb{Z}^{3}\mid 
i+j\leq 1\}
\]
is a Cubist subset of $\mathbb{Z}^{3}$. Indeed, 
$$\mathcal{X}_B^-=\mathbb{Z}\times\mathbb{Z}\times\mathbb{Z}_{\leq 0}
\:\cup\:\{(i,j,1)\in\mathbb{Z}^{3}\mid 
i+j\leq 1 \text{ or } (-i,j-1)\in\mathcal{P}\}$$ is an ideal in $\mathbb{Z}^3$
such that
 $\mathcal{X}_B=
\mathcal{X}_B^-\setminus\mathcal{X}_B^-[-1]$.

\begin{example}
As in earlier examples, we take $p=7$ and $\kappa=(12,6,6,1,1,1,1)$.
Figure~\ref{cubistforblockfigure} shows part of $\mathcal{X}_B$ realised in the plane as a rhombus tiling. The image of $\mathcal{P}$ under $x_B$ is indicated by black vertices and that of $\mathcal{P}_B$ by square black vertices.
Compare with Figure~\ref{blockfigure1}.
\FloatBarrier
\begin{figure}[h]
\[
\xymatrix@C=4.5ex@R=2.625ex@M=0.3EX{
& \umv \ar@{-}[dl] \ar@{-}[dr] && \umv \ar@{-}[dl] \ar@{-}[dr] && 
\umv \ar@{-}[dl] \ar@{-}[dr] && \umv \ar@{-}[dl] \ar@{-}[dr]  
&& \umv \ar@{-}[dl] \ar@{-}[dr] && \umv \ar@{-}[dl] \ar@{-}[dr]  
&& \umv \ar@{-}[dl] \ar@{-}[dr] & \\
\umv \ar@{-}[dr] \ar@{-}[dd] && \umv \ar@{-}[dl] \ar@{-}[dr] 
&& \umv \ar@{-}[dl] \ar@{-}[dr] && \umv \ar@{-}[dl] \ar@{-}[dr] 
&& \umv \ar@{-}[dl] \ar@{-}[dr] && \umv \ar@{-}[dl] \ar@{-}[dd] \ar@{-}[dr] 
&& \umv \ar@{-}[dl] \ar@{-}[dd] \ar@{-}[dr] &&\umv \ar@{-}[dl] \ar@{-}[dd] \\
& \umv \ar@{-}[dd] \ar@{-}[dr] && \umv \ar@{-}[dl] \ar@{-}[dr] 
&& \umv \ar@{-}[dl] \ar@{-}[dr] && \umv \ar@{-}[dl] \ar@{-}[dr] 
&& \umv \ar@{-}[dl] \ar@{-}[dd] && \umv \ar@{-}[dd] 
&& \umv \ar@{-}[dd] & \\
\umv \ar@{-}[dr] && \smv \ar@{-}[dd] \ar@{-}[dr] 
\ar@{}[l]|<(.16){\bf_{01}} 
&& \smv \ar@{-}[dl] \ar@{-}[dd] \ar@{-}[dr]
\ar@{}[l]|<(.16){\bf_{12}} 
 && \smv \ar@{-}[dl] \ar@{-}[dr] 
 \ar@{}[l]|<(.16){\bf_{23}} 
&& \smv \ar@{-}[dl] \ar@{-}[dd] 
\ar@{}[l]|<(.16){\bf_{34}} 
&& \mv \ar@{-}[dl] \ar@{-}[dr]  
\ar@{}[l]|<(.16){\bf_{45}} 
&& \mv \ar@{-}[dl] \ar@{-}[dr]
\ar@{}[l]|<(.16){\bf_{56}} 
 &&\umv \ar@{-}[dl] \\
& \umv \ar@{-}[dl] \ar@{-}[dr] && \smv \ar@{-}[dd] 
\ar@{}[l]|<(.16){\bf_{02}} 
&& \smv \ar@{-}[dd] \ar@{-}[dr] 
\ar@{}[l]|<(.16){\bf_{13}} 
&& \smv \ar@{-}[dl] \ar@{-}[dd]  
\ar@{}[l]|<(.16){\bf_{24}} 
&& \mv \ar@{-}[dl] \ar@{-}[dr] 
\ar@{}[l]|<(.16){\bf_{35}} 
&& \mv \ar@{-}[dl] \ar@{-}[dr]  
\ar@{}[l]|<(.16){\bf_{46}}
&& \umv \ar@{-}[dl] \ar@{-}[dr] & \\
\umv \ar@{-}[dr] && \umv \ar@{-}[dl] \ar@{-}[dr]  
&& \mv \ar@{-}[dl] \ar@{-}[dr] 
\ar@{}[l]|<(.16){\bf_{03}} 
&& \smv \ar@{-}[dd]  
\ar@{}[l]|<(.16){\bf_{14}} 
&& \mv \ar@{-}[dl] \ar@{-}[dr] 
\ar@{}[l]|<(.16){\bf_{25}}
&& \mv \ar@{-}[dl] \ar@{-}[dr]  
\ar@{}[l]|<(.16){\bf_{36}} 
&& \umv \ar@{-}[dl] \ar@{-}[dr] && \umv \ar@{-}[dl] \\
& \umv \ar@{-}[dl] \ar@{-}[dr] && \umv \ar@{-}[dl] \ar@{-}[dr] 
&& \mv \ar@{-}[dl] \ar@{-}[dr]
\ar@{}[l]|<(.16){\bf_{04}} 
 && \mv \ar@{-}[dl] \ar@{-}[dr]  
 \ar@{}[l]|<(.16){\bf_{15}}
&& \mv \ar@{-}[dl] \ar@{-}[dr]
\ar@{}[l]|<(.16){\bf_{26}} 
 && \umv \ar@{-}[dl] \ar@{-}[dr]  
&& \umv \ar@{-}[dl] \ar@{-}[dr] & \\
\umv \ar@{-}[dr] && \umv \ar@{-}[dl] \ar@{-}[dr] && 
\umv \ar@{-}[dl] \ar@{-}[dr] && \mv \ar@{-}[dl] \ar@{-}[dr]  
\ar@{}[l]|<(.16){\bf_{05}} 
&& \mv \ar@{-}[dl] \ar@{-}[dr] 
\ar@{}[l]|<(.16){\bf_{16}} 
&& \umv \ar@{-}[dl] \ar@{-}[dr]  
&& \umv \ar@{-}[dl] \ar@{-}[dr] &&\umv \ar@{-}[dl] \\
& \umv \ar@{-}[dl] \ar@{-}[dr] && \umv \ar@{-}[dl] \ar@{-}[dr]  
&& \umv \ar@{-}[dl] \ar@{-}[dr] && \mv \ar@{-}[dl]
\ar@{}[l]|<(.16){\bf_{06}} 
 \ar@{-}[dr]  
&& \umv \ar@{-}[dl] \ar@{-}[dr] && \umv \ar@{-}[dl] \ar@{-}[dr]  
&& \umv \ar@{-}[dl] \ar@{-}[dr]  & \\
\umv && \umv 
&& \umv && \umv 
&& \umv && \umv 
&& \umv && \umv \\
}
\]
\caption{Part of the Cubist subset $\calx_B$ when $p=7$ and 
$\kappa=(12,6,6,1,1,1,1)$}
\label{cubistforblockfigure}
\end{figure}
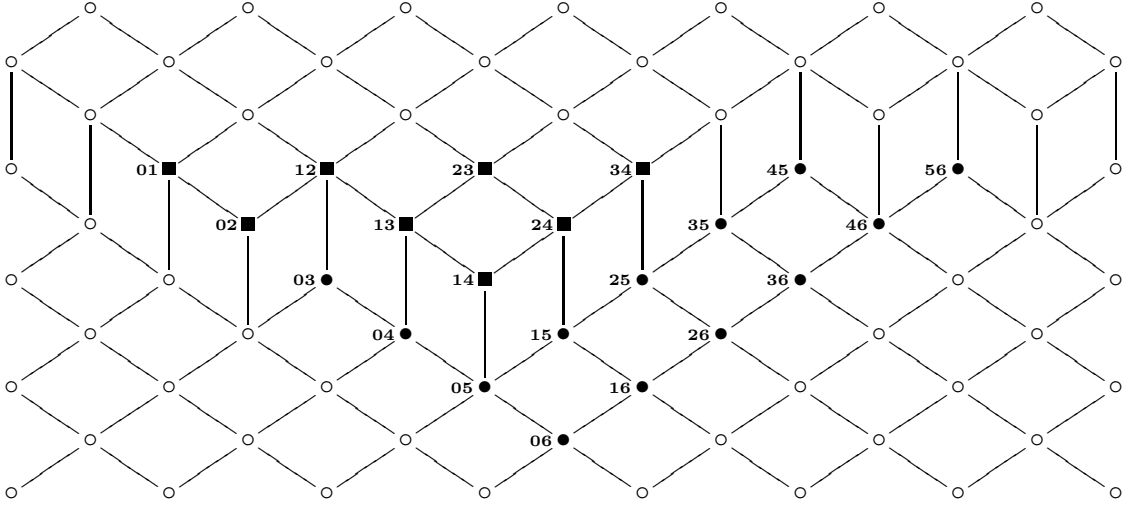
\FloatBarrier
\end{example}

 Let $U_{\mathcal{X}_{B}}$ be the
Cubist algebra corresponding to $\mathcal{X}_B$ and put $e=\sum_{ x \in x_{B}(\mathcal{P)}%
}e_{ x }\in U_{\mathcal{X}_{B}}$.

\begin{theorem} \label{blockiscube}
We have an equivalence%
\[
F_{B}:eU_{\mathcal{X}_{B}}e\mMod\iso fBf\mMod
\]
such that for all $(u,v)\in\mathcal{\mathcal{P}}$,%
\[
F_{B}(eU_{\mathcal{X}_{B}}e_{x_{B}(u,v)})\cong fBf_{\lambda_{B}(u,v)}.
\]

\end{theorem}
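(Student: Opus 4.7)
My plan is to induct along a Scopes sequence $B_{0},B_{1},\ldots,B_{l}=B$ connecting $B$ to a Rouquier block $B_{0}$ (one with $\mathcal{P}_{B_{0}}=\emptyset$), such a sequence existing by the discussion in \S\ref{subsection:weight2blocks}. At each step $B_{i-1}\to B_{i}$ one has a $[2:m]$ pair, and I will pair the Scopes Morita equivalence (when $m\geq 2$) or the Rickard derived equivalence (when $m=1$) on the block side with, respectively, an equality or a flip of the associated Cubist subsets on the algebra side.

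For the base case I would construct the isomorphism $eU_{\mathcal{X}_{B_{0}}}e\cong fB_{0}f$ directly. When $\mathcal{P}_{B_{0}}=\emptyset$ the Cubist subset $\mathcal{X}_{B_{0}}$ is essentially flat and the corresponding truncation of its Cubist algebra has a transparent wreath-like structure, while on the block side Chuang and Kessar realise $B_{0}$ as a wreath product of the principal $p$-block of $k\mathfrak{S}_{p}$ (a line-shaped Brauer tree algebra with $p-1$ edges) with $\mathfrak{S}_{2}$. A direct comparison of quivers, relations and Peach's grading identifies the two truncated algebras. For the inductive step when $m\geq 2$ the pyramid is unchanged, so $\mathcal{X}_{B}=\mathcal{X}_{\bar B}$; Proposition~\ref{parametrise} asserts that Scopes' bijection $\Phi$ is compatible with the parametrizations $\lambda_{B},\lambda_{\bar B}$, so her Morita equivalence restricts to one between $fBf$ and $\bar f\bar B\bar f$ matching the labelled idempotents $f_{\lambda_{B}(u,v)}\leftrightarrow f_{\lambda_{\bar B}(u,v)}$. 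The composite with the inductive hypothesis supplies $F_{\bar B}$.

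When $m=1$, a direct check on the formulae defining $x_{B}$ and $x_{\bar B}$ gives $x_{\bar B}(s,t)=x_{B}(s,t)[1]$, with all other images unchanged; since $z:=x_{\bar B}(s,t)$ is maximal in $\mathcal{X}_{\bar B}$, this is precisely the replacement $z\leadsto z[-1]$ of Lemma~\ref{flipable}, and Theorem~\ref{mutation theorem} supplies a derived equivalence $F:D^{b}(U_{\mathcal{X}_{B}}\mMod)\iso D^{b}(U_{\mathcal{X}_{\bar B}}\mMod)$ sending $U_{\mathcal{X}_{B}}e_{x}$ to the explicit cones $\Gamma_{x}$ of \S\ref{subsection:tiltingcomplex}. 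Meanwhile Rickard's equivalence $F_{B,\bar B}$ sends $fBf_{\lambda_{B}(s,t)}\langle 3\rangle[1]$ and each $\operatorname{cone}(P_{(u,v)}\to fBf_{\lambda_{B}(u,v)})$ to the appropriately labelled projective on the $\bar B$ side. The main obstacle is to verify that under the inductive equivalence $F_{B}$ the cone maps $\zeta'_{\lambda_{B}(u,v)}$ used by Rickard correspond to the multiplication maps $\gamma_{z,x}$ used in the Cubist flip — equivalently, that the Scopes--Rickard $\mathbb{P}^{r-1}$-twist at $\lambda_{B}(s,t)$ and the Cubist $\mathbb{P}^{r-1}$-twist at $z$ coincide under $F_{B}$. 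Once this matching is established, the composite $F_{\bar B}:=F_{B,\bar B}\circ F_{B}\circ F^{-1}$ sends each projective indecomposable of $eU_{\mathcal{X}_{\bar B}}e$ to a projective indecomposable of $\bar f\bar B\bar f$, so it descends from a derived to a module-level equivalence with the required labelling.
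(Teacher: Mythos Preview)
Your proposal is correct and follows essentially the same route as the paper: induct along a Scopes chain from a Rouquier block, use the wreath-product description for the base case, transport the Scopes Morita equivalence when $m\geq 2$, and match the Cubist flip with Rickard's derived equivalence when $m=1$.

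One remark on the point you flag as the ``main obstacle''. You phrase it as needing the cone maps $\zeta'_{\lambda_B(u,v)}$ and $\gamma_{z,x}$ to correspond under $F_B$. The paper sidesteps any direct comparison of maps by giving both cones the \emph{same intrinsic characterization}: on each side the relevant two-term complex is $\operatorname{cone}(\xi)$ where $\xi$ is a projective cover of the smallest submodule $M$ of the target projective such that $\operatorname{Hom}(P_{z}\langle n\rangle,\,-/M)=0$ for all $n$ (here $P_z$ is $fBf_{\lambda_B(s,t)}$ on the block side and $eU_{\mathcal X_B}e_{z}$ on the Cubist side; see Lemma~\ref{basic}(4) for the Cubist side). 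Since $F_B$ is an equivalence sending one distinguished projective to the other, it preserves this characterization, so $F_B$ carries one cone to the other without ever identifying the maps themselves. This is worth internalising: when matching tilting complexes across an equivalence, a categorical description of the complex is usually more robust than chasing specific morphisms.

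A minor indexing point: you set $z:=x_{\bar B}(s,t)$ maximal in $\mathcal X_{\bar B}$, while the paper writes $z:=x_B(s,t)$; these differ by a $[1]$-shift and both conventions work, but be consistent when invoking Theorem~\ref{mutation theorem}.
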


\begin{proof}
The case $\mathcal{P}_{B}=\emptyset$ is handled in \S\ref{rouquier}.

Now suppose that $B$ and $\bar{B}$ form a $[2:m]$ pair and assume that
the theorem is known to hold for the block $B$. We need to prove that it holds
for $\bar{B}$.

Suppose that $m\geq2$. Then $\mathcal{P}_{B}=\mathcal{P}_{\bar{B}}$, and hence
$x_{B}=x_{\bar{B}}$ and $\mathcal{X}_{B}=\mathcal{X}_{\bar{B}}$. So we simply
define $F_{\bar{B}}=F_{B,\bar{B}}\circ F_{B}$.

Suppose that $m=1$. We have $\mathcal{P}_{\bar{B}}=\mathcal{P}_{B}\cup\left\{
\left(  s,t\right)  \right\}  $, where $0\leq s<t\leq p-1$. Thus
$x_{B}(u,v)=x_{\bar{B}}(u,v)$ for $(u,v)\in\mathcal{P}\setminus\left\{
s,t\right\}  $, and $x_{\bar{B}}(s,t)=x_{B}(s,t)[1]$, and $\mathcal{X}%
_{\bar{B}}=\left(  \mathcal{X}_{B}\setminus\{(x_{B}(s,t)\}\right)
\cup\left\{  x_{B}\left(  s,t\right)  [1]\right\}  $. Let $  z  =x_{B}(s,t)$. By
Theorem~\ref{mutation theorem} and Lemma~\ref{basic} we have an equivalence%
\[
G^{\prime}:D^{b}\left(U_{\mathcal{X}_{B}}\mMod\right)
\iso D^{b}\left( U_{\mathcal{X}_{\bar{B}}}\mMod\right)
\]
such that%
\[
G^{\prime}(U_{\mathcal{X}_{B}}e_{  z  }\langle 3 \rangle [1])\cong U_{\mathcal{X}_{\bar{B}}}%
e_{  z  [1]}.
\]
and%

\[
G^{\prime}(\operatorname{cone}(Q'_x\overset{\xi
_{ x }^{\prime}}{\longrightarrow}U_{\mathcal{X}_{B}}e_{ x }))\cong U_{\mathcal{X}%
_{\bar{B}}}e_{ x }%
\]
for $ x \in\mathcal{X}_{B}\setminus\{  z  \}$, where $\xi_{ x }^{\prime}$ is a
projective cover of the smallest submodule $M$ of $U_{\mathcal{X}_{B}}e_{ x }$
such that $\operatorname{Hom}_{U_{\mathcal{X}_{B}}}(U_{\mathcal{X}_{B}}e_{  z  }\langle n \rangle,
U_{\mathcal{X}_{B}}e_{ x }/M)=0$ for all $n\in\mathbb{Z}$.
We know that $Q'_x$ is either $0$ or isomorphic to $U_{\mathcal{X}_{B}}e_{  z  }\langle n \rangle$
for some $n\in\mathbb{Z}$.
 
Since $(s,t)\in\mathcal{P}$, we have $ee_{  z  }\neq0$, which implies the
existence of an equivalence%
\[
G:D^{b}\left( eU_{\mathcal{X}_{B}}e\mMod\right)  \iso
D^{b}\left( \bar{e}U_{\mathcal{X}_{\bar{B}}}\bar
{e}\mMod\right)
\]
such that%
\[
G(eU_{\mathcal{X}_{B}}e_{  z  }\langle 3 \rangle[1])\cong\bar{e}U_{\mathcal{X}_{\bar{B}}%
}e_{  z  [1]}.
\]
and%
\[
G(\operatorname{cone}(Q_x\overset{\xi
_{ x }}{\longrightarrow}eU_{\mathcal{X}_{B}}e_{ x }))\cong \bar{e}U_{\mathcal{X}%
_{\bar{B}}}e_{ x }%
\]
for $ x \in x_B(\mathcal{P})\setminus\{  z  \}$, where $\xi_{ x }$ is a
projective cover of the smallest submodule $M$ of $eU_{\mathcal{X}_{B}}e_{ x }$
such that $\operatorname{Hom}_{eU_{\mathcal{X}_{B}}}(eU_{\mathcal{X}_{B}}e_{  z  }\langle n \rangle,
eU_{\mathcal{X}_{B}}e_{ x }/M)=0$ for all $n\in\mathbb{Z}$.
 Put%
\[
F_{\bar{B}}=F_{B,\bar{B}}\circ RF_{B}\circ H:D^{b}(\bar{e}U_{\mathcal{X}%
_{\bar{B}}}\bar{e})\iso D^{b}(\bar{f}\bar{B}\bar{f}),
\]
where $H$ is a quasi-inverse to $G$. Then we have
\begin{align*}
F_{\bar{B}}(\bar{e}U_{\mathcal{X}_{\bar{B}}}e_{x_{\bar{B}}(s,t)})  &  \cong
F_{B,\bar{B}}(RF_{B}(eU_{\mathcal{X}_{B}}e_{x_{B}(s,t)}[1])\\
&  \cong F_{B,\bar{B}}(fBf_{\lambda_{B}(s,t)}[1])\\
&  \cong\bar{f}\bar{B}\bar{f}_{\lambda_{B}(s,t)},
\end{align*}
and for $\left(  u,v\right)  \in\mathcal{P}\setminus\left\{  (s,t)\right\}  $,%
\begin{align*}
F_{\bar{B}}(\bar{e}U_{\mathcal{X}_{\bar{B}}}e_{x_{\bar{B}}(u,v)})  &  \cong
F_{B,\bar{B}}(RF_{B}(\operatorname{cone}(Q_x\overset{\xi_{ x }}{\longrightarrow}U_{\mathcal{X}_{B}}e_{_{x_{B}(u,v)}%
})))\\
&  \cong F_{B,\bar{B}}(\operatorname{cone}(F_B(Q_x)
\overset{F_{B}(\xi_{ x })}{\longrightarrow}Be_{\lambda_{B}(u,v)}))\\
&  \cong\bar{f}\bar{B}\bar{f}_{\lambda_{B}(u,v)}.
\end{align*}
It follows that $F_{\bar{B}}$ restricts to an equivalence
$\bar{e}U_{\mathcal{X}_{\bar{B}}}\bar{e}\mMod\iso\bar{f}%
\bar{B}\bar{f}\mMod$.
\end{proof}

\subsection{Rouquier blocks} \label{rouquier}
Now we assume that $\mathcal{P}_{B}=\emptyset$.  In this case $B$ is known as a \emph{Rouquier
block} of weight $2$ and is particularly well understood.
We shall prove that there is an isomorphism
$eU_{\mathcal{X}_B}e\iso fBf$ sending $e_{x_B(u,v)}$ to $f_{\lambda_B(u,v)}$ for all $(u,v)\in\mathcal{P}$,
and thus complete the proof of Theorem~\ref{blockiscube}.

 We have%
\[
\mathcal{X}=\mathcal{X}_{B}=\mathcal{X}_{0}\cup\mathcal{X}_{1},
\]
where%
\[
\mathcal{X}_{0}=\operatorname{Im}(x_{B})=\{(i,j,0)\mid  i+j\geq0\}
\]
and%
\[
\mathcal{X}_{1}=\{(i,j,1)\mid  i+j\leq 1\}.
\]
We shall in fact prove that the algebra
$\sum_{  x  , x' \in\mathcal{X}_0}e_{  x  }U_{\mathcal{X}}e_{ x' }$
is isomorphic to a truncation of an infinite-dimensional wreath product. Then
a further truncation will yield the desired isomorphism. 


\subsubsection{Reformulation in terms of wreath products}
Let $A$ be the path algebra of the infinite quiver%
\[\cdots
\overset{\gamma_{-2}}{\underset{\delta_{-1}}{\rightleftarrows}}\cdot
_{g_-1}\overset{\gamma_{-1}}{\underset{\delta_{0}}{\rightleftarrows}}\cdot
_{g_0}\overset{\gamma_{0}}{\underset{\delta_{1}}{\rightleftarrows}}\cdot
_{g_1}\overset{\gamma_{1}}{\underset{\delta_{2}}{\rightleftarrows}}%
\cdots
\]
modulo the relations
$\gamma_{i}\gamma_{i+1}=0,\;
\delta_{i}\delta_{i-1}=0,\;
\gamma_{i}\delta_{i+1}+\delta_{i}\gamma_{i-1}=0,\,$ for $i\in\mathbb{Z}$;
this is a graded algebra with each arrow in degree $1$.
(We saw in Remark~\ref{smallrank} that this algebra is isomorphic to $U_\mathcal{Y}$, for any 
Cubist subset $\mathcal{Y}\subset\mathbb{Z}^2$.)
 
We now form the wreath product $A\wr\mathfrak{S}_{2}=A\otimes A \otimes k \mathfrak{S}_2$, graded with
$k\mathfrak{S}_2$ in degree $0$.
Let $\sigma$ be the nonidentity element of $\mathfrak{S}_2$. 
The following elements of $A\wr\mathfrak{S}_{2}$ are a complete set of orthogonal idempotents:

\begin{align*}
g_{ij} & =  g_i\otimes g_j \otimes 1,   \quad (i,j\in\mathbb{Z}, i<j), \\
g^\pm_{ii}&=g_i\otimes g_i \otimes (1\pm\sigma)/2, \quad  (i\in\mathbb{Z}).
\end{align*}

Now observe that $(g_1+\ldots+g_p)A(g_1+\ldots+g_p)$
 is a Brauer tree algebra of a line with
$p-1$ simple modules, so is Morita equivalent to the principal block of $k\mathfrak{S}_p$. It follows
that $g'(A\wr\mathfrak{S}_{2})g'$ is Morita equivalent to the principal block of $k\mathfrak{S}_p\wr\mathfrak{S}_2$,
where $g'=\sum_{1\leq i<j \leq p-1} g_{ij} + \sum_{1\leq i \leq p-1} g^+_{ii}+\sum_{1\leq i \leq p-1} g^-_{ii}$.

On the other hand there is a Morita equivalence between the block $B$ and the principal block of $k\mathfrak{S}_p\wr\mathfrak{S}_2$ (see \cite{Chuang-DCSBSGCB} and \cite{Chuang-Kessar}), where the
correspondence between simple modules is known explicitly. Passing to truncated algebras, we obtain an isomorphism
$fBf \iso g(A\wr\mathfrak{S}_{2})g$ that sends $f_{\langle i,j \rangle}$ to $g_{ij}$ for $1\leq i \leq j \leq p-1$,
where $g_{ii}$ is defined to be $g^+_{ii}$ if $i$ is even and
$g^-_{ii}$ if $i$ is odd, and $g=\sum_{1\leq i \leq j \leq p-1} g_{ij}$.

For $(u,v)\in\mathcal{P}$, we have $\lambda_B(u,v)=\langle u+1,v \rangle$
and $x_B(u,v)=(-u-1,v,0)$. So to obtain the desired isomorphism between
$eU_\mathcal{X}e$ and $fBf$, it suffices to get an isomorphism
$eU_\mathcal{X}e\iso g(A\wr\mathfrak{S}_{2})g$ sending $e_{(-i,j,0)}$
to $g_{ij}$ for $1\leq i \leq j\leq p-1$.

It will be more convenient to prove a stronger statement. Let
$$\tilde{e}U_{\mathcal{X}}\tilde{e}:=\bigoplus_{x,x'\in\mathcal{X}_0} e_x U_{\mathcal{X}} e_{x'}
\qquad\text{and}\qquad  \tilde{g}(A\wr\mathfrak{S}_{2})\tilde{g}:=
 \bigoplus_{i,j,i',j'\in\mathbb{Z},\:i\leq j,\:i'\leq j'} g_{ij}(A\wr\mathfrak{S}_{2})g_{i'j'}.$$
We will show that there is an isomorphism 
$\tilde{e}U_\mathcal{X}\tilde{e}\iso \tilde{g}(A\wr\mathfrak{S}_{2})\tilde{g}$ sending $e_{(-i,j,0)}$
to $g_{ij}$ for $i,j\in\mathbb{Z},\: i\leq j$.

\begin{figure}[h]
\[
\includegraphics[height=2.5in]{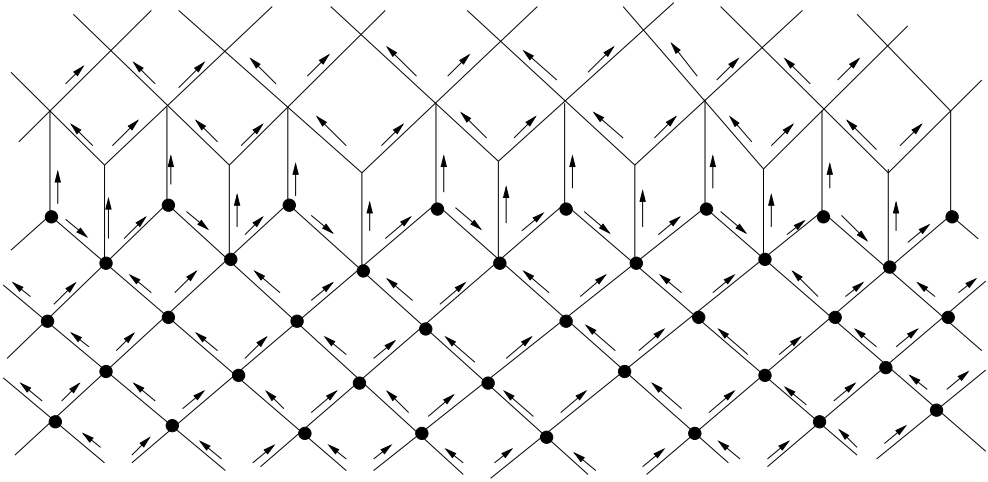}
\]
\caption{$\mathcal{X}_B$ in case $\mathcal{P}_B = \emptyset$.}
\label{Rouquier}
\end{figure}
\FloatBarrier

\subsubsection{A presentation for $\tilde{e}U_{\mathcal{X}}\tilde{e}$} \label{subsubsection:relations}
We first observe that $\tilde{e}U_{\mathcal{X}}\tilde{e}$ is a Koszul algebra, since
$\mathcal{X}$ is a Cubist subset and
$\mathcal{X}_{0}$ is an ideal in $(\mathcal{X},\preceq)$,
as in Figure~\ref{Rouquier}. In particular it
is quadratic algebra.

So using the alternative
presentation of $U_3$ given in Remark~\ref{Peachsigns}
we deduce a presentation of $\tilde{e}U_{\mathcal{X}}\tilde{e}$ by quiver and relations. The
quiver has vertices
\[
\{e_{i,j}\mid  i,j\in\mathbb{Z},~i+j\geq0\},
\]
and arrows
\begin{align*}
\{\alpha_{i,j,1},\alpha_{i,j,2}  &  \mid  i+j\geq0\},\\
\{\beta_{i,j,1},\beta_{i,j,2}  &  \mid  i+j\geq1\}.
\end{align*}
The arrows $\alpha_{i,j,1}$ and $\alpha_{i,j,2}$ are directed from $e_{i,j}$
to $e_{i+1,j}$ and from $e_{i,j}$ to $e_{i,j+1}$, and the arrows
$\beta_{i,j,1}$ and $\beta_{i,j,2}$ are directed from $e_{i,j}$ to $e_{i-1,j}$
and from $e_{i,j}$ to $e_{i,j-1}$. Then $\tilde{e}U_{\mathcal{X}}\tilde{e}$ is isomorphic to
the path algebra of this quiver, modulo relations%
\begin{align}
	\begin{split}
		\alpha_{i,j,1}\alpha_{i+1,j,1}  & =0\quad(i+j\geq0),\\
		\alpha_{i,j,2}\alpha_{i,j+1,2}  &  =0\quad(i+j\geq0),\\
		\beta_{i,j,1}\beta_{i-1,j,1}  &  =0\quad(i+j\geq2),\\
		\beta_{i,j,2}\beta_{i,j-1,2}  &  =0\quad(i+j\geq2);
	\end{split} \tag{0}\\
	\nonumber\\
	\begin{split}
		\alpha_{i,j,1}\alpha_{i+1,j,2}  &  =\alpha_{i,j,2}\alpha_{i,j+1,1}
		\quad(i+j\geq0),\\
		\beta_{i,j,1}\beta_{i-1,j,2}  &  =\beta_{i,j,2}\beta_{i,j-1,1}\quad
		(i+j\geq2),\\
		\alpha_{i,j,1}\beta_{i+1,j,2}  &  =\beta_{i,j,2}\alpha_{i,j-1,1}\quad
		(i+j\geq1),\\
		\alpha_{i,j,2}\beta_{i,j+1,1}  &  =\beta_{i,j,1}\alpha_{i-1,j,2}\quad
		(i+j\geq1),\\
		\alpha_{-j,j,1}\beta_{-j+1,j,2}  &  =0\quad(j\in\mathbb{Z}),\\
		\alpha_{-j,j,2}\beta_{-j,j+1,1}  &  =0\quad(j\in\mathbb{Z});
	\end{split} \tag{1} \\
	\nonumber\\
	\begin{split}
		\alpha_{i,j,1}\beta_{i+1,j,1}  &  =\beta_{i,j,1}\alpha_{i-1,j,1}\quad
		(i+j\geq2),\\
		\alpha_{i,j,2}\beta_{i,j+1,2}  &  =\beta_{i,j,2}\alpha_{i,j-1,2}\quad
		(i+j\geq2),\\
		\alpha_{-j,j+1,1}\beta_{-j+1,j+1,1}-\beta_{-j,j+1,1}\alpha_{-j-1,j+1,1}  &
		=\beta_{-j,j+1,2}\alpha_{-j,j,2}-\alpha_{-j,j+1,2}\beta_{-j,j+2,2}\quad
		(j\in\mathbb{Z)},\\
		\alpha_{-j,j,1}\beta_{-j+1,j,1}  &  =\alpha_{-j,j,2}\beta_{-j,j+1,1}\quad
		(j\in\mathbb{Z}).
	\end{split} \tag{2}
\end{align}
The three groups of relations come from \ref{U1}, \ref{U2'} and \ref{U3'},
respectively.

\subsubsection{Construction of the isomorphism} 
We define a homomorphism%
\[
\tilde{e}U_{\mathcal{X}}\tilde{e}\rightarrow \tilde{g}(A\wr\mathfrak{S}_{2})\tilde{g}%
\]%
by
\begin{align*}
e_{i,j}  &  \mapsto g_{-i,j},\\
\alpha_{i,j,1}  &  \mapsto%
\begin{cases}
\delta_{-i}\otimes g_{j}\otimes1 &   \text{if }i+j\geq1,\\
\delta_{j}\otimes g_{j}\otimes1+(-1)^{j}g_{j}\otimes\delta_{j}\otimes\sigma 
& \text{if }i+j=0,
\end{cases}
\\
\alpha_{i,j,2}  &  \mapsto%
\begin{cases}
g_{-i}\otimes\gamma_{j}\otimes1 &   \text{if }i+j\geq1,\\
g_{j}\otimes\gamma_{j}\otimes1+(-1)^{j}\gamma_{j}\otimes g_{j}\otimes\sigma &
 \text{if }i+j=0,
\end{cases}
\\
\beta_{i,j,1}  &  \mapsto%
\begin{cases}
\gamma_{-i}\otimes g_{j}\otimes1   & \text{if }i+j\geq2,\\
\frac{1}{2}\left(  \gamma_{j-1}\otimes g_{j}\otimes1+(-1)^{j}\gamma
_{j-1}\otimes g_{j}\otimes\sigma\right)  &   \text{if }i+j=1,
\end{cases}
\\
\beta_{i,j,2}  &  \mapsto%
\begin{cases}
g_{-i}\otimes\delta_{j}\otimes1   & \text{if }i+j\geq2,\\
\frac{1}{2}\left( g_{j-1}\otimes\delta_{j}\otimes1+(-1)^{j-1}g_{j-1}%
\otimes\delta_{j}\otimes\sigma\right)    & \text{if }i+j=1.
\end{cases}
\end{align*}

This yields a homomorphism: that the images of the generators satisfy the
relations stated in \S\ref{subsubsection:relations} is a straightforward calculation in $A\wr\mathfrak{S}_2$.

The graded Cartan matrix $C(q)$ for $\tilde{e}U_{\mathcal{X}}\tilde{e}$ is just a submatrix
of that of $U_{\mathcal{X}}$, which can be calculated using
Proposition~\ref{alternativeformula}. We get%
\[
C(q)_{(i,j),(i^{\prime},j^{\prime})}=%
\begin{cases}
1+q^{2}+q^{4}   & \text{if }(i,j)=(i^{\prime},j^{\prime})\text{ and }i+j=0,\\
1+3q^{2}+q^{4}   & \text{if }(i,j)=(i^{\prime},j^{\prime})\text{ and
}i+j=1,\\
1+2q^{2}+q^{4}   & \text{if }(i,j)=(i^{\prime},j^{\prime})\text{ and }%
i+j\geq2,\\
q+q^{3} &   \text{if }| i-i^{\prime}| +| j-j^{\prime}| =1,\\
q^{2} &   \text{if }\left\vert i-i^{\prime}\right\vert =1\text{ and
}| j-j^{\prime}| =1,\\
0  & \text{otherwise.}%
\end{cases}
\]
Replacing $(i,j)$ by $(-i,j)$ and $(i',j')$ by $(-i',j')$,
we obtain the same answers for the graded Cartan matrix of $\tilde{g}(A\wr\mathfrak{S}
_{2})\tilde{g}$, by direct calculation (see, e.g., Proposition 7.1
of \cite{CTGW}). So to show that our homomorphism is an isomorphism as desired, it suffices
to demonstrate that it is surjective.  
This can be done by making the following observations:
\begin{itemize}
\item The degree $0$ and degree $1$ components of $\tilde{g}(A\wr\mathfrak{S}_2)\tilde{g}$ are contained in the image of our homomorphism, so it is enough to show that they generate $\tilde{g}(A\wr\mathfrak{S}_2)\tilde{g}$ as an algebra.
\item The algebra $A\wr\mathfrak{S}_2$ is generated by its degree $0$ and $1$ components.
\item Define $h_{ii}$ to be $g^-_{ii}$ if $i$ is odd and $g^+_{ii}$ if $i$ is even. Then 
$h_{ii}(A\wr\mathfrak{S}_2)_1 g_{i'j'}\neq 0$ if and only if $g_{i'j'}(A\wr\mathfrak{S}_2)_1 h_{ii}\neq0$
if and only if $(i',j')=(i-1,i)$ or $(i',j')=(i,i+1)$.
\item The product maps 
$$g_{i-1,i}\tilde{g}\left(A\wr\mathfrak{S}_2\right)_1\tilde{g}
\otimes\tilde{g}\left(A\wr\mathfrak{S}_2\right)_1\tilde{g}g_{i-1,i}\rightarrow g_{i-1,i}\left(A\wr\mathfrak{S}_2\right)_2 g_{i-1,i}$$
and
$$g_{i-1,i}\tilde{g}\left(A\wr\mathfrak{S}_2\right)_1\tilde{g}
\otimes\tilde{g}\left(A\wr\mathfrak{S}_2\right)_1\tilde{g}g_{i,i+1}\rightarrow g_{i-1,i}\left(A\wr\mathfrak{S}_2\right)_2 g_{i,i+1}$$
and
$$g_{i,i+1}\tilde{g}\left(A\wr\mathfrak{S}_2\right)_1\tilde{g}
\otimes\tilde{g}\left(A\wr\mathfrak{S}_2\right)_1\tilde{g}g_{i-1,i}\rightarrow g_{i,i+1}\left(A\wr\mathfrak{S}_2\right)_2 g_{i-1,i}$$
are surjective.
\end{itemize}

\section{Open questions}

Let $r=2$. The affine
Lie algebra $gl_\infty$ is associated to the infinite 
Brauer line $U_{\mathcal X}$, via the Cartan matrix $C_{U_{\mathcal X}}(-1)$.
Via a construction of Ringel-Hall type, 
the positive part of $gl_\infty$ can be obtained as a Lie algebra of constructible functions on 
the set of indecomposable $V_{\mathcal X}$-modules on which arrows act nilpotently
(G. Lusztig, H. Nakajima).
D. Joyce has shown analogously how to associate a Lie algebra ${\mathcal L}({\mathcal A})$
to any abelian category ${\mathcal A}$. 
Are the examples ${\mathcal L}(V_{\mathcal X} \Nil)$ of any distinguished interest, when $r>2$ ?
Can one define analogues of the full Lie algebra $gl_\infty$ in this setting, 
and not only its positive part ?

\bigskip

Can one deform the Cubist algebras in an interesting way ?   
In case $r=2$, PBW deformations of $V_{\mathcal X}$ have been introduced 
by Crawley-Boevey (``deformed preprojective algebras''), 
and deformations of $V_{\mathcal X} \wr \Sigma_n$ by Etingof, and Ginzburg (``symplectic reflection algebras'').

Let $p$ be a prime number. Let $(K, {\mathcal O}, k)$ be a $p$-modular system.
In case $r=2$, there is a polynomial deformation $\tilde{U}_{\mathcal X} = {\mathcal O}[z] \otimes U_{\mathcal X}$ of $U_{\mathcal X}$ 
defined over ${\mathcal O}$, such that $\tilde{U}_{\mathcal X}/(z-p)$ is the Green order
associated to an infinite line, defined over ${\mathcal O}$. Can one make analogous constructions 
for $r>2$ ? 

Is it possible to deform the Cubist algebras, whilst
preserving all their homological structure (including decomposition matrices, etc.) ? 

\bigskip

Classify all symmetric algebras with highest weight module categories, whose Loewy length is $\leq 5$.

\bigskip

In case $r=2$, the preprojective algebra $V_{\mathcal X}$ 
is closely related to the hereditary algebra $kA_\infty$ with linear quiver of type $A_\infty$. 
There are analogues of this algebra, and its Koszul dual, in case $r>2$.
Indeed, the algebras 
$${\mathcal U_X} = \bigoplus_{  z  , x  \in {\mathcal X},i \in \mathbb{Z}} 
Ext^i_{V_{\mathcal X} \mMod}(\Delta_{V_{\mathcal X}}  (    z  ), \Delta_{V_{\mathcal X}}( x )<i>),$$
$${\mathcal V_X} = \bigoplus_{  z  , x  \in {\mathcal X},i \in \mathbb{Z}} 
Ext^i_{U_{\mathcal X} \mMod}(\Delta_{U_{\mathcal X}}  (    z  ), \Delta_{U_{\mathcal X}}( x )<i>),$$
are Koszul dual to each other, with Cartan matrices $D_{U_{\mathcal X}}(q)$,
$D_{V_{\mathcal X}}(q)$.
One can associate such a pair of algebras
to any dual pair of standard Koszul algebras. 
Are these of any interest ?

\bibliographystyle{amsplain}
\bibliography{cubistch}

\end{document}